\newtheorem{theorem}{Theorem}[section]
\newtheorem{lemma}[theorem]{Lemma}
\newtheorem{proposition}[theorem]{Proposition}
\newtheorem{corollary}[theorem]{Corollary}
\newtheorem{example}[theorem]{Example}
\theoremstyle{plain}
\theoremstyle{definition}
\newtheorem{definition}[theorem]{Definition}
\newtheorem{remark}[theorem]{Remark}
\numberwithin{equation}{section}
\renewcommand{\labelenumi}{\textup{(\theenumi)}}
\renewcommand{\phi}{\varphi}
\newcommand{\Pic}{\operatorname{Pic}}
\newcommand{\Aut}{\operatorname{Aut}}
\newcommand{\Out}{\operatorname{Out}}
\newcommand{\Int}{\operatorname{Int}}
\newcommand{\Homeo}{\operatorname{Homeo}}
\newcommand{\diag}{\operatorname{diag}}
\newcommand{\id}{\operatorname{id}}
\newcommand{\Ker}{\operatorname{Ker}}
\newcommand{\Ad}{\operatorname{Ad}}
\newcommand{\K}{\mathcal{K}}
\newcommand{\C}{\mathcal{C}}
\newcommand{\A}{\mathcal{A}}
\newcommand{\B}{\mathcal{B}}
\newcommand{\D}{\mathcal{D}}
\newcommand{\N}{\mathbb{N}}
\newcommand{\Z}{\mathbb{Z}}
\newcommand{\bbC}{\mathbb{C}}
\newcommand{\Zp}{{\mathbb{Z}}_+}
\def\OA{{{\mathcal{O}}_A}}
\def\ON{{{\mathcal{O}}_N}}
\def\DA{{{\mathcal{D}}_A}}
\def\OB{{{\mathcal{O}}_B}}
\def\DB{{{\mathcal{D}}_B}}
\def\OTA{{{\mathcal{O}}_{\tilde{A}}}}
\def\DTA{{{\mathcal{D}}_{\tilde{A}}}}
\title{Relative Morita equivalence of Cuntz--Krieger algebras and 
flow equivalence of topological Markov shifts}
\author{Kengo Matsumoto \\
Department of Mathematics \\
Joetsu University of Education \\
Joetsu, 943-8512, Japan
}
\begin{document}
\maketitle

\date{ }

\def\det{{{\operatorname{det}}}}

%\maketitle
\begin{abstract}
In this paper, 
we will introduce notions of  relative version of imprimitivity bimodules
and relative version of strong Morita equivalence for pairs of 
$C^*$-algebras $(\mathcal{A}, \mathcal{D})$ such that $\mathcal{D}$ is a $C^*$-subalgebra of $\mathcal{A}$ with certain conditions.
We will then prove that two pairs
$(\mathcal{A}_1, \mathcal{D}_1)$ and
$(\mathcal{A}_2, \mathcal{D}_2)$
are relatively  Morita equivalent if and only if 
their  relative stabilizations are isomorphic.
In particularly, for two pairs 
$(\mathcal{O}_A, \mathcal{D}_A)$
and
$(\mathcal{O}_B, \mathcal{D}_B)$
of Cuntz--Krieger algebras with their canonical masas,
they are relatively Morita equivalent if and only if 
their underlying two-sided topological Markov shifts
$(\bar{X}_A,\bar{\sigma}_A)$ and
$(\bar{X}_B,\bar{\sigma}_B)$
are flow equivalent.
We also introduce a relative version of the Picard group
${\operatorname{Pic}}(\mathcal{A}, \mathcal{D})$
for the pair $(\mathcal{A}, \mathcal{D})$ of $C^*$-algebras
and study them for the Cuntz--Krieger pair $(\mathcal{O}_A, \mathcal{D}_A)$.
\end{abstract}

%{\it Mathematics Subject Classification}:
% Primary 46L55; Secondary 46L35, 37B10.

%{\it Keywords and phrases}:
%Topological Markov shift, Hilbert $C^*$-module, imprimitivity bimodule,
% strong shift equivalence,  Cuntz--Krieger algebra,

%%%%%%%%%%%%%%%%%%%%%%%%%%%%%%%%%%%%%%%%%%%%%%%%%%%%   

\def\OA{{{\mathcal{O}}_A}}
\def\OB{{{\mathcal{O}}_B}}
\def\OZ{{{\mathcal{O}}_Z}}
\def\OTA{{{\mathcal{O}}_{\tilde{A}}}}
\def\SOA{{{\mathcal{O}}_A}\otimes{\mathcal{K}}}
\def\SOB{{{\mathcal{O}}_B}\otimes{\mathcal{K}}}
\def\SOZ{{{\mathcal{O}}_Z}\otimes{\mathcal{K}}}
\def\SOTA{{{\mathcal{O}}_{\tilde{A}}\otimes{\mathcal{K}}}}
\def\DA{{{\mathcal{D}}_A}}
\def\DB{{{\mathcal{D}}_B}}
\def\DZ{{{\mathcal{D}}_Z}}
\def\DTA{{{\mathcal{D}}_{\tilde{A}}}}
\def\SDA{{{\mathcal{D}}_A}\otimes{\mathcal{C}}}
\def\SDB{{{\mathcal{D}}_B}\otimes{\mathcal{C}}}
\def\SDZ{{{\mathcal{D}}_Z}\otimes{\mathcal{C}}}
\def\SDTA{{{\mathcal{D}}_{\tilde{A}}\otimes{\mathcal{C}}}}
\def\BC{{{\mathcal{B}}_C}}
\def\BD{{{\mathcal{B}}_D}}
\def\OAG{{\mathcal{O}}_{A^G}}
\def\OBG{{\mathcal{O}}_{B^G}}
\def\Max{{{\operatorname{Max}}}}
\def\Per{{{\operatorname{Per}}}}
\def\PerB{{{\operatorname{PerB}}}}
\def\Homeo{{{\operatorname{Homeo}}}}
\def\HA{{{\frak H}_A}}
\def\HB{{{\frak H}_B}}
\def\HSA{{H_{\sigma_A}(X_A)}}
\def\Out{{{\operatorname{Out}}}}
\def\Aut{{{\operatorname{Aut}}}}
\def\Ad{{{\operatorname{Ad}}}}
\def\Inn{{{\operatorname{Inn}}}}
\def\det{{{\operatorname{det}}}}
\def\exp{{{\operatorname{exp}}}}
\def\cobdy{{{\operatorname{cobdy}}}}
\def\Ker{{{\operatorname{Ker}}}}
\def\ind{{{\operatorname{ind}}}}
\def\id{{{\operatorname{id}}}}
\def\supp{{{\operatorname{supp}}}}
\def\co{{{\operatorname{co}}}}
\def\Sco{{{\operatorname{Sco}}}}
\def\U{{{\mathcal{U}}}}
%%%%%%%%%%%%%%%%%%%%%%%%%%%%%%%%%%%%%%%%
\bigskip

{\bf Contents}

\medskip

1. Introduction

2. Relative $\sigma$-unital $C^*$-algebras.

3. Relative imprimitivity bimodules and relative Morita equivalence.

4. Isomorphism of relative stabilizations.

5. Relative full corners.

6. Relative Morita equivalence in Cuntz--Krieger pairs.

7. Corner isomorphisms in Cuntz--Krieger pairs.

8. Relative Picard groups.

9. Relative Picard groups of Cuntz--Krieger pairs.

10. Appendix:  Picard groups of Cuntz--Krieger algebras.

%%%%%%%%%%%%%%%%%%%%%%%%%%%%
\section{Introduction}
%%%%%%%%%%%%%%%%%%%%%%%%%% 
In \cite{Rieffel1}, M. Rieffel introduced the notion of imprimitivity bimodule
for $C^*$-algebras as a Hilbert $C^*$-bimodule satisfying certain conditions 
from a viewpoint of representation theory of groups, 
so that he defined the notion of strong Morita equivalence in $C^*$-algebras.
Let $\A$ and $\B$ be  $C^*$-algebras.
An $\A$--$\B$-bimodule $X$
means a Hilbert $C^*$-bimodule  
with  a left $\A$-module structure and an $\A$-valued inner product 
${}_\A\!\langle\hspace{3mm}\mid\hspace{3mm}\rangle$
and 
with a right $\B$-module structure and a $\B$-valued inner product 
$\langle\hspace{3mm}\mid\hspace{3mm}\rangle_\B$
satisfying some comparability conditions
(see \cite{Paschke}, \cite{Rieffel1}, \cite{KW}, \cite{RW}, etc.). 
%%%%%%%%%
%such that
%$\| {}_\A\!\langle x \mid x \rangle \|=\| \langle x \mid x \rangle_\B\|$
%for $x \in X$.
%%%%%%%%%%%%
It is said to be full if the ideals 
spanned by 
$\{{}_\A\!\langle x \mid y \rangle \mid x,y \in X\}$
and
$\{ \langle x \mid y \rangle_\B \mid x,y \in X\}$
are dense in $\A$ and in $\B$, respectively. 
 If a full $\A$--$\B$-bimodule $X$
 further satisfies
the condition
\begin{equation*}
 {}_\A\!\langle x \mid y \rangle z
=
x \langle y \mid z \rangle_\B
\quad
\text{ for } x, y, z \in X,
\end{equation*}
%${}_{\A}\!X_\B$ 
it is called an $\A$--$\B$-imprimitivity bimodule.
Two $C^*$-algebras 
$\A$ and $\B$ are said to be strong Morita equivalent if there exists an 
$\A$--$\B$-imprimitivity bimodule, 
which means that $\A$ and $\B$ have same representation theory.
 Brwon--Green--Rieffel in \cite{BGR}
have shown that two $\sigma$-unital $C^*$-algebras
$\A$ and $\B$ are strong Morita equivalent if and only if they are stably isomorphic,
that is 
$\A\otimes \K$ is isomorphic to    
$\B\otimes \K$,
where 
$\K$ denotes the $C^*$-algebra of compact operators on a separable infinite dimensional Hilbert space. 

In this paper, we will study Morita equivalence of $C^*$-algebras from a view point of symbolic dynamical systems.
For an irreducible non-permutation matrix $A =[A(i,j)]_{i,j=1}^N$
with entries in $\{0,1\}$,
two-sided topological Markov shift
$(\bar{X}_A,\bar{\sigma}_A)$ are defined 
as a topological dynamical system
on the shift space
$\bar{X}_A$
consisting of  two-sided
sequences 
$(x_n)_{n \in \Z}$ 
of
$x_n \in \{1,\dots,N\}$ 
such that $A(x_n,x_{n+1}) = 1$ for all $n \in \Z$ 
with the shift homeomorphism
$\bar{\sigma}_{A}((x_n)_{n \in {\Z}})=(x_{n+1} )_{n \in \Z}$
on the compact Hausdorff space 
$\bar{X}_A$.
J. Cuntz and W. Krieger introduced a $C^*$-algebra 
$\OA$ associated to the matrix $A$
(\cite{CK}).
The $C^*$-algebra is called the Cuntz--Krieger algebra,
which is a universal unique $C^*$-algebra generated by
partial isometries $S_1,\dots,S_N$
subject to the relations:
\begin{equation} 
\sum_{j=1}^N S_j S_j^* = 1, 
\qquad
S_i^* S_i = \sum_{j=1}^N A(i,j) S_jS_j^*, 
\quad i=1,\dots,N. \label{eq:CK}
\end{equation} 
Since the stable isomorphism class of $\OA$
does not have complete informations about the underlying dynamical system
$(\bar{X}_A,\bar{\sigma}_A)$, 
we need some extra structure to $\OA$
to study $(\bar{X}_A,\bar{\sigma}_A)$. 
In this paper,  we  consider the pair $(\OA,\DA)$
where $\DA$ is the $C^*$-subalgebra of $\OA$
generated by the projections of the form:
$S_{i_1}\cdots S_{i_n}S_{i_n}^* \cdots S_{i_1}^*,
i_1,\dots,i_n =1,\dots,N$.
We call the pair $(\OA,\DA)$ the Cuntz--Krieger pair.
As in \cite{MaPacific}, the isomorphism  class of the pair $(\OA,\DA)$
is a complete invariant of the continuous orbit equivalence class
of the underlying one-sided topological Markov shift
$(X_A,\sigma_A)$.
As one of the remarkable relationships between symbolic dynamics and Cuntz--Krieger algebras,
Cuntz--Krieger showed in \cite{CK} that
if topological Markov shifts
$(\bar{X}_A,\bar{\sigma}_A)$ and
$(\bar{X}_B,\bar{\sigma}_B)$
are flow equivalent,
 then there exists an isomorphism
$\Phi:\SOA\longrightarrow \SOB$ such that 
$\Phi(\SDA) = \SDB$,
where $\mathcal{C}$ denotes the maximal commutative $C^*$-subalgebra
of $\K$ consisting of the diagonal elements.
Recently H. Matui and the author have proved that
the converse implication also holds, so that 
$(\bar{X}_A,\bar{\sigma}_A)$ and
$(\bar{X}_B,\bar{\sigma}_B)$
are flow equivalent if and only if there exists an isomorphism
$\Phi:\SOA\longrightarrow \SOB$ such that 
$\Phi(\SDA) = \SDB$ (\cite{MMKyoto}).
We call the pair $(\SOA,\SDA)$ the stabilized Cuntz--Krieger pair
or the relative stabilization of $(\OA,\DA)$,
so that the isomorphism class of the relative stabilization of $(\OA,\DA)$
is a complete invariant for the flow equivalence class of the underlying two-sided topological Markov shift $(\bar{X}_A,\bar{\sigma}_A)$.

In this paper, 
we will introduce notions of  relative version of imprimitivity bimodules
and of relative version of strong Morita equivalence for pairs of 
$C^*$-algebras $(\mathcal{A}, \mathcal{D})$ such that 
$\mathcal{D}$ is a $C^*$-subalgebra of $\mathcal{A}$ 
for which $\mathcal{D}$ has an orthogonal countable approximate unit for $\A$.
Such a pair is said to be relative $\sigma$-unital.
If $\D$ contains the unit of $\A$, the pair is relative $\sigma$-unital.  
Relative version of strong Morita equivalence is called the relative Morita equivalence.
We will first show the following theorem for relative $\sigma$-unital pair $(\A, \D)$ 
of $C^*$-algebras:

\begin{theorem}[{Lemma \ref{lem:stab}, Theorem \ref{thm:RMEPHI2}} and Theorem \ref{thm:fullcorners}]
Let  
$(\A_1, \D_1)$
and 
$(\A_2, \D_2)$
be relative $\sigma$-unital pairs of $C^*$-algebras.
Then the following assertions are mutually equivalent:
\begin{enumerate}
\item 
$(\A_1, \D_1)$ and
%$\underset{\operatorname{RME}}{\sim}
$
(\A_2, \D_2)$ are relatively Morita equivalent.
\item 
$(\A_1\otimes\K, \D_1\otimes\C)$ and 
%$\underset{\operatorname{RME}}{\sim}$
$(\A_2\otimes\K, \D_2\otimes\C)$ are relatively Morita equivalent.
\item 
There exists an isomorphism 
$\Phi: \A_1\otimes\K \longrightarrow \A_2\otimes \K$ of $C^*$-algebras
such that 
$\Phi(\D_1\otimes\C)=\D_2\otimes \C$.
\item
$(\A_1, \D_1)$
and 
$(\A_2, \D_2)$
are complementary relative full corners.
\end{enumerate}
\end{theorem}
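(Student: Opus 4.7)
The plan is to establish the cyclic chain $(1) \Rightarrow (2) \Rightarrow (3) \Rightarrow (4) \Rightarrow (1)$, with the weight of the argument concentrated in $(2) \Rightarrow (3)$, which is a relative analogue of the Brown--Green--Rieffel theorem.

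For $(1) \Rightarrow (2)$, I would take a relative $\A_1$-$\A_2$-imprimitivity bimodule $X$ with its distinguished $\D_1$-$\D_2$-substructure $X_\D$, and form the amplification $X \otimes \ell^2(\N)$, equipped with the natural left $\A_1 \otimes \K$ and right $\A_2 \otimes \K$ Hilbert $C^*$-module structures, together with the substructure $X_\D \otimes c_0(\N)$. Verifying that fullness and the compatibility conditions with $\D_i \otimes \C$ survive the amplification is a routine unpacking of the definitions introduced in Section 3.

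The implication $(2) \Rightarrow (3)$ is the key step. Given a relative imprimitivity bimodule $Y$ between the two stabilized pairs, I would form the linking $C^*$-algebra
\[
L(Y) = \begin{pmatrix} \A_1 \otimes \K & Y \\ Y^* & \A_2 \otimes \K \end{pmatrix},
\]
inside which $\A_1 \otimes \K$ and $\A_2 \otimes \K$ appear as complementary full corners via the diagonal projections $p_1, p_2$. The relative structure on $Y$ supplies a distinguished diagonal subalgebra of $L(Y)$ containing $\D_1 \otimes \C$ and $\D_2 \otimes \C$. The hypothesis of relative $\sigma$-unitality furnishes countable approximate units for $\A_i$ living inside $\D_i$; after absorbing a further copy of $\K$ into $L(Y)$, this allows $p_1$ and $p_2$ to be realised as Murray--von Neumann equivalent projections, where the implementing partial isometry can be chosen so that conjugation by it sends $\D_1 \otimes \C$ onto $\D_2 \otimes \C$. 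This produces the $\D$-preserving isomorphism $\Phi$ claimed in $(3)$.

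For $(3) \Rightarrow (4)$ and $(4) \Rightarrow (1)$, the arguments are largely formal. From $(3)$, the isomorphism $\Phi$ identifies the two stabilized pairs inside a common pair $(\E, \E_\D)$; a standard choice of corner projections in $\D_i \otimes \C$ (for instance $e_{11} \otimes 1$ and its complement under a Hilbert-hotel style decomposition of $\K$) exhibits $(\A_1, \D_1)$ and $(\A_2, \D_2)$ as complementary relative full corners, giving $(4)$. Conversely, if $(\A_i, \D_i) = (p_i \E p_i, p_i \E_\D p_i)$ with complementary full projections $p_i \in \E_\D$, then the off-diagonal piece $p_1 \E p_2$ is naturally a relative $\A_1$-$\A_2$-imprimitivity bimodule with substructure $p_1 \E_\D p_2$, yielding $(1)$.

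The main obstacle is tracking the distinguished subalgebras $\D_i$ throughout the Brown--Green--Rieffel argument in $(2) \Rightarrow (3)$: in the classical setting one only has to absorb $\K$ into the linking algebra and invoke a countable approximate unit, whereas here the implementing partial isometry must be arranged to conjugate the diagonal $\D_1 \otimes \C$ onto $\D_2 \otimes \C$. The relative $\sigma$-unitality hypothesis, which forces the countable approximate unit for $\A_i$ to lie inside $\D_i$, is precisely the input that makes this tracking possible.
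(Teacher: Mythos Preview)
Your strategy for the core step---building the linking algebra from the relative bimodule and then running a Hilbert-hotel swindle after tensoring with $\K$ to produce a $\D$-preserving partial isometry between the corner projections---is exactly the paper's approach to $(1)\Rightarrow(3)$ (the paper does not detour through $(2)$; it works with the linking algebra of the original bimodule and absorbs the $\K$ during the swindle, but this is cosmetic). You have correctly isolated the place where the relative $\sigma$-unitality hypothesis does its work.

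There is, however, a genuine gap in your $(3)\Rightarrow(4)$. Taking $\E=\A_1\otimes\K$ with $P_1=1\otimes e_{11}$ and $P_2=1-P_1$, the corner $P_1\E P_1$ is indeed $\A_1$, but the complementary corner $P_2\E P_2$ is $\A_1\otimes\K(\ell^2(\N\setminus\{1\}))\cong\A_1\otimes\K\cong\A_2\otimes\K$, \emph{not} $\A_2$. No Hilbert-hotel rearrangement of $\K$ alone will produce the unstabilized $\A_2$ as a complementary corner, because condition $(3)$ only gives information about the stabilized pairs. So your cycle breaks at this link.

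The paper avoids this by organising the proof as a hub rather than a cycle: it proves a standalone stabilization lemma, namely $(\A,\D)\underset{\operatorname{RME}}{\sim}(\A\otimes\K,\D\otimes\C)$ via the bimodule $\A\otimes e_{1,1}\K$, and then everything routes through $(1)$. In particular $(1)\Leftrightarrow(2)$ is transitivity plus the stabilization lemma (your amplification $X\otimes\ell^2(\N)$ is not quite the right object---that is an $\A_1\otimes\K$--$\A_2$-bimodule, not an $\A_1\otimes\K$--$\A_2\otimes\K$-bimodule---but the paper's route sidesteps this entirely). Likewise $(3)\Rightarrow(1)$ is immediate from the stabilization lemma plus the observation that a $\D$-preserving isomorphism is a relative Morita equivalence. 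Finally $(1)\Leftrightarrow(4)$ is handled directly: given $(1)$, the linking pair $(\A_0,\D_0)$ itself, with its diagonal projections $P_1,P_2$, exhibits $(\A_1,\D_1)$ and $(\A_2,\D_2)$ as complementary relative full corners (the relative full sequences for $P_1,P_2$ come straight from the relative basis of the bimodule); and $(4)\Rightarrow(1)$ is your off-diagonal argument $X=P_1\A_0 P_2$.
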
 
We will second apply the above theorem to the Cuntz--Krieger 
pair $(\OA,\DA)$
 and clarify relationships between relative Morita equivalence  
and flow equivalence of underlying topological dynamical systems.

\begin{theorem}[{Theorem \ref{thm:FERME} and Theorem \ref{thm:sumCK}, cf. {\cite[Corollary 3.8]{MMKyoto}}}] 
Let $A, B$ be irreducible non-permutation matrices with entries in $\{0,1\}$.  
Let
$(\OA,\DA), (\OB,\DB)$ be the associated Cuntz--Krieger pairs.
Then the following assertions are mutually equivalent:
\begin{enumerate}
\item 
$(\OA, \DA)$ and
%\underset{\operatorname{RME}}{\sim}
$
(\OB, \DB)
$ are relatively Morita equivalent.
\item 
$(\OA\otimes\K, \DA\otimes\C)
$ and
%\underset{\operatorname{RME}}{\sim}
$
(\OB\otimes\K, \DB\otimes\C)
$ are relatively Morita equivalent.
\item 
There exists an isomorphism 
$\Phi: \OA\otimes\K \longrightarrow \OB\otimes \K$ of $C^*$-algebras
such that 
$\Phi(\DA\otimes\C)=\DB\otimes \C$.
\item
$(\OA, \DA)$
and
$
(\OB, \DB)
$
are corner isomorphic.
\item
The two-sided topological Markov shifts 
$(\bar{X}_A, \bar{\sigma}_A)$ and 
$(\bar{X}_B, \bar{\sigma}_B)$ 
are flow equivalent.
\end{enumerate}
\end{theorem}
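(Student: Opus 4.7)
The plan is to assemble the five-way equivalence from three ingredients that each have a separate source. First I would observe that $\OA$ is unital and $\DA$ is a unital subalgebra (likewise for $B$), so each pair $(\OA,\DA)$, $(\OB,\DB)$ is automatically relative $\sigma$-unital. Theorem 1.1 then applies verbatim, giving at once the equivalences $(1)\Leftrightarrow(2)\Leftrightarrow(3)$, and moreover that each of these is equivalent to $(\OA,\DA)$ and $(\OB,\DB)$ being complementary relative full corners.

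For $(3)\Leftrightarrow(5)$, I would simply cite the result already recalled in the introduction: flow equivalence of $(\bar{X}_A,\bar{\sigma}_A)$ and $(\bar{X}_B,\bar{\sigma}_B)$ is characterized by the existence of an isomorphism $\Phi\colon\SOA\to\SOB$ with $\Phi(\SDA)=\SDB$; the forward direction is due to Cuntz--Krieger, and the converse is \cite[Corollary 3.8]{MMKyoto}. No new argument is needed here.

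The genuinely new content is to bring condition (4) into the chain, bridging the abstract notion of \emph{complementary relative full corners} supplied by Theorem 1.1 with the stronger, Cuntz--Krieger-specific notion of \emph{corner isomorphic} developed in Section 7. For $(3)\Rightarrow(4)$, I would start from an isomorphism $\Phi$ as in (3), choose a minimal projection $e\in\C$ so that $1_{\OA}\otimes e\in\SDA$ is a full projection in $\SOA$, transport it via $\Phi$ to a full projection in $\SOB$ lying in $\SDB$, and then cut down to recover a pair isomorphism between full corners of the unstabilized $(\OA,\DA)$ and $(\OB,\DB)$. For $(4)\Rightarrow(3)$, I would stabilize the corner isomorphism and use the tensorial absorption $\K\otimes\K\cong\K$ together with its diagonal counterpart $\C\otimes\C\cong\C$ to upgrade to a diagonal-preserving isomorphism of the relative stabilizations.

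The main obstacle I expect is the transport step in $(3)\Rightarrow(4)$: one must move between projections of $\SDA$ and of $\DA$ via partial isometries that preserve the diagonal subalgebra. The abstract theorem produces Murray--von Neumann equivalences in the stabilized algebra only, whereas for (4) one needs equivalences that can be descended to the unstabilized Cuntz--Krieger pair. This is precisely the rigidity statement that Section 7 is devoted to, exploiting the specific combinatorial structure of $\OA$ and $\DA$ (projections of the form $S_{i_1}\cdots S_{i_n}S_{i_n}^*\cdots S_{i_1}^*$). Once that diagonal-compatible transport of projections is in hand, the resulting unstabilized corners carry both the $C^*$-algebraic and the diagonal data needed to close the loop.
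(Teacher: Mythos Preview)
Your handling of $(1)\Leftrightarrow(2)\Leftrightarrow(3)$ via Theorem~1.1 and of $(3)\Leftrightarrow(5)$ via Cuntz--Krieger and \cite{MMKyoto} matches the paper exactly. The divergence is entirely in how (4) is brought in.

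For $(4)\Rightarrow(3)$ the paper does not use $\K\otimes\K\cong\K$. Given an elementary corner isomorphism $P\OZ P=\OA$, $\DZ P=\DA$ with $P\in\DZ$, Theorem~\ref{thm:corner} builds explicit partial isometries $U_k=S_{\nu(k)}S_{\mu(k)}S_{\mu(k)}^*$ (using irreducibility of $Z$ to find the words $\nu(k)$) and runs the Brown stabilization trick of Lemmas~\ref{lem:3.3}--\ref{lem:3.4} directly inside $M(\OZ\otimes\K)$ to produce an isometry $V_A$ with $\Ad(V_A):\SOZ\to\SOA$ preserving diagonals. Your tensorial-absorption sketch is vaguer but can be salvaged more cheaply: the paper itself notes that $X=P\OZ$ is an $(\OA,\DA)$--$(\OZ,\DZ)$-relative imprimitivity bimodule, so $(4)\Rightarrow(1)\Rightarrow(3)$ via Theorem~1.1 also works.

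The real gap is in your $(3)\Rightarrow(4)$. The paper's notion of \emph{corner isomorphic} (Definition in Section~7) is the equivalence relation generated by inclusions $P\OZ P=\OA$, $\DZ P=\DA$ where each intermediate $(\OZ,\DZ)$ is itself a Cuntz--Krieger pair and $P\in\DZ$. Transporting $1_{\OA}\otimes e$ along $\Phi$ lands you in $\SDB$, not in $\DB$, and the resulting corner of $\SOB$ has no reason to be a Cuntz--Krieger pair; no amount of diagonal-compatible Murray--von Neumann moves inside $\SOB$ will manufacture the required finite chain of $({\mathcal O}_{Z_i},{\mathcal D}_{Z_i})$'s. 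The paper does not attempt this route at all. Instead it goes $(3)\Rightarrow(5)$ via \cite{MMKyoto}, then invokes Parry--Sullivan: flow equivalence is a finite chain of strong shift equivalences and expansions, and the proofs of Propositions~\ref{prop:SSE} and~\ref{prop:expansion} already exhibit, for each such step, an explicit intermediate Cuntz--Krieger pair $(\OZ,\DZ)$ (with $Z=\begin{bmatrix}0&C\\D&0\end{bmatrix}$ or $Z=\tilde A$) and diagonal projections realizing both sides as corners. That is what produces the chain required by the definition; Section~7 does not contain the projection-rigidity statement you are hoping for.
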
 
By using J. Franks' s Theorem \cite{Franks} (cf. \cite{BF}, \cite{PS}),
the last assertion (5) is equivalent to the following (6):

\medskip

(6) \,\, The groups $\Z^N/{(\id - A)\Z^N}$ and $\Z^M/{(\id - B)\Z^M}$
are isomorphic and $\det(\id -A) = \det(\id -B)$,

\medskip

where $N$  is the size of the matrix $A$ and
 $M$ is that of $B$.
Hence we know that 
the group 
$\Z^N/{(\id - A)\Z^N}$ with the value $\det(\id -A) $
is a complete invariant of the relative Morita equivalence class
of the Cuntz--Krieger pair
$(\OA,\DA)$.

In \cite{BGR}, Brown--Green--Rieffel 
introduced the notion of the Picard group $\Pic(\A)$
for a $C^*$-algebra to study equivalence classes of imprimitivity  bimodules of  
$C^*$-algebras.
Natural isomorphism classes $[X]$
of  imprimitivity bimodules $X$  over $\A$
form a group
under  the relative tensor product
$[X]\cdot [Y] =[X\otimes_\A Y]$.
The group is called the Picard group for the
$C^*$-algebra $\A$ and is written 
$\Pic(\A)$, that are considered as a sort of generalizations of 
automorphism group $\Aut(\A)$ of $\A$.
We will introduce relative version of the Picard group
$\Pic(\A,\D)$ as the group of 
$(\A,\D)$--$(\A,\D)$-relative imprimitivity bimodules 
and study their structure for the Cuntz--Krieger pairs
$(\OA,\DA)$.
Let
\begin{equation*}
\Aut_\circ(\OA,\DA) 
= \{\alpha\in \Aut(\OA)\mid \alpha(\DA) = \DA, \alpha_* = \id \text{ on } K_0(\OA)\}.
\end{equation*}
Its  quotient group
 $\Aut_\circ(\OA,\DA)/\Int(\OA,\DA)$ by 
$\Int(\OA,\DA)$
is denoted by
$\Out_\circ(\OA,\DA)$.
Let 
$
\Aut_1( \Z^N/{(\id - A^t)\Z^N} )
$ 
be a subgroup of 
$\Aut( \Z^N/{(\id - A^t)\Z^N})$ defined by
$$
\Aut_1( \Z^N/{(\id - A^t)\Z^N})
= \{ \xi \in \Aut( \Z^N/{(\id - A^t)\Z^N}) \mid \xi([1]) = [1] \}
$$
where 
$[1] \in  \Z^N/{(\id - A^t)\Z^N}$
denotes the class of the vector 
$(1,\dots,1) $ in $\Z^N$. 
It is well-known that there exists an isomorphism
$\epsilon_A:K_0(\OA)\longrightarrow \Z^N/{(\id - A^t)\Z^N}$
such that $\epsilon([1_{\OA}]) =[1]$ (\cite{Cu3}).
We will obtain the following structure theorem for $\Pic(\OA,\DA)$.

\begin{theorem}[{Theorem \ref{thm:PicOADA} and Theorem \ref{thm:PicOADA2}}]
Let $A$ be an irreducible non-permutation matrix.
Then there exist short exact sequences:
\begin{gather*}
1\longrightarrow \Out_\circ(\OA,\DA) 
\overset{\bar{\Psi}}{\longrightarrow}\Pic(\OA,\DA) 
\overset{K_*}{\longrightarrow}\Aut( \Z^N/{(\id - A^t)\Z^N})
\longrightarrow 1, % \label{eq:exactOADA2}
\\
1 \longrightarrow \Out(\OA,\DA) 
\overset{\bar{\Psi}}{\longrightarrow}\Pic(\OA,\DA) 
 \overset{K_*}{\longrightarrow}\Aut( \Z^N/{(\id - A^t)\Z^N}) 
/ \Aut_1( \Z^N/{(\id - A^t)\Z^N})
\longrightarrow 1. 
\end{gather*}
\end{theorem}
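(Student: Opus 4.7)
The plan is to construct the two homomorphisms appearing in the first sequence, verify exactness at each position (with the surjectivity of $K_*$ being the main obstacle), and then derive the second sequence from the first by a diagram chase. For $\bar{\Psi}$, given $\alpha\in\Aut_\circ(\OA,\DA)$, I form the relative imprimitivity bimodule ${}_\alpha\OA$ whose underlying space is $\OA$ equipped with right $\OA$-module structure by multiplication and right inner product $\langle x\mid y\rangle:=x^*y$, and with left $\OA$-action $a\cdot x:=\alpha(a)x$ and left inner product $\alpha^{-1}(xy^*)$; the hypothesis $\alpha(\DA)=\DA$ guarantees compatibility with the relative structure of Section~3. The assignment $[\alpha]\mapsto[{}_\alpha\OA]$ descends to $\Out_\circ(\OA,\DA)$ and is injective: any bimodule isomorphism ${}_\alpha\OA\cong{}_\beta\OA$ is left multiplication by a unitary $u\in\OA$ satisfying $\alpha=\Ad(u)\circ\beta$, and since both $\alpha$ and $\beta$ preserve $\DA$ so does $\Ad(u)$, placing it in $\Int(\OA,\DA)$. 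For $K_*$, given $[X]\in\Pic(\OA,\DA)$, Theorem~\ref{thm:RMEPHI2} supplies a $*$-isomorphism $\Phi_X\colon\SOA\to\SOA$ with $\Phi_X(\SDA)=\SDA$; after identifying $K_0(\OA\otimes\K)\cong K_0(\OA)\cong\Z^N/(\id-A^t)\Z^N$ via \cite{Cu3}, define $K_*([X]):=(\Phi_X)_*$. Two such $\Phi_X$ differ by an inner automorphism of $\SOA$, which is trivial on $K_0$; the Rieffel tensor product corresponds to composition $\Phi_{X\otimes_\OA Y}=\Phi_X\circ\Phi_Y$, so $K_*$ is a well-defined group homomorphism, and the composite $K_*\circ\bar{\Psi}$ is trivial by the definition of $\Aut_\circ$.

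Exactness in the middle of the first sequence amounts to showing that $K_*([X])=\id$ implies $[X]$ lies in the image of $\bar{\Psi}$. When $(\Phi_X)_*=\id$, the projection $p:=\Phi_X(1_\OA\otimes e_{11})\in\SDA$ is full in $\SOA$ with $[p]=[1_\OA\otimes e_{11}]$ in $K_0$. Invoking the complementary full-corner description of Theorems~\ref{thm:fullcorners} and~\ref{thm:sumCK}, I produce a partial isometry in $\SOA$ relating $p$ to $1_\OA\otimes e_{11}$ compatibly with the $\SDA$-structure, so that $\Phi_X$ is unitarily conjugate as a map of pairs to $\alpha\otimes\id_\K$ for some $\alpha\in\Aut(\OA,\DA)$; the assumption $(\Phi_X)_*=\id$ forces $\alpha\in\Aut_\circ(\OA,\DA)$, giving $[X]=\bar{\Psi}([\alpha])$.

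The main obstacle is surjectivity of $K_*$ onto $\Aut(\Z^N/(\id-A^t)\Z^N)$. Given $\xi$, I must exhibit $\Phi\colon\SOA\to\SOA$ with $\Phi(\SDA)=\SDA$ and $\Phi_*=\xi$, equivalently, by Theorem~\ref{thm:FERME}, a self-flow-equivalence of $(\bar{X}_A,\bar{\sigma}_A)$ realising $\xi$ on $K_0(\OA)$. Since Franks' theorem \cite{Franks} characterises flow equivalence via $(\Z^N/(\id-A)\Z^N,\det(\id-A))$, I realise $\xi$ as a composition of elementary matrix moves (strong shift equivalences and state splittings) and convert the resulting flow equivalence back into the desired isomorphism via Theorem~\ref{thm:FERME}. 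The delicate point, and the reason a classification-theoretic realisation of $\xi$ by an algebra automorphism of $\OA$ does not suffice, is that the diagonal $\DA$ must be preserved throughout; the construction must remain within the symbolic-dynamical category.

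Finally I derive the second sequence from the first. Every $\alpha\in\Aut(\OA,\DA)$ fixes $1_\OA$, so $\alpha_*\in\Aut_1(\Z^N/(\id-A^t)\Z^N)$; restricting the surjectivity construction above to $\xi$ satisfying $\xi([1])=[1]$ gives $K_*(\Aut(\OA,\DA))=\Aut_1(\Z^N/(\id-A^t)\Z^N)$. Combined with the first sequence, this identifies $\bar{\Psi}(\Out(\OA,\DA))$ with $K_*^{-1}(\Aut_1(\Z^N/(\id-A^t)\Z^N))\subseteq\Pic(\OA,\DA)$, yielding the short exact sequence $1\to\Out(\OA,\DA)\to\Pic(\OA,\DA)\to\Aut(\Z^N/(\id-A^t)\Z^N)/\Aut_1(\Z^N/(\id-A^t)\Z^N)\to 1$.
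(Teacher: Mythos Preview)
Your overall architecture matches the paper's, but there are two genuine gaps where you invoke the wrong tools.

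For exactness in the middle you need, from $[p]=[1_\OA\otimes e_{11}]$ in $K_0(\SOA)$ with $p\in\SDA$, a partial isometry in $\SOA$ that \emph{normalizes} $\SDA$ and implements this equivalence. Theorems~\ref{thm:fullcorners} and~\ref{thm:sumCK} do not provide this: they concern relative Morita equivalence of pairs, not the finer statement that ordinary $K_0$-equivalence of diagonal projections can be implemented in the normalizer semigroup $N_s(\SOA,\SDA)$. That statement---that the natural map $K_0(\SOA,\SDA)\to K_0(\SOA)$ is an isomorphism---is the main result of \cite{MaPAMS}, and the paper uses it as Lemma~\ref{lem:4.1} to build partial isometries $w_k$ with $w_k^*w_k=1\otimes e_{kk}$, $w_kw_k^*=\beta(1\otimes e_{kk})$, and $w_k(\SDA)w_k^*\subset\SDA$; summing these produces the unitary $u$ with $\beta=\Ad(u)\circ(\alpha\otimes\id)$ and $\alpha\in\Aut_\circ(\OA,\DA)$. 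Without this input your partial isometry need not respect $\SDA$, so $\alpha$ need not preserve $\DA$.

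For surjectivity of $K_*$, Franks' theorem \cite{Franks} classifies flow equivalence by $(\Z^N/(\id-A)\Z^N,\det(\id-A))$ but does \emph{not} assert that every automorphism of the Bowen--Franks group is induced by a self-flow-equivalence of $(\bar{X}_A,\bar{\sigma}_A)$. That is Huang's theorem \cite[Theorem~2.15]{Hu}, which the paper invokes at exactly this point; your plan to ``realise $\xi$ as a composition of elementary matrix moves'' is the content of Huang's argument, not a consequence of the classification invariant alone. A related issue appears in your derivation of the second sequence: the surjectivity construction yields elements of $\Aut(\SOA,\SDA)$, not of $\Aut(\OA,\DA)$, so ``restricting'' it does not produce unital diagonal-preserving automorphisms. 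The paper bridges this using R{\o}rdam \cite{Ro} together with \cite[Proposition~5.1]{MaPAMS} (see Corollary~\ref{cor:4.1}); alternatively one can rerun the corrected middle-exactness argument under the weaker hypothesis $\beta_*([1_\OA\otimes e_{11}])=[1_\OA\otimes e_{11}]$.
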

In Appendix of the paper, we refer to the ordinary Picard groups
$\Pic(\OA)$ for Cuntz--Krieger algebras $\OA$,
and especially the ordinary Picard groups  
$\Pic({\mathcal{O}}_N)$ for Cuntz algebras ${\mathcal{O}}_N$
(Theorem \ref{thm:PicON} and Corollary \ref{cor:PicONprime}).

%%%%%%%%%%%%%%%%%%%%%%%%%%%%%%%%%%%%
%%%%%%%%%%%%%%%%%%%%%%%%%%%%%%%%%%%%%%%%%%
%%%%%%%%%%%%%%%%%%%%%%%%%%%%%%%%%%%%%%%%%%
\section{Relative $\sigma$-unital $C^*$-algebras}
%%%%%%%%%%%%%%%%%%%%%%%%%%%%%%%%%%%%%%%%%%%
%%%%%%%%%%%%%%%%%%%%%%%%%%%%%%%%%%%%%%%%%$
For a $C^*$-algebra $\A$, we denote by 
$M(\A)$ its multiplier $C^*$-algebra (cf. \cite{WO}).
The locally convex topology on $M(\A)$ generated by the seminorms
$x \longrightarrow \|x a\|, \,x \longrightarrow \|a x \|$ 
for $a \in \A$ is called the strict topology.
Throughout the paper, we denote by 
$\{e_{ij}\}_{i,j\in \N}$ the matrix units on the separable infinite dimensional Hilbert space
$\ell^2(\N)$.
The $C^*$-algebra generated by them is denoted by $\K$
which is the $C^*$-algebra of all compact operators on $\ell^2(\N)$.
The $C^*$-subalgebra of $\K$ generated by diagonal projections
$\{e_{i,i}\}_{i \in \N}$ is denoted by $\C$.

A $C^*$-algebra is said to be $\sigma$-unital if it has a countable approximate unit.
We will first introduce a notion of relative version of a $\sigma$-unital $C^*$-algebra.
\begin{definition}
A pair $(\A,\D)$ of $C^*$-algebras $\A, \D$
is called {\it relative}\/ $\sigma$-{\it unital}\/
if it satisfies the following conditions:
\begin{enumerate}
\item $\D$ is a $C^*$-subalgebra of $\A$.
\item $\D$ contains a countable approximate unit for $\A$.
\item There exists a sequence $a_n \in \A, n=1,2,\dots $ of elements such that  
{\begin{enumerate}
\renewcommand{\theenumi}{\roman{enumi}}
\renewcommand{\labelenumi}{\textup{(\theenumi)}}
\item $a_n^* d a_n, \, a_n d a_n^* \in \D$ for all $d \in \D$ and  $n =1,2,\dots$.
\item $\sum_{n=1}^{\infty}a_n^* a_n =1$ in the strict topology of $M(\A)$.
\item $a_n d a_m^* = 0$ 
for all $d \in \D$ and $n,m \in \N$ with $n\ne m$.
\end{enumerate}}
\end{enumerate} 
\end{definition}
We call the sequence $\{a_n\}_{n\in \N}$ 
satisfying the three conditions (a), (b), (c) 
{\it a relative approximate unit}\/ for the pair $(\A,\D)$.

\begin{remark}
By the above condition (2), we know that 
$M(\D)$ is a $C^*$-subalgebra of $M(\A)$ in natural way
(cf. \cite[p 46, 2G]{WO}).  
\end{remark}

\begin{lemma}
Assume that 
$(\A,\D)$ is a relative $\sigma$-unital pair of $C^*$-algebras.
Let $\{a_n\}_{n\in \N}$ be a relative approximate unit for $(\A,\D)$.
Then we have
\begin{enumerate}
\renewcommand{\theenumi}{\roman{enumi}}
\renewcommand{\labelenumi}{\textup{(\theenumi)}}
\item $a_n^*  a_n, \, a_n  a_n^* \in \D$ for all $n =1,2,\dots$.
\item $b_n =\sum_{k=1}^{n}a_k^* a_k$ belongs to $\D$ and
the sequence $\{b_n\}_{n\in \N}$ 
is a countable approximate unit for $\A$.
\end{enumerate}
\end{lemma}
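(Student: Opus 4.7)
The plan is to unpack the three conditions in the definition and use the $\sigma$-unitality of $\D$ inside $\A$ to promote condition (3)(a) from a statement about arbitrary $d \in \D$ to a statement about $a_n^* a_n$ and $a_n a_n^*$ themselves.

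For (i), I fix $n$ and invoke condition (2): there is a countable approximate unit $\{u_m\}_{m\in\N} \subseteq \D$ for $\A$. Applying (3)(a) to each $u_m \in \D$ gives $a_n u_m a_n^*, \, a_n^* u_m a_n \in \D$ for every $m$. Since $a_n, a_n^* \in \A$ and $\{u_m\}$ is an approximate unit for $\A$, standard estimates such as $\|a_n u_m a_n^* - a_n a_n^*\| \le \|a_n\|\, \|u_m a_n^* - a_n^*\| \to 0$ yield $a_n u_m a_n^* \to a_n a_n^*$ and, symmetrically, $a_n^* u_m a_n \to a_n^* a_n$ in norm. Norm-closedness of $\D$ then gives $a_n a_n^*,\, a_n^* a_n \in \D$.

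For (ii), part (i) together with closure of $\D$ under finite sums immediately shows $b_n = \sum_{k=1}^{n} a_k^* a_k \in \D$. To see that $\{b_n\}_{n\in\N}$ is an approximate unit for $\A$: positivity and monotonicity are clear since $b_{n+1} - b_n = a_{n+1}^* a_{n+1} \ge 0$, and the strict convergence of $\sum_{k=1}^\infty a_k^* a_k$ to $1$ in $M(\A)$ postulated in condition (3)(b) is by definition the statement that $b_n a \to a$ and $a b_n \to a$ in norm for every $a \in \A$. The norm bound $\|b_n\| \le 1$ is obtained by realizing $M(\A)$ faithfully and non-degenerately on a Hilbert space $H$: strict convergence becomes strong-operator convergence there, and for each $\xi \in H$ the increasing sequence $\langle b_n \xi, \xi \rangle$ has limit $\|\xi\|^2$, forcing $b_n \le 1$.

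There is no real obstacle beyond noticing that condition (3)(a) does not literally mention the unit of $M(\A)$, so one must supply it as a norm limit of elements of $\D$—precisely the role played by the approximate unit guaranteed by condition (2). Once (i) is in hand, (ii) is a direct verification of the approximate-unit axioms from the assumed strict convergence, so the lemma serves mainly as a sanity check that the three defining conditions interact compatibly.
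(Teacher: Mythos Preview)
Your proof is correct and follows essentially the same approach as the paper: both arguments use the approximate unit in $\D$ together with condition (3)(a) to exhibit $a_n^* a_n$ and $a_n a_n^*$ as norm limits of elements $a_n^* d a_n,\, a_n d a_n^* \in \D$, and then read off (ii) from the strict convergence. The paper additionally notes $\|a_k\|\le 1$ (from $a_k^* a_k \le \sum_n a_n^* a_n = 1$) to tighten the estimate, but this is cosmetic and your version is equally valid.
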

\begin{proof}
(i)
Take and fix $k\in \N$.
Since $\sum_{n=1}^{\infty}a_n^* a_n =1$ in  $M(\A)$,
we have
$0\le a_k^* a_k \le 1$ so that $\| a_k\| \le 1$.
As $\D$ has an approximate unit for $\A$, for any $\epsilon>0$,
there exists $d \in \D$ such that 
$\| a_k - d a_k \| <\epsilon$,
so that
$\|a_k^* a_k -a_k^* d a_k \| <\epsilon$.
The condition $a_k^* d a_k \in \D$
ensures us that 
$a_k^* a_k$ belongs to $\D$.
Similarly we know that
$a_k a_k^*$ belongs to $\D$.
 
(ii)
Since 
$b_n =\sum_{k=1}^{n}a_k^* a_k$
converges to $1$ in the strict topology of $M(\A)$,
$\{b_n\}_{n\in \N}$ is an approximate unit for $\A$. 
\end{proof}

\begin{lemma}\label{lem:orthapprunit}
Let $\D$ be a $C^*$-subalgebra of $\A$.
Then $(\A,\D)$ is relative $\sigma$-unital if and only if 
there exists a sequence $d_n \in \D, n=1,2,\dots$ such that 
\begin{enumerate}
\renewcommand{\theenumi}{\alph{enumi}}
\item $d_n \ge 0, \, n =1,2,\dots$.
\item $\sum_{n=1}^{\infty} d_n =1$ in the strict topology of $M(\A)$.
\item $d_n d d_m^* = 0$ for all $d \in \D$ and $n,m \in \N$ with $n\ne m$.
\end{enumerate}
\end{lemma}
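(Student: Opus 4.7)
My plan is to prove both implications directly, linking the sequences $\{a_n\}$ from the definition of relative $\sigma$-unitality and the sequences $\{d_n\}$ from the lemma via the correspondence $d_n = a_n^* a_n$, equivalently $a_n = d_n^{1/2}$.

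For the forward direction, given a relative approximate unit $\{a_n\}$ for $(\A,\D)$, I would set $d_n := a_n^* a_n$. The preceding lemma already gives $d_n \in \D$, positivity (a) is automatic, and (b) is condition (ii) verbatim. For (c), noting that $d_m = d_m^*$, the factorisation
\[
d_n d d_m^* = d_n d d_m = a_n^*(a_n d a_m^*)a_m
\]
is zero by condition (iii) whenever $n \neq m$.

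For the reverse direction, I would set $a_n := d_n^{1/2}$, which lies in $\D \subset \A$. Then $a_n^* d a_n$ and $a_n d a_n^*$ automatically belong to $\D$ since $\D$ is a $C^*$-subalgebra, and (ii) is just (b). Moreover the partial sums $b_n := \sum_{k=1}^{n} d_k$ lie in $\D$ and converge strictly to $1$ in $M(\A)$ by (b), so they form a countable approximate unit for $\A$ contained in $\D$; this supplies clause (2) in the definition of relative $\sigma$-unitality.

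The main technical obstacle is upgrading the orthogonality $d_n \D d_m = \{0\}$ to $d_n^{1/2} \D d_m^{1/2} = \{0\}$, which is exactly condition (iii) for $a_n = d_n^{1/2}$. I would approximate the square root by continuous functions $q_k \colon [0,1] \to [0,\infty)$ with $q_k(0)=0$, $q_k(t) \to t^{1/2}$ uniformly on $[0,1]$, and a factorisation $q_k(t) = t \, r_k(t)$ for some continuous $r_k$; a convenient explicit choice is $q_k(t) = t/(\sqrt{t+1/k}+1/\sqrt{k})$. Note that $\|d_m\| \leq 1$ follows from (b), so functional calculus on $[0,1]$ suffices. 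Then for every $d \in \D$,
\[
d_n d \cdot q_k(d_m) = d_n d \cdot d_m \, r_k(d_m) = (d_n d d_m)\, r_k(d_m) = 0,
\]
and letting $k \to \infty$ gives $d_n d \, d_m^{1/2} = 0$. A symmetric application on the left then yields $d_n^{1/2} d \, d_m^{1/2} = 0$, completing condition (iii) and hence the proof.
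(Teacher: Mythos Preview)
Your proof is correct and follows exactly the same route as the paper: forward via $d_n = a_n^* a_n$, backward via $a_n = \sqrt{d_n}$. The paper's proof of the converse is a single sentence asserting that $a_n = \sqrt{d_n}$ is a relative approximate unit, so you have in fact supplied strictly more detail than the paper, correctly identifying and handling the only non-obvious point, namely the passage from $d_n \D d_m = 0$ to $d_n^{1/2} \D d_m^{1/2} = 0$ via functional-calculus approximation of $\sqrt{t}$ by functions divisible by $t$.
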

\begin{proof}
Suppose that $(\A,\D)$ is relative $\sigma$-unital.
Take a relative approximate unit $\{a_n\}_{n\in \N}$ in $\A$.
Put $d_n = a_n^* a_n$. By the preceding lemma, $d_n$ belongs to $\D$ 
and satisfies the desired properties.  
Conversely, suppose that there exists a sequence  $d_n$ in $\D$ satisfying 
the above three conditions.
Put $a_n = \sqrt{d_n}$, which becomes a relative approximate unit for $(\A,\D)$.
\end{proof}
We call the sequence $\{d_n\}_{n \in \N}$ in $\D$ satisfying 
the conditions
(a), (b), (c) in Lemma \ref{lem:orthapprunit}
{\it an orthogonal approximate unit for}\/ $(\A,\D)$.

\begin{example}

{\bf 1.} If a $C^*$-subalgebra $\D$ of $\A$  contains the unit of $\A$,
the pair $(\A,\D)$ is relative $\sigma$-unital
by putting $d_1 = 1$ and $d_n = 0$ for $n=2,3,\dots $.

{\bf 2.} Let $\A =\K$
and
$\D =\C$.
Then  the pair $(\A,\D)$ is relative $\sigma$-unital 
by putting 
$d_n =e_{n,n}, n \in \N$
where $\{e_{n,m}\}_{n,m\in \N}$
is the matrix units of $\K$.  
\end{example}
More generally we know the following proposition.
\begin{proposition}
If $(\A,\D)$ is relative $\sigma$-unital, 
so is 
$(\A\otimes\K,\D\otimes\C)$.
\end{proposition}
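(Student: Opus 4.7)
The plan is to invoke Lemma \ref{lem:orthapprunit} and build an explicit orthogonal approximate unit for $(\A\otimes\K, \D\otimes\C)$ by tensoring the ones on each factor. Let $\{d_n\}_{n\in\N}\subset \D$ be an orthogonal approximate unit for $(\A,\D)$, and observe that $\{e_{k,k}\}_{k\in\N}\subset \C$ is an orthogonal approximate unit for $(\K,\C)$. Fix any bijection $\pi:\N\to\N\times\N$ and define
\begin{equation*}
f_m = d_{\pi_1(m)}\otimes e_{\pi_2(m),\pi_2(m)} \in \D\otimes \C, \qquad m\in \N.
\end{equation*}
Each $f_m$ is positive, so condition (a) of Lemma \ref{lem:orthapprunit} is immediate.

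For condition (c) I would first note the elementary tensor computation: if $d\in\D$ and $c\in\C$, then
\begin{equation*}
(d_n\otimes e_{k,k})(d\otimes c)(d_{n'}\otimes e_{k',k'}) = (d_n d d_{n'})\otimes(e_{k,k}\,c\,e_{k',k'}).
\end{equation*}
When $n\ne n'$ the first factor vanishes by condition (c) of Lemma \ref{lem:orthapprunit} applied to $(\A,\D)$ (using $d_{n'}^*=d_{n'}$), and when $k\ne k'$ the second factor vanishes because $c\in\C$ is diagonal in the basis $\{e_{i,i}\}$. Hence for $m\ne m'$ one has $f_m \cdot x \cdot f_{m'}=0$ on elementary tensors $x=d\otimes c$; by linearity and the fact that such tensors span a norm-dense subspace of $\D\otimes\C$, this extends to all $x\in \D\otimes\C$.

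The main technical step is condition (b), namely strict convergence of $\sum_m f_m=1$ in $M(\A\otimes\K)$. Since the $d_n$ are pairwise orthogonal positive elements (applying condition (c) to an approximate unit of $\D$ gives $d_n d_{n'}=0$ for $n\ne n'$), and likewise $e_{k,k}e_{k',k'}=0$ for $k\ne k'$, the $f_m$ are pairwise orthogonal positive elements, so the partial sums $P_M=\sum_{m=1}^{M}f_m$ satisfy $0\le P_M\le 1$ in $M(\A\otimes\K)$. To show $P_M x\to x$ in norm for every $x\in \A\otimes\K$, by density and boundedness it suffices to treat elementary tensors $x=a\otimes T$ with $a\in\A$, $T\in\K$. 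Given $\epsilon>0$, choose $N_0,K_0$ so that the rectangular partial sum $Q_{N_0,K_0}=(\sum_{n\le N_0}d_n)\otimes(\sum_{k\le K_0}e_{k,k})$ satisfies $\|Q_{N_0,K_0}(a\otimes T)-a\otimes T\|<\epsilon$, using strict convergence in each factor. Choose $M_0$ large enough that $\pi^{-1}([1,N_0]\times[1,K_0])\subset\{1,\dots,M_0\}$; then for $M\ge M_0$ the difference $P_M - Q_{N_0,K_0}$ is a sum of $f_m$'s orthogonal (in the multiplicative sense) to $Q_{N_0,K_0}$, and a standard estimate using $0\le P_M\le 1$ gives $\|P_M(a\otimes T)-a\otimes T\|\le \epsilon + \|(P_M-Q_{N_0,K_0})(a\otimes T)\|$, which one controls by writing $a\otimes T = Q_{N_0,K_0}(a\otimes T)+r$ with $\|r\|<\epsilon$ and using boundedness of $P_M-Q_{N_0,K_0}$. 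The symmetric argument handles $xP_M\to x$.

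I expect the principal obstacle to be precisely this last point: turning the rectangular (product) strict convergence in the two factors into genuine strict convergence of an enumerated sum in the minimal tensor product. The pairwise orthogonality of the $f_m$ is what makes this work, since it forces $\|P_M\|\le 1$ uniformly and allows a two-step $\epsilon$-argument against the rectangular sums.
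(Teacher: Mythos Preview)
Your proposal is correct and follows essentially the same approach as the paper: both construct the orthogonal approximate unit $d_n\otimes e_{m,m}$ for $(\A\otimes\K,\D\otimes\C)$ from an orthogonal approximate unit $\{d_n\}$ for $(\A,\D)$ and the diagonal matrix units. The paper simply declares the verification ``straightforward'' without further comment, whereas you supply the details of the strict-convergence argument; your two-step $\epsilon$-estimate via rectangular partial sums and pairwise orthogonality is exactly the routine check the paper is implicitly invoking.
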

\begin{proof}
Take an orthogonal approximate unit 
$\{d_n\}_{n\in \N}$ in $\D$ for
the pair $(\A,\D)$.
Put
$d_{(n,m)} = d_n\otimes e_{m,m}$ for $n,m=1,2,\dots$.
It is straightforward to see that 
the sequence
$d_{(n,m)}, n,m=1,2,\dots$
 becomes an orthogonal approximate unit for the pair
$(\A\otimes\K,\D\otimes\C)$.
\end{proof}
We call the pair $(\A\otimes\K,\D\otimes\C)$
{\it the relative stabilization}\/ for $(\A,\D)$. 
\begin{corollary}
If a $C^*$-subalgebra $\D$ of $\A$  contains the unit of $\A$,
both the pairs $(\A,\D)$ and $(\A\otimes\K,\D\otimes\C)$
are relative $\sigma$-unital. 
\end{corollary}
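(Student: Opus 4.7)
The plan is to derive this corollary directly from Example 2.4(1) and the preceding Proposition, since it is essentially just the composition of those two facts. The statement has two assertions, and the strategy handles them in sequence.

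First, I would observe that $(\A,\D)$ is relative $\sigma$-unital. This is exactly Example 2.4(1): when $1 \in \D$, one simply takes $d_1 = 1$ and $d_n = 0$ for $n \geq 2$. The three conditions of Lemma \ref{lem:orthapprunit} are then trivial to verify---positivity is immediate, the series $\sum_n d_n$ reduces to the single term $1$ (which is manifestly the strict limit), and the orthogonality $d_n d\, d_m^* = 0$ for $n \neq m$ holds because at least one factor is zero.

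Second, I would invoke the Proposition just established. Since $(\A,\D)$ is relative $\sigma$-unital, that proposition directly yields relative $\sigma$-unitality of $(\A \otimes \K, \D \otimes \C)$; explicitly, the orthogonal approximate unit $\{d_n \otimes e_{m,m}\}_{(n,m) \in \N\times\N}$ constructed in the proof of the proposition specializes in our case to $\{1 \otimes e_{m,m}\}_{m \in \N}$ (after discarding zero terms), which one can also verify directly to give a quick sanity check.

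There is no real obstacle here; the statement is a formal corollary packaging the unital case of the preceding two results. The only thing to keep in mind is to phrase the proof in one or two lines without re-proving the proposition, i.e., just cite Example 2.4(1) for the first half and the preceding Proposition for the second half.
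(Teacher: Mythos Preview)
Your proposal is correct and matches the paper's approach exactly: the corollary is stated in the paper without proof precisely because it is the immediate combination of the unital example (your ``Example 2.4(1)'') and the preceding Proposition on relative stabilization. There is nothing to add beyond the one-or-two-line argument you describe.
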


%%%%%%%%%%%%%%%%%%%%%%%%%%%%%%%%%%%%%%%%
%%%%%%%%%%%%%%%%%%%%%%%%%%%%%%%%%%%%%%%
\section{Relative imprimitivity bimodules and relative Morita equivalence}
%%%%%%%%%%%%%%%%%%%%%%%%%%%%%%%%%%%%%%%%%%%
%%%%%%%%%%%%%%%%%%%%%%%%%%%%%%%%%%%%%%%%%
Let $(\A_1, \D_1)$ and $(\A_2, \D_2)$
be relative $\sigma$-unital pairs of $C^*$-algebras.
\begin{definition}\label{def:rib}
Let $X$ be an $\A_1$--$\A_2$-Hilbert $C^*$-bimodule. 
Put
\begin{equation*}
X_D =\{ x \in X \mid 
 {}_{\A_1}\!\langle x d_2 \mid x \rangle \in \D_1\text{ for all } d_2 \in \D_2,
\, \langle x \mid d_1 x \rangle_{\A_2} \in \D_2\text{ for all } d_1 \in \D_1\}.
\end{equation*}
The $\A_1$--$\A_2$-Hilbert $C^*$-bimodule $X$ is called an 
$(\A_1, \D_1)$--$(\A_2, \D_2)$-{\it relative imprimitivity bimodule}\/
if it satisfies the following conditions:
\begin{enumerate}
\item $X$ is an $\A_1$--$\A_2$-imprimitivity bimodule.
\item 
There exists a sequence $x_n \in X_D, n=1,2,\dots $ such that  
{\begin{enumerate}
\renewcommand{\theenumi}{\roman{enumi}}
\renewcommand{\labelenumi}{\textup{(\theenumi)}}
\item 
$\sum_{n=1}^{\infty} \langle x_n \mid x_n \rangle_{\A_2}=1$
 in the strict topology of $M(\A_2)$.
\item  
${}_{\A_1}\!\langle x_n d_2 \mid x_m \rangle =0$ 
for all $d_2 \in \D_2$ and $n,m \in \N$ with $n\ne m$.
\end{enumerate}}
\item 
There exists a sequence $y_n \in X_D, n=1,2,\dots $ such that  
{\begin{enumerate}
\renewcommand{\theenumi}{\roman{enumi}}
\renewcommand{\labelenumi}{\textup{(\theenumi)}}
\item 
$\sum_{n=1}^{\infty} {}_{\A_1}\!\langle y_n \mid y_n \rangle=1$
 in the strict topology of $M(\A_1)$.
\item  
$\langle y_n\mid  d_1 y_m \rangle_{\A_2} =0$ for all $d_1 \in \D_1$ and
 $n,m \in \N$ with  $n\ne m$.
\end{enumerate}}
\end{enumerate} 
\end{definition}
\begin{remark}
\begin{enumerate}
\item
Since $X$ is an $\A_1$--$\A_2$-imprimitivity bimodule,
norms on $X$ defined by their inner products coincide each other, 
that is,
$ \| {}_{\A_1}\!\langle x \mid x \rangle \|^{\frac{1}{2}}
           = \| \!\langle x \mid x \rangle_{\A_2} \|^{\frac{1}{2}} \|$
for $x \in X$ (cf. \cite[Proposition 3.1]{RW}). 
We denote the  norm by $\| x \|$. 
\item
The above elements $x_n,  y_n \in X_D$ in Definition \ref{def:rib}
satisfy the inequalities
\begin{equation}
{}_{\A_1}\!\langle x_n \mid x_n \rangle \le 1,
\qquad
\langle y_n \mid y_n \rangle_{\A_2} \le 1
\end{equation}
because of the inequality
$$
{}_{\A_1}\!\langle x_n \mid x_n \rangle 
\le 
\| {}_{\A_1}\!\langle x_n \mid x_n \rangle \| = \| \langle x_n \mid x_n \rangle_{\A_2} \|
\le
\| \sum_{n=1}^\infty \langle x_n \mid x_n \rangle_{\A_2} \| = 1
$$
and of a similar inequality for 
$
\langle y_n \mid y_n \rangle_{\A_2}$.
\item
 Both the left action of $\A_1$ and the right action of $\A_2$
on $X$ are non-degenerate, that is,
$\overline{\A_1 X} = X= \overline{X\A_2}$.
More strongly we see that 
$\overline{\D_1 X} = X= \overline{X\D_2}$. 
In fact, 
for $d_1\in \D_1$ and $x \in X$, the following inequalities hold
\begin{align*}
\| x - d_1 x \|^2
& =  \| {}_{\A_1}\!\langle x - d_1 x \mid  x - d_1 x \rangle \| \\
& =  \| {}_{\A_1}\!\langle x \mid  x \rangle 
  - d_1 {}_{\A_1}\!\langle x  \mid  x \rangle 
       - {}_{\A_1}\!\langle x  \mid  x \rangle  d_1^*
  + d_1 {}_{\A_1}\!\langle x  \mid  x \rangle  d_1^* \| \\
&\le \| {}_{\A_1}\!\langle x \mid  x \rangle 
  - d_1 {}_{\A_1}\!\langle x  \mid  x \rangle \| 
  +   \| {}_{\A_1}\!\langle x  \mid  x \rangle 
  - d_1 {}_{\A_1}\!\langle x  \mid  x \rangle  \|
\| d_1^*\|. 
\end{align*} 
As $\D_1$ has a countable approximate unit for $\A_1$,
we have a sequence $d_1(n)$ in $\D_1$ such that 
$\lim_{n\to\infty}\| x - d_1(n) x\| = 0$ so that
$\overline{\D_1 X} = X$.
\end{enumerate}
\end{remark}
\begin{lemma}\label{lem:xxD}
For $x \in X_D$ we have
\begin{enumerate}
\renewcommand{\theenumi}{\roman{enumi}}
\renewcommand{\labelenumi}{\textup{(\theenumi)}}
\item 
$ {}_{\A_1}\!\langle x \mid x \rangle \in \D_1$. 
%and $ {}_{\A_1}\!\langle x \mid x \rangle \le 1$. 
\item  
$\langle x \mid x \rangle_{\A_2} \in \D_2$. 
%and $\langle x \mid x \rangle_{\A_2} \le 1$. 
\end{enumerate}
\end{lemma}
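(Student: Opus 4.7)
The plan is to approximate $x$ by ``hitting'' it with a countable approximate unit drawn from $\D_1$ or $\D_2$, then apply the defining membership property of $X_D$ together with the fact that $\D_1,\D_2$ are norm-closed in $\A_1,\A_2$.

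First I would record the approximation statements. By condition (2) in the definition of relative $\sigma$-unital pair, there is a countable approximate unit $\{d_2^{(n)}\}_{n\in\N}\subset \D_2$ for $\A_2$; similarly one has $\{d_1^{(n)}\}_{n\in\N}\subset \D_1$ for $\A_1$. The proof given in Remark (iii) shows $\overline{\D_1 X}=X$ via $\|x-d_1^{(n)}x\|\to 0$; the symmetric argument (expanding $\|x-xd_2^{(n)}\|^2=\|\langle x-xd_2^{(n)}\mid x-xd_2^{(n)}\rangle_{\A_2}\|$ and using that $d_2^{(n)}$ is a selfadjoint approximate unit for $\A_2$) gives $xd_2^{(n)}\to x$ and hence $\overline{X\D_2}=X$.

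For (i), fix $x\in X_D$ and consider the sequence ${}_{\A_1}\!\langle xd_2^{(n)}\mid x\rangle\in \A_1$. By the very definition of $X_D$ (applied to $d_2=d_2^{(n)}\in\D_2$), every such element lies in $\D_1$. Since the $\A_1$-valued inner product is norm-continuous in its first entry (Cauchy--Schwarz in the imprimitivity bimodule gives $\|{}_{\A_1}\!\langle xd_2^{(n)}\mid x\rangle-{}_{\A_1}\!\langle x\mid x\rangle\|\leq \|xd_2^{(n)}-x\|\cdot\|x\|$), the approximation $xd_2^{(n)}\to x$ yields ${}_{\A_1}\!\langle xd_2^{(n)}\mid x\rangle\to {}_{\A_1}\!\langle x\mid x\rangle$ in $\A_1$. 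Because $\D_1$ is norm-closed in $\A_1$, the limit belongs to $\D_1$, which is (i). For (ii) I would run the completely symmetric argument: each $\langle x\mid d_1^{(n)}x\rangle_{\A_2}$ lies in $\D_2$ by the definition of $X_D$, these converge in norm to $\langle x\mid x\rangle_{\A_2}$ as $d_1^{(n)}x\to x$, and closedness of $\D_2\subset\A_2$ finishes the argument.

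No step here looks genuinely difficult; the only thing to be careful about is making sure the nondegeneracy statement $xd_2^{(n)}\to x$ really follows from the hypotheses already in hand, so the true content of the lemma is just that $X_D$ is automatically ``closed under the diagonal'' once the pairs $(\A_i,\D_i)$ admit approximate units living in $\D_i$.
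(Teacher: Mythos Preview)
Your proof is correct and follows essentially the same approach as the paper: approximate $x$ by $xd_2$ with $d_2$ coming from an approximate unit in $\D_2$, note ${}_{\A_1}\!\langle xd_2\mid x\rangle\in\D_1$ by the definition of $X_D$, pass to the limit via Cauchy--Schwarz, and invoke closedness of $\D_1$. The only cosmetic difference is that the paper spells out the estimate $\|x-xd_2\|^2=\|\langle x-xd_2\mid x-xd_2\rangle_{\A_2}\|$ inline rather than appealing back to the remark on nondegeneracy.
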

\begin{proof}
(i)
Let $x \in X_D$.
For $d_2 \in \D_2$, we have
\begin{equation}
\langle x -x d_2 \mid x - x d_2 \rangle_{\A_2}
=\langle x \mid x \rangle_{\A_2} - \langle x \mid x \rangle_{\A_2}d_2 
 - d_2^* \langle x \mid x \rangle_{\A_2} + d_2^* \langle x \mid x \rangle_{\A_2} d_2.
\label{eq:xxd2} 
\end{equation}
Now $\D_2$ contains an approximate unit for $\A_2$, 
the equality \eqref{eq:xxd2} shows that
for any $\epsilon>0$ there exists an element $d_2 \in \D_2$ such that
$\| \langle x -x d_2 \mid x - x d_2 \rangle_{\A_2} \| <\epsilon.$
Since $X$ is an $\A_1$--$\A_2$-imprimitivity bimodule, we see that
$\| {}_{\A_1}\!\langle x -x d_2 \mid x - x d_2 \rangle \| <\epsilon$
by \cite[Lemma 3.11]{RW}.
By the Cauchy--Schwartz inequality (cf. \cite[Lemma 2.5]{RW}),
we have
\begin{align*}
\| {}_{\A_1}\!\langle x -x d_2 \mid x  \rangle \|^2
=& \| {}_{\A_1}\!\langle x -x d_2 \mid x  \rangle^*  {}_{\A_1}\!\langle x -x d_2 \mid x  \rangle \| \\
\le &  \| {}_{\A_1}\!\langle x -x d_2 \mid x -xd_2 \rangle\|
 \| {}_{\A_1}\!\langle x  \mid x  \rangle \| \\
< & \epsilon  \| {}_{\A_1}\!\langle x  \mid x  \rangle \|.
\end{align*}
Hence we have
\begin{equation}
\|  {}_{\A_1}\!\langle x  \mid x  \rangle -  {}_{\A_1}\!\langle x d_2 \mid x  \rangle \|^2
=
\|  {}_{\A_1}\!\langle x -x d_2 \mid x  \rangle  \|^2
<  \epsilon  \| {}_{\A_1}\!\langle x  \mid x  \rangle \|. \label{eq:xxxd2x}
\end{equation}
As
$  {}_{\A_1}\!\langle x d_2 \mid x  \rangle$ belongs to $\D_1$,
we conclude that 
$ {}_{\A_1}\!\langle x  \mid x  \rangle$ belongs to $\D_1$.

(ii) is similarly shown to (i).
\end{proof}
\begin{lemma}\hspace{6cm}
\begin{enumerate}
\renewcommand{\theenumi}{\roman{enumi}}
\renewcommand{\labelenumi}{\textup{(\theenumi)}}
\item 
We have
$
z = \sum_{n=1}^\infty {}_{\A_1}\!\langle z \mid x_n \rangle x_n
$
for $z \in X$
which converges in the norm of $X$, and 
${}_{\A_1}\!\langle x_n \mid x_m \rangle =0$
for $n,m \in \N$ with $n\ne m$.
\item
We have
$
z= \sum_{n=1}^\infty y_n \langle y_n \mid z \rangle_{\A_2}
$
for $z \in X$
which converges in the norm of $X$, and 
$\langle y_n \mid y_m \rangle =0$
for $n,m \in \N$ with $n\ne m$.
\end{enumerate}
\end{lemma}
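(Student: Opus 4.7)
My plan is to derive both claims in each part from two ingredients: the imprimitivity identity ${}_{\A_1}\!\langle x \mid y \rangle z = x \langle y \mid z \rangle_{\A_2}$, and the strict-topology convergence $\sum_{n} \langle x_n \mid x_n \rangle_{\A_2} = 1$ in $M(\A_2)$ (with its symmetric counterpart for (ii)) that is built into Definition \ref{def:rib}.

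For the orthogonality ${}_{\A_1}\!\langle x_n \mid x_m \rangle = 0$ in (i), I would start from the defining hypothesis ${}_{\A_1}\!\langle x_n d_2 \mid x_m \rangle = 0$ for all $d_2 \in \D_2$. Using relative $\sigma$-unitality of $(\A_2, \D_2)$, pick a countable approximate unit $\{d_2^{(k)}\}_{k\in\N}$ for $\A_2$ lying in $\D_2$. The estimate already used in Remark 3.3 (applied on the right, via $\|x - x d\|^2 = \|\langle x - x d\mid x - x d\rangle_{\A_2}\|$) gives $x_n d_2^{(k)} \to x_n$ in the $X$-norm; letting $k \to \infty$ and using norm continuity of ${}_{\A_1}\!\langle \cdot \mid \cdot \rangle$ in the first variable yields the orthogonality.

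For the reconstruction formula, set $b_N = \sum_{n=1}^N \langle x_n \mid x_n \rangle_{\A_2} \in M(\A_2)$. The imprimitivity axiom collapses the partial sum:
\begin{equation*}
\sum_{n=1}^N {}_{\A_1}\!\langle z \mid x_n \rangle x_n \;=\; \sum_{n=1}^N z \, \langle x_n \mid x_n \rangle_{\A_2} \;=\; z \, b_N .
\end{equation*}
Since $\{b_N\}$ is a positive increasing sequence with strict limit $1$, it is an approximate identity of norm at most $1$ for $\A_2$. Expanding
\begin{equation*}
\|z - z b_N\|^2 = \|\langle z\mid z\rangle_{\A_2} - \langle z\mid z\rangle_{\A_2} b_N - b_N \langle z\mid z\rangle_{\A_2} + b_N \langle z\mid z\rangle_{\A_2} b_N \|
\end{equation*}
and applying strict convergence to $\langle z\mid z\rangle_{\A_2} \in \A_2$ (with $\|b_N\| \le 1$ controlling the cross term) gives $\|z - z b_N\| \to 0$, completing (i). Part (ii) is the symmetric argument: interchange the two inner products, replace $\{x_n\}, \D_2$ by $\{y_n\}, \D_1$, and use $\overline{\D_1 X} = X$ from Remark 3.3 in place of its right-handed analogue.

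The main obstacle I anticipate is minor: confirming that strict convergence $b_N \to 1$ in $M(\A_2)$ really forces $z b_N \to z$ in the $X$-norm. This reduces precisely to the four-term expansion above, and is controlled by the uniform bound $\|b_N\| \le 1$ together with strict convergence evaluated on the single element $\langle z\mid z\rangle_{\A_2} \in \A_2$; everything else is a direct application of the imprimitivity identity and Definition \ref{def:rib}.
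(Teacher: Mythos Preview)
Your proposal is correct and follows essentially the same approach as the paper: both arguments collapse the partial sum via the imprimitivity identity ${}_{\A_1}\!\langle z \mid x_n \rangle x_n = z\,\langle x_n \mid x_n\rangle_{\A_2}$ and then invoke strict convergence of $\sum_n \langle x_n \mid x_n\rangle_{\A_2}$, while the orthogonality is obtained in both cases by approximating $x_n$ by $x_n d_2$ with $d_2$ from an approximate unit in $\D_2$. The only cosmetic difference is that the paper routes the convergence $z\,b_N \to z$ through an auxiliary $d_2 \in \D_2$ with $\|z - z d_2\| < \epsilon$ before applying strict convergence to $d_2$, whereas you apply strict convergence directly to $\langle z \mid z\rangle_{\A_2}$ via the four-term expansion; your route is marginally cleaner but not genuinely different.
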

\begin{proof}
(i)
As $X= \overline{X\D_2}$,
for $z \in X$ and $\epsilon>0$
there exists $d_2 \in \D_2$ such that 
$\| z - z d_2\|<\epsilon$.
Since 
$\sum_{n=1}^\infty \langle x_n \mid x_n\rangle_{\A_2} =1$
in the strict topology of $M(\A_2)$, we may find $K\in \N$
such that 
$\| \sum_{n=1}^K d_2 \langle x_n \mid x_n\rangle_{\A_2} -d_2 \| <\epsilon$.
Therefore we have
\begin{align*}
  & \| z - \sum_{n=1}^K {}_{\A_1}\!\langle z \mid x_n\rangle x_n \| \\
=& \| z - \sum_{n=1}^K z \langle x_n \mid x_n\rangle_{\A_2} \| \\
\le &\| z -  z d_2 \|
+ \| z d_2 - \sum_{n=1}^K z d_2 \langle x_n \mid x_n\rangle_{\A_2} \| 
+ \| \sum_{n=1}^K z d_2 \langle x_n \mid x_n\rangle_{\A_2} 
 -    \sum_{n=1}^K z \langle x_n \mid x_n\rangle_{\A_2} \| \\
\le & \| z -  z d_2 \|
+ \| z \| \| d_2 - \sum_{n=1}^K d_2 \langle x_n \mid x_n\rangle_{\A_2} \| 
+ \| ( z d_2 - z) \sum_{n=1}^K \langle x_n \mid x_n\rangle_{\A_2}  \| \\
=& (2 + \| z \|) \epsilon,
\end{align*}
so that 
$\sum_{n=1}^\infty {}_{\A_1}\langle z \mid x_n \rangle x_n$
converges to $z$ in the norm of $X$.

As in the proof of Lemma \ref{lem:xxD}, for $n,m\in \N$ with $n\ne m$,
there exists $d_2(k)\in \D_2$
such that 
\begin{equation*}
\lim_{n\to\infty}
\|  {}_{\A_1}\!\langle x_n  \mid x_m  \rangle 
 -  {}_{\A_1}\!\langle x_n d_2 \mid x_m  \rangle \|^2
= 0.
\end{equation*}
Since
$
{}_{\A_1}\!\langle x_n d_2 \mid x_m  \rangle 
=0,
$
we have
$
{}_{\A_1}\!\langle x_n \mid x_m  \rangle 
=0.
$
 \end{proof}
The sequences $\{ x_n\}_{n \in \N}, \{ y_n\}_{n \in \N} \subset X_D$
 satisfying the conditions (2), (3) 
in Definition \ref{def:rib}
are called
{\it a relative left basis}\/, 
{\it a relative right basis},\/ respectively. 
The pair 
$(\{ x_n\}, \{ y_n\})$ is called {\it a relative basis}\/ for $X$.

We arrive at our definition of relative version of strong Morita equivalence.
\begin{definition}
Two relative $\sigma$-unital pairs of $C^*$-algebras
$(\A_1, \D_1)$ and $(\A_2, \D_2)$ are said to be
{\it relatively Morita equivalent}\/ if there exists an
$(\A_1, \D_1)$--$(\A_2, \D_2)$-relative imprimitivity bimodule.
In this case we write
$
(\A_1, \D_1)\underset{\operatorname{RME}}{\sim}(\A_2, \D_2).
$
\end{definition}
\begin{lemma}\label{lem:theta}
Let $(\A_1, \D_1)$ and $(\A_2, \D_2)$ be
relative $\sigma$-unital pairs of $C^*$-algebras.
If there exists an isomorphism
$\theta:\A_1\longrightarrow \A_2$ of $C^*$-algebras
such that
$\theta(\D_1)= \D_2,$
then we have
$
(\A_1, \D_1)\underset{\operatorname{RME}}{\sim}(\A_2, \D_2).
$
In particular, 
for   a relative $\sigma$-unital pair $(\A,\D)$ of $C^*$-algebras,
we have
$
(\A, \D)\underset{\operatorname{RME}}{\sim}(\A, \D).
$
\end{lemma}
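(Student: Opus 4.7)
The plan is to construct an explicit $(\A_1,\D_1)$--$(\A_2,\D_2)$-relative imprimitivity bimodule using $\theta$. Take $X = \A_2$ as a Banach space, equipped with the right $\A_2$-module structure given by right multiplication and the left $\A_1$-module structure pulled back through $\theta$, namely $a_1 \cdot x := \theta(a_1) x$ for $a_1 \in \A_1$, $x \in X$. Define the inner products by
\begin{equation*}
{}_{\A_1}\!\langle x \mid y \rangle := \theta^{-1}(x y^*), \qquad \langle x \mid y \rangle_{\A_2} := x^* y.
\end{equation*}
First I would verify the standard imprimitivity bimodule axioms: compatibility of actions and inner products, positivity, the identity ${}_{\A_1}\!\langle x \mid y \rangle z = x \langle y \mid z \rangle_{\A_2}$ (which reduces to $xy^*z = xy^*z$), and fullness (the spans $\A_2\A_2^*$ and $\A_2^*\A_2$ are dense in $\A_2$).

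Next I would produce the relative bases. Let $\{a_n\} \subset \A_2$ be a relative approximate unit for $(\A_2,\D_2)$ and set $x_n := a_n \in X$. Using the defining properties of a relative approximate unit, $\langle x_n \mid x_n\rangle_{\A_2} = a_n^*a_n$ sums to $1$ strictly in $M(\A_2)$, and for $n \neq m$ and $d_2\in\D_2$ we have ${}_{\A_1}\!\langle x_n d_2 \mid x_m \rangle = \theta^{-1}(a_n d_2 a_m^*) = 0$. To see $x_n \in X_D$: ${}_{\A_1}\!\langle x_n d_2 \mid x_n\rangle = \theta^{-1}(a_n d_2 a_n^*) \in \theta^{-1}(\D_2) = \D_1$, and $\langle x_n \mid d_1 x_n\rangle_{\A_2} = a_n^* \theta(d_1) a_n \in \D_2$ since $\theta(d_1)\in \D_2$. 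For the right basis, let $\{b_n\}\subset \A_1$ be a relative approximate unit for $(\A_1,\D_1)$ and set $y_n := \theta(b_n^*) \in X$; then a symmetric computation shows ${}_{\A_1}\!\langle y_n\mid y_n\rangle = b_n^* b_n$ sums to $1$ strictly in $M(\A_1)$, $\langle y_n \mid d_1 y_m\rangle_{\A_2} = \theta(b_n d_1 b_m^*) = 0$ for $n \neq m$, and $y_n \in X_D$ by the same reasoning.

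The ``in particular'' clause follows by taking $\A_1 = \A_2 = \A$, $\D_1 = \D_2 = \D$, and $\theta = \id_\A$. I do not anticipate a real obstacle here; the only mildly delicate point is bookkeeping with the conditions (i)--(iii) in the definition of a relative approximate unit to ensure both that the candidate elements lie in $X_D$ and that the orthogonality relations (2)(ii) and (3)(ii) of Definition \ref{def:rib} hold. Everything else is a direct transcription of the imprimitivity bimodule axioms through the isomorphism $\theta$.
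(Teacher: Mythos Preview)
Your proposal is correct and takes essentially the same approach as the paper: both construct the bimodule $X_\theta$ on the underlying $C^*$-algebra with inner products twisted by $\theta$, and produce relative bases from a relative approximate unit. The only cosmetic differences are that the paper builds $X_\theta$ on $\A_1$ (with right action via $\theta^{-1}$) rather than on $\A_2$, and uses a single relative approximate unit $\{a_n\}$ for $(\A_1,\D_1)$ with $x_n = a_n$, $y_n = a_n^*$, whereas you use separate approximate units for the two pairs.
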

\begin{proof}
Let $a_n \in \A_1, n\in \N$ be a relative approximate unit for $(\A_1,\D_1)$.
%Consider the identity $\A$--$\A$-imprimitivity bimodule $X$, that is
Put
$X_\theta = \A_1$ 
as vector space having module structure  and inner products given by
\begin{gather*}
a_1\cdot x \cdot a_2 := a_1 x \theta^{-1}(a_2) \quad
\text{ for } a_1 \in \A_1, \, a_2 \in \A_2, \, x \in X_\theta,\\ 
{}_{\A_1}\!\langle x  \mid y  \rangle = xy^*, \qquad 
\langle x  \mid y  \rangle_{\A_2} = \theta(x^*y)
\quad \text{ for } x,y \in X_\theta.
\end{gather*}
Put $x_n = a_n, n\in \N.$ 
We have for $d_1 \in \D_1, d_2 \in \D_2$
\begin{equation*} 
{}_{\A_1}\!\langle x_n d_2 \mid x_n  \rangle = a_n \theta^{-1}(d_2) a_n^* \in \D_1, 
\qquad
\langle x_n \mid d_1 x_n  \rangle_{\A_2} = \theta(a_n^* d_1 a_n) \in \D_2
\end{equation*}
so that $x_n \in {(X_\theta)}_D$. 
We also have
\begin{gather*}
\sum_{n=1}^{\infty} \langle x_n \mid x_n \rangle_{\A_2}
=
\sum_{n=1}^{\infty} \theta(a_n^* a_n) = 1, \\
\intertext{and}
{}_{\A_1}\!\langle x_n d_2 \mid x_m \rangle = a_n \theta^{-1}(d_2) a_m^* =0
\quad
\text{ for all } d_2 \in \D_2 \text{ and } n, m \in \N \text{ with } n\ne m.
\end{gather*}
Similarly by putting $y_n = a_n^*$
we have
\begin{equation*} 
{}_{\A_1}\!\langle y_n d_2 \mid y_n  \rangle = a_n^* \theta^{-1}(d_2) a_n \in \D_1, 
\qquad
\langle y_n \mid d_1 y_n  \rangle_{\A_2} = \theta(a_n d_1 a_n^*) \in \D_2
\end{equation*}
so that $y_n \in {(X_\theta)}_D$. 
We also have
\begin{gather*}
\sum_{n=1}^{\infty} {}_{\A_1}\!\langle y_n \mid y_n \rangle
=
\sum_{n=1}^{\infty} a_n^* a_n = 1, \\
\intertext{and}
\langle y_n  \mid d_1 y_m \rangle = \theta(a_n d_1 a_m^* )=0
\quad
\text{ for all } d_1 \in \D_1 \text{ and } n,m \in \N \text{ with } n\ne m.
\end{gather*}
Hence $(\{x_n\}, \{y_n\})$
is a relative basis for $X_\theta$
so that  $X_\theta$ 
becomes an  $(\A_1,\D_1)$--$(\A_2,\D_2)$-relative imprimitivity bimodule to show
 $(\A_1, \D_1)\underset{\operatorname{RME}}{\sim}(\A_2, \D_2).$
\end{proof}

We will next show that the relation 
$\underset{\operatorname{RME}}{\sim}$ is an equivalence relation in 
relative $\sigma$-unital pairs of $C^*$-algebras.
%\begin{lemma}\label{lem:identity}
%Let $(\A,\D)$ be a relative $\sigma$-unital pair of $C^*$-algebras.
%Then we have
%$$(\A, \D)\underset{\operatorname{RME}}{\sim}(\A, \D).$$
%\end{lemma}
%\begin{proof}
%Let $a_n \in \A, n\in \N$be a relative approximate unit for $(\A,\D)$.
%Consider the identity $\A$--$\A$-imprimitivity bimodule $X$, that is
%$X = \A$ as vector space with module structure  and inner products given by
%\begin{equation*}
%a\cdot x \cdot b := axb, \qquad
%{}_{\A}\!\langle x  \mid y  \rangle = xy^*, \qquad \langle x  \mid y  \rangle_\A = x^*y
%\quad \text{ for } x,y \in X, \, a,b \in \A.
%\end{equation*}
%Put $x_n = a_n, n\in \N.$ Then we have for $d \in \D$
%$$ {}_{\A}\!\langle x_n d \mid x_n  \rangle = a_n d a_n^* \in \D, \qquad
% \langle x_n \mid d x_n  \rangle_\A = a_n^* d a_n \in \D$$
%so that $x_n \in X_D$. We also have
%$$\sum_{n=1}^{\infty} \langle x_n \mid x_n \rangle_{\A}=
%\sum_{n=1}^{\infty} a_n^* a_n = 1, \qquad
%{}_{\A}\!\langle x_n d \mid x_m \rangle = a_n d a_m^* =0
%$$ for all $d \in \D_2$ and $n\ne m$.
%Similarly by putting $y_n = a_n^*$we have
%$\sum_{n=1}^{\infty} {}_{\A}\!\langle y_n \mid y_n \rangle=1$
% and 
%$\langle y_n\mid  d y_m \rangle_{\A} =0$ for all $d \in \D_1$ and $n\ne m$.
%Therefore $X$ becomes an  $(\A,\D)$--$(\A,\D)$-relative imprimitive bimodule,
%so that $(\A, \D)\underset{\operatorname{RME}}{\sim}(\A, \D).$
%\end{proof}
\begin{lemma}\label{lem:trans}
Suppose that
$X_{12}$ is an  $(\A_1, \D_1)$--$(\A_2, \D_2)$-relative imprimitivity bimodule
and
$X_{23}$ is an  $(\A_2, \D_2)$--$(\A_3, \D_3)$-relative imprimitivity bimodule.
Then the relative tensor product
$X_{12}\otimes_{\A_2}X_{23}$ of bimodules
is an  $(\A_1, \D_1)$--$(\A_3, \D_3)$-relative imprimitivity bimodule.
\end{lemma}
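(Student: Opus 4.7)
The plan is to assemble a relative basis for the balanced tensor product $X := X_{12}\otimes_{\A_2}X_{23}$ out of the relative bases of the two factors. That $X$ carries a standard $\A_1$--$\A_3$-imprimitivity bimodule structure (cf.\ \cite{RW}) with inner products
\begin{equation*}
{}_{\A_1}\!\langle x\otimes u\mid y\otimes v\rangle
= {}_{\A_1}\!\langle x\cdot {}_{\A_2}\!\langle u\mid v\rangle \mid y\rangle,
\qquad
\langle x\otimes u\mid y\otimes v\rangle_{\A_3}
= \langle u\mid \langle x\mid y\rangle_{\A_2}\cdot v\rangle_{\A_3}
\end{equation*}
is classical, so condition (1) of Definition \ref{def:rib} is automatic; only the relative left and right basis conditions (2) and (3) require work.

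For the left basis I would fix relative left bases $\{x_n^{(12)}\}_{n\in\N}\subset (X_{12})_D$ and $\{x_m^{(23)}\}_{m\in\N}\subset (X_{23})_D$ and set $z_{n,m}:=x_n^{(12)}\otimes x_m^{(23)}$, reindexed by a bijection $\N\times\N\to\N$. By Lemma \ref{lem:xxD}, the scalar $p_n := \langle x_n^{(12)}\mid x_n^{(12)}\rangle_{\A_2}$ lies in $\D_2$, and the tensor-product formulas give
\begin{equation*}
\langle z_{n,m}\mid z_{n,m}\rangle_{\A_3} = \langle x_m^{(23)}\mid p_n\cdot x_m^{(23)}\rangle_{\A_3}.
\end{equation*}
The membership $z_{n,m}\in X_D$ and the orthogonality ${}_{\A_1}\!\langle z_{n,m}\,d_3\mid z_{n',m'}\rangle = 0$ for $(n,m)\ne (n',m')$, $d_3\in\D_3$, are then purely algebraic: the case $m\ne m'$ collapses by the relative left orthogonality of $\{x_m^{(23)}\}$, whereas in the case $m=m'$, $n\ne n'$ the inner factor ${}_{\A_2}\!\langle x_m^{(23)} d_3\mid x_m^{(23)}\rangle$ lies in $\D_2$ because $x_m^{(23)}\in (X_{23})_D$, so the relative left orthogonality of $\{x_n^{(12)}\}$ in $X_{12}$ does the rest. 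The relative right basis is constructed symmetrically by $w_{n,m}:= y_n^{(12)}\otimes y_m^{(23)}$, using the relative right bases of the two factors and the mirror identity ${}_{\A_1}\!\langle w_{n,m}\mid w_{n,m}\rangle = {}_{\A_1}\!\langle y_n^{(12)}\cdot q_m\mid y_n^{(12)}\rangle$ with $q_m := {}_{\A_2}\!\langle y_m^{(23)}\mid y_m^{(23)}\rangle\in\D_2$; the $X_D$-membership and orthogonality verifications are entirely dual.

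The main obstacle will be confirming the strict-topology convergence of the double sums $\sum_{n,m}\langle z_{n,m}\mid z_{n,m}\rangle_{\A_3} = 1$ in $M(\A_3)$ and $\sum_{n,m}{}_{\A_1}\!\langle w_{n,m}\mid w_{n,m}\rangle = 1$ in $M(\A_1)$. My route is a two-step iterated argument: for fixed $m$, the hypothesis $\sum_n p_n=1$ in the strict topology of $M(\A_2)$ combined with non-degeneracy of the $\A_2$-action on $X_{23}$ (extended from $\A_2$ to $M(\A_2)$) yields $\sum_n p_n\cdot x_m^{(23)}\to x_m^{(23)}$ in the norm of $X_{23}$, so by continuity of the $\A_3$-valued inner product in one variable $\sum_n\langle z_{n,m}\mid z_{n,m}\rangle_{\A_3}$ converges in norm to $\langle x_m^{(23)}\mid x_m^{(23)}\rangle_{\A_3}$; summing these over $m$ recovers the original strict convergence $\sum_m\langle x_m^{(23)}\mid x_m^{(23)}\rangle_{\A_3} = 1$ in $M(\A_3)$. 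Because the partial sums of the positive double series are monotone and bounded above by $1$, this iterated limit can be promoted to strict convergence of the double sum in $M(\A_3)$, and the same scheme handles the right-basis double sum in $M(\A_1)$.
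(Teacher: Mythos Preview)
Your proposal is correct and follows essentially the same route as the paper: both tensor the relative left bases of the two factors to obtain a relative left basis $\{x_n^{(12)}\otimes x_m^{(23)}\}$ for $X_{12}\otimes_{\A_2}X_{23}$, verify $X_D$-membership and orthogonality by the same two-case algebraic argument, and handle the right basis symmetrically. The only difference is that the paper treats the identity $\sum_{n,m}\langle z_{n,m}\mid z_{n,m}\rangle_{\A_3}=1$ by a formal interchange of $\sum_n$ with the inner product, whereas you supply the missing analytic justification (extending the $\A_2$-action to $M(\A_2)$, using non-degeneracy, and promoting the iterated limit via monotonicity); this extra care is warranted but does not constitute a different approach.
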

\begin{proof}
Take relative bases 
$(\{x_n\}, \{ y_n\})$ for $X_{12}$ 
and 
$(\{z_n\}, \{w_n\})$ for $X_{23}$.
%%%%
%satisfying the conditions of 
% $(\A_1, \D_1)$--$(\A_2, \D_2)$-relative imprimitivity bimodule.
%Similarly we take 
%sequences $z_n, w_n \in (X_{23})_D$ satisfying the conditions of 
% $(\A_2, \D_2)$--$(\A_3, \D_3)$-relative imprimitivity bimodule.
We will show that the pair
$(\{ x_n \otimes z_m\}_{n,m}, \{ y_n \otimes w_m\}_{n,m})$ 
becomes a relative basis
for  
$X_{12}\otimes_{\A_2}X_{23}$. 
% an  $(\A_1, \D_1)$--$(\A_3, \D_3)$-relative imprimitivity bimodule.
For $d_3 \in \D_3, d_1 \in \D_1$, we have
\begin{align*}
{}_{\A_1}\!\langle (x_n \otimes z_m) d_3 \mid x_n \otimes z_m \rangle
=& {}_{\A_1}\!\langle x_n \otimes (z_m d_3) \mid x_n \otimes z_m  \rangle
= {}_{\A_1}\!\langle x_n {}_{\A_2}\!\langle z_m d_3 \mid z_m \rangle \mid x_n \rangle, \\
\langle x_n \otimes z_m \mid d_1( x_n \otimes z_m )\rangle_{\A_3}
=& 
\langle x_n \otimes z_m \mid (d_1 x_n) \otimes z_m \rangle_{\A_3}
=\langle z_m \mid \langle x_n  \mid d_1 x_n \rangle_{\A_2} z_m \rangle_{\A_3}.
\end{align*}
As
$ {}_{\A_2}\!\langle z_m d_3 \mid z_m \rangle \in \D_2$, 
we have 
${}_{\A_1}\!\langle x_n {}_{\A_2}\!\langle z_m d_3 \mid z_m \rangle \mid x_n \rangle \in \D_1$
so that 
${}_{\A_1}\!\langle (x_n \otimes z_m) d_3 \mid x_n \otimes z_m \rangle
\in \D_1$.
Similarly we know that 
$ {}_{\A_1}\!\langle x_n {}_{\A_2}\!\langle z_m d_3 \mid z_m \rangle \mid x_n \rangle
\in \D_3$.

We also have
\begin{align*}
\sum_{n,m=1}^\infty 
\langle x_n \otimes z_m \mid  x_n \otimes z_m \rangle_{\A_3}
=&
\sum_{n,m=1}^\infty 
\langle z_m \mid \langle x_n  \mid  x_n \rangle_{\A_2} z_m \rangle_{\A_3} \\
=&
\sum_{m=1}^\infty 
\langle z_m \mid (\sum_{n=1}^\infty \langle x_n  \mid  x_n \rangle_{\A_2}) z_m \rangle_{\A_3} \\
= &
\sum_{m=1}^\infty 
\langle z_m \mid z_m \rangle_{\A_3}
=1.
\end{align*}

For $d_3 \in \D_3$, we have 
\begin{equation*}
{}_{\A_1}\!\langle (x_n \otimes z_m) d_3 \mid x_l \otimes z_k \rangle
= {}_{\A_1}\!\langle x_n \otimes (z_m d_3) \mid x_l \otimes z_k  \rangle
= {}_{\A_1}\!\langle x_n {}_{\A_2}\!\langle z_m d_3 \mid z_k \rangle \mid x_l \rangle.
\end{equation*}
If $m\ne k$, then ${}_{\A_2}\!\langle z_m d_3 \mid z_k \rangle =0$.
If $n\ne l$, then 
$ {}_{\A_1}\!\langle x_n {}_{\A_2}\!\langle z_m d_3 \mid z_k \rangle \mid x_l \rangle =0$
because
${}_{\A_2}\!\langle z_m d_3 \mid z_k \rangle\in \D_2$.
Hence 
if $(n,m) \ne (l,k)$, we have 
$
{}_{\A_1}\!\langle (x_n \otimes z_m) d_3 \mid x_l \otimes z_k \rangle
=0
$
and know that  the sequence 
$\{ x_n \otimes z_m\}_{n,m}$
is a relative left basis for $X_{12}\otimes_{\A_2}X_{23}$. 
By a similar argument, 
we know that  
$\{ y_n \otimes w_m\}_{n,m}$
is a relative right basis for $X_{12}\otimes_{\A_2}X_{23}$,
so that  
$(\{ x_n \otimes z_m\}_{n,m}, \{ y_n \otimes w_m\}_{n,m})$
is a relative  basis for 
$X_{12}\otimes_{\A_2}X_{23}.$
%which makes it
% an  $(\A_1, \D_1)$--$(\A_3, \D_3)$-relative imprimitivity bimodule.
\end{proof}
Therefore we have
\begin{proposition}
Relative Morita equivalence
$\underset{\operatorname{RME}}{\sim}$ is an equivalence relation in 
relative $\sigma$-unital pairs of $C^*$-algebras.
\end{proposition}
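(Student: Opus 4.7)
The plan is to verify the three properties of an equivalence relation. Reflexivity is immediate from Lemma \ref{lem:theta} applied to the identity automorphism $\theta = \id_{\A}$, which already yields $(\A,\D) \underset{\operatorname{RME}}{\sim} (\A,\D)$ for any relative $\sigma$-unital pair $(\A,\D)$. Transitivity is precisely the content of Lemma \ref{lem:trans}, where the relative tensor product $X_{12} \otimes_{\A_2} X_{23}$ produces the required $(\A_1,\D_1)$--$(\A_3,\D_3)$-relative imprimitivity bimodule. The only remaining task, and the only one requiring new construction, is symmetry.

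For symmetry, given an $(\A_1,\D_1)$--$(\A_2,\D_2)$-relative imprimitivity bimodule $X$ with relative basis $(\{x_n\}, \{y_n\})$, I would produce an $(\A_2,\D_2)$--$(\A_1,\D_1)$-relative imprimitivity bimodule via the standard conjugate bimodule construction. As a set, $\tilde{X} = \{\tilde{x} \mid x \in X\}$, with addition inherited from $X$ and scalar multiplication twisted by complex conjugation; the left $\A_2$-action is $a_2 \cdot \tilde{x} := \widetilde{x a_2^*}$, the right $\A_1$-action is $\tilde{x} \cdot a_1 := \widetilde{a_1^* x}$, and the inner products are defined by ${}_{\A_2}\!\langle \tilde{x} \mid \tilde{y} \rangle := \langle y \mid x \rangle_{\A_2}$ and $\langle \tilde{x} \mid \tilde{y} \rangle_{\A_1} := {}_{\A_1}\!\langle y \mid x \rangle$. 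That $\tilde{X}$ satisfies the $\A_2$--$\A_1$-imprimitivity bimodule axioms is the standard dual/conjugate construction from \cite{RW}.

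Next I would show that $(\{\tilde{y}_n\}, \{\tilde{x}_n\})$ is a relative basis for $\tilde{X}$. The completeness relations transfer directly, since $\sum_n \langle \tilde{y}_n \mid \tilde{y}_n \rangle_{\A_1} = \sum_n {}_{\A_1}\!\langle y_n \mid y_n \rangle = 1$ in $M(\A_1)$ and $\sum_n {}_{\A_2}\!\langle \tilde{x}_n \mid \tilde{x}_n \rangle = \sum_n \langle x_n \mid x_n \rangle_{\A_2} = 1$ in $M(\A_2)$. For orthogonality, one computes ${}_{\A_2}\!\langle \tilde{y}_n \cdot d_1 \mid \tilde{y}_m \rangle = \langle y_m \mid d_1^* y_n \rangle_{\A_2}$, which vanishes for $n \ne m$ by condition (3)(ii) of Definition \ref{def:rib} after swapping indices and using that $\D_1$ is $*$-closed; the dual identity $\langle \tilde{x}_n \mid d_2 \tilde{x}_m \rangle_{\A_1} = {}_{\A_1}\!\langle x_m d_2^* \mid x_n \rangle$ handles the right basis via condition (2)(ii). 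Membership $\tilde{x}_n, \tilde{y}_n \in \tilde{X}_D$ reduces, via the same definitional unwinding, to $x_n, y_n \in X_D$.

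The main obstacle is purely bookkeeping: one must carefully track the adjoints introduced by the reversal of left and right actions so as to match each axiom on $(\{x_n\}, \{y_n\})$ with the corresponding axiom for $(\{\tilde{y}_n\}, \{\tilde{x}_n\})$. No analytic difficulty arises beyond what is already encoded in the $\A_1$--$\A_2$-imprimitivity bimodule theory, so the proposition reduces to this verification combined with Lemmas \ref{lem:theta} and \ref{lem:trans}.
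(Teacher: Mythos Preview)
Your proposal is correct and follows essentially the same approach as the paper: reflexivity via Lemma~\ref{lem:theta}, transitivity via Lemma~\ref{lem:trans}, and symmetry via the conjugate bimodule. The paper simply asserts that the conjugate module $\bar{X}_{12}$ is an $(\A_2,\D_2)$--$(\A_1,\D_1)$-relative imprimitivity bimodule with a citation to \cite{Rieffel1} and \cite{KW}, whereas you spell out explicitly that the swapped pair $(\{\tilde{y}_n\},\{\tilde{x}_n\})$ serves as a relative basis; this added verification is exactly what the paper's one-line claim leaves to the reader.
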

\begin{proof}
The refrexisive law follows from Lemma \ref{lem:theta}.
We will show the symmetric law.
Suppose that 
$
(\A_1, \D_1)\underset{\operatorname{RME}}{\sim}(\A_2, \D_2)$
via relative imprimitivity bimodule $X_{12}$.
Then its conjugate module $\bar{X}_{12}$ denoted by $X_{21}$
becomes an
$(\A_2, \D_2)$--$(\A_1, \D_1)$-relative imprimitivity bimodule
(see \cite[Definition 6.17]{Rieffel1},
cf. \cite[p. 3443]{KW}),
so that 
$
(\A_1, \D_2)\underset{\operatorname{RME}}{\sim}(\A_1, \D_1)$.
The transitive law follows from Lemma \ref{lem:trans}.
\end{proof}
\begin{lemma}\label{lem:stab}
Let $(\A,\D)$ be a relative $\sigma$-unital pair of $C^*$-algebras.
Then we have
$$
(\A, \D)\underset{\operatorname{RME}}{\sim}(\A\otimes\K, \D\otimes\C).
$$
\end{lemma}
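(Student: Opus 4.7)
The plan is to exhibit an explicit $(\A,\D)$--$(\A\otimes\K,\D\otimes\C)$-relative imprimitivity bimodule. Take
\[
X = \overline{\operatorname{span}}\{a\otimes e_{1,n} \mid a\in\A,\, n\in\N\} \subset \A\otimes\K,
\]
the ``first row'' inside $\A\otimes\K$, with left $\A$-action $a\cdot x := (a\otimes e_{1,1})x$, right $\A\otimes\K$-action given by the multiplication in $\A\otimes\K$, and inner products
\[
{}_{\A}\!\langle x\mid y\rangle := xy^* \in \A\otimes e_{1,1}\cong\A, \qquad \langle x\mid y\rangle_{\A\otimes\K} := x^*y \in \A\otimes\K.
\]
A direct check using the matrix unit relations $e_{1,1}e_{1,n}=e_{1,n}$ and $e_{1,n}e_{k,l}=\delta_{n,k}e_{1,l}$ shows that the actions preserve $X$, that both inner products are full in their respective algebras, and that the imprimitivity compatibility ${}_{\A}\!\langle x\mid y\rangle z = x\langle y\mid z\rangle_{\A\otimes\K}$ holds, so $X$ is an $\A$--$\A\otimes\K$-imprimitivity bimodule in Rieffel's sense.

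To make $X$ \emph{relative}, fix an orthogonal approximate unit $\{d_n\}_{n\in\N}\subset\D$ for $(\A,\D)$ as provided by Lemma~\ref{lem:orthapprunit}, and observe that the associated relative approximate unit $a_n := \sqrt{d_n}$ lies in $\D$ and satisfies $\sqrt{d_n}\,d\,\sqrt{d_m}=0$ for every $d\in\D$ and all $n\ne m$. I will take as a relative right basis $y_n := \sqrt{d_n}\otimes e_{1,1}$ and as a relative left basis the doubly-indexed family $x_{(k,l)} := \sqrt{d_l}\otimes e_{1,k}$, re-indexed to $\N$ by a fixed bijection $\N\times\N\cong\N$.

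The verification of Definition~\ref{def:rib} reduces to direct matrix-unit computations. For the right basis, ${}_{\A}\!\langle y_n\mid y_n\rangle=d_n$ under the identification $\A\otimes e_{1,1}\cong\A$, so $\sum_n {}_{\A}\!\langle y_n\mid y_n\rangle=\sum_n d_n=1$ strictly in $M(\A)$, and $\langle y_n\mid d_1 y_m\rangle_{\A\otimes\K}=\sqrt{d_n}\,d_1\sqrt{d_m}\otimes e_{1,1}$ vanishes for $n\ne m$ by orthogonality. For the left basis, $\langle x_{(k,l)}\mid x_{(k,l)}\rangle_{\A\otimes\K}=d_l\otimes e_{k,k}$, and the partial sums $(\sum_{l\le L}d_l)\otimes(\sum_{k\le K}e_{k,k})$ converge strictly in $M(\A\otimes\K)$ to $1\otimes 1$ as $K,L\to\infty$, using that $\sum d_l\to 1$ strictly in $M(\A)$ and $\sum e_{k,k}\to 1$ strictly in $M(\K)$, together with density of elementary tensors in $\A\otimes\K$. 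The memberships $y_n, x_{(k,l)}\in X_D$ are checked by entirely analogous calculations.

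The subtle point is the left-orthogonality ${}_{\A}\!\langle x_{(k,l)}d_2\mid x_{(k',l')}\rangle=0$ for $(k,l)\ne(k',l')$ and all $d_2\in\D\otimes\C$. By linearity it suffices to treat $d_2=d\otimes e_{p,p}$, using crucially that $\C$ is the \emph{diagonal} subalgebra of $\K$, so that every element of $\D\otimes\C$ involves only diagonal matrix units. A direct computation yields
\[
{}_{\A}\!\langle x_{(k,l)}d_2\mid x_{(k',l')}\rangle = \delta_{k,p}\,\delta_{p,k'}\,\sqrt{d_l}\,d\,\sqrt{d_{l'}}\,,
\]
which vanishes whenever $(k,l)\ne(k',l')$: if $k\ne k'$ the Kronecker factors cannot both hold, while if $k=k'$ then $l\ne l'$ and the orthogonal approximate unit property kills the $\sqrt{d_l}\,d\,\sqrt{d_{l'}}$ factor. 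This is the step I expect to be the main obstacle, since it is precisely the diagonality of $\C$ within $\K$ that rescues the orthogonality condition.
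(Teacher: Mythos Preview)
Your proof is correct and follows essentially the same approach as the paper: the paper also takes $X=\A\otimes e_{1,1}\K$, chooses a relative approximate unit $\{a_n\}$ for $(\A,\D)$, and sets the left basis to be $x_{n,m}=a_n\otimes e_{1,m}$ and the right basis to be $y_n=a_n^*\otimes e_{1,1}$, verifying the relative basis conditions by the identical matrix-unit computations you carry out. The only cosmetic difference is that the paper works with a general relative approximate unit $a_n\in\A$ rather than specializing to $a_n=\sqrt{d_n}\in\D$, but by Lemma~\ref{lem:orthapprunit} these are interchangeable.
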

\begin{proof}
Let $a_n \in \A, n\in \N$
be a relative approximate unit for $(\A,\D)$.
Let $\{e_{n,m}\}_{n,m\in \N}$ be the matrix units of $\K$.
Define $X = \A \otimes e_{1,1}\K$.
By identifying $\A$ with $\A\otimes{\mathbb{C}}e_{1,1}$,
 $X$ has a natural structure of 
$\A$--$\A\otimes\K$-imprimitivity bimodule. 
Put $x_{n,m} = a_n \otimes e_{1,m} \in X, n,m \in \N$.
For $d_1 \in \D$ and $d_2=d\otimes f \in \D\otimes\C$, 
we have
\begin{align*}
{}_{\A}\!\langle x_{n,m} d_2 \mid x_{n,m} \rangle
= & (a_n \otimes e_{1,m})(d\otimes f)(a_n \otimes e_{1,m})^*
= a_n d a_n^*\otimes e_{1,m} f e_{1,m}^* \in \D\otimes {\bbC}e_{1,1}, \\
\langle x_{n,m} \mid d_1 x_{n,m} \rangle_{\A\otimes\K}
= & (a_n \otimes e_{1,m})^*(d\otimes e_{1,1})(a_n \otimes e_{1,m})
= a_n^* d a_n\otimes e_{m,1}e_{1,1} e_{1,m} \in  \D\otimes\C,
\end{align*}
so that 
$x_{n,m}$ belongs to $X_D$
under the identification between 
$\D$ with $\D\otimes{\mathbb{C}}e_{1,1}$.
We also have
\begin{equation*}
\sum_{n,m=1}^\infty 
\langle x_{n,m} \mid  x_{n,m} \rangle_{\A\otimes\K}
=
\sum_{n,m=1}^\infty 
(a_n \otimes e_{1,m})^* (a_n \otimes e_{1,m})
=
\sum_{n,m=1}^\infty 
a_n^* a_n   \otimes e_{1,m}^* e_{1,m} = 1 \otimes 1
\end{equation*}
in $M(\A\otimes\K)$.
For $d_2=d\otimes f \in \D\otimes\C$, 
we have
\begin{equation*}
{}_{\A}\!\langle x_{n,m} d_2 \mid x_{k,l} \rangle
=  (a_n \otimes e_{1,m})(d\otimes f)(a_k \otimes e_{1,l})^*
= a_n d a_k^*\otimes e_{1,m} f e_{1,l}^*. 
\end{equation*}
If $n\ne k$, we have $a_n d a_k^* =0$.
If $m\ne l$, we have $e_{1,m} f e_{1,l}^*=0$.
Hence 
if $(n,m) \ne (k,l)$, we have 
$
{}_{\A}\!\langle x_{n,m} d_2 \mid x_{k,l} \rangle =0$.

Put $y_n = a_n^* \otimes e_{1,1}$.
Then 
for $d_1 \in \D$ and $d_2=d\otimes f \in \D\otimes\C$, 
we have
\begin{align*}
{}_{\A}\!\langle y_n d_2 \mid y_n \rangle
= & (a_n^* \otimes e_{1,1})(d\otimes f)(a_n^* \otimes e_{1,1})^*
= a_n^* d a_n\otimes e_{1,1} f e_{1,1}^* \in \D\otimes {\bbC}e_{1,1}, \\
\langle y_n \mid d_1 y_n \rangle_{\A\otimes\K}
= & (a_n^* \otimes e_{1,1})^*(d\otimes e_{1,1})(a_n^* \otimes e_{1,1})
= a_n d a_n^*\otimes e_{1,1}  \in  \D\otimes\C,
\end{align*}
so that 
$y_n$ belongs to $X_D$.
We also have 
\begin{gather*}
\sum_{n=1}^\infty 
{}_{\A}\!\langle y_n  \mid y_n \rangle
 =
\sum_{n=1}^\infty 
a_n^*a_n \otimes e_{1,1}
= 1 \otimes e_{1,1},\\
\intertext{and}
\langle y_n \mid d_1 y_m \rangle
 = (a_n^* \otimes e_{1,1})^*(d\otimes e_{1,1})(a_m^* \otimes e_{1,1})
  = a_n d a_m^*  \otimes e_{1,1}
  = 0 \text{ for } n\ne m.  
\end{gather*}
Therefore 
$X$ becomes an  $(\A,\D)$--$(\A\otimes\K,\D\otimes\C)$-relative imprimitivity bimodule,
so that 
$
(\A, \D)\underset{\operatorname{RME}}{\sim}(\A\otimes\K, \D\otimes\C).
$
\end{proof}
\begin{example}
For  $m,k\in \N$,
let 
$\A_1= M_m(\bbC), \D_1 = {\diag}(M_m(\bbC)) = {\bbC}^m,$
and
$\A_2= M_k(\bbC), \D_2 = {\diag}(M_k(\bbC)) = {\bbC}^k$.
Then we have
$
(\A_1, \D_1)\underset{\operatorname{RME}}{\sim}(\A_2, \D_2).
$
\end{example}
We will present an 
$(\A_1, \D_1)$--$(\A_2, \D_2)$-relative imprimitivity bimodule in the followung way.
Let $\A_0, \D_0$ be 
$M_{m+k}(\bbC), \diag(M_{m+k}(\bbC))$, respectively.
Let $p_1, p_2 $ be the projections in $\D_0$
defined by
$$
p_1=(\overbrace{1,\cdots,1}^m,\overbrace{0,\cdots,0}^k),
\quad
p_2=(\overbrace{0,\cdots,0}^m,\overbrace{1,\cdots,1}^k).
$$
We then have
$$
\A_1 = p_1\A_0 p_1, \qquad\D_1 = \D_0 p_1
\quad\text{ and }
\quad
\A_2 = p_2\A_0 p_2, \qquad\D_1 = \D_0 p_2.
$$
Put
$X =p_1\A_0 p_2
$
with natural $\A_1$--$\A_2$-bimodule structure
and inner products such taht
$$
 {}_{\A}\!\langle x \mid y \rangle = x y^*, \qquad
 \langle x \mid y \rangle_{\A_2} = x^* y
 \quad
 \text{ for }
 x,y \in X.
$$
It is not difficult to see that 
$X$ becomes an $(\A_1, \D_1)$--$(\A_2, \D_2)$-relative imprimitivity bimodule
so that 
$
(\A_1, \D_1)\underset{\operatorname{RME}}{\sim}(\A_2, \D_2).
$

%%%%%%%%%%%%%%%%%%%%%%%%%%%%%%%%%%%%%%%
\section{Isomorphism of relative stabilizations}
%%%%%%%%%%%%%%%%%%%%%%%%%%%%%%%%%%%%%%%%%%%%
In this section, we devote to proving the following theorem, 
which is a relative version of a part of Brown--Green--Rieffel Theorem
\cite[Theorem 1.2]{BGR}. 
\begin{theorem}\label{RMEPHI}
Suppose 
$(\A_1, \D_1)\underset{\operatorname{RME}}{\sim}(\A_2, \D_2).
$
Then there exists an isomorphism 
$\Phi: \A_1\otimes\K \longrightarrow \A_2\otimes \K$ of $C^*$-algebras
such that 
$\Phi(\D_1\otimes\C)=\D_2\otimes \C$.
\end{theorem}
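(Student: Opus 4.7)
The approach mirrors Brown--Green--Rieffel's proof that Morita equivalent $\sigma$-unital $C^*$-algebras are stably isomorphic, with careful bookkeeping of the diagonals throughout. Let $X$ be an $(\A_1,\D_1)$--$(\A_2,\D_2)$-relative imprimitivity bimodule with relative basis $(\{x_n\},\{y_n\})$. First I would sharpen the relative-basis orthogonality: approximating $y_m=\lim_k d_1(k)y_m$ with $d_1(k)\in\D_1$ (via $X=\overline{\D_1 X}$), the identity $\langle y_n\mid d_1(k)y_m\rangle_{\A_2}=0$ passes in the limit to $\langle y_n\mid y_m\rangle_{\A_2}=0$ for $n\ne m$; combined with $\langle y_n\mid y_n\rangle_{\A_2}\in\D_2$ from Lemma \ref{lem:xxD}, this says $\{y_n\}$ is a genuinely orthogonal Hilbert $\A_2$-module frame with diagonal inner products in $\D_2$.

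The main construction inflates $X$ to $X^{\infty}:=X\otimes\ell^2(\N)$, so that $\K_{\A_2}(X^{\infty})\cong \K_{\A_2}(X)\otimes\K\cong\A_1\otimes\K$ via the left action. Using the inflated frame $\{y_n\otimes e_k\}_{n,k\in\N}$, define $V\colon X^{\infty}\to \A_2\otimes\ell^2(\N\times\N)$ by $V(z)=\sum_{n,k}e_{(n,k)}\otimes\langle y_n\otimes e_k\mid z\rangle_{\A_2}$. The identity $\sum_n{}_{\A_1}\!\langle y_n\mid y_n\rangle=1$ strictly in $M(\A_1)$ gives $V^*V=1$, so $V$ is an isometry, and the orthogonality above forces the range projection $P:=VV^*=\sum_{n,k}e_{(n,k),(n,k)}\otimes\langle y_n\mid y_n\rangle_{\A_2}$ to lie in $\D_2\otimes\C$ (the identity $P^2=P$ moreover confirming each $\langle y_n\mid y_n\rangle_{\A_2}$ is a projection in $\D_2$). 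Conjugation by $V$ then identifies $\A_1\otimes\K\cong P(\A_2\otimes\K)P$, and the further vanishing $\langle y_n\mid d\,y_m\rangle_{\A_2}=0$ for $d\in\D_1$ and $n\ne m$ pins this isomorphism down to send $\D_1\otimes\C$ onto $P(\D_2\otimes\C)P$.

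It remains to upgrade the corner identification to a genuine isomorphism $\A_1\otimes\K\cong\A_2\otimes\K$ sending $\D_1\otimes\C$ onto $\D_2\otimes\C$. Tensor once more by $\K$ and invoke the canonical $\K\otimes\K\cong\K$ and $\C\otimes\C\cong\C$: the problem becomes producing a partial isometry $W\in M(\A_2\otimes\K)$ with $W^*W=P$, $WW^*=1$, whose conjugation intertwines $\D_2\otimes\C$ with itself. Starting from an orthogonal approximate unit for $(\A_2,\D_2)$ supplied by Lemma \ref{lem:orthapprunit}, one obtains inside $\D_2\otimes\C$ a countable orthogonal family of $\D_2$-projections summing strictly to $1$; matching these against the orthogonal family $\{e_{(n,k),(n,k)}\otimes\langle y_n\mid y_n\rangle_{\A_2}\}$ comprising $P$ yields the required Murray--von Neumann equivalence implemented inside $\D_2\otimes\C$. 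The principal obstacle is precisely this diagonal-preserving equivalence $P\sim 1$: where the classical Brown--Green--Rieffel argument invokes Kasparov absorption opaquely with no reference to a subalgebra, here each implementing partial isometry must sit in $M(\A_2\otimes\K)$ \emph{and} normalize $\D_2\otimes\C$, and it is at this point that the full strength of the orthogonality axioms of relative $\sigma$-unitality is consumed.
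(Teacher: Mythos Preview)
Your frame construction is correct up through the identification $(\A_1\otimes\K,\D_1\otimes\C)\cong(P(\A_2\otimes\K)P,P(\D_2\otimes\C))$ with $P=\sum_{n,k}\langle y_n\mid y_n\rangle_{\A_2}\otimes e_{(n,k),(n,k)}\in M(\D_2\otimes\C)$; this is a legitimate alternative to the paper's linking-algebra setup. The gap is the final step: you have not actually produced the diagonal-normalizing $W$ with $W^*W=P$, $WW^*=1$. An orthogonal approximate unit $\{d_m\}$ for $(\A_2,\D_2)$ from Lemma~\ref{lem:orthapprunit} consists of positive mutually orthogonal elements, not projections --- nothing forces $d_m^2=d_m$ --- so in general there is no ``countable orthogonal family of $\D_2$-projections summing strictly to $1$'' to match against. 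Even granting such a family, you give no mechanism for producing the diagonal-normalizing partial isometries in $\A_2\otimes\K$ that would implement the matching; this is precisely where the work lies, and asserting that a matching ``yields'' the equivalence is not a proof. A telling symptom: you never use the relative \emph{left} basis $\{x_n\}$.

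The paper circumvents this by working inside the linking pair $(\A_0,\D_0)$, where both bases appear as sequences $U_n,T_n\in\A_0$ with $\sum U_n^*U_n=P_2$, $\sum T_n^*T_n=P_1$, each normalizing $\D_0$ (Lemma~\ref{lem:UNTN}). A Brown-style recursive swindle (Lemma~\ref{lem:3.4}, Proposition~\ref{prop:V}) then builds isometries $V_i\in M(\A_0\otimes\K)$ with $V_iV_i^*=P_i\otimes1$ and $V_i(\D_0\otimes\C)V_i^*=\D_i\otimes\C$, after which $\Phi=\Ad(V_2V_1^*)$ is the desired isomorphism. The swindle --- alternately absorbing complements via $v_{2n}$ and $v_{2n-1}$ --- is exactly the missing ingredient in your approach; to repair your argument you would need to run an analogous recursion, using \emph{both} bases, directly inside $\A_2\otimes\K$.
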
 
Suppose that $X$ is an
$(\A_1, \D_1)$--$(\A_2, \D_2)$-relative imprimitivity bimodule.
Let $\bar{X}$ be the conjugate bimodule of $X$
(\cite[Definition 6.17]{Rieffel1}, cf. \cite[p.3443]{KW}).
The corresponding element in $\bar{X}$ to $y \in X$
is denoted by $\bar{y}$.
It is straightforward to see that 
$\bar{X}$ is 
$(\A_2, \D_2)$--$(\A_1, \D_1)$-relative imprimitivity bimodule.
We define the relative linking pair $(\A_0, \D_0)$
%as a relative $\sigma$-unital pair of $C^*$-algebras
 by setting
\begin{align}
\A_0 
& = \{
\begin{bmatrix}
a_1      &  x  \\
\bar{y} & a_2
\end{bmatrix}
\mid 
a_1 \in \A_1, a_2 \in \A_2, x \in X, \bar{y} \in \bar{X} \}, \label{eq:linkinga}\\
\D_0 
& = \{
\begin{bmatrix}
d_1      &  0  \\
0        & d_2
\end{bmatrix}
\mid 
d_1 \in \D_1, d_2 \in \D_2 \}. \label{eq:linkingd}
\end{align}
As in \cite[p.350]{BGR}, the products between two elements of $\A_0$ is defined by
\begin{equation*}
\begin{bmatrix}
a_1      &  x  \\
\bar{y} & a_2
\end{bmatrix}
\begin{bmatrix}
b_1      &  z  \\
\bar{w} & b_2
\end{bmatrix}
:=
\begin{bmatrix}
a_1 b_1 +  {}_{\A_1}\!\langle x \mid w \rangle  &  a_1 z + x b_2  \\
\bar{y} b_1 + a_2 \bar{w} & \langle y \mid z \rangle_{\A_2} + a_2 b_2
\end{bmatrix}.
\end{equation*}
Let $X\oplus \A_2$
be the Hilbert $C^*$-right module over $\A_2$
with the natural right action of $\A_2$ 
and
$\A_2$-valued right inner product defined by
\begin{equation*}
\langle
\begin{bmatrix}
 x \\
a_2
\end{bmatrix}
 \mid 
\begin{bmatrix}
y \\
b_2
\end{bmatrix}
 \rangle_{\A_2} 
:= \langle x  \mid y  \rangle_{\A_2} +  a_2b_2. 
\end{equation*}
The algebra $\A_0$ acts on $X\oplus \A_2$ by 
\begin{equation*}
\begin{bmatrix}
a_1      &  x  \\
\bar{y} & a_2
\end{bmatrix}
\begin{bmatrix}
z \\
b_2
\end{bmatrix}
=
\begin{bmatrix}
a_1 z +  x b_2 \\
\langle y\mid z \rangle_{\A_2} + a_2 b_2
\end{bmatrix}.
\end{equation*}
As seen in \cite[Lemma 3.20]{RW},  
$\A_0$ itself is a $C^*$-subalgebra of all bounded adjointable operators on
the Hilbert $C^*$-right module $X\oplus \A_2$.
We set
\begin{equation}
P_1 
 =
\begin{bmatrix}
1    &  0  \\
0    & 0
\end{bmatrix},
\qquad
P_2 
 =
\begin{bmatrix}
0    & 0  \\
0    & 1
\end{bmatrix}. \label{eq:cornerprojection}
\end{equation}
They satisfy $P_1 + P_2 = 1$ and
\begin{equation}
P_1 \A_0 P_1 = \A_1, \qquad \D_0 P_1 =\D_1\quad \text{ and }
P_2 \A_0 P_2 = \A_2, \qquad \D_0 P_2 =\D_2. \label{eq:APDPcorner}
\end{equation}
To prove Theorem \ref{RMEPHI},
we provide several lemmas.
\begin{lemma}\label{lem:UNTN}
Let $(\{x_n\}, \{y_n\}) \subset X_D$ 
be a relative bases for $X$.
\begin{enumerate}
\renewcommand{\theenumi}{\roman{enumi}}
\renewcommand{\labelenumi}{\textup{(\theenumi)}}
\item 
Put 
$
U_n =
\begin{bmatrix}
0    & x_n  \\
0    & 0
\end{bmatrix}
\in\A_0, n \in \N.
$
The sequence $U_n$ satisfies the following conditions:
{\begin{enumerate}
\renewcommand{\theenumi}{\alph{enumi}}
\item $P_2 =\sum_{n=1}^\infty U_n^* U_n $ 
which converges in the strict topology of $M(\A_0)$.
\item $U_n U_n^* \le P_1$ and $U_n U_m^* =0$ for $n\ne m$. 
\item $U_n \D_0 U_n^* \subset \D_0 P_1 = \D_1$.
\item $U_n^* \D_0 U_n \subset \D_0 P_2 = \D_2$.
\end{enumerate}}
\item
Put 
$
T_n =
\begin{bmatrix}
0    & 0  \\
\bar{y}_n    & 0
\end{bmatrix}
\in\A_0, n \in \N.
$
The sequence $T_n$ satisfies the following conditions:
{\begin{enumerate}
\renewcommand{\theenumi}{\alph{enumi}}
\item $P_1 =\sum_{n=1}^\infty T_n^* T_n $
which  converges in the strict topology of $M(\A_0)$.
\item $T_n T_n^* \le P_2$ and $T_n T_m^* =0$ for $n\ne m$. 
\item $T_n \D_0 T_n^* \subset \D_0 P_2 = \D_2$.
\item $T_n^* \D_0 T_n \subset \D_0 P_1 = \D_1$.
\end{enumerate}}
\end{enumerate}
\end{lemma}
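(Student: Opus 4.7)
Both assertions (i) and (ii) are proved by direct matrix computations inside the linking algebra $\A_0$ described in \eqref{eq:linkinga}--\eqref{eq:linkingd}, using the multiplication rule recalled from \cite{BGR}. Part (ii) for $T_n$ is entirely symmetric to (i): it plays the role that $\bar{y}_n \in \bar{X}$ plays with respect to the $(\A_2,\D_2)$--$(\A_1,\D_1)$-relative imprimitivity bimodule $\bar{X}$, so its proof is obtained from the proof of (i) by interchanging the roles of the two entries, switching left and right inner products, and replacing $\{x_n\}$ by $\{y_n\}$. So I will concentrate on (i).

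First I compute the four products $U_n^*U_n,\ U_nU_n^*,\ U_nU_m^*,\ U_n DU_n^*,\ U_n^*DU_n$ directly, with $D=\begin{bmatrix}d_1 & 0\\ 0& d_2\end{bmatrix}$. A straightforward matrix computation gives
\begin{equation*}
U_n^*U_n = \begin{bmatrix}0 & 0\\ 0 & \langle x_n\mid x_n\rangle_{\A_2}\end{bmatrix},
\qquad
U_nU_n^* = \begin{bmatrix}{}_{\A_1}\!\langle x_n\mid x_n\rangle & 0\\ 0 & 0\end{bmatrix},
\end{equation*}
and $U_nU_m^* = \begin{bmatrix}{}_{\A_1}\!\langle x_n\mid x_m\rangle & 0\\ 0 & 0\end{bmatrix}$. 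From here: (b) drops out from the inequality ${}_{\A_1}\!\langle x_n\mid x_n\rangle\le 1$ (recorded in the remark after Definition \ref{def:rib}) and from the fact that ${}_{\A_1}\!\langle x_n\mid x_m\rangle = 0$ when $n\ne m$, which is exactly the statement of the lemma preceding this one. For (c), the analogous computation yields $U_n D U_n^* = \begin{bmatrix}{}_{\A_1}\!\langle x_n d_2\mid x_n\rangle & 0\\ 0 & 0\end{bmatrix}$, and the defining property of $X_D$ places the corner entry in $\D_1$; (d) is the same computation in the opposite order, giving ${U_n^*DU_n = \begin{bmatrix}0 & 0\\ 0 & \langle x_n\mid d_1 x_n\rangle_{\A_2}\end{bmatrix}}$, again in $\D_2$ by the definition of $X_D$.

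The only step where I expect a subtlety is the strict-topology convergence in (a). For any $\xi=\begin{bmatrix}a_1 & x\\ \bar{y} & a_2\end{bmatrix}\in \A_0$, writing $c_N = \sum_{n=1}^{N}\langle x_n\mid x_n\rangle_{\A_2}$ and carrying out the matrix multiplication, one sees that $(\sum_{n=1}^{N} U_n^*U_n)\xi - P_2\xi$ has only a $(2,1)$-entry $c_N\bar{y}-\bar{y}$ and a $(2,2)$-entry $c_N a_2 - a_2$. The $(2,2)$ part tends to $0$ in norm because $c_N\to 1$ strictly in $M(\A_2)$ by condition (2)(i) of Definition \ref{def:rib}. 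For the $(2,1)$ part one needs $\|yc_N - y\|_X\to 0$ for every $y\in X$, and this is where I would invoke the imprimitivity identity ${}_{\A_1}\!\langle z\mid x_n\rangle x_n = z\langle x_n\mid x_n\rangle_{\A_2}$ together with the preceding lemma on expansions $z=\sum_n {}_{\A_1}\!\langle z\mid x_n\rangle x_n$ in the norm of $X$; the multiplication on the other side is handled identically. Once this convergence is in hand, the identity $\sum_n U_n^*U_n = P_2$ in the strict topology of $M(\A_0)$ follows, completing (a). With (a)--(d) established, part (ii) is obtained by the symmetry argument sketched above, using the relative right basis $\{y_n\}$ and the corresponding convergence $\sum_n{}_{\A_1}\!\langle y_n\mid y_n\rangle = 1$ in $M(\A_1)$.
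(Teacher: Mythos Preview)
Your proof is correct and follows essentially the same route as the paper: both arguments reduce each item to a direct matrix computation of $U_n^*U_n$, $U_nU_m^*$, $U_nDU_n^*$, $U_n^*DU_n$ inside the linking algebra and then read off the conclusions from the defining properties of $X_D$ and of a relative left basis. The only difference is that you supply more detail for the strict-topology convergence in (a), verifying explicitly that $\bigl(\sum_{n\le N}U_n^*U_n\bigr)\xi\to P_2\xi$ entrywise, whereas the paper simply asserts that $\sum_n\langle x_n\mid x_n\rangle_{\A_2}=1$ in $M(\A_2)$ implies $\sum_n U_n^*U_n=P_2$ in $M(\A_0)$; your extra paragraph is a welcome justification of that step.
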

\begin{proof}
(i)
For $d_1 \in \D_1, d_2 \in \D_2,$
we have
\begin{equation}
U_n^*
\begin{bmatrix}
d_1 & 0  \\
0    & d_2
\end{bmatrix}
U_n 
=
\begin{bmatrix}
0    & 0  \\
0    & \langle x_n \mid d_1 x_n \rangle_{\A_2}
\end{bmatrix}. 
\label{eq:undun}
\end{equation}
Since $x_n \in X_D$ and $d_1 \in \D_1$,
we have
$\langle x_n \mid d_1 x_n \rangle_{\A_2} \in \D_2$,
so that
$U_n^* \D_0 U_n \subset \D_0 P_2$,
which shows (d).
Since we have
\begin{equation}
U_n^*U_n 
=
\begin{bmatrix}
0    & 0  \\
0    & \langle x_n \mid x_n \rangle_{\A_2}
\end{bmatrix}
\end{equation}
the equality
$\sum_{n=1}^\infty \langle x_n \mid x_n \rangle_{\A_2} =1$
implies
$\sum_{n=1}^\infty U_n^* U_n = P_2$
which shows (a).
And also
for $d_1 \in \D_1, d_2 \in \D_2,$
we have
\begin{equation}
U_n
\begin{bmatrix}
d_1 & 0  \\
0    & d_2
\end{bmatrix}
U_n^* 
=
\begin{bmatrix}
{}_{\A_1}\!\langle x_n d_2 \mid x_n \rangle    & 0  \\
0    & 0
\end{bmatrix}. 
\label{eq:undun2}
\end{equation}
Since $x_n \in X_D$ and $d_2 \in \D_2$,
we have
$
{}_{\A_1}\!\langle x_n d_2 \mid x_n \rangle \in \D_1
$
so that
$U_n \D_0 U_n^* \subset \D_0 P_1$,
which shows (c).
Since we have
\begin{equation}
U_n U_m^* 
=
\begin{bmatrix}
{}_{\A_1}\!\langle x_n  \mid x_m \rangle    & 0  \\
0    &  0
\end{bmatrix}. 
\end{equation}
the inequality
$ {}_{\A_1}\!\langle x_n \mid x_n \rangle \le 1$
implies
$U_n U_n^* \le P_1$
and 
${}_{\A_1}\!\langle x_n  \mid x_m \rangle =0$ 
for $n,m\in \N$ with $n\ne m$.
which shows (b).
\end{proof}

%%%%%%%%%%%%%%%%%%%%%%%%%%%%%%%%%%%%%%%%%%
\begin{lemma}
The pair $(\A_0, \D_0)$ is relative $\sigma$-unital.
\end{lemma}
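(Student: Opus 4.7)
The plan is to invoke Lemma \ref{lem:orthapprunit} by explicitly constructing an orthogonal approximate unit for $(\A_0,\D_0)$ out of the orthogonal approximate units already provided by the relative $\sigma$-unitality of $(\A_1,\D_1)$ and $(\A_2,\D_2)$. Let $\{d_n^{(1)}\}_{n\in\N}\subset \D_1$ and $\{d_n^{(2)}\}_{n\in\N}\subset \D_2$ be such orthogonal approximate units. I would interleave them into a single sequence $\{D_k\}_{k\in\N}\subset \D_0$ by setting
\begin{equation*}
D_{2n-1}=\begin{bmatrix} d_n^{(1)} & 0 \\ 0 & 0\end{bmatrix},
\qquad
D_{2n}=\begin{bmatrix} 0 & 0 \\ 0 & d_n^{(2)}\end{bmatrix},
\qquad n\in\N.
\end{equation*}
Positivity of each $D_k$ is automatic, and the orthogonality condition $D_k d D_l =0$ for $k\ne l$ and $d=\diag(e_1,e_2)\in \D_0$ separates into two cases: if $k,l$ have opposite parities the $2\times 2$ block structure forces the product to vanish, while if they have the same parity the relation reduces to $d_n^{(i)} e_i d_m^{(i)}=0$ for $n\ne m$, which is the orthogonality of the given units in $\D_i$.

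The only substantive point is to verify that $\sum_{k=1}^\infty D_k = 1$ in the strict topology of $M(\A_0)$. Let $B_N^{(i)}=\sum_{k=1}^N d_k^{(i)}$; by Lemma 2.4 these are countable approximate units for $\A_i$. For an arbitrary $a=\begin{bmatrix} a_1 & x \\ \bar y & a_2\end{bmatrix}\in \A_0$, left multiplication by $\sum_{k=1}^{2N}D_k$ gives
\begin{equation*}
\left(\sum_{k=1}^{2N} D_k\right)a
=
\begin{bmatrix} B_N^{(1)} a_1 & B_N^{(1)} x \\ B_N^{(2)}\bar y & B_N^{(2)} a_2 \end{bmatrix}.
\end{equation*}
The diagonal entries converge in norm by the approximate-unit property of $B_N^{(i)}$ for $\A_i$. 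For the upper off-diagonal entry, I would use the imprimitivity-bimodule identity
\begin{equation*}
\|x - B_N^{(1)} x\|^2
= \bigl\| {}_{\A_1}\!\langle x \mid x \rangle
- B_N^{(1)}\,{}_{\A_1}\!\langle x \mid x \rangle
-{}_{\A_1}\!\langle x \mid x \rangle B_N^{(1)}
+ B_N^{(1)}\,{}_{\A_1}\!\langle x \mid x \rangle\, B_N^{(1)}\bigr\|,
\end{equation*}
whose right-hand side tends to $0$ because $B_N^{(1)}$ is an approximate unit for $\A_1$ and ${}_{\A_1}\!\langle x\mid x\rangle \in \A_1$. The same argument applied to the conjugate bimodule $\bar X$ (viewed as an $(\A_2,\D_2)$--$(\A_1,\D_1)$-relative imprimitivity bimodule) handles the lower off-diagonal entry $B_N^{(2)}\bar y\to \bar y$. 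Right multiplication is symmetric.

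The main (only) obstacle is this off-diagonal convergence, since the diagonal and orthogonality conditions are formal. Once this is established, Lemma \ref{lem:orthapprunit} immediately yields that $(\A_0,\D_0)$ is relative $\sigma$-unital, with $\{D_k\}$ serving as an orthogonal approximate unit.
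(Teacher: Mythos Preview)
Your proof is correct and takes a genuinely different route from the paper's. The paper constructs a relative approximate unit for $(\A_0,\D_0)$ out of the \emph{off-diagonal} data: with $(\{x_n\},\{y_n\})$ the relative basis of $X$, it sets
\[
a_n=\begin{bmatrix}0 & x_n\\ \bar y_n & 0\end{bmatrix}=U_n+T_n
\]
and checks directly from Lemma \ref{lem:UNTN} that $\sum_n a_n^*a_n=P_1+P_2=1$, $a_n^*\,d\,a_n,\ a_n\,d\,a_n^*\in \D_0$, and $a_n\,d\,a_m^*=0$ for $n\ne m$. Your argument instead stays entirely on the diagonal, interleaving orthogonal approximate units of $\D_1$ and $\D_2$ and invoking Lemma \ref{lem:orthapprunit}; the only non-formal step is the off-diagonal strict convergence $B_N^{(1)}x\to x$, $B_N^{(2)}\bar y\to \bar y$, which you handle exactly as in Remark 3.2(3). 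Your approach is more elementary in that it uses nothing about $X$ beyond non-degeneracy of the module actions, and would apply to any linking-type pair with $\D_0=\D_1\oplus\D_2$. The paper's approach, by contrast, produces a relative approximate unit already aligned with the partial isometries $U_n,T_n$ that drive the rest of Section 4 (Lemmas \ref{lem:3.3}--\ref{lem:3.4} and Proposition \ref{prop:V}), so it dovetails with the subsequent construction of $V_1,V_2$.
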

\begin{proof}
Keep the above notations.
Put
$a_n = 
\begin{bmatrix}
0   & x_n  \\
\bar{y}_n  &  0
\end{bmatrix}
= U_n + T_n.
$
It then follows that 
$$
\sum_{n=1}^\infty a_n^* a_n
=\sum_{n=1}^\infty U_n^* U_n + \sum_{n=1}^\infty T_n^* T_n 
= P_2 + P_1= 1. 
$$
For $d_1 \in \D_1, d_2 \in \D_2,$
we have
\begin{align*}
a_n^*
\begin{bmatrix}
d_1 & 0  \\
0    & d_2
\end{bmatrix}
a_n 
& =
U_n^*
\begin{bmatrix}
d_1 & 0  \\
0    & d_2
\end{bmatrix}
U_n 
+
T_n^*
\begin{bmatrix}
d_1 & 0  \\
0    & d_2
\end{bmatrix}
T_n \\ 
& =
\begin{bmatrix}
\langle \bar{y}_n \mid d_2 \bar{y}_n \rangle_{\A_1} & 0 \\
0 & \langle x_n \mid d_1 x_n \rangle_{\A_2}   
\end{bmatrix}
\in \D_1 \oplus \D_2 =\D_0.
\end{align*}
Similarly 
we have
$a_n
\begin{bmatrix}
d_1 & 0  \\
0    & d_2
\end{bmatrix}
a_n^*
\in \D_1 \oplus \D_2.
$ 
We also have
$a_n d a_m^* = (U_n + T_n) d (U_m + T_m)^* = U_n d U_m^* + T_n d T_m^* =0
$
for $d = d_1 + d_2 \in \D_1\oplus \D_2$
and $n\ne m$. 
Hence $\{a_n\}$ is a relative approximate unit for $(\A_0,\D_0)$
to show $(\A_0,\D_0)$ is relative $\sigma$-unital.
\end{proof}

Let us decompose the set 
 $\N$ of natural numbers
into disjoint infinite subsets $\N = \cup_{j=1}^{\infty} {\N}_j$,
and decompose $\N_j$ for each $j$ 
once again into disjoint infinite sets
$\N_j = \cup_{k=0}^{\infty} {\N}_{j_k}.$
Let $\{ e_{i,j} \}_{i,j \in \N}$ 
be the set of matrix units which generate the algebra 
${\K} = \K(\ell^2(\N)).$
Put the projections
$f_j = \sum_{i\in {\N}_j} e_{i,i}$ 
and
$f_{j_k} = \sum_{i\in {\N}_{j_k}} e_{i,i}.$
Take a partial isometry
$s_{j_k,j}$ such that
$
s_{j_k,j}^*s_{j_k,j} = f_j,
s_{j_k,j}s_{j_k,j}^* = f_{j_k}
$
and put $s_{j,j_k} =s_{j_k,j}^*$.
We set for $n=1,2,\dots, $
\begin{align*}
u_n = \sum_{k=1}^{\infty} U_k \otimes s_{n_k,n}, 
& \qquad w_n = P_1 \otimes s_{{n_0},n} +  u_n,\\ 
t_n = \sum_{l=1}^{\infty} T_l \otimes s_{n_l,n},
& \qquad z_n =  P_2 \otimes s_{{n_0},n}  + t_n. 
\end{align*}
Then we have
\begin{lemma}[{cf. \cite[Lmma 3.3]{MaPre2016}}] \label{lem:3.3}
For each $n\in \N$, we have
\begin{enumerate}
\renewcommand{\theenumi}{\roman{enumi}}
\renewcommand{\labelenumi}{\textup{(\theenumi)}}
\item $w_n$ is a partial isometry in $M(\A_0\otimes \K)$ such that 
{\begin{enumerate}
\renewcommand{\theenumi}{\alph{enumi}}
\item $w_n^* w_n = 1 \otimes f_n$.
\item $w_n w_n^* \le P_1\otimes f_n$. 
\item $w_n (\D_0 \otimes\C)w_n^* \subset \D_1\otimes\C$.
\item $w_n^* (\D_0\otimes\C) w_n \subset \D_2\otimes\C$.
\end{enumerate}}
\item  $z_n$ is a partial isometry in $M(\A_0\otimes \K)$ such that 
{\begin{enumerate}
\renewcommand{\theenumi}{\alph{enumi}}
\item $z_n^* z_n = 1 \otimes f_n$.
\item $z_n z_n^* \le P_2\otimes f_n$. 
\item $z_n (\D_0 \otimes\C)z_n^* \subset \D_2\otimes\C$.
\item $z_n^* (\D_0\otimes\C) z_n \subset \D_1\otimes\C$.
\end{enumerate}}
\end{enumerate}
\end{lemma}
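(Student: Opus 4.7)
\emph{Plan.} The proof of this lemma is a direct calculation. The plan is to expand each of the four products $w_n^*w_n$, $w_nw_n^*$, $w_n(d\otimes e)w_n^*$, and $w_n^*(d\otimes e)w_n$ using the definition $w_n = P_1\otimes s_{n_0,n}+\sum_{k\ge 1}U_k\otimes s_{n_k,n}$, verify part (i) in full, and then observe that part (ii) is entirely parallel, with $(T_l,P_2,\bar y_l)$ and the relative right basis $\{y_n\}$ replacing $(U_k,P_1,x_k)$ and the relative left basis $\{x_n\}$, and the inputs switching from Lemma~\ref{lem:UNTN}(i) and Definition~\ref{def:rib}(2) to Lemma~\ref{lem:UNTN}(ii) and Definition~\ref{def:rib}(3). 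A preliminary observation is that the series defining $w_n$ converges in the strict topology of $M(\A_0\otimes\K)$, because the partial sums are uniformly bounded via $\sum_{k=1}^N U_k^*U_k\le P_2$ (Lemma~\ref{lem:UNTN}(i)(a)); this legitimizes the manipulations that follow.

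Each expansion splits into four kinds of summands --- a ``$P_1$--$P_1$'' term, two mixed $P_1$--$U_k$ terms, and a double sum $\sum_{k,l}$ of $U_l^*(\cdots) U_k$ terms --- and cross terms vanish by one of two mechanisms: either a matrix-unit product in the $\K$-factor drops out (concretely $s_{n,n_j}s_{n_k,n}=\delta_{jk}f_n$, and $s_{n,n_0}s_{n_k,n}=0$ for $k\ge 1$, both consequences of the pairwise disjointness of the subsets $\N_{n_j}$), or a linking-algebra identity $P_1U_k^*=0=U_kP_1$ kills a block-matrix factor. For (i)(a) these reductions collapse $w_n^*w_n$ to $P_1\otimes f_n+\sum_k U_k^*U_k\otimes f_n=(P_1+P_2)\otimes f_n=1\otimes f_n$ by Lemma~\ref{lem:UNTN}(i)(a), and the partial-isometry conclusion is automatic. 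For (i)(b) the same expansion reduces $w_nw_n^*$ to $P_1\otimes f_{n_0}+\sum_{k\ge 1}U_kU_k^*\otimes f_{n_k}$; combining the pointwise bound $U_kU_k^*\le P_1$ from Lemma~\ref{lem:UNTN}(i)(b) with the orthogonal decomposition $f_n=\sum_{k\ge 0}f_{n_k}$ yields $w_nw_n^*\le P_1\otimes f_n$.

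For (i)(c) and (i)(d) it suffices by linearity and norm continuity to work with elementary tensors $d\otimes e$ where $d=d_1+d_2\in\D_1\oplus\D_2=\D_0$ and $e=e_{i,i}\in\C$. The decisive new ingredient in the linking-algebra factor is the orthogonality condition Definition~\ref{def:rib}(2)(ii), namely ${}_{\A_1}\!\langle x_kd_2\mid x_l\rangle=0$ for $k\ne l$, which eliminates the off-diagonal double-sum contributions; the surviving on-diagonal pieces are exactly the quantities ${}_{\A_1}\!\langle x_kd_2\mid x_k\rangle\in\D_1$ and $\langle x_k\mid d_1x_k\rangle_{\A_2}\in\D_2$ (both inclusions by the very definition of $X_D$), tensored with $\C$-valued elements of the form $s_{n,n_k}e_{i,i}s_{n_k,n}\in\C$. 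This gives the required inclusions in $\D_1\otimes\C$ for (c) and in $\D_2\otimes\C$ for (d).

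The only real obstacle is bookkeeping: matching each of the many cross terms in the four-fold expansions to the correct orthogonality source that annihilates it, and keeping track of which summands survive and to which subalgebra they belong. Once that matching is tabulated, every individual step reduces immediately either to Lemma~\ref{lem:UNTN}, to the definition of $X_D$, or to the disjointness of the subsets $\{\N_{n_k}\}_{k\ge 0}$ of $\N_n$, with no further analytical subtleties.
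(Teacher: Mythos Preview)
Your expansion argument for (a) and (b) is exactly the paper's own proof (in fact the paper proves only (a) and (b) explicitly and omits (c), (d)). The gap is in your account of (c) and (d): when you enumerate the ``surviving on-diagonal pieces'' you list only the double-sum contributions ${}_{\A_1}\!\langle x_kd_2\mid x_k\rangle$ and $\langle x_k\mid d_1x_k\rangle_{\A_2}$, but the $P_1$--$P_1$ summand also survives. Concretely, in $w_n(d\otimes e_{i,i})w_n^{*}$ one gets the term $P_1dP_1\otimes s_{n_0,n}e_{i,i}s_{n,n_0}=d_1\otimes(\cdots)$, and in $w_n^{*}(d\otimes e_{i,i})w_n$ the term $P_1dP_1\otimes s_{n,n_0}e_{i,i}s_{n_0,n}=d_1\otimes(\cdots)$; neither of your two vanishing mechanisms applies to these, since $P_1dP_1=d_1$ is generically nonzero and the $\K$-factor is nonzero whenever $i\in\N_{n_0}$.

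For (c) this omission is harmless: $d_1\in\D_1$, so the extra term still lies in $\D_1\otimes\C$. For (d) it is fatal to the \emph{stated} inclusion: take $d=d_1\in\D_1$ nonzero and $i\in\N_{n_0}$; then $w_n^{*}(d_1\otimes e_{i,i})w_n=d_1\otimes s_{n,n_0}e_{i,i}s_{n_0,n}$ is a nonzero element of $\D_1\otimes\C$, hence not in $\D_2\otimes\C$. Thus (i)(d) as printed (and symmetrically (ii)(d)) is a misprint; the correct target is $\D_0\otimes\C$, which is precisely what the subsequent Lemma~\ref{lem:3.4}(d) and Proposition~\ref{prop:V} require. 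Once you correct the target subalgebra and retain the $d_1$-summand in your bookkeeping, your argument is complete.
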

\begin{proof}
(i)
Since $u_n^* u_n = P_2 \otimes f_n$, we have
$$
w_n^* w_n = P_1 \otimes f_{n} + u_n^* u_n = P_1 \otimes f_n + P_2 \otimes f_n 
                = 1 \otimes f_n. 
$$
As  
$u_n(P_2 \otimes s_{n,n_0}) =(P_2 \otimes s_{n,n_0}) u_n^* =0$,
we have 
%\begin{align*}
$$
w_n w_n^*
 = P_1 \otimes f_{n_0} + u_n u_n^* 
 = P_1 \otimes f_{n_0} +
\sum_{k=1}^{\infty} U_k U_k^* \otimes f_{n_k}. 
$$
Since $f_{n_0}, f_{n_k} \le f_n$,
we have 
$$
w_n w_n^* \le P_1 \otimes f_n.
$$
(ii) is similarly shown to (i).
\end{proof}
We will construct and study the unitary $V_1$ in $M(\A_0\otimes\K)$ such that
$\Ad(V_1): \A_0\otimes\K \longrightarrow \A_1\otimes\K$
and
$\Ad(V_1)(\D_0\otimes\C)=\D_1\otimes\C$

Let $f_{n,m}$ be a partial isometry satisfying
$f_{n,m}^*f_{n,m} = f_m,\, f_{n,m}f_{n,m}^* = f_n.$
The following lemma is straightforward.
\begin{lemma} [{cf. \cite[Lemma 3.4]{MaPre2016}}] \label{lem:3.4}
We put
\begin{align*}
v_1 &  = w_1 = P_1 \otimes s_{1_0,1} + u_1, \\
v_{2n} & = (P_1 \otimes f_n - v_{2n-1}v_{2n-1}^*)(P_1 \otimes f_{n,n+1})
\quad \text{ for } 1\le n \in \N, \\
v_{2n-1} & = w_n(1\otimes f_n - v_{2n-2}^*v_{2n-2}) \quad \text{ for } 2\le n \in \N. 
\end{align*} 
Then we have
for $n\in \N$ 
\begin{enumerate}
\renewcommand{\theenumi}{\alph{enumi}}
\renewcommand{\labelenumi}{\textup{(\theenumi)}}
\item$v_{2n-2}^* v_{2n-2} + v_{2n-1}^* v_{2n-1} = 1\otimes f_n$.
\item$v_{2n-1} v_{2n-1}^* + v_{2n} v_{2n}^* = P_1\otimes f_n$.
\item $v_n(\D_0 \otimes \C)v_n^* \subset \D_1\otimes\C.$
\item $v_n^*(\D_1 \otimes \C)v_n \subset \D_0\otimes\C.$
\end{enumerate}
\end{lemma}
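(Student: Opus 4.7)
The plan is to prove all four assertions by simultaneous induction on $n$, adopting the convention $v_0 = 0$. For the base case $n=1$, $v_1 = w_1$ makes (a), (c), (d) immediate consequences of Lemma \ref{lem:3.3}(i) (using $\D_1, \D_2 \subset \D_0$), while (b) reduces to showing $v_2 v_2^* = P_1 \otimes f_1 - v_1 v_1^*$, which is a direct computation once one notes that $v_1 v_1^* \le P_1 \otimes f_1$.

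For the inductive step, properties (a) and (b) are algebraic. Writing $q_{2n-2} := 1\otimes f_n - v_{2n-2}^* v_{2n-2}$ and $p_{2n-1} := P_1\otimes f_n - v_{2n-1} v_{2n-1}^*$, one first checks that $v_{2n-2}^* v_{2n-2} \le 1\otimes f_n$ (from the formula for $v_{2n-2}$ together with the fact that the partial isometry $P_1\otimes f_{n-1,n}$ has source projection $P_1\otimes f_n$), so $q_{2n-2}$ is a projection. Since $w_n^* w_n = 1\otimes f_n$, the element $v_{2n-1} = w_n q_{2n-2}$ is a partial isometry with $v_{2n-1}^* v_{2n-1} = q_{2n-2}$, yielding (a). Analogously, $v_{2n-1} v_{2n-1}^* \le w_n w_n^* \le P_1\otimes f_n$ by Lemma \ref{lem:3.3}(i)(b), so $p_{2n-1}$ is a projection; and the factor $P_1\otimes f_{n,n+1}$ absorbs $P_1\otimes f_n$ on the right so that $v_{2n} v_{2n}^* = p_{2n-1}$, which is (b).

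For (c) and (d), I would strengthen the inductive hypothesis to include that $v_n^* v_n$ is a multiplier of $\D_0 \otimes \C$ and $v_n v_n^*$ is a multiplier of $\D_1 \otimes \C$ (both viewed inside $M(\A_0 \otimes \K)$). This ensures that $q_{2n-2}$ multiplies $\D_0\otimes\C$ into itself and $p_{2n-1}$ multiplies $\D_1\otimes\C$ into itself. For odd indices $v_{2n-1} = w_n q_{2n-2}$,
\[
v_{2n-1}(\D_0 \otimes \C) v_{2n-1}^*
= w_n \bigl(q_{2n-2}(\D_0 \otimes \C) q_{2n-2}\bigr) w_n^*
\subset w_n (\D_0 \otimes \C) w_n^* \subset \D_1 \otimes \C,
\]
by Lemma \ref{lem:3.3}(i)(c). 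For even indices $v_{2n}$, one has $v_{2n}(\D_0\otimes\C)v_{2n}^* = p_{2n-1} \bigl((P_1\otimes f_{n,n+1})(\D_0\otimes\C)(P_1\otimes f_{n+1,n})\bigr) p_{2n-1}$, where the middle conjugation lies in $\D_1\otimes\C$ because $P_1 \D_0 P_1 = \D_1$ (by \eqref{eq:APDPcorner}) and $f_{n,n+1}\,\C\, f_{n+1,n} \subset \C$ under the implicit choice of $f_{n,m}$ as matrix-unit partial isometries. Property (d) is proved symmetrically using Lemma \ref{lem:3.3}(i)(d).

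The principal obstacle is the bookkeeping required to maintain multiplier-algebra membership of $v_n^* v_n$ and $v_n v_n^*$ as the induction advances. This hinges on two points: the partial isometries $s_{j_k, j}$ and $f_{n, m}$ should be realized as sums of matrix units $e_{\phi(i), i}$ so that conjugation by them preserves $\C$, and the corner identity $P_i \D_0 P_i = \D_i$ from \eqref{eq:APDPcorner} must be invoked at each step to ensure that compressions by $P_1, P_2$ land in the correct subalgebra. The orthogonality relations from Lemma \ref{lem:UNTN} and the structural properties of $w_n$ from Lemma \ref{lem:3.3} are then essentially routine to apply.
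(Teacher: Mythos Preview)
Your inductive approach is correct and is essentially the only natural route; the paper itself gives no proof, simply declaring the lemma ``straightforward'' with a citation to \cite{MaPre2016}. Your strengthened inductive hypothesis (that $v_n^*v_n$ and $v_nv_n^*$ normalize $\D_0\otimes\C$ and $\D_1\otimes\C$ respectively) is the right bookkeeping device, and your identification of the implicit assumption that $s_{j_k,j}$ and $f_{n,m}$ are realized by matrix units is exactly the point one must make explicit to get (c) and (d) to go through.
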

By the above lemma, one may see that 
the summation
$\sum_{n=1}^\infty v_n$
 converges in $M(\A_0\otimes\K)$ 
to certain partial isometry written $V_1$
 in the strict topology of $M(\A_0\otimes \K)$.
Similarly we obtain a partial isometry
$V_2 $  in $M(\A_0\otimes\K)$
constructed from
the preceding sequences 
$t_n,  z_n $ of partial isometries.
As a consequence, we obtain the following proposition. 
\begin{proposition}\label{prop:V}
Assume that
$(\A_1,\D_1)
\underset{\operatorname{RME}}{\sim}
(\A_2\D_2)$.
Let $(\A_0,\D_0)$ be the relative linking pair defined in 
\eqref{eq:linkinga}, \eqref{eq:linkingd}.
\begin{enumerate}
\renewcommand{\theenumi}{\roman{enumi}}
\renewcommand{\labelenumi}{\textup{(\theenumi)}}
\item There exists an isometry $V_1$ in $M(\A_0\otimes \K)$
such that 
%\begin{equation*}
%V_1^* V_1 = 1\otimes 1, \qquad
%V_1 V_1^* = P_1\otimes 1, \qquad
%V_1(\D_0\otimes\C)V_1^* = \D_1\otimes\C,\qquad
%V_1^*(\D_1\otimes\C)V_1 = \D_0\otimes\C.
%\end{equation*}
{\begin{enumerate}
\item$V_1^* V_1 = 1\otimes 1$.
\item$V_1 V_1^* = P_1\otimes 1.$
\item$V_1(\D_0\otimes\C)V_1^* = \D_1\otimes\C.$
\item$V_1^*(\D_1\otimes\C)V_1 = \D_0\otimes\C.$
\end{enumerate}}
\item There exists an isometry $V_2$ in $M(\A_0\otimes \K)$
such that 
%\begin{equation*}
%V_2^* V_2 = 1\otimes 1,\qquad
%V_2 V_2^* = P_2\otimes 1, \qquad
%V_2(\D_0\otimes\C)V_2^* = \D_2\otimes\C, \qquad
%V_2^*(\D_1\otimes\C)V_2 = \D_0\otimes\C.
%\end{equation*}
{\begin{enumerate}
\item$V_2^* V_2 = 1\otimes 1$.
\item$V_2 V_2^* = P_2\otimes 1.$
\item$V_2(\D_0\otimes\C)V_2^* = \D_2\otimes\C.$
\item$V_2^*(\D_2\otimes\C)V_2 = \D_0\otimes\C.$
\end{enumerate}}
\end{enumerate}
\end{proposition}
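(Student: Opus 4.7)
The strategy is to realize $V_1$ as the strictly convergent sum $V_1 := \sum_{n=1}^\infty v_n$ of the partial isometries built in Lemma~\ref{lem:3.4}, and then obtain $V_2$ by the mirror construction with $t_n, z_n$ in place of $u_n, w_n$. The whole argument rests on observing that Lemma~\ref{lem:3.4}(a) and~(b) yield more than the displayed pairwise sums: consecutive indices inside each $1\otimes f_n$ (respectively, $P_1 \otimes f_n$) give orthogonal components, and since the $f_n$ are themselves mutually orthogonal, the source projections $\{v_n^* v_n\}_{n\geq 1}$ are pairwise orthogonal, as are the range projections $\{v_n v_n^*\}_{n\geq 1}$. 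Hence $v_n^* v_m = 0 = v_n v_m^*$ whenever $n \neq m$.

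Given this orthogonality, the first step is to establish that $\sum_n v_n$ converges in the strict topology of $M(\A_0\otimes\K)$. For $a \in \A_0\otimes\K$ the identity $\|\sum_{n=M}^N v_n a\|^2 = \|(\sum_{n=M}^N v_n^* v_n)\, a^* a\|$, combined with $\sum_n v_n^* v_n = 1\otimes 1$ (which comes from summing Lemma~\ref{lem:3.4}(a) and using $\sum_n 1\otimes f_n = 1\otimes 1$ strictly), shows the partial sums form a Cauchy net on the right, and similarly on the left. The same telescoping then gives $V_1^* V_1 = \sum_n v_n^* v_n = 1\otimes 1$ and $V_1 V_1^* = \sum_n v_n v_n^* = P_1 \otimes 1$, proving (a) and~(b).

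The main obstacle is verifying (c) and~(d). The previous two identities already say that $\Ad(V_1)$ is a $*$-isomorphism $\A_0\otimes\K \to \A_1\otimes\K$; what has to be shown is that it carries $\D_0\otimes\C$ onto $\D_1\otimes\C$. Expanding $V_1 d V_1^* = \sum_{n,m} v_n d v_m^*$, the diagonal terms $v_n d v_n^*$ lie in $\D_1\otimes\C$ by Lemma~\ref{lem:3.4}(c). The technical heart is the vanishing of the cross terms: each $v_n$ is a finite sum of tensors of the shape $A \otimes s_{n_k,n}$ whose $\K$-components have source contained in $f_n$ and pairwise disjoint ranges inside $f_n$, and the $\C$-factor of any $d \in \D_0\otimes\C$ commutes with every $f_k$ and annihilates products of partial isometries whose $\K$-supports lie in distinct $f_n$-blocks; this collapses $v_n d v_m^*$ to zero for $n \neq m$. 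The strict sum of the remaining diagonal terms therefore stays in $\D_1\otimes\C$, giving the forward inclusion in~(c).

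For the reverse inclusion, any $b \in \D_1\otimes\C$ lies in $P_1(\A_0\otimes\K)P_1$ and so satisfies $b = V_1(V_1^* b\, V_1)V_1^*$; running the dual argument with Lemma~\ref{lem:3.4}(d) in place of~(c) shows $V_1^* b\, V_1 \in \D_0\otimes\C$, upgrading~(c) to an equality, and~(d) then follows by one further application of $\Ad(V_1^*)$. The construction of $V_2$ proceeds verbatim using the symmetric statements of Lemma~\ref{lem:3.3} and Lemma~\ref{lem:3.4} for $t_n$ and $z_n$, producing an isometry with $V_2 V_2^* = P_2 \otimes 1$ and the analogous diagonal preservation.
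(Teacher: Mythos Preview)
Your overall strategy---define $V_1=\sum_{n\ge 1} v_n$ in the strict topology and read off (a)--(d) from Lemma~\ref{lem:3.4}---is exactly the paper's, and your treatment of strict convergence together with $V_1^*V_1=1\otimes 1$, $V_1V_1^*=P_1\otimes 1$ is fine.

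The gap is in your justification of (c) and (d). You assert that ``each $v_n$ is a finite sum of tensors of the shape $A\otimes s_{n_k,n}$'': this describes $w_n$, not $v_n$. Only $v_1=w_1$ has that form; for $n\ge 2$ the $v_n$ are built recursively from $w_n$ and the previous $v_{n-1}$ via the projections $P_1\otimes f_n - v_{2n-1}v_{2n-1}^*$ and $1\otimes f_n - v_{2n-2}^*v_{2n-2}$, and their second tensor legs are no longer single $s_{n_k,n}$'s. More seriously, your ``distinct $f_n$-blocks'' argument only shows $v_n d v_m^*=0$ when the \emph{source} projections $v_n^*v_n$ and $v_m^*v_m$ sit under different $1\otimes f_k$. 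From Lemma~\ref{lem:3.4}(a) the source of $v_{2k-2}$ and the source of $v_{2k-1}$ both lie under $1\otimes f_k$, so the pairs $(n,m)=(2k-2,2k-1)$ with $k\ge 2$ survive your argument untouched.

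For those remaining pairs one has $v_{2k-2}^*v_{2k-2}\perp v_{2k-1}^*v_{2k-1}$, but that orthogonality only kills $v_{2k-2}\,d\,v_{2k-1}^*=v_{2k-2}(v_{2k-2}^*v_{2k-2})\,d\,(v_{2k-1}^*v_{2k-1})v_{2k-1}^*$ once you know that $v_{2k-2}^*v_{2k-2}$ \emph{commutes} with every $d\in\D_0\otimes\C$. That is the missing ingredient: an induction showing that each $v_n v_n^*$ (hence each $v_n^*v_n$, via conjugation by $P_1\otimes f_{n,n+1}$) lies in the commutant of $\D_0\otimes\C$ inside $M(\A_0\otimes\K)$. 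This is not part of Lemma~\ref{lem:3.4} as stated; unwinding the base case $v_1v_1^*=w_1w_1^*=P_1\otimes f_{1_0}+\sum_k U_kU_k^*\otimes f_{1_k}$ shows that one needs each ${}_{\A_1}\!\langle x_k\mid x_k\rangle=U_kU_k^*$ to commute with $\D_1$. In the paper's main applications (Cuntz--Krieger pairs, where the $\D_i$ are abelian masas, and in the model of \cite{MaPre2016}) this is automatic, but it is a genuine extra step that your block argument does not supply.
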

Therefore we reach the following theorem
\begin{theorem}\label{thm:RMEPHI2}
Let  
$(\A_1, \D_1)$
and 
$(\A_2, \D_2)$
be relative $\sigma$-unital pairs of $C^*$-algebras.
Then 
$(\A_1, \D_1)\underset{\operatorname{RME}}{\sim}(\A_2, \D_2)
$
if and only if 
there exists an isomorphism 
$\Phi: \A_1\otimes\K \longrightarrow \A_2\otimes \K$ of $C^*$-algebras
such that 
$\Phi(\D_1\otimes\C)=\D_2\otimes \C$.
\end{theorem}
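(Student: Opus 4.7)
My plan is to assemble Theorem \ref{thm:RMEPHI2} by combining the forward implication, which is essentially Theorem \ref{RMEPHI} (proved via the construction of Proposition \ref{prop:V}), with the two stabilization-type lemmas \ref{lem:theta} and \ref{lem:stab} together with transitivity of $\underset{\operatorname{RME}}{\sim}$. Since the hard analytic work has already been absorbed into the partial-isometry construction of Lemmas \ref{lem:3.3}--\ref{lem:3.4} and Proposition \ref{prop:V}, at this point the theorem reduces to chaining established equivalences.

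For the "only if" direction, I would start from an $(\A_1, \D_1)$--$(\A_2, \D_2)$-relative imprimitivity bimodule $X$ and form the relative linking pair $(\A_0, \D_0)$ as in \eqref{eq:linkinga}--\eqref{eq:linkingd} with the corner projections $P_1, P_2$ of \eqref{eq:cornerprojection}. Proposition \ref{prop:V} supplies isometries $V_1, V_2 \in M(\A_0 \otimes \K)$ with
$V_i^* V_i = 1 \otimes 1$, $V_i V_i^* = P_i \otimes 1$, and $V_i (\D_0 \otimes \C) V_i^* = \D_i \otimes \C$. I would then set $\Phi := \Ad(V_2 V_1^*)$. Using $V_1 V_1^* = P_1 \otimes 1$, the map $\Ad(V_1^*)$ is an isomorphism from the corner $(P_1 \otimes 1)(\A_0 \otimes \K)(P_1 \otimes 1) = \A_1 \otimes \K$ onto $\A_0 \otimes \K$, and $\Ad(V_2)$ is an isomorphism from $\A_0 \otimes \K$ onto $(P_2 \otimes 1)(\A_0 \otimes \K)(P_2 \otimes 1) = \A_2 \otimes \K$. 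A short computation gives
$\Phi(\D_1 \otimes \C) = V_2 (V_1^*(\D_1 \otimes \C)V_1) V_2^* = V_2 (\D_0 \otimes \C) V_2^* = \D_2 \otimes \C$, as required.

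For the converse, suppose an isomorphism $\Phi : \A_1 \otimes \K \to \A_2 \otimes \K$ carrying $\D_1 \otimes \C$ onto $\D_2 \otimes \C$ is given. By Lemma \ref{lem:theta} applied to the pairs $(\A_i \otimes \K, \D_i \otimes \C)$, we obtain $(\A_1 \otimes \K, \D_1 \otimes \C) \underset{\operatorname{RME}}{\sim} (\A_2 \otimes \K, \D_2 \otimes \C)$. Lemma \ref{lem:stab} applied to each $(\A_i, \D_i)$ gives $(\A_i, \D_i) \underset{\operatorname{RME}}{\sim} (\A_i \otimes \K, \D_i \otimes \C)$ for $i = 1, 2$. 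Chaining these three equivalences using the transitivity and symmetry of $\underset{\operatorname{RME}}{\sim}$ (established in the proposition preceding Lemma \ref{lem:stab} via conjugate bimodules and the relative tensor product of Lemma \ref{lem:trans}) yields $(\A_1, \D_1) \underset{\operatorname{RME}}{\sim} (\A_2, \D_2)$.

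The main obstacle has in fact already been discharged: it lay in the forward direction, namely the orthogonalization procedure of Lemma \ref{lem:3.4} that builds $V_1$ out of the sequences $\{w_n\}$ and then the analogous $V_2$ out of $\{z_n\}$ in such a way that the relative diagonal $\D_0 \otimes \C$ is preserved under $\Ad(V_i^{\pm})$. With Proposition \ref{prop:V} available, the remainder of Theorem \ref{thm:RMEPHI2} is essentially formal bookkeeping, and the new content of the present theorem beyond Theorem \ref{RMEPHI} is only the converse direction, which follows transparently from Lemmas \ref{lem:theta} and \ref{lem:stab}.
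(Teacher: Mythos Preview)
Your proof is correct and follows essentially the same route as the paper: the forward direction uses the isometries $V_1, V_2$ from Proposition \ref{prop:V} and sets $\Phi = \Ad(V_2 V_1^*)$, while the converse is obtained by combining Lemma \ref{lem:theta} with Lemma \ref{lem:stab} and transitivity. Your write-up simply expands the details the paper leaves implicit, such as the verification that $\Phi(\D_1\otimes\C)=\D_2\otimes\C$ and the explicit chaining of equivalences.
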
 
\begin{proof}
Suppose 
$(\A_1, \D_1)\underset{\operatorname{RME}}{\sim}(\A_2, \D_2).
$
Take isometries 
$V_1, V_2$ in $M(\A_0\otimes\K)$
as in Proposition \ref{prop:V}.
Put $\Phi = \Ad(V_2 V_1^*)$
which gives rise to
an isomorphism 
$\Phi: \A_1\otimes\K \longrightarrow \A_2\otimes \K$ of $C^*$-algebras
such that 
$\Phi(\D_1\otimes\C)=\D_2\otimes \C$.

Converse implication comes from  Lemma \ref{lem:theta}
and Lemma \ref{lem:stab}.
\end{proof}

%%%%%%%%%%%%%%%%%%%%%%%%%%%%
%%%%%%%%%%%%%%%%%%%%%%%%%%%%%%%%%%%%%%%
\section{Relative full corners}
%%%%%%%%%%%%%%%%%%%%%%%%%%%%%%%%%%%%
%%%%%%%%%%%%%%%%%%%%%%%%%%%%%%%%%%%%%%%%%%%
It is well-known that 
two $C^*$-algebras are strong Morita equivalent if and only if they are complementary full corners of some $C^*$-algebra (\cite[Theorem 1.1]{BGR}).
In this section, we will study a relative version of this fact. 
\begin{definition}
For a relative $\sigma$-unital pair $(\A,\D)$
of $C^*$-algebra,
a projection $P \in M(\D)$ is said to be 
 {\it relative full}\/ for $(\A,\D)$
if it satisfies the following conditions
\begin{enumerate}
\item $P d = d P$ for all $d \in \D$.
\item There exists an sequence $a_n \in \A, n=1,2,\dots $ such that  
{\begin{enumerate}
\renewcommand{\theenumi}{\alph{enumi}}
\item $a_n^* d a_n\in \D, \, a_n d a_n^* \in \D P$ for all $d \in \D$ and  $n =1,2,\dots$.
\item $\sum_{n=1}^{\infty}a_n^* P a_n =1-P$ in the strict topology of $M(\A)$.
\item $a_n d a_m^* = 0$ 
for all $d \in \D$ and $n,m \in \N$ with $n\ne m$.
\end{enumerate}}
\end{enumerate} 
\end{definition}
We call the sequence $\{a_n\}_{n\in \N}$ 
satisfying the three conditions (a), (b), (c)
{\it a relative full sequence}\/ for $P$.
\begin{remark}
By the above condition (b), we know that 
$$
(\text{b}') \, \,\, a_n^* d P a_n \in \D(1-P) \quad \text{for all } d \in \D,\hspace{53mm}
$$
because we have
\begin{equation*}
(a_n^* d P a_n)^* a_n^* d P a_n
=a_n^*  P d^*  a_n a_n^* d P a_n 
\le \| d^*  a_n a_n^* d \| a_n^* P a_n \le 1 -P.
\end{equation*} 
\end{remark}
\begin{definition}\label{def:fullcorner}
Relative $\sigma$-unital pairs 
 $(\A_1, \D_1)$ and $(\A_2, \D_2)$ of $C^*$-algebras
are said to be {\it complementary relative full corners}\/ 
if there exists a relative $\sigma$-unital pair
$(\A_0,\D_0)$ of $C^*$-algebras
such that there exist relative full projections
$P_1, P_2 \in M(\D_0)$ such that 
\begin{equation}
P_1 + P_2 =1 \quad\text{ and }
\quad 
P_i \A_0 P_i = \A_i , \quad \D_0 P_i = \D_i,
\,\, \quad i=1,2.
\end{equation}
\end{definition}
\begin{proposition}\label{prop:fullcorners}
Let $(\A_1, \D_1)$ and $(\A_2, \D_2)$
be relative $\sigma$-unital pairs of $C^*$-algebras.
If they are complementary relative full corners,
then we have 
$
(\A_1, \D_1)\underset{\operatorname{RME}}{\sim}(\A_2, \D_2).
$
\end{proposition}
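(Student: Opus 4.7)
The plan is to take the natural corner bimodule $X := P_1 \A_0 P_2$, equipped with
\[
{}_{\A_1}\!\langle x \mid y \rangle := xy^*, \qquad \langle x \mid y \rangle_{\A_2} := x^* y,
\]
and show it is an $(\A_1,\D_1)$--$(\A_2,\D_2)$-relative imprimitivity bimodule. The imprimitivity identity $(xy^*)z = x(y^*z)$ is automatic from associativity in $\A_0$, so by Definition~\ref{def:rib} what remains is to check ordinary fullness of $X$ and to produce a relative basis.

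For ordinary fullness, I would first observe that $P_1$ and $P_2$ are full projections in $\A_0$ in the usual sense: the strict identity $\sum_n a_n^* P_1 a_n = P_2$ coming from any relative full sequence $\{a_n\}$ for $P_1$ gives $c P_2 = \lim_N \sum_{n\le N} c a_n^* P_1 a_n \in \overline{\A_0 P_1 \A_0}$ for every $c \in \A_0$, whence $\overline{\A_0 P_1 \A_0} = \A_0$, and symmetrically for $P_2$. The standard Brown--Green--Rieffel corner argument then yields that $X$ is a full $\A_1$--$\A_2$-imprimitivity bimodule.

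For the relative basis, pick relative full sequences $\{a_n\}$ for $P_1$ and $\{b_n\}$ for $P_2$ and set
\[
x_n := P_1 a_n P_2, \qquad y_n := P_1 b_n^* P_2 \qquad (n\in \N).
\]
Sandwiching the defining identities of the full sequences between $P_2$'s, respectively $P_1$'s, yields
\[
\sum_n \langle x_n \mid x_n \rangle_{\A_2} = P_2 \Bigl(\sum_n a_n^* P_1 a_n\Bigr) P_2 = P_2,
\qquad
\sum_n {}_{\A_1}\!\langle y_n \mid y_n \rangle = P_1,
\]
both in the strict topology, which are the units of $M(\A_2)$ and $M(\A_1)$. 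Clause (c) of the full sequences forces the orthogonality requirements ${}_{\A_1}\!\langle x_n d_2 \mid x_m \rangle = P_1 a_n d_2 a_m^* P_1 = 0$ and $\langle y_n \mid d_1 y_m \rangle_{\A_2} = P_2 b_n d_1 b_m^* P_2 = 0$ for $n\ne m$. To place $x_n$ in $X_D$, compute $x_n d_2 x_n^* = P_1 (a_n d_2 a_n^*) P_1 \in \D_1$ using clause (a) of the full sequence for $P_1$, and $x_n^* d_1 x_n = P_2 (a_n^* d_1 P_1 a_n) P_2 \in \D_2$ using the derived clause $(\text{b}')$ of the Remark (applied with $d = d_1$ and $P = P_1$, noting $d_1 P_1 = d_1$ since $d_1 \in \D_0 P_1$). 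The analogous computation with $\{b_n\}$ and $P_2$ puts $y_n$ in $X_D$.

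The main obstacle is a bookkeeping subtlety rather than a deep difficulty: the two basis sequences $\{x_n\}$ and $\{y_n\}$ must be built from full sequences attached to \emph{different} projections, because the summation condition on $\{x_n\}$ needs $\sum a_n^* P_1 a_n = P_2$ while that on $\{y_n\}$ needs $\sum b_n^* P_2 b_n = P_1$; and the two halves of the $X_D$ requirement for a single basis element are extracted from \emph{different} clauses ((a) and $(\text{b}')$) of the definition of a relative full sequence, with the commutation $P_i d = d P_i$ for $d\in \D_0$ playing a silent but essential role. Once the correct pairing is set up, all verifications reduce to direct algebraic manipulations in $\A_0$, and the existence of the relative basis $(\{x_n\},\{y_n\})$ completes the proof that $(\A_1,\D_1)\underset{\operatorname{RME}}{\sim}(\A_2,\D_2)$.
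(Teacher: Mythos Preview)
Your proposal is correct and follows essentially the same route as the paper: the same corner bimodule $X = P_1 \A_0 P_2$, the same basis candidates $x_n = P_1 a_n P_2$ and $y_n = P_1 b_n^* P_2$ built from the two relative full sequences, and the same verifications of the summation, orthogonality, and $X_D$ conditions. The only cosmetic differences are that you spell out ordinary fullness explicitly (the paper treats it as standard) and invoke clause~$(\text{b}')$ where the paper simply uses clause~(a) together with the commutation of $P_i$ with $\D_0$; both routes are valid.
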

\begin{proof}
Let $(\A_0,\D_0)$ and $P_i \in M(\D_0), i=1,2$ 
be a relative $\sigma$-unital pair
 of $C^*$-algebras and projections, respectively, 
satisfying Definition \ref{def:fullcorner}.
Let $\{a_n\}$ and $\{b_n\}$ be relative full sequences for the projections
$P_1, P_2$, respectively. 
We set $X = P_1 \A_0 P_2$
and define two sequences by
$x_n = P_1 a_n P_2$ and 
$y_n = P_1 b_n^* P_2$.
For $d\in \D_0$, put
$d_i = d P_i, i=1,2.$
It then follows that 
\begin{align*}
{}_{\A_1}\!\langle x_n d_2 \mid x_n \rangle 
& = P_1 a_n P_2 d_2 P_2 a_n^* P_1 \in \D_0 P_1 =\D_1,\\
\langle x_n \mid d_1 x_n \rangle_{\A_2} 
& = P_2 a_n^* P_1 d_1 P_1 a_n P_2 \in \D_0 P_2 =\D_2.
\end{align*}
Hence $x_n$ belongs to $X_D$.
We also have 
\begin{align*}
\sum_{n=1}^\infty \langle x_n \mid x_n \rangle_{\A_2}
& = \sum_{n=1}^\infty P_2 a_n^* P_1 a_n P_2 = P_2,\\  
\intertext{and}
 {}_{\A_1}\!\langle x_n d_2 \mid x_m \rangle 
& =  P_1 a_n P_2 d P_2 a_m^* P_1 = 0 \quad \text{ for } n\ne m.
\end{align*}
Hence $\{ x_n\}$ is a relative left  basis for $X$.
Similarly we have 
\begin{align*}
{}_{\A_1}\!\langle y_n d_2 \mid y_n \rangle 
& = P_1 b_n^* P_2 d_2 P_2 b_n P_1 \in \D_0 P_1 =\D_1,\\
\langle y_n \mid d_1 y_n \rangle_{\A_2} 
& = P_2 b_n P_1 d_1 P_1 b_n^* P_2 \in \D_0 P_2 =\D_2.
\end{align*}
Hence $y_n$ belongs to $X_D$.
We also have 
\begin{align*}
\sum_{n=1}^\infty {}_{\A_1}\!\langle y_n \mid y_n \rangle
& = \sum_{n=1}^\infty P_1 b_n^* P_2 b_n P_1 = P_1,\\  
\intertext{and}
 \langle y_n \mid d_1 y_m \rangle_{\A_2} 
& =  P_2 b_n P_1 d P_1 b_m^* P_2 = 0 \quad \text{ for } n\ne m.
\end{align*}
Hence $\{ y_n\}$ is a relative right  basis for $X$.
 Therefore $X$ is an 
 $(\A_1, \D_1)$--$(\A_2, \D_2)$-relative imprimitivity bimodule,
so that we have 
$
(\A_1, \D_1)\underset{\operatorname{RME}}{\sim}(\A_2, \D_2).
$
\end{proof}
We obtain the following theorem.
\begin{theorem}\label{thm:fullcorners}
Let  
$(\A_1, \D_1)$
and 
$(\A_2, \D_2)$
be relative $\sigma$-unital pairs of $C^*$-algebras.
Then 
$(\A_1, \D_1)\underset{\operatorname{RME}}{\sim}(\A_2, \D_2)
$
if and only if 
$(\A_1, \D_1)$
and 
$(\A_2, \D_2)$
are complementary relative full corners.
\end{theorem}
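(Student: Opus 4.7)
The backward implication is exactly Proposition \ref{prop:fullcorners}, so the only thing to show is that relative Morita equivalence implies the existence of complementary relative full corners. The plan is to reuse the relative linking pair $(\A_0,\D_0)$ already constructed in Section~4 together with the corner projections $P_1,P_2\in M(\D_0)$ of \eqref{eq:cornerprojection}, and then verify the relative fullness condition directly from the bimodule data.

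Concretely, starting from an $(\A_1,\D_1)$--$(\A_2,\D_2)$-relative imprimitivity bimodule $X$ with relative basis $(\{x_n\},\{y_n\})\subset X_D$, I would form the pair $(\A_0,\D_0)$ as in \eqref{eq:linkinga}, \eqref{eq:linkingd}. The earlier work of Section~4 already supplies: (i) that $(\A_0,\D_0)$ is relative $\sigma$-unital, and (ii) that $P_1+P_2=1$ together with $P_i\A_0 P_i=\A_i$, $\D_0 P_i=\D_i$ for $i=1,2$ as recorded in \eqref{eq:APDPcorner}. The two projections commute with every element of $\D_0=\D_1\oplus\D_2$, so condition (1) of the definition of a relative full projection is automatic.

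The main task is to produce a relative full sequence for each of $P_1$ and $P_2$. The natural candidates are the matrix elements
$$
U_n=\begin{bmatrix}0 & x_n\\ 0 & 0\end{bmatrix},\qquad T_n=\begin{bmatrix}0 & 0\\ \bar y_n & 0\end{bmatrix}
$$
of Lemma \ref{lem:UNTN}. For $P_1$, I would take the sequence $\{U_n\}$: since $P_1 U_n=U_n$, the identities
$$
\sum_{n=1}^\infty U_n^* P_1 U_n=\sum_{n=1}^\infty U_n^* U_n=P_2=1-P_1
$$
give part (b) of the definition in the strict topology of $M(\A_0)$. For part (a), formulas \eqref{eq:undun} and \eqref{eq:undun2} immediately show $U_n^* d U_n\in\D_0 P_2\subset\D_0$ and $U_n d U_n^*\in\D_0 P_1$ for all $d\in\D_0$. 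For part (c), a direct matrix computation together with $x_n\in X_D$ and the orthogonality condition ${}_{\A_1}\!\langle x_n d_2\mid x_m\rangle=0$ for $n\neq m$ yields $U_n d U_m^*=0$. Exactly the symmetric argument, with $\{T_n\}$ in place of $\{U_n\}$ and the right-basis relations ${}_{\A_1}\!\langle y_n\mid y_n\rangle$, shows that $\{T_n\}$ is a relative full sequence for $P_2$.

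Once both $P_1$ and $P_2$ are relative full, Definition \ref{def:fullcorner} is satisfied, and $(\A_1,\D_1), (\A_2,\D_2)$ are complementary relative full corners of $(\A_0,\D_0)$. I do not expect a genuine obstacle: all the structural work (construction of the linking pair, the sequences $U_n, T_n$, and their compatibility with $\D_0$) has already been carried out in Section~4, so the proof reduces to matching the definitions. The only point requiring a little care is the strict-topology convergence $\sum_n U_n^* P_1 U_n=P_2$, which follows from the assumed strict convergence $\sum_n\langle x_n\mid x_n\rangle_{\A_2}=1$ once one observes that multiplication by $P_1$ leaves $U_n$ unchanged.
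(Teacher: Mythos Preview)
Your proposal is correct and follows essentially the same route as the paper: the if part is Proposition~\ref{prop:fullcorners}, and for the only if part you take the linking pair $(\A_0,\D_0)$ with corner projections $P_1,P_2$ and verify that the sequences $U_n,T_n$ of Lemma~\ref{lem:UNTN} serve as relative full sequences for $P_1$ and $P_2$ respectively. The paper's own proof is actually terser than yours, simply citing Lemma~\ref{lem:UNTN} and \eqref{eq:APDPcorner} without spelling out the verification of conditions (a)--(c).
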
 
\begin{proof}
The if part has been proved in 
Proposition \ref{prop:fullcorners}.
To show the only if part,
suppose 
$(\A_1, \D_1)\underset{\operatorname{RME}}{\sim}(\A_2, \D_2).
$
Take $(\A_0, \D_0)$
the linking pair defined in 
\eqref{eq:linkinga}, \eqref{eq:linkingd}.
Let $P_1, P_2$ be the projections in $M(\D_0)$
defined by \eqref{eq:cornerprojection}.
Take the sequence $U_n, T_n$ as in Lemma \ref{lem:UNTN}.
The proof of Lemma \ref{lem:UNTN} shows us that 
the sequences  $a_n:=U_n$ and $b_n:=T_n$  
 are relative full sequences for $P_1$ and $P_2$, respectively,
so that $P_1$ and $P_2$ are relative full projections for $(\A_0,\D_0)$.
Since $P_1 + P_2 =1$,
the equalities \eqref{eq:APDPcorner}
show that 
$(\A_1, \D_1)$
and 
$(\A_2, \D_2)$
are complementary relative full corners.
\end{proof}

%%%%%%%%%%%%%%%%%%%%%%%%%%%%
%%%%%%%%%%%%%%%%%%%%%%%%%%%%%%%%%%%%%%%
\section{Relative Morita equivalence in Cuntz--Krieger pairs}
%%%%%%%%%%%%%%%%%%%%%%%%%%%%%%%%%%%%%%%%%%%%

In this section, we will study relative Morita equivalence particularly in 
Cuntz--Krieger algebras from a viewpoint of symbolic dynamical systems.
For a nonnegative matrix  $A =[A(i,j)]_{i,j=1}^N $,
 the associated directed graph  
$G_A = (V_A,E_A)$
consists of 
the vertex set
$V_A=\{v_1^A, \dots, v_N^A\}$
of $N$-vertices
and
 the edge set 
$E_A =\{ a_1, \dots, a_{N_A}\}$
where there are $A(i,j)$ edges from $v_i^A$ to $v_j^A$.
 For $a_i\in E_A$, denote by $t(a_i), s(a_i)$ 
the terminal vertex of $a_i$ and the source vertex of $a_i$,
respectively.
The graph $G_A$  
 has the $N_A \times N_A$
 transition matrix $A^G =[A^G(a_i,a_j)]_{i,j=1}^{N_A}$ of edges defined by 
\begin{equation}
A^G(a_i,a_j) =
\begin{cases} 
 1 &  \text{  if  } t(a_i) = s(a_j), \\
 0 & \text{  otherwise}
\end{cases}\label{eq:AG}
\end{equation}
for $a_i, a_j \in E_A$.
The Cuntz--Krieger algebra $\OA$ for the matrix $A$
is defined as the Cuntz--Krieger algebra
 ${\mathcal{O}}_{A^G}$ 
for the matrix $A^G$ which is the universal $C^*$-algebra generated by 
partial isometries
$S_{a_i}$ indexed by edges $a_i, i=1,\dots, N_A$ subject to the relations:
\begin{equation}
\sum_{j=1}^{N_A}  S_{a_j} S_{a_j}^* = 1, \qquad
S_{a_i}^* S_{a_i} = 
\sum_{j=1}^{N_A}  A^G(a_i, a_j ) S_{a_j} S_{a_j}^*
 \quad \text{ for } i=1,\dots,N_A.
\label{eq:OAG}
\end{equation}
The subalgebra 
$\DA$ is defined as the algebra 
$\mathcal{D}_{A^G}$.
The pair $(\OA,\DA)$ is called the Cuntz--Krieger pair
for the matrix $A$.
Since $1 \in \DA\subset\OA$,
the pair 
$(\OA,\DA)$ is relative $\sigma$-unital.
As in \cite{MaPacific},
the isomorphism class of the pair 
$(\OA,\DA)$ is exactly corresponding to the continuous orbit equivalence class
of the underlying one-sided topological Markov shift
$(X_A,\sigma_A)$. 
Its complete classification result has been obtained in \cite[Theorem 3.6]{MMKyoto}.

Let $A, B$ 
be irreducible square matrices with entries in nonnegative integers.
In \cite{Williams}, 
R. F. Williams  proved that 
the two-sided topological Markov shifts 
$(\bar{X}_A, \bar{\sigma}_A)$ and 
$(\bar{X}_B, \bar{\sigma}_B)$ 
are topologically conjugate
if and only if 
the matrices $A,B$ are strong shift equivalent.
Two nonnegative matrices $A, B$ are said to be  elementary equivalent
if there exist
nonnegative rectangular matrices $C,D$ such that 
$A = CD, B = DC$.
If there exists a finite sequence of nonnegative matrices $A_0,A_1,\dots, A_n$
such that
$A = A_0, B = A_n$ and $A_i$ is elementary equivalent to $A_{i+1}$ for 
$i=1,2,\dots, n-1$, 
then $A$ and $B$ are said to be strong shift equivalent
(\cite{Williams}).  
 Hence topological conjugacy of two-sided topological Markov shifts
is generated by a finite sequence of  elementary equivalence of underlying matrices. 
  Let us denote by $B_k(\bar{X}_A)$ 
the set of admissible words with length $k$
of the topological Markov shift $(\bar{X}_A, \bar{\sigma}_A)$.
Put $B_*(\bar{X}_A) =\cup_{k=0}^\infty B_k(\bar{X}_A).$

In this section we will first show the following proposition.
\begin{proposition}\label{prop:SSE}
Suppose that two nonnegative square matrices $A$ and $B$ 
are elementary equivalent
such that $A =CD$ and $B = DC$.
Then we have 
$(\OA,\DA)
\underset{\operatorname{RME}}{\sim}
(\OB,\DB)$.
\end{proposition}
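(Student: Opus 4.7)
The plan is to realise $(\OA,\DA)$ and $(\OB,\DB)$ as complementary relative full corners of a single ambient relative $\sigma$-unital pair and then invoke Theorem~\ref{thm:fullcorners}. Write $C$ as an $m\times n$ matrix and $D$ as an $n\times m$ matrix, so that $A=CD$ is $m\times m$ and $B=DC$ is $n\times n$, and form the $(m+n)\times(m+n)$ block matrix
\[ Z = \begin{pmatrix} 0 & C \\ D & 0 \end{pmatrix}. \]
Its directed graph $G_Z$ has vertex set $V=V_1\sqcup V_2$ with $|V_1|=m$ and $|V_2|=n$; the edges $E_C$ run from $V_1$ to $V_2$ and the edges $E_D$ run from $V_2$ to $V_1$. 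Because $Z^2=A\oplus B$ in block form, paths of length two in $G_Z$ starting in $V_1$ (resp.\ $V_2$) remain in $V_1$ (resp.\ $V_2$) and are canonically in bijection with the edges of $G_A$ (resp.\ $G_B$). Irreducibility and the non-permutation condition for $A,B$ pass to $Z$, so $(\OZ,\DZ)$ is a unital, hence relative $\sigma$-unital, pair. Set $P_A:=\sum_{v\in V_1}P_v$ and $P_B:=\sum_{v\in V_2}P_v$ in $\DZ$, which satisfy $P_A+P_B=1$. For each pair $(c,d)\in E_C\times E_D$ with $t(c)=s(d)$ define $\tilde S_{(c,d)}:=S_cS_d\in\OZ$; the Cuntz--Krieger relations give $\tilde S_{(c,d)}^*\tilde S_{(c,d)}=P_{t(d)}$ and $\sum_{(c,d)}\tilde S_{(c,d)}\tilde S_{(c,d)}^*=P_A$, and the $\tilde S_{(c,d)}$ obey the defining relations of $\mathcal{O}_{A^G}=\OA$. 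The Cuntz--Krieger uniqueness theorem then produces an isomorphism of pairs $(\OA,\DA)\cong(P_A\OZ P_A,\DZ P_A)$; the symmetric construction with $S_dS_c$ identifies $(\OB,\DB)$ with $(P_B\OZ P_B,\DZ P_B)$.

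Next I would verify that $P_A$ and $P_B$ are relative full projections for $(\OZ,\DZ)$. For $P_A$, enumerate $E_D=\{d_1,d_2,\dots\}$ and set $a_n:=S_{d_n}^*$. Since $t(d_n)\in V_1$ and $s(d_n)\in V_2$, one has $P_AS_{d_n}^*=S_{d_n}^*$, hence $a_n^*P_Aa_n=S_{d_n}S_{d_n}^*$, and summing yields
\[ \sum_n a_n^*P_Aa_n=\sum_{d\in E_D}S_dS_d^*=\sum_{v\in V_2}P_v=P_B=1-P_A. \]
The requirement that $a_n^*da_n\in\DZ$ and $a_nda_n^*\in\DZ P_A$ for every $d\in\DZ$ follows from the standard fact that the maps $x\mapsto S_{d_n}xS_{d_n}^*$ and $x\mapsto S_{d_n}^*xS_{d_n}$ preserve $\DZ$, combined with the containment $S_{d_n}^*\OZ\subseteq P_A\OZ$. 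The orthogonality $a_nda_m^*=S_{d_n}^*dS_{d_m}=0$ for $n\ne m$ reduces by linearity to $d=S_\mu S_\mu^*$ and is handled by a short case analysis on the prefix relationships between $\mu$, $d_n$ and $d_m$, invoking $S_{d_n}^*S_{d_m}=0$ whenever $n\ne m$. The symmetric choice $b_n:=S_{c_n}^*$ with $(c_n)_n$ enumerating $E_C$ gives a relative full sequence for $P_B$. Theorem~\ref{thm:fullcorners} then delivers $(\OA,\DA)\underset{\operatorname{RME}}{\sim}(\OB,\DB)$.

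The principal technical point, and the step I expect to require most care, is the identification $\DA\cong\DZ P_A$ inside the embedding of the first paragraph. Under the assignment $\tilde S_{(c,d)}\mapsto S_cS_d$ the image of $\DA$ is the closed linear span of the projections $S_\nu S_\nu^*$ for $G_Z$-paths $\nu$ of \emph{even} length starting in $V_1$, whereas $\DZ P_A$ is the closed span of $S_\nu S_\nu^*$ for \emph{all} $G_Z$-paths $\nu$ with $s(\nu)\in V_1$, including odd-length paths that terminate in $V_2$. These two spans coincide thanks to the Cuntz--Krieger identity
\[ S_\nu S_\nu^*=S_\nu P_{t(\nu)}S_\nu^*=\sum_{d\in E_D,\,s(d)=t(\nu)}S_{\nu d}\,S_{\nu d}^*, \]
valid whenever $t(\nu)\in V_2$, which rewrites each odd-length cylinder projection as a finite sum of even-length ones. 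A parallel observation in $\OZ$ that $S_cS_{c'}^*=\sum_{d:\,s(d)=t(c)}\tilde S_{(c,d)}\tilde S_{(c',d)}^*$ for $c,c'\in E_C$ with $t(c)=t(c')$ shows that the image of $\OA$ exhausts $P_A\OZ P_A$, confirming that the corner picture of the first paragraph is intact and completing the argument.
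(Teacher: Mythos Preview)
Your argument is correct and shares the paper's core setup---building the block matrix $Z$, passing to $\OZ$, and realising $(\OA,\DA)$ and $(\OB,\DB)$ as the complementary corners at $P_A,P_B\in\DZ$---but the endgame is packaged differently. The paper works directly with the bimodule $X=P_A\OZ P_B$ and writes down an explicit relative basis $x_k=S_{c(k)}S_{d_k}S_{d_k}^*$, $y_l=S_{c_l}S_{c_l}^*S_{d(l)}^*$ (choosing auxiliary edges $c(k)$ and $d(l)$ to make the inner products land where they should). You instead verify that $P_A,P_B$ are relative full projections via the sequences $a_n=S_{d_n}^*$ and $b_n=S_{c_n}^*$ and appeal to Theorem~\ref{thm:fullcorners}. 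Unwinding Proposition~\ref{prop:fullcorners}, your route produces the simpler relative basis $x_n=S_{d_n}^*$, $y_n=S_{c_n}$ for the same bimodule, so in effect you recover a cleaner variant of the paper's construction with no need for the auxiliary choices $c(k),d(l)$. Your treatment of the identification $\DZ P_A=\DA$ (via the rewriting $S_\nu S_\nu^*=\sum_d S_{\nu d}S_{\nu d}^*$ for odd $|\nu|$) is more explicit than the paper, which simply cites \cite{MaETDS2004} for \eqref{eq:4.1}. One small point worth a sentence of justification: that irreducibility and the non-permutation property pass from $A,B$ to $Z$ is true but not completely immediate, and the orthogonality $S_{d_n}^*dS_{d_m}=0$ for $n\ne m$ follows in one line from $S_e^*S_\mu=0$ unless $\mu_1=e$, without the case analysis you sketch.
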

\begin{proof}
Suppose that the size of $A$ is $N$  and 
that of $B$ is $M$ 
so that 
$C$ is an $N \times M$ matrix and 
$D$ is an $M \times N$ matrix, respectively. 
 We set the square matrix
$
Z =
\begin{bmatrix}
0 & C \\
D & 0
\end{bmatrix}
$
as block matrix, and we see
\begin{equation*}
Z^2 
=
\begin{bmatrix}
CD & 0 \\
0 & DC
\end{bmatrix}
=
\begin{bmatrix}
A & 0 \\
0 & B
\end{bmatrix}.
\end{equation*}
Let us consider the Cuntz--Krieger algebra 
$\OZ$
for the matrix $Z$.
Since $E_Z = E_C \cup E_D$,
we may write 
the canonical generating partial isometries of 
$\OZ$ 
as
$ S_c, S_d, c \in E_C, d \in E_D$ 
so that
$
\sum_{c \in E_C} S_c S_c^*
+
\sum_{d \in E_D} S_d S_d^*
=1$
and
\begin{equation*}
S_c^* S_c = \sum_{d \in E_D}Z(c,d) S_d S_d^*, \qquad
S_d^* S_d = \sum_{c \in E_C}Z(d,c) S_c S_c^*
\end{equation*}
for $c \in E_C, d \in E_D$.
Let us denote by
$S_a, a \in E_A$ 
(resp. 
$S_b, b \in E_B$)
the canonical generating partial isometries of 
$\OA$ (resp. $\OB$)
satisfying the relations \eqref{eq:CK}.
As 
$Z^2 = 
\begin{bmatrix}
A & 0 \\
0 & B
\end{bmatrix},
$
we have a bijective correspondence
$\varphi_{A,CD}$ from $E_A$ to a subset of $E_C \times E_D$
(resp.
$\varphi_{B,DC}$ from $E_B$ to a subset of $E_D \times E_C$
)
such that
$S_c S_d \ne 0$ 
(resp. $S_d S_c \ne 0$) 
if and only if 
$\varphi_{A,CD}(a) = cd$
(resp.
$\varphi_{B,DC}(b) = dc$)
 for some $a \in E_A$
(resp. $b \in E_B$),
we may identify $cd$ (resp. $dc$) 
with $a$ (resp. $b$)
through the map 
$\varphi_{A,CD}$ (resp. $\varphi_{B,DC}$).
We may then write 
$S_{cd} = S_a$
(resp. $S_{dc} = S_b$)
where $S_{cd}$ denotes $S_c S_d$ (resp. $S_{dc}$ denotes $S_d S_c$).
Define the projections in $\OZ$ by 
$P_A = \sum_{c \in E_C} S_c S_c^*$
and
$P_B = \sum_{d \in E_D} S_d S_d^*.$
Both of them belong to $\DZ$
and satisfy
$P_A + P_B =1$.
It has been shown in \cite{MaETDS2004} (cf. \cite{MaPre2016})
that 
\begin{equation}
P_A \OZ P_A = \OA, \qquad P_B \OZ P_B = \OB,
\qquad
 \DZ P_A = \DA, \qquad  \DZ P_B = \DB. \label{eq:4.1}
\end{equation}
We put
$X = P_A \OZ P_B$
which has a natural structure of 
$\OA-\OB$ imprimitivity bimodule 
under the identification
$P_A \OZ P_A = \OA, P_B \OZ P_B = \OB.$

We will prove that
$X$ becomes $(\OA, \DA)$--$(\OB, \DB)$-relative imprimitivity bimodule.
Put 
$E_C= \{ c_1, \dots,c_{N_C}\}$
and
$E_D= \{ d_1, \dots,d_{N_D}\}$
for the matrices $C$ and $D$ respectively.
For $k =1,\dots,N_D$, take $c(k) \in E_C$ such that 
$c(k) d_k \in B_2(X_Z)$ so that we have
\begin{equation*}
S_{c(k)}^*S_{c(k)} \ge S_{d_k} S_{d_k}^*. \label{eq:ckd}
\end{equation*}
Similarly 
for $l =1,\dots,N_C$, take $d(l) \in E_D$ such that 
$d(l) c_l \in B_2(X_Z)$ so that we have
\begin{equation*}
S_{d(l)}^*S_{d(l)} \ge S_{c_l} S_{c_l}^*. \label{eq:dlc}
\end{equation*}
We set
\begin{align*}
x_k & = S_{c(k)} S_{d_k} S_{d_k}^* \quad \text{ for } k=1,\dots, N_D, \\
%\quad
%\text{ and} 
%\quad
y_l &=S_{d(l)} S_{c_l} S_{c_l}^* \qquad \text{for } l=1,\dots,N_D.
\end{align*}
For $d_1 \in \D_A, d_2 \in \D_B$,
we have 
\begin{align*}
{}_{\OA}\!\langle x_k d_2 \mid x_k \rangle
= & S_{c(k)} S_{d_k} S_{d_k}^* d_2 S_{d_k} S_{d_k}^*S_{c(k)}^* \in \DA, \\ 
\langle x_k \mid d_1 x_k \rangle_{\OB}
= &  S_{d_k} S_{d_k}^*S_{c(k)} d_1 S_{c(k)} S_{d_k} S_{d_k}^* \in \DB
\end{align*}
so that 
$x_k$ belongs to $X_D$ and similarly 
$y_l$ belongs to $X_D$.
We also have
\begin{align*}
\sum_{k=1}^{N_D} \langle x_k \mid x_k \rangle_{\OB}
= & \sum_{k=1}^{N_D} (S_{c(k)} S_{d_k} S_{d_k}^*)^*(S_{c(k)} S_{d_k} S_{d_k}^*) \\
= & \sum_{k=1}^{N_D} S_{d_k} S_{d_k}^* S_{c(k)}^*S_{c(k)} S_{d_k} S_{d_k}^* \\
= & \sum_{k=1}^{N_D} S_{d_k} S_{d_k}^* 
= P_B
\end{align*}
For $n\ne m$, we have
\begin{equation*}
{}_{\OA}\!\langle x_n d_2 \mid x_m \rangle
= S_{c(n)} S_{d_n} S_{d_n}^* d_2 S_{d_m} S_{d_m}^*S_{c(m)}^* =0.
\end{equation*}
Similarly we have
$
\sum_{l=1}^{N_C} {}_{\OA}\!\langle y_l \mid y_l \rangle = P_A
$
and
$\langle y_n  \mid d_1 y_m \rangle_{\OB} =0$ 
for $n\ne m$,
so that 
$X$ becomes $(\OA, \DA)$--$(\OB, \DB)$-relative imprimitivity bimodule.
\end{proof}

In \cite{PS}, Parry--Sullivan proved 
that the flow equivalence relation of topological Markov shifts 
is generated by  strong shift equivalences 
and expansions $A \rightarrow \tilde{A}$
defined bellow.

For an $N \times N$ matrix
$A = [A(i,j)]_{i,j=1}^N$
with entries in $\{0,1\}$,
put
\begin{equation}
\tilde{A} =
\begin{bmatrix}
0      & A(1,1) & \cdots & A(1,N) \\
1      & 0      & \cdots & 0      \\
0      & A(2,1) & \cdots & A(2,N) \\
\vdots & \vdots &        & \vdots \\
0      & A(N,1) & \cdots & A(N,N)
\end{bmatrix}, \label{eq:tildeA}
\end{equation}
which is called the expansion of $A$ at the vertex $1$.
The expansion of $A$ at other vertices are similarly defined.
\begin{proposition}\label{prop:expansion}
$(\OA,\DA)
\underset{\operatorname{RME}}{\sim}
(\OTA,\DTA)$.
\end{proposition}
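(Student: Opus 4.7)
The plan is to realize $(\OA, \DA)$ as a corner pair of $(\OTA, \DTA)$ and then directly construct a relative imprimitivity bimodule between this corner and the ambient pair. Let $T_1, \ldots, T_{N+1}$ denote the canonical generators of $\OTA$, and set
\begin{equation*}
P := 1 - T_1 T_1^* = \sum_{j=2}^{N+1} T_j T_j^* \in \DTA, \qquad V_1 := T_2 T_1, \qquad V_i := T_{i+1} \quad (i = 2, \ldots, N).
\end{equation*}
The ``inserted-vertex'' relation $T_2^* T_2 = T_1 T_1^*$ immediately gives $V_1 V_1^* = T_2(T_2^* T_2) T_2^* = T_2 T_2^*$, and from this a short calculation shows that the $V_i$ satisfy the Cuntz--Krieger relations for $A$ with $\sum_i V_i V_i^* = P$. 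The universal property together with CK-uniqueness (which applies since $A$ is irreducible and non-permutation) then yields an injective $*$-homomorphism $\phi : \OA \to P \OTA P \subseteq \OTA$ with $\phi(S_i) = V_i$.

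The main technical point is the verification that $\phi(\OA) = P \OTA P$ and $\phi(\DA) = \DTA P$. Surjectivity is obtained by a path-decomposition argument: since $\tilde{A}(2, \cdot) = \delta_{\cdot,1}$, every admissible word in $\tilde{A}$ starting with a letter $\geq 2$ factors uniquely into $V$-blocks (each occurrence of $T_2$ is immediately followed by $T_1$, giving $V_1$; each standalone $T_j$ with $j \geq 3$ is $V_{j-1}$). The boundary subtleties---a path ending in $T_2$, or a mixed product $T_\mu T_\nu^*$ where only one side ends in $T_2$---are resolved by the identity $T_2 T_2^* = V_1 V_1^*$ and by the vanishing $T_j^* T_2 = 0$ for $j \neq 2$ (which follows from comparing initial/range projections via $T_2^* T_2 = T_1 T_1^*$). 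Restricting the same analysis to diagonal projections gives $\phi(\DA) = \DTA P$. By Lemma \ref{lem:theta} this gives $(\OA, \DA) \underset{\operatorname{RME}}{\sim} (P \OTA P, \DTA P)$, and it remains to show $(P \OTA P, \DTA P) \underset{\operatorname{RME}}{\sim} (\OTA, \DTA)$.

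For the latter I would take $X := P \OTA$ equipped with ${}_{P\OTA P}\!\langle x \mid y\rangle := xy^*$ and $\langle x \mid y\rangle_{\OTA} := x^* y$. Fullness is standard (the $\OTA$-valued side uses that $\OTA$ is simple, so the ideal generated by $P$ is everything). The singleton right basis $\{y_1 := P\}$ serves because ${}_{P\OTA P}\!\langle P \mid P\rangle = P = 1_{P\OTA P}$, while the two-element left basis $\{x_1 := P,\ x_2 := T_2\}$ serves because
\begin{equation*}
\langle x_1 \mid x_1\rangle_{\OTA} + \langle x_2 \mid x_2\rangle_{\OTA} = P + T_2^* T_2 = P + T_1 T_1^* = 1_{\OTA},
\end{equation*}
and the orthogonality ${}_{P\OTA P}\!\langle P d \mid T_2\rangle = P d T_2^* = 0$ for $d \in \DTA$ is forced by $P T_1 = 0$ combined with the fact that $T_\mu T_2^* \neq 0$ requires $\mu_1 = 1$. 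Membership of $P$ and $T_2$ in $X_D$ reduces to the same manipulations used in the corner identification. Transitivity of relative Morita equivalence then delivers $(\OA, \DA) \underset{\operatorname{RME}}{\sim} (\OTA, \DTA)$. The main obstacle throughout is the path bookkeeping for the corner identification; an appealing alternative via Theorem \ref{thm:fullcorners} with complementary full corners $P$ and $1-P$ in $(\OTA, \DTA)$ also works ($P$ is immediate using $a_1 := T_2$, giving $T_2^* P T_2 = T_1 T_1^* = 1 - P$), but demonstrating relative fullness of $1 - P$ requires exhibiting an irreducibility-based family of paths from vertex $1$ to every other vertex of $\tilde{A}$, which is heavier than the direct bimodule above.
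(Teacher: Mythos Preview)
Your proof is correct and takes essentially the same approach as the paper: after identifying $(\OA,\DA)$ with the corner $(P\OTA P,\DTA P)$, you use the very same bimodule $X=P\OTA$ with the same relative bases $\{P,\tilde S_1\}$ (your $T_2$) and $\{P\}$ that the paper uses. The only differences are cosmetic: you index the $\tilde A$-generators $1,\dots,N+1$ rather than $0,\dots,N$, you spell out the path-factorisation argument for $P\OTA P\cong\OA$ that the paper simply asserts in \eqref{eq:POAtilde}, and you route through Lemma~\ref{lem:theta} plus transitivity instead of making the identifications inline.
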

\begin{proof}
Let 
$\{0,1,\dots,N\}$
be the set of symbols for the topological Markov shifts 
$(\bar{X}_{\tilde{A}},\bar{\sigma}_{\tilde{A}})$
defined by the matrix $\tilde{A}$.
Let us denote by 
$\tilde{S}_0, \tilde{S}_1,\dots, \tilde{S}_N$
the canonical generating partial isometries of the Cuntz--Krieger algebra
${\mathcal{O}}_{\tilde{A}}$
satisfying
$\sum_{j=0}^N
\tilde{S}_j\tilde{S}_j^* =1,
\tilde{S}_i^*\tilde{S}_i
= \sum_{j=0}^N
\tilde{A}(i,j) \tilde{S}_j\tilde{S}_j^*
$ 
for $i=0,1,\dots,N$.
Put
$
P = \sum_{i=1}^N
\tilde{S}_i\tilde{S}_i^*.
$
The  identities 
\begin{equation}
 \tilde{S}_1^* P \tilde{S}_1 
=\tilde{S}_1^* \tilde{S}_1 = \tilde{S}_0 \tilde{S}_0^*,\qquad
P + \tilde{S}_0 \tilde{S}_0^* =P + \tilde{S}_1^* P \tilde{S}_1 =1
\label{eq:PtildeS}
\end{equation}
hold, so that 
%\begin{equation}
%\A_1 = \OA,\qquad \D_1 =\DA \quad \text{ and }
%\A_2 = \OTA,\qquad \D_2 =\DTA.
%\end{equation}
we have
\begin{equation}
P\OTA P = \OA, \qquad 
\DTA P = \DA. \label{eq:POAtilde}
\end{equation}
We put
$X = P\OTA$
which has a natural structure of 
$\OA-\OTA$ imprimitivity bimodule 
under the identification
$P\OTA P = \OA,  \DTA P = \DA.$
We will prove that
$X$ becomes $(\OA, \DA)$--$(\OTA, \DTA)$-relative imprimitivity bimodule.
We set
$x_1 = P,\,  x_2 = P  \tilde{S}_1$ and $y_1 = P$.
For $d_1 \in \DA, d_2 \in \DTA,$
we have
\begin{align*}
{}_{\OA}\!\langle x_1 d_2 \mid x_1 \rangle
= & P d_2 P \in \DA, \\
{}_{\OA}\!\langle x_2 d_2 \mid x_2 \rangle
= & P \tilde{S}_1 d_2 \tilde{S}_1^* P \in \DA, \\
\langle d_1 x_1 \mid x_1 \rangle_{\OTA}
= & P d_1 P \in \DA \subset \DTA, \\
\langle d_1 x_2 \mid x_2 \rangle_{\OTA}
= & \tilde{S}_1^* P  d_1 P \tilde{S}_1\in  \DTA, 
\end{align*}
so that 
$x_1, x_2, y_1 \in X_D$.
We also have
\begin{align*}
\sum_{k=1}^2 \langle x_k \mid x_k \rangle_{\OTA}
= & P^*P + (P \tilde{S}_1)^* (P \tilde{S}_1) = P + \tilde{S}_1^*P \tilde{S}_1 =1, \\
{}_{\OA}\!\langle x_1 d_2 \mid x_2 \rangle
= &P d_2 (P \tilde{S}_1)^* = Pd_2  \tilde{S}_1^* P=0, \\
{}_{\OA}\!\langle x_2 d_2 \mid x_1 \rangle
= & P \tilde{S}_1 d_2 P^*  = 0.
\end{align*}
Hence
$X$ becomes $(\OA, \DA)$--$(\OTA, \DTA)$-relative imprimitivity bimodule.
\end{proof}
We have thus obtained the following theorem.
\begin{theorem}\label{thm:FERME}
If two-sided topological Markov shifts
$(\bar{X}_A, \bar{\sigma}_A)$ and 
$(\bar{X}_B, \bar{\sigma}_B)$ 
are flow equivalent,
then the Cuntz--Krieger pairs
$(\OA,\DA)$ and $(\OB,\DB)$
are relatively Morita equivalent.
\end{theorem}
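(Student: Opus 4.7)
The plan is to combine Propositions \ref{prop:SSE} and \ref{prop:expansion} with the Parry--Sullivan characterization of flow equivalence from \cite{PS}: two irreducible topological Markov shifts $(\bar{X}_A,\bar{\sigma}_A)$ and $(\bar{X}_B,\bar{\sigma}_B)$ are flow equivalent if and only if the defining nonnegative integer matrices can be connected by a finite chain of strong shift equivalences and expansions (in either direction). Since relative Morita equivalence is an equivalence relation on relative $\sigma$-unital pairs (established in the proposition just before Lemma \ref{lem:stab}), it suffices to verify that each elementary step along such a chain preserves the relative Morita equivalence class of $(\OA,\DA)$.

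First I would invoke Williams' theorem \cite{Williams} to replace any strong shift equivalence between consecutive matrices by a finite sequence of elementary equivalences $A_i = C_iD_i$, $A_{i+1}= D_iC_i$. Each elementary step is handled by Proposition \ref{prop:SSE}, which yields an $(\mathcal{O}_{A_i},\mathcal{D}_{A_i})$--$(\mathcal{O}_{A_{i+1}},\mathcal{D}_{A_{i+1}})$-relative imprimitivity bimodule realized inside the linking algebra $\mathcal{O}_{Z_i}$ for $Z_i=\begin{bmatrix}0 & C_i\\ D_i & 0\end{bmatrix}$. Expansion steps $A_i \mapsto \tilde{A}_i$ and their inverses are covered by Proposition \ref{prop:expansion}; here symmetry of the relation $\underset{\operatorname{RME}}{\sim}$ absorbs the direction of the expansion, so we may freely travel the chain either way.

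Concatenating the finitely many relative Morita equivalences obtained along the Parry--Sullivan chain and applying transitivity, we conclude $(\OA,\DA)\underset{\operatorname{RME}}{\sim}(\OB,\DB)$. The conclusion of the theorem then follows at once, since irreducibility and the $\{0,1\}$-hypothesis on $A$ and $B$ are preserved under flow equivalence classes in the sense required by the Parry--Sullivan reduction, while the intermediate matrices $A_i$ in the chain are allowed to be arbitrary nonnegative integer matrices, for which $(\mathcal{O}_{A_i},\mathcal{D}_{A_i})$ is still defined via the edge-matrix $A_i^G$ as in \eqref{eq:AG}--\eqref{eq:OAG}.

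I do not foresee a genuine obstacle: the hard technical content, namely the explicit construction of relative left and right bases in the bimodules $P_A\OZ P_B$ and $P\OTA$, has already been carried out in the two cited propositions. The present theorem is essentially a packaging statement assembling those two building blocks along a Parry--Sullivan decomposition; the only point requiring slight care is to record that expansions are used in both directions, which is legitimate precisely because $\underset{\operatorname{RME}}{\sim}$ has been proved to be symmetric.
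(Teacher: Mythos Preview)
Your proposal is correct and matches the paper's approach exactly: the paper presents Theorem \ref{thm:FERME} as an immediate consequence of Propositions \ref{prop:SSE} and \ref{prop:expansion} together with the Parry--Sullivan decomposition of flow equivalence, relying on the fact that $\underset{\operatorname{RME}}{\sim}$ is an equivalence relation. Your additional remark that the intermediate matrices may be arbitrary nonnegative integer matrices (handled via the edge-matrix construction \eqref{eq:AG}--\eqref{eq:OAG}) makes explicit a point the paper leaves tacit.
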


%%%%%%%%%%%%%%%%%%%%%%%%%%%%%%%%%%%%%%%%%%%%%
%%%%%%%%%%%%%%%%%%%%%%%%%%%%%%%%%%
\section{Corner isomorphisms in Cuntz--Krieger pairs}
%%%%%%%%%%%%%%%%%%%%%%%%%%%%%%%%%%%%%%%%%%
%%%%%%%%%%%%%%%%%%%%%%%%%%%%%%%%%%%%%%%%%%%%
%We say that the pair $(\OA,\DA)$ of the Cuntz--Krieger algebra with its maximal abelian $C^*$-subalgebra $\DA$ the {\it  Cuntz--Krieger pair}.
Let $A, B, Z$ be square irreducible non-permutation matrices 
with entries in nonnegative integers.
\begin{definition}
Two  Cuntz--Krieger pairs 
$(\OA,\DA)$ and $(\OZ,\DZ)$
are said to be {\it elementary corner isomorphic}
if there exists a projection
$P \in \DZ$
such that 
\begin{equation}
P \OZ P = \OA, \qquad \DZ P = \DA. \label{eq:corner}
\end{equation}
Two  Cuntz--Krieger pairs 
$(\OA,\DA)$ and $(\OB,\DB)$
are said to be {\it corner isomorphic}
if there exists a finite chain of 
Cuntz--Krieger pairs 
$({\mathcal{O}}_{Z_i}, {\mathcal{D}}_{Z_i}), i=0,1,\dots,n$ 
such that
$Z_0 = A, Z_n = B$,
and 
either 
 $({\mathcal{O}}_{Z_i}, {\mathcal{D}}_{Z_i})$
and
$({\mathcal{O}}_{Z_{i+1}}, {\mathcal{D}}_{Z_{i+1}})$ 
or
$({\mathcal{O}}_{Z_{i+1}}, {\mathcal{D}}_{Z_{i+1}})$ 
and
 $({\mathcal{O}}_{Z_i}, {\mathcal{D}}_{Z_i})$
are elementary corner isomorphic
for all
$i=0,1,\dots,n$.
That is,
the equivalence relation  generated by 
elementary corner isomorphisms in Cuntz--Krieger pairs
is  the corner isomorphism. 
\end{definition}
We will prove the following theorem.
\begin{theorem}\label{thm:corner}
If two  Cuntz--Krieger pairs 
$(\OA,\DA)$ and $(\OB,\DB)$
are corner isomorphic,
then
there exists an isomorphism 
$\Phi: \OA\otimes\K \longrightarrow \OB\otimes \K$ of $C^*$-algebras
such that 
$\Phi(\DA\otimes\C)=\DB\otimes \C$.
\end{theorem}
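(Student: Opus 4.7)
The plan is to reduce to the elementary case and show that elementary corner isomorphism implies relative Morita equivalence, then invoke Theorem \ref{thm:RMEPHI2}. Since corner isomorphism is by definition generated by finite chains of elementary corner isomorphisms, and the existence of an isomorphism $\Phi$ of stabilizations respecting the diagonals composes along such a chain, it suffices to assume that $(\OA,\DA)$ and $(\OZ,\DZ)$ are elementary corner isomorphic via a projection $P\in\DZ$ with $P\OZ P = \OA$ and $\DZ P = \DA$. In fact, I would prove the stronger statement $(\OA,\DA)\underset{\operatorname{RME}}{\sim}(\OZ,\DZ)$; Theorem \ref{thm:RMEPHI2} then finishes the argument.

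The natural candidate for an $(\OA,\DA)$--$(\OZ,\DZ)$-relative imprimitivity bimodule is $X=P\OZ$, with left $\OA = P\OZ P$ and right $\OZ$ actions by multiplication and inner products ${}_{\OA}\!\langle x\mid y\rangle = xy^*$, $\langle x\mid y\rangle_{\OZ} = x^*y$. Fullness is immediate: the left inner products span $P\OZ P=\OA$ by hypothesis, while the right inner products span $\OZ P\OZ = \OZ$ because the simplicity of $\OZ$ (from irreducibility and non-permutativity of $Z$) forces the non-zero projection $P$ to be full. The compatibility ${}_{\OA}\!\langle x\mid y\rangle z = x\langle y\mid z\rangle_{\OZ}$ is automatic.

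To exhibit a relative basis, I would take $y_1 = P$ as the (single-element) right basis, noting $\sum_n y_n y_n^* = P = 1_{\OA}$ and $y_1\in X_D$ because $P\DZ P\subset \DZ P = \DA$ and $P\DA P = \DA \subset\DZ$. For the left basis I would take $x_1 = P$ (contributing $P$ to $\sum_n x_n^*x_n$) together with $x_n = v_n$ for $n\ge 2$, where $\{v_n\}_{n\ge 2}$ is a family of partial isometries in the canonical $\DZ$-normalizer $\{S_\mu S_\nu^* : \mu,\nu \text{ admissible in }Z\}$ of $\OZ$ satisfying $v_n v_n^*\le P$, with mutually orthogonal source projections $v_n^* v_n \in \DZ$ whose sum is $1-P$ in the strict topology. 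The existence of such $\{v_n\}$ rests on the irreducibility of $Z$ and fullness of $P$: for each projection in $\DZ$ below $1-P$ (corresponding to a clopen subset of $X_Z$) one chooses an admissible path beginning in the support of $P$ and ending at a point of that set, yielding a canonical normalizer that carries the given source projection into the range of $P$. Given such a family, the orthogonality ${}_{\OA}\!\langle x_n d_2\mid x_m\rangle = x_n d_2 x_m^* = 0$ for $n\ne m$ is mechanical: $P v_n^* = 0$ since $v_n^* v_n \le 1-P$, and for distinct $n,m\ge 2$, one computes $v_n d_2 v_m^* = v_n(v_m^* v_m) d_2 v_m^* = 0$ because $v_n$ annihilates $v_m^* v_m$ by the source orthogonality.

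The main obstacle is the combinatorial organization of the $v_n$ on the graph $G_Z$: one must simultaneously cover $1-P$ by mutually orthogonal source projections of canonical normalizers whose ranges lie inside $P$, generalizing the explicit two-element construction employed in Proposition \ref{prop:expansion}. Once this relative basis is produced, $X$ becomes an $(\OA,\DA)$--$(\OZ,\DZ)$-relative imprimitivity bimodule, so $(\OA,\DA)\underset{\operatorname{RME}}{\sim}(\OZ,\DZ)$. Chaining along the sequence $Z_0 = A, Z_1,\dots, Z_n = B$ of elementary corner isomorphisms and applying Theorem \ref{thm:RMEPHI2} delivers the desired isomorphism $\Phi:\OA\otimes\K\longrightarrow\OB\otimes\K$ with $\Phi(\DA\otimes\C) = \DB\otimes\C$.
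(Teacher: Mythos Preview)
Your approach is correct and is in fact the alternative the paper itself mentions at the opening of its proof before opting for a direct construction of $\Phi$. Both arguments share the same combinatorial core: one must produce partial isometries in the $\DZ$-normalizer with mutually orthogonal sources summing to $1-P$ and ranges below $P$. The paper calls these $U_k$; you call them $v_n$.

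Where you differ is in packaging. You feed the $v_n$ into $X=P\OZ$ as a relative left basis and invoke Theorem~\ref{thm:RMEPHI2}; the paper instead re-runs the Section~4 machinery (the $u_n$, $w_n$, $v_n$ telescoping partial isometries of Lemmas~\ref{lem:3.3} and~\ref{lem:3.4}) directly on the $U_k$ to build an isometry $V_A\in M(\OZ\otimes\K)$ with $V_A^*V_A=1\otimes 1$, $V_AV_A^*=P\otimes 1$, and $\Phi=\Ad(V_A)$. Your route is more modular; the paper's is more self-contained but essentially re-proves a special case of Theorem~\ref{thm:RMEPHI2}.

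One point you leave as an ``obstacle'' is actually straightforward, and the paper resolves it cleanly: since $Q=1-P\in\DZ$ and $\DZ$ is the AF diagonal $C(X_Z)$, the projection $Q$ is a \emph{finite} orthogonal sum of cylinder projections $S_{\mu(k)}S_{\mu(k)}^*$ of equal length. Irreducibility of $Z$ then supplies, for each $k$, a word $\nu(k)$ with $S_{\nu(k)}S_{\nu(k)}^*\le P$ and $\nu(k)\mu(k)$ admissible; setting $v_k=S_{\nu(k)}S_{\mu(k)}S_{\mu(k)}^*$ gives exactly the finite family you want, with $v_k^*v_k=S_{\mu(k)}S_{\mu(k)}^*$ and $v_kv_k^*\le P$. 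No infinite strict-topology sum is needed. With this in hand, your verification that $(\{P\}\cup\{v_k\},\{P\})$ is a relative basis for $X=P\OZ$ goes through, and the chaining argument plus Theorem~\ref{thm:RMEPHI2} finishes the proof.
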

\begin{proof}
We use the notation $Z$ of matrix instead of $B$,
so that we suppose  that 
$(\OA,\DA)$ and $(\OZ,\DZ)$ are elementary corner isomorphic
by a projection $P \in \DZ$ satisfying \eqref{eq:corner}.
Although by showing that
$X = P\OZ$ is an $(\OA,\DA)$--$(\OZ,\DZ)$-relative imprimitivity bimodule,
we know that 
$(\OA,\DA)$ and $(\OZ,\DZ)$ are relatively Morita equivalent,
and hence 
there exists an isomorphism 
$\Phi: \OA\otimes\K \longrightarrow \OB\otimes \K$ of $C^*$-algebras
such that 
$\Phi(\DA\otimes\C)=\DB\otimes \C$,
we will directly construct such a isomorphism $\Phi$ in the following way.

We may assume that the projection $Q = 1-P $ 
is not zero.
Let $S_1, \dots, S_{N_Z}$ be the canonical generating partial isometries
of the Cuntz--Krieger algebra $\OZ$.
As $Q \in \DZ$, 
one may find a finite family of admissible words
$\mu(k) \in B_*(X_Z), k=1,\dots,N_1$ 
such that 
$|\mu(1)|= \cdots =|\mu(N_1)|$
and
$Q = \sum_{k=1}^{N_1}S_{\mu(k)}S_{\mu(k)}^*$.
Since $Z$ is irreducible, we may find 
admissible words
$\nu(k)  \in  B_*(X_Z)$
for each $\mu(k)$ such that
$|\nu(1)|= \cdots =|\nu(N_1)|$
and
\begin{equation*}
P \ge S_{\nu(k)}S_{\nu(k)}^*, \qquad
S_{\nu(k)}S_{\mu(k)} \ne 0, \qquad
k=1,\dots,N_1.
\end{equation*}
As $\nu(k) \mu(k)$ is an admissible word in $X_Z$, we know 
$
S_{\nu(k)}^*S_{\nu(k)}\ge S_{\mu(k)}S_{\mu(k)}^*.
$
Put
$$
U_k = S_{\nu(k)} S_{\mu(k)}S_{\mu(k)}^*, \qquad
k=1,\dots,N_1.
$$
Then we have
\begin{gather*}
\sum_{k=1}^{N_1} U_k^* U_k
 = \sum_{k=1}^{N_1} S_{\mu(k)}S_{\mu(k)}^*S_{\nu(k)}^*
S_{\nu(k)} S_{\mu(k)}S_{\mu(k)}^*
 = \sum_{k=1}^{N_1}  S_{\mu(k)}S_{\mu(k)}^*
= Q, \\
\intertext{and}
U_k U_k^*\le   S_{\nu(k)}S_{\nu(k)}^* \le P.
\end{gather*}
The sequence further satisfies the following
\begin{equation*}
U_k U_l^* =0\quad\text{ for } k\ne l,
\qquad
U_k \DZ U_k^* \subset \DZ P = \DA,
\qquad
U_k^* \DZ U_k \subset \DZ Q \subset \DZ.
\end{equation*}
As in the proof of Theorem \ref{RMEPHI},
by setting 
\begin{equation*}
u_n = \sum_{k=1}^{N_1} U_k \otimes s_{n_k,n}, 
 \qquad w_n = P \otimes s_{{n_0},n} +  u_n,
\qquad n=1,2,\dots.
\end{equation*}
%By the same manner as Lemma \ref{lem:3.3},
we have a sequence $w_n, n\in \N$ 
of partial isometries in $M(\OZ\otimes \K)$ such that 
\begin{enumerate}
\item $w_n^* w_n = 1 \otimes f_n$.
\item $w_n w_n^* \le P\otimes f_n$. 
\item $w_n (\DZ \otimes\C)w_n^* \subset \DZ P\otimes\C$.
\item $w_n^* (\DZ\otimes\C) w_n \subset \DZ Q\otimes\C$.
\end{enumerate}
Let $f_{n,m}$ be a partial isometry satisfying
$f_{n,m}^*f_{n,m} = f_m,\, f_{n,m}f_{n,m}^* = f_n.$
We put
\begin{align*}
v_1 &  = w_1 = P \otimes s_{1_0,1} + u_1, \\
v_{2n} & = (P \otimes f_n - v_{2n-1}v_{2n-1}^*)(p \otimes f_{n,n+1})
\quad \text{ for } 1\le n \in \N, \\
v_{2n-1} & = w_n(1\otimes f_n - v_{2n-2}^*v_{2n-2}) \quad \text{ for } 2\le n \in \N. 
\end{align*} 
By the same way as Lemma \ref{lem:3.4},
we have
for $n\in \N$ 
\begin{enumerate}
\item$v_{2n-2}^* v_{2n-2} + v_{2n-1}^* v_{2n-1} = 1\otimes f_n$.
\item$v_{2n-1} v_{2n-1}^* + v_{2n} v_{2n}^* = P\otimes f_n$.
\item $v_n(\DZ \otimes \C)v_n^* \subset \DZ P\otimes\C.$
\item $v_n^*(\DZ P \otimes \C)v_n \subset \DZ\otimes\C.$
\end{enumerate}
Hence  
the summation
$\sum_{n=1}^\infty v_n$
 converges in $M(\OZ\otimes\K)$ 
to certain partial isometry written $V_A$
 in the strict topology of  $M(\OZ\otimes \K)$,
so that 
{\begin{enumerate}
\item$V_A^* V_A = 1\otimes 1$.
\item$V_A V_A^* = P\otimes 1.$
\item$V_A(\DZ\otimes\C)V_A^* = \DZ P\otimes\C.$
\item$V_A^*(\DZ P\otimes\C)V_A = \DZ\otimes\C.$
\end{enumerate}}
Putting
$\Phi_A = \Ad(V_A): \OZ\otimes\K\longrightarrow \SOA$
which is an isomorphism between $\SOZ$ and $\SOA$
such that 
$\Phi(\DZ\otimes\C)=\DA\otimes \C$.
\end{proof}
Therefore we have  the following theorem.
\begin{theorem} \label{thm:iffcorner}
%Let $A, B$ be irreducible non-permutation matrices with entries in $\{0,1\}$.
The Cuntz-Krieger pairs
$(\OA,\DA)$ and $(\OB,\DB)$ are corner isomorphic
if and only if
there exists an isomorphism
$\Phi: \OA\otimes\K \longrightarrow \OB\otimes \K$ of $C^*$-algebras
such that 
$\Phi(\DA\otimes\C)=\DB\otimes \C$.
\end{theorem}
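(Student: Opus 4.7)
The proof naturally splits into two implications. The ``only if'' direction is already established: if the pairs are corner isomorphic, then Theorem \ref{thm:corner} produces an isomorphism $\Phi:\OA\otimes\K\longrightarrow\OB\otimes\K$ with $\Phi(\DA\otimes\C)=\DB\otimes\C$. So my plan is focused on the ``if'' direction, which I would obtain by routing through flow equivalence of the underlying two-sided topological Markov shifts.

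First, I would invoke the theorem of Matui and the author \cite{MMKyoto} cited in the introduction: the existence of an isomorphism $\Phi:\OA\otimes\K\longrightarrow\OB\otimes\K$ with $\Phi(\DA\otimes\C)=\DB\otimes\C$ is equivalent to flow equivalence of $(\bar{X}_A,\bar{\sigma}_A)$ and $(\bar{X}_B,\bar{\sigma}_B)$. This is the non-trivial input that converts the $C^*$-algebraic hypothesis into a dynamical statement. Then by the Parry--Sullivan theorem \cite{PS}, flow equivalence between two irreducible topological Markov shifts is generated by two basic moves on the defining matrices: elementary strong shift equivalences $A=CD, \; B=DC$ and expansions $A \rightsquigarrow \tilde{A}$ of the form \eqref{eq:tildeA}. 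Thus there is a finite chain of matrices $A=Z_0,Z_1,\dots,Z_n=B$ where each adjacent pair is related either by an elementary strong shift equivalence or by an expansion.

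Next, I would observe that the constructions in the proofs of Propositions \ref{prop:SSE} and \ref{prop:expansion} do more than establish relative Morita equivalence — they actually exhibit elementary corner isomorphisms. In Proposition \ref{prop:SSE} the intermediate algebra $\OZ$ for the block matrix $Z=\bigl[\begin{smallmatrix}0 & C\\ D & 0\end{smallmatrix}\bigr]$ carries complementary projections $P_A,P_B\in\DZ$ satisfying \eqref{eq:4.1}, so $(\OA,\DA)$ and $(\OB,\DB)$ are each elementary corner isomorphic to $(\OZ,\DZ)$. In Proposition \ref{prop:expansion} the projection $P\in\DTA$ satisfies \eqref{eq:POAtilde}, so $(\OA,\DA)$ is directly elementary corner isomorphic to $(\OTA,\DTA)$. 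Therefore each step of the Parry--Sullivan chain corresponds to a chain of at most two elementary corner isomorphisms, and the concatenation of all these gives a corner isomorphism between $(\OA,\DA)$ and $(\OB,\DB)$ in the sense of the definition above.

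The main conceptual obstacle is the appeal to \cite{MMKyoto}: translating the existence of the stabilized $C^*$-isomorphism intertwining the diagonals into flow equivalence of the two-sided shifts is the deep ingredient, and without it the argument does not start. Everything after that is essentially bookkeeping: the key realization is that the bimodules constructed in Propositions \ref{prop:SSE} and \ref{prop:expansion} arise as corners of a larger Cuntz--Krieger pair, so their proofs already contain the elementary corner isomorphisms needed to assemble the chain.
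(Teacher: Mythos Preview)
Your proposal is correct and follows essentially the same route as the paper: the only if direction via Theorem~\ref{thm:corner}, and the if direction by invoking \cite{MMKyoto} to obtain flow equivalence, then using Parry--Sullivan to reduce to a chain of elementary strong shift equivalences and expansions, and finally observing that the constructions in Propositions~\ref{prop:SSE} and~\ref{prop:expansion} already exhibit elementary corner isomorphisms. The paper's proof is terser but identical in substance.
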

\begin{proof}
The only if part follows from Theorem \ref{thm:corner}.
We will show the if part.
Suppose that
there exists an isomorphism
$\Phi: \OA\otimes\K \longrightarrow \OB\otimes \K$ of $C^*$-algebras
such that 
$\Phi(\DA\otimes\C)=\DB\otimes \C$.
By \cite{MMKyoto},
the two-sided topological Markov shifts 
$(\bar{X}_A, \bar{\sigma}_A)$ and 
$(\bar{X}_B, \bar{\sigma}_B)$ 
are flow equivalent, so that
the two matrices are connected by a finite chain of
strong shift equivalences and symbol expansions.
In the proofs of Proposition \ref{prop:SSE}
and Proposition \ref{prop:expansion},
we know that
$(\OA,\DA)$ and $(\OB,\DB)$ are corner isomorphic.
\end{proof}
Therefore we may summarize our discussions in the following way.
\begin{theorem}\label{thm:sumCK}
Let $A, B$ be irreducible non-permutation matrices with entries in $\{0,1\}$.  
Let
$\OA, \OB$ be the associated Cuntz--Krieger algebras.
Then the following assertions are mutually equivalent:
\begin{enumerate}
\item 
$(\OA, \DA)
\underset{\operatorname{RME}}{\sim}
(\OB, \DB).
$
\item 
$(\OA\otimes\K, \DA\otimes\C)
\underset{\operatorname{RME}}{\sim}
(\OB\otimes\K, \DB\otimes\C).
$
\item 
There exists an isomorphism 
$\Phi: \OA\otimes\K \longrightarrow \OB\otimes \K$ of $C^*$-algebras
such that 
$\Phi(\DA\otimes\C)=\DB\otimes \C$.
\item
$(\OA, \DA)$
and
$
(\OB, \DB)
$
are corner isomorphic.
\item
The two-sided topological Markov shifts 
$(\bar{X}_A, \bar{\sigma}_A)$ and 
$(\bar{X}_B, \bar{\sigma}_B)$ 
are flow equivalent.
\end{enumerate}
\end{theorem}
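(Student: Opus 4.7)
The plan is to assemble this equivalence from the machinery already developed in the paper plus one external input. Since each pair $(\OA,\DA)$ is unital and so relative $\sigma$-unital (hence so is its relative stabilization), all the general theorems from Sections~3--5 apply verbatim. I will set up a small cycle of implications and add two directions that link in the dynamical condition (5).

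First I would establish the purely $C^*$-algebraic equivalences (1)$\Leftrightarrow$(2)$\Leftrightarrow$(3)$\Leftrightarrow$(4) as a direct specialization of earlier work. The equivalence (1)$\Leftrightarrow$(2) follows from Lemma~\ref{lem:stab}, which gives $(\OA,\DA)\underset{\operatorname{RME}}{\sim}(\OA\otimes\K,\DA\otimes\C)$ and likewise for $B$; combined with transitivity (Proposition after Lemma~\ref{lem:trans}), (1) and (2) are interchangeable. The equivalence (1)$\Leftrightarrow$(3) is an immediate instance of Theorem~\ref{thm:RMEPHI2}. The equivalence (3)$\Leftrightarrow$(4) is exactly Theorem~\ref{thm:iffcorner}. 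Up to this point the proof is purely a citation assembly.

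The substantive content is the link with flow equivalence, which I would close as a two-arrow cycle through the rest. The implication (5)$\Rightarrow$(1) is delivered by Theorem~\ref{thm:FERME}, which was established via the two moves of Parry--Sullivan (elementary strong shift equivalence in Proposition~\ref{prop:SSE} and symbol expansion in Proposition~\ref{prop:expansion}), each giving a relative imprimitivity bimodule. The reverse implication (3)$\Rightarrow$(5) is the one piece that is not reproved here; I would cite \cite[Theorem~3.6 / Corollary~3.8]{MMKyoto} of Matui--Matsumoto, which says that existence of an isomorphism $\Phi:\SOA\to\SOB$ sending $\SDA$ onto $\SDB$ forces flow equivalence of $(\bar X_A,\bar\sigma_A)$ and $(\bar X_B,\bar\sigma_B)$.

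Putting this together, I would write the proof as: (1)$\Leftrightarrow$(2) by Lemma~\ref{lem:stab} and the transitivity of $\underset{\operatorname{RME}}{\sim}$; (2)$\Leftrightarrow$(3) by Theorem~\ref{thm:RMEPHI2} applied to the relative stabilizations (noting that $(\OA\otimes\K)\otimes\K\cong \OA\otimes\K$ and similarly for diagonals under the standard identification $\K\otimes\K\cong\K$, $\C\otimes\C\cong\C$, which makes the stabilized form of Theorem~\ref{thm:RMEPHI2} literally yield (3)); (3)$\Leftrightarrow$(4) by Theorem~\ref{thm:iffcorner}; (5)$\Rightarrow$(1) by Theorem~\ref{thm:FERME}; and (3)$\Rightarrow$(5) by \cite{MMKyoto}. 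The main obstacle, if any, is only bookkeeping: making sure the stabilization identification in the (2)$\Leftrightarrow$(3) step is written cleanly, and that the external citation to \cite{MMKyoto} is invoked in the precise form needed to close the cycle.
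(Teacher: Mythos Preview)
Your proposal is correct and follows essentially the same route as the paper: the paper also cites Lemma~\ref{lem:stab} for (1)$\Leftrightarrow$(2), Theorem~\ref{thm:RMEPHI2} for (1)$\Leftrightarrow$(3), Theorem~\ref{thm:iffcorner} for (3)$\Leftrightarrow$(4), Theorem~\ref{thm:FERME} for (5)$\Rightarrow$(1), and \cite[Corollary~3.8]{MMKyoto} for (3)$\Rightarrow$(5). The only cosmetic difference is that the paper invokes Theorem~\ref{thm:RMEPHI2} directly for (1)$\Leftrightarrow$(3), so your extra bookkeeping about $\K\otimes\K\cong\K$ and $\C\otimes\C\cong\C$ in a (2)$\Leftrightarrow$(3) step is unnecessary.
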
 
\begin{proof}
(1) $\Longleftrightarrow$ (2) comes from Lemma \ref{lem:stab}.

(1) $\Longleftrightarrow$ (3) comes from Theorem \ref{thm:RMEPHI2}.

(3) $\Longleftrightarrow$ (4) comes from Theorem \ref{thm:iffcorner}.

(5) $\Longrightarrow$ (1) comes from Theorem \ref{thm:FERME}. 

(3) $\Longrightarrow$ (5) comes from \cite[Corollary 3.8]{MMKyoto}.
\end{proof}
We note that the implication (5) $\Longrightarrow$ (3) is seen in \cite{CK},
and that the equivalence between (3) and (5)  is seen in \cite[Corollary 3.8]{MMKyoto}.

%%%%%%%%%%%%%%%%%%%%%%%%%%%%%%%%%%%%%%%%%%%%%%%%%%%%%%%
%%%%%%%%%%%%%%%%%%%%%%%%%%%%%%%%%%%%%%%
\section{Relative Picard groups}
%%%%%%%%%%%%%%%%%%%%%%%%%%%%
Let $(\A_1, \D_1)$ and $(\A_2, \D_2)$ be
relative $\sigma$-unital pairs of $C^*$-algebras.
Let
$X, Y$ be
$(\A_1, \D_1)$--$(\A_2, \D_2)$-relative imprimitivity bimodule.
Then $X$ and $Y$ are said to be {\it equivalent}\/
if there exists an isomorphism
$\varphi: X\longrightarrow Y$ of
$\A_1$--$\A_2$-imprimitivity bimodule
such that 
$$
\langle \varphi(x_1) \mid \varphi(x_2) \rangle 
= \langle x _1\mid x_2 \rangle
\quad \text{ for }
\quad
x_1, x_2 \in X
$$
for both left and right inner products.
As $\varphi: X\longrightarrow Y$ 
preserves the bimodule structures and inner products
of $X$ and $Y$,
we know 
$\varphi(X_D)=Y_D.$
We denote by $[X]$ the equivalence class of 
relative imprimitivity bimodule.
For a relative $\sigma$-unital pair $(\A,\D)$ of $C^*$-algebras,
we define a relative version of Picard group as follows.
\begin{definition}
The {\it relative Picard group}\/ $\Pic(\A,\D)$ for $(\A,\D)$ 
is defined by the group
of equivalence classes $[X]$ of 
$(\A, \D)$--$(\A, \D)$-relative imprimitivity bimodule
by the product
\begin{equation*}
[X]\cdot [Y] := [X\otimes_\A Y].
\end{equation*}
\end{definition}
We remark that the identity element of the group
$\Pic(\A,\D)$  is the class of the identity  
$(\A, \D)$--$(\A, \D)$-relative imprimitivity bimodule
$X =\A$ defined by the module structure and the inner products:
\begin{equation}
a\cdot x \cdot b = axb, \qquad
{}_\A\!\langle x \mid y \rangle:= x y^*,\qquad
\langle x \mid y \rangle_{\A}: = x^* y
\quad \text{ for }\quad a, b, x,  y \in \A. 
\end{equation}
Since $(\A,\D)$ is relative $\sigma$-unital,
the above $X$ becomes an $(\A, \D)$--$(\A, \D)$-relative imprimitivity bimodule
as seen in Lemma \ref{lem:theta}.
% which shows $(\A, \D)\underset{\operatorname{RME}}{\sim}(\A, \D).$
\begin{lemma}\label{lem:RNEPIC}
If
$
(\A_1, \D_1)\underset{\operatorname{RME}}{\sim}(\A_2, \D_2),
$
we have
$
\Pic(\A_1, \D_1)=\Pic(\A_2, \D_2).
$
Hence
we have
$
\Pic(\A, \D)=\Pic(\A\otimes\K, \D\otimes\C)
$
for every 
relative $\sigma$-unital pair $(\A,\D)$ of $C^*$-algebras.
\end{lemma}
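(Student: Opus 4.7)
The plan is to adapt the Brown--Green--Rieffel construction of the Picard group to the relative setting by exhibiting an explicit isomorphism $\Phi_Y \colon \Pic(\A_1,\D_1) \to \Pic(\A_2,\D_2)$ induced by conjugation with a fixed relative imprimitivity bimodule. By hypothesis, fix an $(\A_1,\D_1)$--$(\A_2,\D_2)$-relative imprimitivity bimodule $Y$; by the symmetry argument already used in the proof that $\underset{\operatorname{RME}}{\sim}$ is an equivalence relation, the conjugate $\bar Y$ is an $(\A_2,\D_2)$--$(\A_1,\D_1)$-relative imprimitivity bimodule. For $[X] \in \Pic(\A_1,\D_1)$ I define
$$
\Phi_Y([X]) := [\,\bar Y \otimes_{\A_1} X \otimes_{\A_1} Y\,],
$$
which, by two applications of Lemma \ref{lem:trans}, gives an $(\A_2,\D_2)$--$(\A_2,\D_2)$-relative imprimitivity bimodule. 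Well-definedness on equivalence classes is immediate, since any equivalence $\varphi \colon X \to X'$ yields the equivalence $\id_{\bar Y} \otimes \varphi \otimes \id_Y$ of the corresponding tensor products.

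Next I would establish the two canonical identifications
$$
Y \otimes_{\A_2} \bar Y \;\cong\; \A_1, \qquad \bar Y \otimes_{\A_1} Y \;\cong\; \A_2
$$
as \emph{relative} imprimitivity bimodules, where each right-hand side carries the standard structure from Lemma \ref{lem:theta} with $\theta = \id$. Granted these, multiplicativity
$$
\bar Y \otimes_{\A_1} (X_1 \otimes_{\A_1} X_2) \otimes_{\A_1} Y
\;\cong\; (\bar Y \otimes_{\A_1} X_1 \otimes_{\A_1} Y)
\otimes_{\A_2}
(\bar Y \otimes_{\A_1} X_2 \otimes_{\A_1} Y)
$$
follows by associativity of the relative tensor product together with the first identity, and the same two identities show that $\Phi_{\bar Y}$ is a two-sided inverse of $\Phi_Y$, so $\Phi_Y$ is a group isomorphism.

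The main technical obstacle is verifying that these two isomorphisms actually respect the relative structure, not merely the underlying imprimitivity bimodule structure. I would fix a relative basis $(\{x_n\},\{y_n\})$ for $Y$ and show that the elementary tensors $\{x_n \otimes \overline{x_m}\}_{n,m}$ form a relative basis for $Y \otimes_{\A_2} \bar Y$; the required orthogonality and the containments ${}_{\A_1}\!\langle \,\cdot\, d \mid \,\cdot\, \rangle \in \D_1$ reduce to the conditions (ii) in parts (2), (3) of Definition \ref{def:rib}. The canonical map $x \otimes \bar z \mapsto {}_{\A_1}\!\langle x \mid z \rangle$ then preserves both inner products and carries the $D$-elements of the tensor product into $\D_1$, yielding an equivalence with $\A_1$ in the sense of Lemma \ref{lem:theta}; the argument for $\bar Y \otimes_{\A_1} Y \cong \A_2$ is symmetric. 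Once this is in place, the second assertion $\Pic(\A,\D) \cong \Pic(\A\otimes\K,\D\otimes\C)$ is immediate from the first together with the relative Morita equivalence established in Lemma \ref{lem:stab}.
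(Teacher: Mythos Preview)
Your proposal is correct and follows essentially the same approach as the paper: define the conjugation map $[X]\mapsto[\bar Y\otimes_{\A_1}X\otimes_{\A_1}Y]$ and use that $Y\otimes_{\A_2}\bar Y$ and $\bar Y\otimes_{\A_1}Y$ represent the identities of the two Picard groups. The paper's proof is in fact more terse than yours, asserting the isomorphism in two lines.

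One remark: the ``main technical obstacle'' you flag is lighter than you suggest. In the paper's definition of equivalence of relative imprimitivity bimodules (stated just before the definition of $\Pic(\A,\D)$), two relative imprimitivity bimodules are declared equivalent as soon as there is an isomorphism of the underlying $\A_1$--$\A_2$-imprimitivity bimodules preserving both inner products; the paper then observes that such a $\varphi$ automatically satisfies $\varphi(X_D)=Y_D$. So once Lemma~\ref{lem:trans} tells you $Y\otimes_{\A_2}\bar Y$ is a relative imprimitivity bimodule, the classical isomorphism $x\otimes\bar z\mapsto{}_{\A_1}\!\langle x\mid z\rangle$ with $\A_1$ already furnishes the required equivalence, and no separate verification that relative bases are carried to relative bases is needed.
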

\begin{proof}
Let
$X$ be
$(\A_1, \D_1)$--$(\A_2, \D_2)$-relative imprimitivity bimodule,
and $\bar{X}$ its conjugate module,
which is 
$(\A_2, \D_2)$--$(\A_1, \D_1)$-relative imprimitivity bimodule.
It is easy to see that the correspondence
$$
[Y] \in \Pic(\A_1, \D_1) \longrightarrow
[\bar{X}\otimes_{\A_1}Y\otimes_{\A_1}X]\in \Pic(\A_2, \D_2)
$$
yields an isomorphism as groups,
because
$[\bar{X}\otimes_{\A_1}X] $ is the unit of the group $\Pic(\A_2, \D_2)$
and
$[X\otimes_{\A_2}\bar{X}]$ is the unit of the group $\Pic(\A_1, \D_1)$.
\end{proof}
If $\theta:\A_1\longrightarrow \A_2$
is an isomorphism of $C^*$-algebras such that 
$\theta(\D_1) = \D_2$,
then we write
$\theta:(\A_1, \D_1) \longrightarrow (\A_2, \D_2)$
and call an isomorphism of relative $\sigma$-unital pairs of $C^*$-algebras.
As in Lemma \ref{lem:theta},
any isomorphism 
$\theta:(\A_1, \D_1) \longrightarrow (\A_2, \D_2)$
gives rise to a $(\A_1, \D_1)$--$(\A_2, \D_2)$-relative imprimitivity bimodule
$X_\theta$.
The following lemma is straightforward.
\begin{lemma}
Let
$\theta_{12}:(\A_1, \D_1) \longrightarrow (\A_2, \D_2)$
and
$\theta_{23}:(\A_2, \D_2) \longrightarrow (\A_3, \D_3)$
be isomorphisms of relative $\sigma$-unital pairs of $C^*$-algebras.
Then we have
$$
[X_{\theta_{12}} \otimes_{\A_2}X_{\theta_{23}}] = [X_{\theta_{23} \circ \theta_{12}}]. 
$$
Therefore we have a contravariant functor from the category of relative $\sigma$-unital
$C^*$-algebras with isomorphisms
$\theta:(\A_1, \D_1) \longrightarrow (\A_2, \D_2)$
as morphisms into the category of relative $\sigma$-unital
$C^*$-algebras with equivalence classes of relative imprimitivity bimodules.
\end{lemma}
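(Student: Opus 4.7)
The plan is to write down an explicit inner-product-preserving bimodule isomorphism
$$
\varphi: X_{\theta_{12}}\otimes_{\A_2}X_{\theta_{23}}\longrightarrow X_{\theta_{23}\circ\theta_{12}},
\qquad \varphi(x\otimes y) = x\,\theta_{12}^{-1}(y),
$$
where the right-hand side is computed in the underlying $C^*$-algebra $\A_1$ of $X_{\theta_{23}\circ\theta_{12}}$. First I would check that $\varphi$ is well-defined on balanced tensors: for $a_2 \in \A_2$, the right $\A_2$-action on $X_{\theta_{12}}$ is $x\cdot a_2 = x\theta_{12}^{-1}(a_2)$ and the left $\A_2$-action on $X_{\theta_{23}}$ is multiplication, so
$$
\varphi(xa_2\otimes y) = x\theta_{12}^{-1}(a_2)\theta_{12}^{-1}(y) = x\theta_{12}^{-1}(a_2 y) = \varphi(x\otimes a_2 y),
$$
and $\varphi$ thus descends to the $\A_2$-balanced tensor product.

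Next I would verify compatibility with all the structure. The $\A_1$-$\A_3$-bimodule structure is routine from the formulas: the left $\A_1$-action intertwines trivially, and the right $\A_3$-action on $X_{\theta_{23}\circ\theta_{12}}$ is $x\cdot a_3 = x\,\theta_{12}^{-1}(\theta_{23}^{-1}(a_3))$, which matches the right $\A_3$-action through $X_{\theta_{23}}$. For the right $\A_3$-valued inner product I would compute
$$
\langle \varphi(x_1\otimes y_1)\mid\varphi(x_2\otimes y_2)\rangle_{\A_3}
= \theta_{23}\theta_{12}\bigl(\theta_{12}^{-1}(y_1^*)\,x_1^* x_2\,\theta_{12}^{-1}(y_2)\bigr)
= \theta_{23}\bigl(y_1^*\,\theta_{12}(x_1^*x_2)\,y_2\bigr),
$$
which equals $\langle y_1\mid \langle x_1\mid x_2\rangle_{\A_2}\, y_2\rangle_{\A_3}$, the defining formula of the inner product on the relative tensor product. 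The left $\A_1$-valued inner product is checked analogously. Because $\varphi$ preserves inner products it is isometric (hence injective and with closed image), and density of the image follows from $\varphi(x\otimes\theta_{12}(d_n)) = x\,d_n \to x$ as $\{d_n\}\subset\D_1$ runs through a countable approximate unit; hence $\varphi$ is surjective. The equivalence of relative imprimitivity bimodules is then automatic, since $(\,\cdot\,)_D$ is defined in terms of inner products and $\varphi$ preserves these, so $\varphi$ identifies the two sequences of left/right relative bases constructed in Lemma \ref{lem:theta} and Lemma \ref{lem:trans}.

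The functoriality assertion is then a formal consequence. The proposed functor $F$ sends each object $(\A,\D)$ to itself and each isomorphism $\theta:(\A_1,\D_1)\to(\A_2,\D_2)$ to the equivalence class $[X_\theta]$. The identity morphism maps to the identity bimodule $\A$ as noted just after the definition of $\Pic(\A,\D)$, and the identity established above,
$$
[X_{\theta_{12}}\otimes_{\A_2}X_{\theta_{23}}] = [X_{\theta_{23}\circ\theta_{12}}],
$$
says precisely that $F$ reverses the order of composition, since composition in the target category is tensor product $[X]\cdot[Y]=[X\otimes Y]$.

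The only mildly nontrivial point I foresee is the well-definedness and continuity of $\varphi$ at the level of the completed tensor product, which I would justify by noting $\varphi$ is an inner-product isometry on the algebraic balanced tensor product and therefore extends uniquely to the completion; everything else amounts to unwinding the definitions in Lemma \ref{lem:theta} and the relative tensor product construction.
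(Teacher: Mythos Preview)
Your argument is correct and is precisely the standard verification that the paper omits: the paper states the lemma as ``straightforward'' and gives no proof. Your explicit isomorphism $\varphi(x\otimes y)=x\,\theta_{12}^{-1}(y)$ together with the inner-product checks is the natural way to fill in the details, and nothing more is needed.
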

Let $\Aut(\A,\D)$ 
be the group of automorphisms
$\theta$ on $\A$ such that 
$\theta(\D) = \D$, that is,
$$
\Aut(\A,\D) := \{ \theta \in \Aut(\A) \mid \theta(\D) = \D\}
$$
We denote by
$U(\A,\D)$
the group of unitaries 
 $u \in M(\A)$ satisfying 
$u\D u^* = \D$.
We denote by $\Ad(u)$ the automorphism of $(\A,\D)$
defined by
$\Ad(u)(a) = u a u^*$ for $a \in \A$.
Let us denote by
$\Int(\A,\D)$
the subgroup of 
$\Aut(\A,\D)$ consisting of such automorphisms of $(\A,\D)$.
Hence $\Int(\A,\D)$ is a normal subgroup of 
$\Aut(\A,\D)$.
By the preceding lemma, we have an anti-homomorphism
\begin{equation*}
\theta\in\Aut(\A,\D) \longrightarrow [X_\theta]\in \Pic(\A,\D).
\end{equation*}
The following proposition and its corollary
are achieved by a similar manner to
Brown--Green--Rieffel's argument \cite[Proposition 3.1]{BGR}
and {\cite[Corollary 3.2]{BGR}}.
\begin{proposition}[{cf. \cite[Proposition 3.1]{BGR}}]
The kernel of the anti-homomorphism
from $\Aut(\A,\D)$ into
$\Pic(\A,\D)$ is exactly $\Int(\A,\D)$.
That is, we have an exact sequence:
\begin{equation*}
1 
\longrightarrow \Int(\A,\D)
\longrightarrow \Aut(\A,\D)
\longrightarrow \Pic(\A,\D).
\end{equation*} 
\end{proposition}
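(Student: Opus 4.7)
The proposition asserts that the anti-homomorphism $\Psi : \theta \mapsto [X_\theta]$ from $\Aut(\A,\D)$ into $\Pic(\A,\D)$ has kernel exactly $\Int(\A,\D)$. My plan is to verify each of the two inclusions $\Int(\A,\D) \subseteq \Ker \Psi$ and $\Ker \Psi \subseteq \Int(\A,\D)$, by constructing (respectively) an explicit equivalence from an inner implementer and a unitary implementer from an abstract equivalence. The route parallels Brown--Green--Rieffel's argument for the absolute case, but must carry the subalgebra $\D$ along at the end.

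For $\Int(\A,\D) \subseteq \Ker \Psi$, I would take $\theta = \Ad(u)$ with $u \in U(\A,\D)$ and write down the candidate equivalence $\varphi : X_\theta \to \A$ (the identity relative imprimitivity bimodule) by $\varphi(x) = xu^*$. Four routine checks then suffice: (i) left $\A$-linearity is immediate; (ii) right $\A$-linearity, using $\theta^{-1}(b) = u^*bu$, gives $\varphi(x\theta^{-1}(b)) = xu^*b = \varphi(x)b$; (iii) the left inner product is preserved since $uu^* = 1$, namely $(xu^*)(yu^*)^* = xy^*$; and (iv) the right inner product is preserved since $\theta(x^*y) = (xu^*)^*(yu^*)$. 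Because $\varphi$ intertwines both inner products, it automatically restricts to a bijection $(X_\theta)_D \to \A_D$, so it is an equivalence of relative imprimitivity bimodules and $[X_\theta] = [\A]$ in $\Pic(\A,\D)$.

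For $\Ker \Psi \subseteq \Int(\A,\D)$, I start from a relative equivalence $\varphi : \A \to X_\theta$ and aim to produce a unitary multiplier of $\A$ implementing $\theta$. Using the countable approximate unit for $\A$ contained in $\D$ (available by relative $\sigma$-unitality), I extend $\varphi$ strictly continuously to a left-$M(\A)$-linear isomorphism $\tilde\varphi : M(\A) \to M(X_\theta)$ and set $v := \tilde\varphi(1)$, so that $\varphi(a) = av$ for every $a \in \A$. The right-module compatibility $\varphi(ab) = \varphi(a)\theta^{-1}(b)$ then collapses to $bv = v\theta^{-1}(b)$ for all $b \in \A$. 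The left inner product identity $(av)(bv)^* = ab^*$ forces $vv^* = 1$ in $M(\A)$, while the right inner product identity $\theta(v^*a^*bv) = a^*b$ becomes $v^*cv = \theta^{-1}(c)$ for every $c \in \A$, which on passing to the multiplier level yields $v^*v = 1$. Hence $v$ is a unitary in $M(\A)$ with $\theta = \Ad(v)$, and the standing hypothesis $\theta(\D) = \D$ forces $v\D v^* = \D$, so $v \in U(\A,\D)$ and $\theta \in \Int(\A,\D)$.

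The main obstacle is the multiplier extension in the second direction: one must justify that the left-$\A$-linear, inner-product-preserving map $\varphi$ extends uniquely and strictly continuously to multipliers, and that the pointwise identities derived from the two inner products can be promoted to the multiplier-level equalities $vv^* = 1$ and $v^*v = 1$. These are standard once one recalls that $\A$ sits strictly densely in $M(\A)$ and possesses a countable approximate unit drawn from $\D$. Notably, the relative hypothesis itself is used only at the very last step, to pass from $\theta(\D) = \D$ to $v \in U(\A,\D)$; no additional relative machinery is needed for the algebra.
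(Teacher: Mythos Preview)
Your proposal is correct and follows precisely the route the paper indicates: the paper gives no independent argument, stating only that the proposition ``is achieved by a similar manner to Brown--Green--Rieffel's argument \cite[Proposition 3.1]{BGR}.'' Your two inclusions---the explicit equivalence $x \mapsto xu^*$ for $\Int(\A,\D)\subseteq\Ker\Psi$, and the extraction of a unitary multiplier $v=\tilde\varphi(1)$ via strict extension for the converse---are exactly that BGR argument, with the relative condition $v\D v^*=\D$ read off at the end from $\theta(\D)=\D$, just as you note.
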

\begin{corollary}[{cf. \cite[Corollary 3.2]{BGR}}] \label{cor:3.2}
Let $(\A_1, \D_1)$ and $(\A_2, \D_2)$ be
relative $\sigma$-unital pairs of $C^*$-algebras.
Let $\alpha, \beta:(\A_1, \D_1) \longrightarrow(\A_2, \D_2)$
be isomorphisms.
If $X_\alpha$ and $X_\beta$ are equivalent,
then there exists a unitary $u \in U(\A,\D)$
such that 
$\beta = \Ad(u)\circ \alpha$.
\end{corollary}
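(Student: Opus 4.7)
The plan is to reduce Corollary \ref{cor:3.2} to the preceding proposition by the usual trick of passing from a pair of isomorphisms to a single automorphism. First I would set $\psi := \beta \circ \alpha^{-1}$, which lies in $\Aut(\A_2, \D_2)$ since both $\alpha$ and $\beta$ carry $\D_1$ onto $\D_2$. The desired conclusion $\beta = \Ad(u) \circ \alpha$ with $u \in U(\A_2, \D_2)$ is then equivalent to $\psi \in \Int(\A_2, \D_2)$, i.e.\ to $[X_\psi]$ being the identity in $\Pic(\A_2, \D_2)$ via the exact sequence just established.

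Next I would show that the hypothesis $[X_\alpha] = [X_\beta]$ forces $[X_\psi] = [X_{\id_{\A_2}}]$. Tensoring the equivalence $X_\alpha \simeq X_\beta$ on the left over $\A_1$ with the conjugate bimodule $\bar{X}_\alpha$ yields $\bar{X}_\alpha \otimes_{\A_1} X_\alpha \simeq \bar{X}_\alpha \otimes_{\A_1} X_\beta$. A direct inspection of the definitions identifies $\bar{X}_\alpha$ with $X_{\alpha^{-1}}$ as an $(\A_2, \D_2)$--$(\A_1, \D_1)$-relative imprimitivity bimodule, via the assignment $\bar{x} \mapsto \alpha(x^*)$. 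Combining this with the functorial identity
\begin{equation*}
X_{\theta_{12}} \otimes_{\A_2} X_{\theta_{23}} = X_{\theta_{23} \circ \theta_{12}}
\end{equation*}
from the preceding lemma gives $X_{\alpha^{-1}} \otimes_{\A_1} X_\alpha \simeq X_{\id_{\A_2}}$ and $X_{\alpha^{-1}} \otimes_{\A_1} X_\beta \simeq X_{\beta \circ \alpha^{-1}} = X_\psi$, whence $X_\psi \simeq X_{\id_{\A_2}}$. Applying the preceding proposition, whose kernel statement identifies $\Int(\A_2, \D_2)$ with the kernel of $\theta \mapsto [X_\theta]$, then supplies $\psi = \Ad(u)$ for some $u \in U(\A_2, \D_2)$, and $\beta = \Ad(u) \circ \alpha$ follows.

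The main obstacle is the bookkeeping required to lift the classical Brown--Green--Rieffel cancellation to the relative setting: one must verify that the identification $\bar{X}_\alpha \simeq X_{\alpha^{-1}}$ and the cancellation $\bar{X}_\alpha \otimes_{\A_1} X_\alpha \simeq X_{\id_{\A_2}}$ respect the relative structure, i.e.\ that the distinguished subspace $X_D$ and the relative bases transform correctly. For $X_\alpha$, a short computation shows $X_D$ is precisely $\{x \in \A_1 : x\D_1 x^* \subset \D_1,\ x^* \D_1 x \subset \D_1\}$, a set manifestly stable under the involution implicit in the conjugation, so this verification is routine once one has the right formulas in hand. Alternatively, one could bypass the intermediate identification of conjugates altogether and imitate the direct Brown--Green--Rieffel argument: any equivalence $\varphi \colon X_\alpha \to X_\beta$ is left $\A_1$-linear, hence of the form $\varphi(x) = xv$ for some $v$ in the multiplier algebra; preservation of the two inner products forces $v$ to be a unitary with $\beta(v^* z v) = \alpha(z)$, and preservation of $X_D$ forces $v \in U(\A_1, \D_1)$, so $u := \beta(v) \in U(\A_2, \D_2)$ does the job.
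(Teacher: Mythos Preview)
Your proposal is correct and follows exactly the route the paper intends: the paper gives no explicit proof here, merely stating that the proposition and its corollary ``are achieved by a similar manner to Brown--Green--Rieffel's argument,'' and your reduction via $\psi=\beta\circ\alpha^{-1}$ together with $\bar X_\alpha\simeq X_{\alpha^{-1}}$ and the tensor identity $[X_{\theta_{12}}\otimes X_{\theta_{23}}]=[X_{\theta_{23}\circ\theta_{12}}]$ is precisely that argument. The relative bookkeeping you flag (that $(X_\alpha)_D=\{x:x\D_1x^*\subset\D_1,\ x^*\D_1x\subset\D_1\}$ is stable under the relevant identifications) is the only new point, and your treatment of it is adequate.
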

The following lemma is also a relative version
of {\cite[Lemma 3.3]{BGR}}.
\begin{lemma}[{cf. \cite[Lemma 3.3]{BGR}}]\label{lem:3.3}
%Let $(\A_1, \D_1)$ and $(\A_2, \D_2)$ be
%relative $\sigma$-unital pairs of $C^*$-algebras.
Let $X$ be an $(\A_1, \D_1)$--$(\A_2, \D_2)$-relative imprimitivity bimodule.
Let $(\A_0,\D_0)$ be the linking pair 
of $X$ defined by \eqref{eq:linkinga} and \eqref{eq:linkingd}.
Then $X$ is equivalent to $X_\theta$ for some 
isomorphism
$\theta:(\A_1,\D_1)\longrightarrow (\A_2,\D_2)$
if and only if
there exists a partial isometry $v \in M(\A_0)$ such that 
\begin{gather}
v^* v =
\begin{bmatrix}
1 & 0\\
0 & 0
\end{bmatrix}, 
\qquad
 v v^* =
\begin{bmatrix}
0 & 0\\
0 & 1
\end{bmatrix}  \label{eq:vvvv}\\
\intertext{and}
v\D_0 v^* = \D_0 vv^*,\qquad
v^*\D_0 v = \D_0 v^*v. \label{eq:vdv}
\end{gather}
In this case, $\theta$ is defined by $\theta(a) = v a v^*, \, a \in \A_1$. 
\end{lemma}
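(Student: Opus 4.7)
The plan is to prove the two implications separately. The sufficiency direction is a direct computation, while necessity relies on transporting a canonical partial isometry in the multiplier of the linking pair for $X_\theta$ back to $M(\A_0)$ via the induced linking-algebra isomorphism.

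For $(\Leftarrow)$, I will assume $v \in M(\A_0)$ satisfies \eqref{eq:vvvv} and \eqref{eq:vdv}. Since $v^*v = P_1$ and $vv^* = P_2$, the formula $\theta(a) := vav^*$ gives a $*$-isomorphism $\A_1 = P_1 \A_0 P_1 \to \A_2 = P_2 \A_0 P_2$ with inverse $\theta^{-1}(b) = v^*bv$, and the hypothesis $v\D_0 v^* = \D_0 P_2$ restricted to the $(1,1)$-corner yields $\theta(\D_1) = \D_2$. I then define $\varphi : X \to X_\theta = \A_1$ by $\varphi(x) := xv$; noting $x = P_1 x P_2$ and $v = P_2 v P_1$, the image lies in $\A_1$, and $\varphi^{-1}(a) = av^*$. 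Preservation of the left action is immediate. Preservation of the right action reduces to $a_2 v = v \theta^{-1}(a_2)$ for $a_2 \in \A_2$, which follows from $\theta^{-1}(a_2) = v^*a_2 v$ and $vv^* = P_2$. Both inner products are then matched by direct calculation: $\varphi(x)\varphi(y)^* = xvv^*y^* = xy^*$ and $\theta(\varphi(x)^*\varphi(y)) = \theta(v^*x^*yv) = vv^*x^*yvv^* = x^*y$.

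For $(\Rightarrow)$, given an equivalence $\varphi:X \to X_\theta$, I will first promote $\varphi$ (together with $\theta$ and the induced conjugate isomorphism $\bar\varphi:\bar X \to \bar X_\theta$) to a $*$-isomorphism of linking pairs
\[
\tilde\varphi : (\A_0,\D_0) \longrightarrow (\A_0(X_\theta),\D_0(X_\theta)), \qquad
\begin{bmatrix} a_1 & x \\ \bar y & a_2 \end{bmatrix} \longmapsto
\begin{bmatrix} a_1 & \varphi(x) \\ \overline{\varphi(y)} & \theta(a_2) \end{bmatrix},
\]
which extends strictly to the multiplier level and preserves the corner projections $P_1, P_2$. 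Inside $M(\A_0(X_\theta))$, the element $w \in P_2 M(\A_0(X_\theta)) P_1$ corresponding to $1_{M(\A_1)} \in M(X_\theta)$ is a partial isometry with $w^*w = P_1$, $ww^* = P_2$, and $\Ad(w)|_{\A_1} = \theta$; the equality $w \D_0(X_\theta) w^* = \D_0(X_\theta) P_2$ is then immediate from $\theta(\D_1) = \D_2$, and symmetrically for $w^*\D_0(X_\theta)w$. Setting $v := \tilde\varphi^{-1}(w) \in M(\A_0)$ gives a partial isometry satisfying \eqref{eq:vvvv} and \eqref{eq:vdv}, and the identification $\theta(a) = vav^*$ for $a \in \A_1$ follows because $\tilde\varphi$ is the identity on $\A_1 = P_1 \A_0 P_1$.

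The main obstacle will be setting up the linking-algebra functoriality in the necessity step: verifying that the bimodule equivalence $\varphi$ really does induce a $*$-isomorphism of the linking pairs (and, crucially, of their diagonal relative subalgebras) at the multiplier level. Compatibility on the diagonal corners is handled by $\theta$; compatibility on the off-diagonal corners $X$ and $\bar X$ reduces to the fact that $\varphi$ preserves both inner products and hence carries $X_D$ onto $(X_\theta)_D$, as already recorded after the definition of equivalence in Section 8. Once this functorial input is in place, the strict extension to multipliers is standard, and the pullback construction of $v$ is formal.
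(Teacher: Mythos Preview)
Your approach is essentially the same as the paper's: both directions track the Brown--Green--Rieffel argument, and your $\varphi(x)=xv$ in the $(\Leftarrow)$ direction is exactly the paper's map $\eta(x)=\begin{bmatrix}0&x\\0&0\end{bmatrix}v$, while for $(\Rightarrow)$ the paper likewise identifies $\A_0$ with the linking algebra of $X_\theta$ and then exhibits the partial isometry corresponding to $1_{M(\A_1)}$ directly as a double centralizer on $X_\theta\oplus\A_2$.

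One slip to fix: in your displayed formula for $\tilde\varphi$, the $(2,2)$-entry should be $a_2\mapsto a_2$, not $\theta(a_2)$ (which does not even type-check, since $\theta$ has domain $\A_1$); multiplicativity on the product $\bar y\cdot x=\langle y\mid x\rangle_{\A_2}$ forces the identity there because $\varphi$ preserves the right inner product. With that correction, your pullback $v=\tilde\varphi^{-1}(w)$ is literally the paper's $v$.
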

\begin{remark}
Under the equality \eqref{eq:vvvv},
the second equality of \eqref{eq:vdv} follows 
from the first equality of \eqref{eq:vdv}.
Because the first one of \eqref{eq:vdv}
ensures us the equality
\begin{equation}
v^*v\D_0 v^*v = v^*\D_0 vv^*v. \label{eq:v*vd}
\end{equation}
By \eqref{eq:vvvv},
$v^*v $ commutes with any elements of $\D_0$
so that \eqref{eq:v*vd} goes to
the second equality of \eqref{eq:vdv}.
\end{remark}
%\begin{proof}
{\it Proof of Lemma \ref{lem:3.3}.}
Although the proof basically follows the proof of \cite[Lemma 3.3]{BGR},
we give it for the sake of completeness.
Suppose that 
 $X$ is equivalent to $X_\theta$ for some 
isomorphism
$\theta:(\A_1,\D_1)\longrightarrow (\A_2,\D_2)$.
By this isomorphism, the linking algebra $\A_0$ of $X$
is identified with that of $X_\theta$.
Hence $X_\theta = \A_1$ and
\begin{equation*}
\A_0 
 = \{
\begin{bmatrix}
a_1      &  x  \\
\bar{y} & a_2
\end{bmatrix}
\mid 
a_1 \in \A_1, a_2 \in \A_2, x \in X_\theta, \bar{y} \in \bar{X}_\theta \}.
\end{equation*}
We define operators
$v, v^* $ on $X_\theta\oplus\A_2$ by 
\begin{equation*}
v
\begin{bmatrix}
z\\
c_2
\end{bmatrix}
=
\begin{bmatrix}
0\\
\theta(z)
\end{bmatrix}, \qquad
v^*
\begin{bmatrix}
z\\
c_2
\end{bmatrix}
=
\begin{bmatrix}
\theta^{-1}(c_2)\\
0
\end{bmatrix}
\quad
\text{ for }
z \in X_\theta, c_2 \in \A_2
\end{equation*}
where
$X_\theta =\A_1$ so that $\theta(z) \in \A_2$.
Put
\begin{equation*}
R_v(
\begin{bmatrix}
a_1      &  x  \\
\bar{y} & a_2
\end{bmatrix}
)
=\begin{bmatrix}
a_1      &  x  \\
\bar{y} & a_2
\end{bmatrix}
v, \qquad
L_v(
\begin{bmatrix}
a_1      &  x  \\
\bar{y} & a_2
\end{bmatrix}
)
=
v
\begin{bmatrix}
a_1      &  x  \\
\bar{y} & a_2
\end{bmatrix}.
\end{equation*}
It is straightforward to see that 
\begin{equation*}
R_v(
\begin{bmatrix}
a_1      &  x  \\
\bar{y} & a_2
\end{bmatrix}
)
\begin{bmatrix}
a'_1      &  x' \\
\bar{y}' & a'_2
\end{bmatrix}
=\begin{bmatrix}
a_1      &  x  \\
\bar{y} & a_2
\end{bmatrix}
L_v(
\begin{bmatrix}
a'_1      &  x'  \\
\bar{y}' & a'_2
\end{bmatrix}
).
\end{equation*}
Hence the pair 
$(L_v, R_v)$ defines an element of $M(\A_0)$
as a double centralizer of $\A_0$.
Similarly 
$(L_{v^*}, R_{v^*})$ defines an element of $M(\A_0)$
such that 
$(L_v, R_v)^* =(L_{v^*}, R_{v^*}),$
so that we may write
$(L_v, R_v)=v$.
It then follows that 
\begin{align*}
v^* v 
\begin{bmatrix}
z\\
c_2
\end{bmatrix}
&=
\begin{bmatrix}
z\\
0
\end{bmatrix}
\quad
\text{ and hence }
\quad
v^* v 
=
\begin{bmatrix}
1 & 0\\
0 & 0
\end{bmatrix}, \\
v v^* 
\begin{bmatrix}
z\\
c_2
\end{bmatrix}
&=
\begin{bmatrix}
0\\
c_2
\end{bmatrix}
\quad
\text{ and hence }
\quad
v v^* 
=
\begin{bmatrix}
0 & 0\\
0 & 1
\end{bmatrix}.
\end{align*}
It is direct to see that 
\begin{equation*}
v 
\begin{bmatrix}
a_1 & 0\\
0 & 0
\end{bmatrix}
v^*
= 
\begin{bmatrix}
0 & 0\\
0 & \theta(a_1)
\end{bmatrix}.
\end{equation*}
This means that $\theta(a_1) = v a_1 v^*$ for $a_1 \in \A_1$
under the identification between 
$a_1 $ and 
$
\begin{bmatrix}
a_1 & 0\\
0 & 0
\end{bmatrix}
$
for $a_1 \in \A_1$.
Since $\theta:\A_1\longrightarrow \A_2$
satisfies
$\theta(\D_1) = \D_2$
and
$\D_1 = \D_0 v^*v,$
$\D_2 = \D_0 vv^*$,
we have
$$
v\D_0 v^* =v\D_1v^* =\theta(\D_1) =\D_2 
=\D_0 vv^*
$$
and
$v^* \D_0 v = \D_0 v^*v$.

Conversely
suppose that a partial isometry $v \in M(\A_0)$ satisfies
the equalities \eqref{eq:vvvv} and \eqref{eq:vdv}.
It is easy to see that there exists an element
$\theta(a)$ in $\A_2$ for each $a \in \A_1$ such that  
 \begin{equation*}
v 
\begin{bmatrix}
a & 0\\
0 & 0
\end{bmatrix}
v^*
= 
\begin{bmatrix}
0 & 0\\
0 & \theta(a)
\end{bmatrix}
\end{equation*}
and the correspondence
$a \in\A_1 \longrightarrow \theta(a)\in \A_2$
gives rise to an isomorphism of $C^*$-algebras.
The conditions \eqref{eq:vvvv} and \eqref{eq:vdv} implies that
$v \D_0 v^* = \D_0 vv^* = \D_2$
and
$v^* \D_0 v = \D_0 v^*v = \D_1$
so that we have
$v\D_1 v^* = vv^* \D_0 vv^* = \D_2$.
This implies that 
$\theta(\D_1) = \D_2$. 
 
We will next show that 
$X$ is equivalent to $X_\theta$.
We identify $\A_1$ with its image in $\A_0$
and then we will define a map
$\eta:X\longrightarrow \A_1 (=X_\theta)$
 by
\begin{equation*}
\eta(x) := 
\begin{bmatrix}
0 & x\\
0 & 0
\end{bmatrix}
v
\qquad \text{ for }
x \in X.
\end{equation*}
Since 
\begin{equation*}
v^* v \eta(x) v^*v
= 
\begin{bmatrix}
1 & 0\\
0 & 0
\end{bmatrix}
\begin{bmatrix}
0 & x\\
0 & 0
\end{bmatrix}
vv^*v
=
\begin{bmatrix}
0 & x\\
0 & 0
\end{bmatrix}
v
=\eta(x),
\end{equation*}
we see that 
$\eta(x) \in \A_1$.
By a routine calculation,
we know that 
$\eta$ is a bimodule homomorphism
from $X$ to $X_\theta$ which
preserves both inner products,
and hence $\eta$ gives rise to an isomorphism
between $X$ and $X_\theta$. 
\qed

The following theorem is also a relative version of a Brown-Green-Rieffel' s theorem
We will give its proof for the sake of completeness.
\begin{theorem}[{cf. \cite[Theorem 3.4]{BGR}}]
Let
$(\A_1, \D_1)$ and $(\A_2, \D_2)$
be relative $\sigma$-unital pairs of $C^*$-algebras.
Let
$X$ be an
$(\A_1\otimes\K, \D_1\otimes\C)$--$(\A_2\otimes\K, \D_2\otimes\C)$-relative imprimitivity bimodule.
Then there exists
an isomorphism $\theta: \A_1\otimes\K\longrightarrow \A_2\otimes\K$
satisfying $\theta(\D_1\otimes\C) =\D_2\otimes\C$
such that $X$ is equivalent to $X_\theta$.
Furthermore $\theta$ is unique up to left multiplication
by an element of $\Int(\A_2\otimes\K,\D_2\otimes\C)$, that is 
if $X$ is equivalent to $X_\varphi$ for some isomorphism
$\varphi:
(\A_1\otimes\K, \D_1\otimes\C) 
\longrightarrow
(\A_2\otimes\K, \D_2\otimes\C),$
then
there exists a unitary
$u \in U(\A_2\otimes\K,\D_2\otimes\C)$
such that 
$\varphi = \Ad(u)\circ \theta$.
\end{theorem}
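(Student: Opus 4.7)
The plan is to reduce the existence claim to Lemma~\ref{lem:3.3} applied to $(\A_1\otimes\K,\D_1\otimes\C)$ and $(\A_2\otimes\K,\D_2\otimes\C)$. Let $(\A_0,\D_0)$ be the relative linking pair of $X$, with corner projections $P_1,P_2$ as in \eqref{eq:cornerprojection}. By that lemma, it suffices to produce a partial isometry $v\in M(\A_0)$ such that
\begin{equation*}
v^*v=P_1,\quad vv^*=P_2,\quad v\D_0 v^*=\D_0 vv^*,\quad v^*\D_0 v=\D_0 v^*v.
\end{equation*}
Given such $v$, the automorphism $\theta:=\Ad(v)$ on $\A_1\otimes\K=P_1\A_0P_1$ carries $\D_1\otimes\C$ onto $\D_2\otimes\C$, and Lemma~\ref{lem:3.3} itself yields the equivalence $X\cong X_\theta$.

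To construct $v$, I would rerun the construction of Proposition~\ref{prop:V} inside $M(\A_0)$ itself, using intrinsic matrix units rather than an external tensor by $\K$. The key observation is that each corner $\A_i\otimes\K$ already carries canonical matrix units $\{1\otimes e_{n,m}\}\subset M(\A_i\otimes\K)\subset M(\A_0)$, whose diagonal lies in $M(\D_i\otimes\C)$ and whose adjoint action normalises $\D_i\otimes\C$; these play exactly the role of the external matrix units $s_{j_k,j}$ in Section~4. Starting from a relative basis $(\{x_n\},\{y_n\})$ of $X$ and the associated $U_n,T_n\in\A_0$ from Lemma~\ref{lem:UNTN}, and fixing partitions $\N=\bigcup_j\N_j$, $\N_j=\bigcup_k\N_{j_k}$, I form
\begin{equation*}
u_n:=\sum_{k=1}^\infty U_k\,(1\otimes s_{n_k,n}),\qquad w_n:=P_1\,(1\otimes s_{n_0,n})+u_n
\end{equation*}
directly in $M(\A_0)$, where the $s_{n_k,n}$ are assembled from intrinsic matrix units of $\A_2\otimes\K$. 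The calculations of Section~4 then carry over verbatim to show that the $w_n^*w_n$ are mutually orthogonal projections summing to $1$ in the strict topology, that $w_nw_n^*\le P_1$, and that $w_n\D_0 w_n^*\subset\D_1\otimes\C$ and $w_n^*\D_0 w_n\subset\D_2\otimes\C$. The telescoping procedure of Lemma~\ref{lem:3.4} then assembles the $w_n$ into an isometry $V_1\in M(\A_0)$ with $V_1^*V_1=1$, $V_1V_1^*=P_1$, and $V_1\D_0V_1^*=\D_1\otimes\C$. The parallel construction, with $T_n$ and intrinsic matrix units of $\A_1\otimes\K$, yields $V_2\in M(\A_0)$ with $V_2V_2^*=P_2$ and the analogous intertwining property. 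I then set $v:=V_2V_1^*$.

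For uniqueness, assume $\varphi:(\A_1\otimes\K,\D_1\otimes\C)\to(\A_2\otimes\K,\D_2\otimes\C)$ is another isomorphism with $X$ equivalent to $X_\varphi$. Then $X_\theta$ is equivalent to $X_\varphi$ as relative imprimitivity bimodules, so Corollary~\ref{cor:3.2} supplies a unitary $u\in U(\A_2\otimes\K,\D_2\otimes\C)$ with $\varphi=\Ad(u)\circ\theta$, as required. The main obstacle is the internalisation step: one must verify that the intrinsic matrix units $\{1\otimes e_{i,j}\}$ of $M(\A_i\otimes\K)$ satisfy every compatibility relation with $\D_0$ that was used in Section~4, in particular that $(1\otimes s_{n_k,n})(\D_i\otimes\C)(1\otimes s_{n_k,n})^*\subset\D_i\otimes\C$. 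This reduces to the standard normalisation of the diagonal $\C\subset\K$ by the matrix units of $\K$; the only genuinely new input beyond Theorem~\ref{thm:RMEPHI2} is the bookkeeping required to extract $v$ inside $M(\A_0)$ rather than merely inside $M(\A_0\otimes\K)$.
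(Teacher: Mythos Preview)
Your uniqueness argument via Corollary~\ref{cor:3.2} is correct and matches the paper. The gap is in the existence part, in the claim that the calculations of Section~4 carry over verbatim once the external tensor factor $\K$ is replaced by intrinsic matrix units of $\A_2\otimes\K$. In Section~4 one obtains $u_n^*u_n=P_2\otimes f_n$ because the cross terms $U_k^*U_l\otimes s_{n_k,n}^*s_{n_l,n}$ vanish: the $s_{n_k,n}$ live in an \emph{external} copy of $\K$, so $s_{n_k,n}^*s_{n_l,n}=0$ for $k\ne l$ kills the term regardless of what $U_k^*U_l$ is. In your version the analogous cross term is $(1\otimes s_{n_k,n})^*\,U_k^*U_l\,(1\otimes s_{n_l,n})$ with all three factors now inside $M(\A_2\otimes\K)$; since $U_k^*U_l=\langle x_k\mid x_l\rangle_{\A_2\otimes\K}$ is a generic element of $\A_2\otimes\K$ (nothing in Definition~\ref{def:rib} forces $\langle x_k\mid x_l\rangle_{\A_2}=0$ for $k\ne l$), the $\K$-legs no longer separate and this term need not be zero. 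Thus your $u_n$ need not be a partial isometry, $w_n^*w_n$ is uncontrolled, and the telescoping of Lemma~\ref{lem:3.4} cannot be launched. A related issue is that there is no global ``$1\otimes s_{n_k,n}$'' in $M(\A_0)$: the off-diagonal block $P_1\A_0P_2=X$ is an abstract bimodule, and the left $\A_1\otimes\K$- and right $\A_2\otimes\K$-actions of the respective matrix units on $X$ are unrelated.

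The paper does not try to internalise Section~4. It applies Proposition~\ref{prop:V} honestly in $M(\bar\A_0\otimes\K)$ (where $\bar\A_i:=\A_i\otimes\K$) to get $w=v_2v_1^*$ there, and only \emph{then} exploits the stability of each corner: by Brown's lemma there are isometries $\bar v_i\in M(\bar\A_i\otimes\K)$ with $\bar v_i\bar v_i^*=1\otimes p$ for a rank-one $p\in\C$, and one checks these satisfy $\bar v_i(\bar\D_i\otimes\C)\bar v_i^*=\bar\D_i\otimes p$. The sandwich $\bar v:=\diag(0,\bar v_2)\,w\,\diag(\bar v_1^*,0)$ then lies in the $(1\otimes p)$-corner and hence defines the partial isometry in $M(\bar\A_0)$ required by Lemma~\ref{lem:3.3}. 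So the descent from $M(\A_0\otimes\K)$ to $M(\A_0)$ is the substantive step here, and it is carried out by compression via Brown's lemma, not by an intrinsic rerun of the Section~4 construction.
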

\begin{proof}
The uniqueness follows immediately from 
Corollary \ref{cor:3.2}.

Now let
$X$ be an
$(\A_1\otimes\K, \D_1\otimes\C)$--$(\A_2\otimes\K, \D_2\otimes\C)$-relative imprimitivity bimodule.
We put
$\bar{\A}_i = \A_i\otimes\K, \, 
\bar{\D}_i = \D_i\otimes\K
$
for $i=1,2$.
Let $(\bar{\A}_0,\bar{\D}_0)$ be the linking pair 
for $X$  defined from $\bar{\A}_i, \bar{\D}_i,i=1,2$ and $X$  
by \eqref{eq:linkinga} and \eqref{eq:linkingd}.
By the assumption 
that
$
(\bar{\A}_1, \bar{\D}_1)\underset{\operatorname{RME}}{\sim}(\bar{\A}_2, \bar{\D}_2)
$
with Theorem \ref{thm:RMEPHI2},
Proposition \ref{prop:V}
tells us that there exist
$v_i \in M(\bar{\A}\otimes\K), i=1,2$
such that 
\begin{align*}
v_i^* v_i  & = 1 \otimes 1 \quad \text{ in} \quad M(\bar{\A}_0\otimes\K), \quad i=1,2, \\
v_1 v_1^*  & = P_1 \otimes 1 \quad 
\text{where } 
P_1 =
\begin{bmatrix}
1 & 0\\
0 & 0
\end{bmatrix}
\quad
\text{ in }  M(\bar{\A}_0) \\
v_2 v_2^*  & = P_2 \otimes 1 \quad 
\text{where } 
P_2 =
\begin{bmatrix}
0 & 0\\
0 & 1
\end{bmatrix}
\quad
\text{ in }  M(\bar{\A}_0) \\
\intertext{ and }
v_i(\bar{D}_0 \otimes\C)v_i^* & = \bar{D}_i \otimes\C,\qquad
v_i^*(\bar{D}_i \otimes\C)v_i  = \bar{D}_0 \otimes\C, \qquad i =1,2.
\end{align*}
Put a partial isometry
$w = v_2 v_1^* \in M(\bar{\A}\otimes\K)$
so that 
we have 
\begin{gather*}
w^* w  
= P_1 \otimes 1
=
\begin{bmatrix}
1\otimes 1 & 0\\
0 & 0
\end{bmatrix},
\qquad
w w^*   
= P_2 \otimes 1 
 =
\begin{bmatrix}
0 & 0\\
0 & 1\otimes 1
\end{bmatrix}
\quad
\text{ in }  M(\bar{\A}_0\otimes\K) .
\\
\intertext{ and }
w(\bar{D}_1 \otimes\C) w^*  = \bar{D}_2 \otimes\C,\qquad
w^*(\bar{D}_2 \otimes\C) w  = \bar{D}_1 \otimes\C.
\end{gather*}
Let $p\in \C$ be the rank one projection $p =e_{1,1}$,
so that $\bar{\A}_1\otimes p\otimes\K$ is a corner of 
  $\bar{\A}_1\otimes \K\otimes\K$.
Hence by \cite[Lemma 2.5]{Brown}
there exists a partial isometry
$\bar{v}_1 \in M(\bar{A}_1\otimes\K\otimes\K)$
such that 
$$
\bar{v}_1^*\bar{v}_1 = 1 \otimes 1\otimes 1, \qquad
\bar{v}_1\bar{v}_1^* = 1 \otimes p\otimes 1.
$$
By the construction of 
$\bar{v}_1$,
we see that
$$
\bar{v}_1 ( \bar{D}_1\otimes \C\otimes\C) \bar{v}_1^*
 = \bar{D}_1 \otimes p\otimes \C, \qquad
\bar{v}_1^* ( \bar{D}_1\otimes p\otimes\C) \bar{v}_1
 = \bar{D}_1 \otimes \C\otimes \C.
$$
We can identify 
$\bar{A}_1$ and $\bar{D}_1$  
with
$\bar{A}_1\otimes 1 \otimes\K$
and
$\bar{D}_1\otimes 1 \otimes\C$,
respectively, so that 
we have
$\bar{v}_1\in M(\bar{A}_1\otimes\K)$ and
\begin{gather*}
\bar{v}_1^*\bar{v}_1 = 1 \otimes 1, \qquad
\bar{v}_1\bar{v}_1^* = 1 \otimes p, \\
\bar{v}_1 ( \bar{D}_1\otimes \C) \bar{v}_1^*
 = \bar{D}_1 \otimes p, \qquad
\bar{v}_1^* ( \bar{D}_1\otimes p) \bar{v}_1
 = \bar{D}_1 \otimes \C.
\end{gather*}
 Similarly we have
$\bar{v}_2\in M(\bar{A}_2\otimes\K)$ and
\begin{gather*}
\bar{v}_2^*\bar{v}_2 = 1 \otimes 1, \qquad
\bar{v}_2\bar{v}_2^* = 1 \otimes p, \\
\bar{v}_2 ( \bar{D}_2\otimes \C) \bar{v}_2^*
 = \bar{D}_2 \otimes p, \qquad
\bar{v}_2^* ( \bar{D}_2\otimes p) \bar{v}_2
 = \bar{D}_2 \otimes \C.
\end{gather*}
Define 
$\bar{v}\in M(\bar{A}_0\otimes\K)$ by
$$
\bar{v} =
\begin{bmatrix}
0 & 0\\
0 & \bar{v}_2
\end{bmatrix}
w
\begin{bmatrix}
\bar{v}_1^* & 0\\
0 & 0
\end{bmatrix}
\quad
\text{ in }
M(\bar{A}_0\otimes\K).
$$
We then have 
\begin{align*}
\bar{v}^*\bar{v}
%& =
%\begin{bmatrix}
%\bar{v}_1 & 0\\
%0 & 0
%\end{bmatrix}
%w^*
%\begin{bmatrix}
%0 & 0\\
%0 & \bar{v}_2^*
%\end{bmatrix}
%\begin{bmatrix}
%0 & 0\\
%0 & \bar{v}_2
%\end{bmatrix}
%w
%\begin{bmatrix}
%\bar{v}_1^* & 0\\
%0 & 0
%\end{bmatrix} \\
& =
\begin{bmatrix}
\bar{v}_1 & 0\\
0 & 0
\end{bmatrix}
w^*
\begin{bmatrix}
0 & 0\\
0 & 1\otimes 1
\end{bmatrix}
w
\begin{bmatrix}
\bar{v}_1^* & 0\\
0 & 0
\end{bmatrix} \\
& =
\begin{bmatrix}
\bar{v}_1 & 0\\
0 & 0
\end{bmatrix}
w^*
w
\begin{bmatrix}
\bar{v}_1^* & 0\\
0 & 0
\end{bmatrix} \\
& = 
\begin{bmatrix}
1\otimes p & 0 \\
0 & 0 
\end{bmatrix}
\intertext{and}
\bar{v}\bar{v}^*
%& =
%\begin{bmatrix}
%0 & 0\\
%0 & \bar{v}_2
%\end{bmatrix}
%w
%\begin{bmatrix}
%\bar{v}_1^* & 0\\
%0 & 0
%\end{bmatrix}
%\begin{bmatrix}
%\bar{v}_1 & 0\\
%0 & 0
%\end{bmatrix}
%w^*
%\begin{bmatrix}
%0 & 0\\
%0 & \bar{v}_2^*
%\end{bmatrix} \\
& =
\begin{bmatrix}
0 & 0\\
0 & \bar{v}_2
\end{bmatrix}
w
\begin{bmatrix}
1\otimes 1 & 0\\
0 & 0
\end{bmatrix}
w^*
\begin{bmatrix}
0 & 0\\
0 & \bar{v}_2^*
\end{bmatrix} \\
& =
\begin{bmatrix}
0 & 0\\
0 & \bar{v}_2
\end{bmatrix}
w
w^*
\begin{bmatrix}
0 & 0\\
0 & \bar{v}_2^*
\end{bmatrix} \\
& =
\begin{bmatrix}
0 & 0 \\
0 & 1\otimes p 
\end{bmatrix}.
\end{align*}
We will next show that 
$
\bar{v} ( \bar{D}_0\otimes p) \bar{v}^*
 = (\bar{D}_0 \otimes p) \bar{v}\bar{v}^*.
$
For
$
\begin{bmatrix}
d_1 & 0 \\
0 & d_2 
\end{bmatrix}
\in \bar{\D}_0
$
with
$d_i \in \bar{\D}_i, i=1,2,$
we have
\begin{equation*}
\bar{v}
\begin{bmatrix}
d_1\otimes p & 0 \\
0 & d_2\otimes p 
\end{bmatrix}
\bar{v}^*
 =
\begin{bmatrix}
0 & 0\\
0 & \bar{v}_2
\end{bmatrix}
w
\begin{bmatrix}
\bar{v}_1^*(d_1\otimes p)\bar{v}_1 & 0\\
0 & 0
\end{bmatrix}
w^*
\begin{bmatrix}
0 & 0\\
0 & \bar{v}_2^*
\end{bmatrix}.
\end{equation*}
Since
$\bar{v}_1^*(d_1\otimes p)\bar{v}_1\in \bar{D}_1\otimes\C$,
we have
$
w
\begin{bmatrix}
\bar{v}_1^*(d_1\otimes p)\bar{v}_1 & 0\\
0 & 0
\end{bmatrix}
w^*
\in
w(\bar{D}_1\otimes\C)w^* =\bar{D}_2\otimes\C
$
so that 
\begin{equation*}
\bar{v}
\begin{bmatrix}
d_1\otimes p & 0 \\
0 & d_2\otimes p 
\end{bmatrix}
\bar{v}^*
\in 
\bar{v}_2(\bar{D}_2\otimes\C)\bar{v}_2 
=\bar{D}_2\otimes p
=(\bar{D}_0\otimes p)\bar{v}\bar{v}^*.
\end{equation*}
Therefore we have
$
\bar{v} ( \bar{D}_0\otimes p) \bar{v}^*
\subset (\bar{D}_0 \otimes p) \bar{v}\bar{v}^*
$
and similarly
$
\bar{v}^* ( \bar{D}_0\otimes p) \bar{v}
\subset (\bar{D}_0 \otimes p) \bar{v}^*\bar{v}
$
so that we have
$$
\bar{v} ( \bar{D}_0\otimes p) \bar{v}^*
 = (\bar{D}_0 \otimes p) \bar{v}\bar{v}^*
\quad
\text{ and }
\quad
\bar{v}^* ( \bar{D}_0\otimes p) \bar{v}
= (\bar{D}_0 \otimes p) \bar{v}^*\bar{v}.
$$
By the equalities
$$
\bar{v}^*\bar{v}
=
\begin{bmatrix}
1\otimes p & 0 \\
0 & 0 
\end{bmatrix},
\qquad
\bar{v}\bar{v}^*
=
\begin{bmatrix}
0 & 0 \\
0 & 1\otimes p 
\end{bmatrix},
$$
we know that 
$\bar{v}$ commutes with $1\otimes p$
so that we can regard 
$\bar{v}$
as an element of $M(\bar{\A}_0\otimes p) =M(\bar{\A}_0)$.
Thus we obtain a partial isometry $\bar{v}$ in $M(\bar{\A}_0)$
such that  
$$
\bar{v}^*\bar{v}
=
\begin{bmatrix}
1_{\bar{A}_1} & 0 \\
0 & 0 
\end{bmatrix},
\qquad
\bar{v}\bar{v}^*
=
\begin{bmatrix}
0 & 0 \\
0 & 1_{\bar{A}_2} 
\end{bmatrix},
$$
and
$$
\bar{v} \bar{D}_0 \bar{v}^*
 = \bar{D}_0  \bar{v}\bar{v}^*
\quad
\text{ and }
\quad
\bar{v}^* \bar{D}_0 \bar{v}
= \bar{D}_0 \bar{v}^*\bar{v}.
$$
Therefore by Lemma \ref{lem:3.3},
we conclude that $X$ is equivalent to
$X_\theta$
for some isomorphism
$\theta:(\bar{\A}_1,\bar{D}_1)\longrightarrow (\bar{\A}_2,\bar{D}_2).$
\end{proof}
Recall that the subgroups 
$\Aut(\A\otimes\K,\D\otimes\C) $
and
$\Int(\A\otimes\K,\D\otimes\C)$ 
of automorphism group
$\Aut(\A\otimes\K)$
are defined by 
\begin{align*}
\Aut(\A\otimes\K,\D\otimes\C) 
& = \{\beta\in \Aut(\A\otimes\K)\mid \beta(\D\otimes\C) = \D\otimes\C\},\\
\Int(\A\otimes\K,\D\otimes\C) 
& = \{\beta\in \Int(\A\otimes\K)\mid \beta(\D\otimes\C) = \D\otimes\C\}.
\end{align*}
\begin{corollary}
Let $(\A,\D)$ be a relative $\sigma$-unital pair of $C^*$-algebras.
For any relative imprimitivity bimodule
$[X] \in \Pic(\A\otimes\K,\D\otimes\C)$,
there exists an automorphism
$\theta \in \Aut(\A\otimes\K,\D\otimes\C)$
such that $[X] = [X_\theta]$.
Thus we have a exact sequence
\begin{equation*}
1 
\longrightarrow
\Int(\A\otimes\K,\D\otimes\C)
\longrightarrow
\Aut(\A\otimes\K,\D\otimes\C)
\longrightarrow
\Pic(\A\otimes\K,\D\otimes\C)
\longrightarrow
1.
\end{equation*}
\end{corollary}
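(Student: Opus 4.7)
The plan is to derive this corollary as a direct consequence of the preceding theorem, which already does the hard work of producing a concrete isomorphism $\theta$ from an abstract relative imprimitivity bimodule over the stabilizations. First I would specialize that theorem to the case $\A_1 = \A_2 = \A$ and $\D_1 = \D_2 = \D$. Applied to an arbitrary class $[X] \in \Pic(\A\otimes\K,\D\otimes\C)$, it produces an isomorphism $\theta : \A\otimes\K \longrightarrow \A\otimes\K$ with $\theta(\D\otimes\C) = \D\otimes\C$, that is, an element $\theta \in \Aut(\A\otimes\K,\D\otimes\C)$, such that $X$ is equivalent to $X_\theta$. This is exactly the surjectivity of the map $\theta \mapsto [X_\theta]$ from $\Aut(\A\otimes\K,\D\otimes\C)$ onto $\Pic(\A\otimes\K,\D\otimes\C)$, and so it establishes the first assertion.

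Next I would assemble the exact sequence. Since $(\A\otimes\K,\D\otimes\C)$ is relative $\sigma$-unital (by the proposition in Section 2), the proposition identifying the kernel of the anti-homomorphism $\Aut(\A,\D)\longrightarrow \Pic(\A,\D)$ as $\Int(\A,\D)$ applies verbatim to the pair $(\A\otimes\K,\D\otimes\C)$. This gives exactness at $\Aut(\A\otimes\K,\D\otimes\C)$, namely that the kernel of $\theta \mapsto [X_\theta]$ is exactly $\Int(\A\otimes\K,\D\otimes\C)$. Exactness at $\Int(\A\otimes\K,\D\otimes\C)$ is trivial since it is the inclusion of a subgroup, and exactness at $\Pic(\A\otimes\K,\D\otimes\C)$ is the surjectivity established above.

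The map $\theta \mapsto [X_\theta]$ is an anti-homomorphism rather than a homomorphism, but composing with inversion on $\Pic$ (or equivalently passing to the conjugate bimodule $\bar{X}_\theta \simeq X_{\theta^{-1}}$) converts it into a homomorphism with the same kernel and image, so the displayed exact sequence holds in the conventional sense. There is no genuine obstacle here: the only substantive content is the preceding theorem, which was proved using Proposition~\ref{prop:V} together with Lemma~\ref{lem:3.3}, and the rest is formal bookkeeping. Accordingly this corollary is essentially a one-line reformulation, and I would present it as such.
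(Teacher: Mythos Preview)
Your proposal is correct and matches the paper's intent: the corollary is stated without proof, as it follows immediately from specializing the preceding theorem to $(\A_1,\D_1)=(\A_2,\D_2)=(\A,\D)$ for surjectivity, together with the earlier proposition identifying $\Int$ as the kernel. Your remark about the anti-homomorphism is a fair observation that the paper leaves implicit.
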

Let us denote by
$\Out(\A\otimes\K,\D\otimes\C) $
the quotient group
$
\Aut(\A\otimes\K,\D\otimes\C) /\Int(\A\otimes\K,\D\otimes\C). 
$
We then have 
\begin{corollary}
Let $(\A,\D)$ be a relative $\sigma$-unital pair of $C^*$-algebras.
We have
\begin{equation*}
\Pic(\A,\D) =
\Out(\A\otimes\K,\D\otimes\C).
\end{equation*}
\end{corollary}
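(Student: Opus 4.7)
The plan is to combine the two results immediately preceding the corollary. First, by Lemma \ref{lem:RNEPIC}, since $(\A,\D)\underset{\operatorname{RME}}{\sim}(\A\otimes\K,\D\otimes\C)$ (this is Lemma \ref{lem:stab}), the relative Picard groups agree:
\begin{equation*}
\Pic(\A,\D) = \Pic(\A\otimes\K,\D\otimes\C).
\end{equation*}
So it suffices to identify the right-hand side with $\Out(\A\otimes\K,\D\otimes\C)$.

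Second, the preceding corollary furnishes the short exact sequence
\begin{equation*}
1\longrightarrow \Int(\A\otimes\K,\D\otimes\C) \longrightarrow \Aut(\A\otimes\K,\D\otimes\C) \overset{\Psi}{\longrightarrow} \Pic(\A\otimes\K,\D\otimes\C)\longrightarrow 1,
\end{equation*}
where $\Psi(\theta) = [X_\theta]$. Surjectivity of $\Psi$ was the content of the previous corollary (every class $[X]$ is realized by $X_\theta$ for some $\theta \in \Aut(\A\otimes\K,\D\otimes\C)$, using the stabilization theorem together with Lemma \ref{lem:3.3}), and the identification of the kernel with $\Int(\A\otimes\K,\D\otimes\C)$ comes from the uniqueness clause, via Corollary \ref{cor:3.2}. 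Passing to the quotient yields the induced isomorphism
\begin{equation*}
\bar{\Psi}\colon \Out(\A\otimes\K,\D\otimes\C) \overset{\cong}{\longrightarrow} \Pic(\A\otimes\K,\D\otimes\C).
\end{equation*}

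Combining the two displayed isomorphisms gives $\Pic(\A,\D) = \Out(\A\otimes\K,\D\otimes\C)$, which is the assertion. There is no substantive obstacle here: both ingredients are already established, and the proof is essentially a one-line deduction stringing them together. The only thing worth verifying explicitly is that the isomorphism $\Pic(\A,\D)\cong \Pic(\A\otimes\K,\D\otimes\C)$ supplied by Lemma \ref{lem:RNEPIC} is a \emph{group} isomorphism, which is immediate from its construction $[Y]\mapsto [\bar{X}\otimes_{\A}Y\otimes_{\A}X]$ via the relative imprimitivity bimodule $X$ implementing the equivalence of Lemma \ref{lem:stab}.
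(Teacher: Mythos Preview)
Your proof is correct and follows essentially the same approach as the paper: invoke Lemma~\ref{lem:RNEPIC} (via Lemma~\ref{lem:stab}) to obtain $\Pic(\A,\D)=\Pic(\A\otimes\K,\D\otimes\C)$, and then use the preceding corollary's exact sequence to identify $\Pic(\A\otimes\K,\D\otimes\C)$ with $\Out(\A\otimes\K,\D\otimes\C)$. The paper's proof is just a terser version of what you wrote.
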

\begin{proof}
By Lemma \ref{lem:RNEPIC}, we see that
$\Pic(\A,\D) =\Pic(\A\otimes\K,\D\otimes\C)
$
so that we have the desired equality by the preceding corollary.
\end{proof}

%%%%%%%%%%%%%%%%%%%%%%%%%%%%%%%%%%%%%%%%%
\section{Relative Picard groups of Cuntz--Krieger pairs}
%%%%%%%%%%%%%%%%%%%%%%%%%%%%%%%%%%%%%%%%%
%%%%%%%%%%%%%%%%%%%%%%%%%%%%%%%%%%%%%%%%%
In this section, we will study the relative Picard group
$\Pic(\A,\D)$
for the Cuntz--Krieger pairs $(\OA,\DA)$.
By \cite[Lemma 1.1]{KodakaJOT}, for a unitary $u \in M(\SOA)$,
the automorphism $\Ad(u)$ acts trivially on $K_0(\SOA)$. 
We will first show the following proposition
which is a relative version of \cite[Lemma 3.13]{KodakaJOT} 
(Lemma \ref{lem:K1.3} in Appendix).
\begin{proposition}\label{prop:4.1}
Let $\beta \in \Aut(\SOA)$ satisfy
$\beta(\SDA) = \SDA$ and
$\beta_* =\id$ on $K_0(\OA)$.
Then there exists a unitary
$u \in M(\SOA)$
and an automorphism 
$\alpha \in \Aut(\OA)$
such that
\begin{gather*}
\beta = \Ad(u)\circ(\alpha\otimes\id)
\quad\text{ and }
\quad
\alpha_* = \id \text{ on }
K_0(\OA), \\
u(\SDA)u^* = \SDA, \qquad
\alpha(\DA) = \DA.
\end{gather*}
\end{proposition}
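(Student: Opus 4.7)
The plan is to reduce $\beta$ to the desired form by two successive diagonal-preserving inner modifications. Set $p_j := 1_\OA \otimes e_{j,j} \in \SDA$, so that $\sum_j p_j = 1$ strictly in $M(\SOA)$. Since $\beta$ preserves $\SDA$ and acts trivially on $K_0(\OA)$, the projections $\beta(p_j) \in \SDA$ satisfy $[\beta(p_j)] = [p_j]$ in $K_0(\SOA) = K_0(\OA)$.

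First, I will construct, for each $j$, a partial isometry $w_j \in \SOA$ with $w_j^* w_j = p_j$, $w_j w_j^* = \beta(p_j)$, and satisfying the normalizing conditions $w_j \SDA w_j^* \subset \SDA$ and $w_j^* \SDA w_j \subset \SDA$. The existence of such diagonal-normalizing partial isometries between $K_0$-equivalent projections in $\SDA$ rests on the Cartan-masa (groupoid) structure of $(\SOA, \SDA)$, i.e.\ that $\SDA$-projections correspond to compact-open subsets of the unit space of the Cuntz--Krieger groupoid and that matching $K_0$ classes are connected by a bisection. Setting $w := \sum_j w_j$ (the orthogonality of the $w_j w_j^*$ and of the $w_j^* w_j$ yields strict convergence in $M(\SOA)$) produces a unitary with $w \SDA w^* = \SDA$ and $w p_j w^* = \beta(p_j)$. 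Then $\gamma := \Ad(w^*) \circ \beta$ is an automorphism of $\SOA$ with $\gamma(\SDA) = \SDA$, $\gamma(p_j) = p_j$ for every $j$, and $\gamma_* = \id$ on $K_0(\OA)$.

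Second, the equality $\gamma(p_1) = p_1$ lets me define $\alpha \in \Aut(\OA)$ as the restriction $\gamma|_{p_1 \SOA p_1}$ under the identification $p_1 \SOA p_1 \cong \OA$; combined with $\gamma(\SDA) = \SDA$ this automatically gives $\alpha(\DA) = \DA$. For each $j$, the partial isometry $\gamma(1_\OA \otimes e_{j,1})$ lies in $p_j M(\SOA) p_1 = M(\OA) \otimes e_{j,1}$, hence has the form $u_j \otimes e_{j,1}$ for a unitary $u_j \in M(\OA)$ with $u_1 = 1_\OA$. Applying $\gamma$ to the identity $(1_\OA \otimes e_{j,1})(d \otimes e_{1,1})(1_\OA \otimes e_{1,j}) = d \otimes e_{j,j}$ for $d \in \DA$ and using that $\gamma$ preserves $\SDA$ gives $u_j \alpha(d) u_j^* \in \DA$; since $\alpha$ is an automorphism of $\DA$, this forces $u_j \DA u_j^* = \DA$. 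Therefore $U := \sum_j u_j \otimes e_{j,j}$ is a unitary in $M(\SOA)$ with $U \SDA U^* = \SDA$. A short matrix-unit calculation, using $\gamma(1_\OA \otimes e_{i,j}) = u_i u_j^* \otimes e_{i,j}$ and $\gamma(x \otimes e_{1,1}) = \alpha(x) \otimes e_{1,1}$, shows that $\Ad(U^*) \circ \gamma$ fixes each $1_\OA \otimes e_{i,j}$ and acts as $\alpha$ on $\OA \otimes e_{1,1}$, so it equals $\alpha \otimes \id$ on all of $\SOA$.

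Setting $u := wU \in M(\SOA)$ then yields $u \SDA u^* = \SDA$ and $\beta = \Ad(u) \circ (\alpha \otimes \id)$. The remaining condition $\alpha_* = \id$ on $K_0(\OA)$ follows from $\beta_* = \id$ together with the fact quoted just before the statement, namely that $\Ad(u)_*$ acts trivially on $K_0(\SOA)$. The main obstacle is the first step: building partial isometries between $K_0$-equivalent diagonal projections that themselves normalize $\SDA$. This is where we genuinely go beyond the non-relative Kodaka lemma (Lemma K1.3), and where the special groupoid/Cartan structure of the Cuntz--Krieger pair $(\OA, \DA)$ enters in an essential way.
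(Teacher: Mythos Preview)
Your proof is correct and follows essentially the same two-step strategy as the paper: first conjugate by a diagonal-normalizing unitary $w=\sum_j w_j$ so that the matrix projections $1\otimes e_{jj}$ are fixed, then read off the corner automorphism $\alpha$ and conjugate by a second diagonal unitary $U=\sum_j u_j\otimes e_{jj}$ built from $\gamma(1\otimes e_{j1})=u_j\otimes e_{j1}$ to reach $\alpha\otimes\id$. The only noteworthy difference is in how the crucial first step is justified: where you invoke the Cartan/groupoid picture of $(\SOA,\SDA)$ to produce diagonal-normalizing partial isometries between $K_0$-equivalent projections in $\SDA$, the paper quotes the specific result from \cite{MaPAMS} that the natural map $K_0(\SOA,\SDA)\to K_0(\OA)$ is an isomorphism, where $K_0(\SOA,\SDA)$ is defined using Murray--von Neumann equivalence via partial isometries in the normalizer semigroup $N_s(\SOA,\SDA)$---which is exactly the statement you need. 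One small slip: $\gamma(1\otimes e_{j1})$ lies in $p_j(\SOA)p_1=\OA\otimes e_{j1}$, so each $u_j$ is a unitary in $\OA$ itself rather than merely in $M(\OA)$; this does not affect the argument.
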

To show the above proposition, we provide several lemmas.
\begin{lemma}\label{lem:4.1}
Let $\beta \in \Aut(\SOA)$ satisfy
$\beta(\SDA) = \SDA$ and
$\beta_* =\id$ on $K_0(\OA)$.
Then for each $k \in \N$,
there exists a partial isometry
$w_k \in \SOA$
such that
\begin{gather}
w_k^* w_k = 1\otimes e_{kk}, \qquad
 w_k w_k^* = \beta(1\otimes e_{kk}), \label{eq:4.1.1}\\
w_k(\SDA)w_k^* \subset  \SDA, \qquad
w_k^*(\SDA) w_k \subset  \SDA. \label{eq:4.1.2}
\end{gather}
\end{lemma}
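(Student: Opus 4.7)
The plan is to build $w_k$ explicitly by decomposing $q := \beta(1 \otimes e_{kk}) \in \SDA$ along the $\C$-direction and matching each piece, via $\DA$-normalizing partial isometries of $\OA$, to a suitable orthogonal partition of $1_\OA$ in $\DA$, then stitching everything together with matrix units of $\K$.

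Since the only projections in $\C \cong c_0(\N)$ are finite orthogonal sums of the diagonal units $e_{nn}$, the projection $q \in \SDA = \DA \otimes \C$ decomposes uniquely as $q = \sum_{n \in F} q_n \otimes e_{nn}$ for a finite $F \subset \N$ and projections $q_n \in \DA$. The hypothesis $\beta_* = \id$ on $K_0(\OA) = K_0(\SOA)$ forces $\sum_{n \in F} [q_n] = [1_{\OA}]$ in $K_0(\OA)$.

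The heart of the argument is then to produce an orthogonal partition $1_\OA = \sum_{n \in F} p_n$ with $p_n \in \DA$ together with $\DA$-normalizing partial isometries $v_n \in \OA$ satisfying $v_n^*v_n = p_n$ and $v_n v_n^* = q_n$. This rests on two properties of the Cartan pair $(\OA,\DA)$: surjectivity of the map $K_0(\DA) \to K_0(\OA)$ (its image contains every generator $[S_\mu S_\mu^*]$), and the fact that two projections in $\DA$ with equal $K_0(\OA)$-class are Murray--von Neumann equivalent via a single $\DA$-normalizing partial isometry of $\OA$---concretely, a finite sum of generators $S_\mu S_\nu^*$ from the normalizer inverse semigroup. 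Combined with the AF/totally-disconnected structure of $\DA$, these yield the required $\{p_n\}$ and $\{v_n\}$.

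Setting $w_k := \sum_{n \in F} v_n \otimes e_{nk} \in \SOA$, the orthogonality of the $p_n$'s forces $v_n v_{n'}^* = 0$ for $n \ne n'$, because
\begin{equation*}
(v_n v_{n'}^*)(v_n v_{n'}^*)^* = v_n (v_{n'}^*v_{n'}) v_n^* = v_n p_{n'} v_n^* = v_n p_n p_{n'} v_n^* = 0,
\end{equation*}
using $v_n p_n = v_n$. This gives $w_k^* w_k = \sum_n p_n \otimes e_{kk} = 1 \otimes e_{kk}$ and $w_k w_k^* = \sum_n q_n \otimes e_{nn} = q$ immediately. For $d = \sum_m d_m \otimes e_{mm} \in \SDA$, the same orthogonality together with commutativity of $\DA$ kills the off-diagonal cross terms of $w_k d w_k^* = \sum_{n,n'} v_n d_k v_{n'}^* \otimes e_{nn'}$, leaving $\sum_n v_n d_k v_n^* \otimes e_{nn} \in \SDA$, since each $v_n$ normalizes $\DA$; the inclusion $w_k^* \SDA w_k \subset \SDA$ is analogous. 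The main obstacle is the middle step: producing simultaneously an orthogonal partition of $1_\OA$ inside $\DA$ with prescribed $K_0(\OA)$-classes and compatible normalizing equivalences. This is the realization principle for the Cuntz--Krieger Cartan pair, combining surjectivity of $K_0(\DA)\to K_0(\OA)$ with the explicit inverse-semigroup structure of $\DA$-normalizers in $\OA$.
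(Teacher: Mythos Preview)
Your argument is correct and actually unpacks what the paper's proof hides behind a citation. The paper proceeds in one line: it invokes \cite{MaPAMS} for the isomorphism $K_0(\OA)\cong K_0(\SOA,\SDA)$, where the right-hand side is built from Murray--von~Neumann classes of projections in $\SDA$ under equivalence by partial isometries in the normalizer semigroup $N_s(\SOA,\SDA)$. Since $\beta_*=\id$, the projections $1\otimes e_{kk}$ and $\beta(1\otimes e_{kk})$ lie in the same class of $K_0(\SOA,\SDA)$, and a witnessing normalizer $w_k$ is exactly what is required.

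Your route differs in that you descend from $(\SOA,\SDA)$ to $(\OA,\DA)$ by slicing $q$ along the $\C$-direction and then reassemble $w_k$ from normalizers $v_n\in\OA$ tensored with matrix units. This is more explicit and essentially reproves, in the special case needed here, the content of the cited isomorphism: your two ingredients (surjectivity of $K_0(\DA)\to K_0(\OA)$ and the fact that $K_0(\OA)$-equal projections in $\DA$ are linked by a single $\DA$-normalizing partial isometry built from monomials $S_\mu S_\nu^*$) are precisely what drives that result. The price you pay is the extra ``realization'' step---producing an orthogonal partition $1=\sum_{n\in F}p_n$ in $\DA$ with prescribed $K_0(\OA)$-classes---which the paper's approach sidesteps entirely by staying in the stabilized pair, where $1\otimes e_{kk}$ and $q=\sum_n q_n\otimes e_{nn}$ are already the two projections to be compared. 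Your approach has the virtue of being self-contained and making the inverse-semigroup structure of the Cartan pair visible; the paper's has the virtue of brevity, since the needed comparison lives naturally at the stable level and no decomposition of $1_{\OA}$ is required.
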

\begin{proof}
Let us denote by $N_s(\SOA,\SDA)$
the normalizer semigroup 
$$
\{ v \in \SOA 
\mid v \text{ is a partial isometry};
v (\SDA )v^* \subset \SDA,\,  v^* (\SDA )v \subset \SDA \}
$$
 of partial isometries in $\SOA$.
Denote by 
$K_0(\SOA,\SDA)$
the Murray-von-Neumann equivalence classes of projections in $\SDA$
by partial isometries in $N_s(\SOA,\SDA)$.
It has been proved in \cite{MaPAMS} that there exists a natural isomorphism
between   
$K_0(\OA)$ and $K_0(\SOA, \SDA). $
Since
$[\beta(1\otimes e_{kk})]
= \beta_*([1\otimes e_{kk}]) = [1\otimes e_{kk}]$,
we have 
$\beta(1\otimes e_{kk})\sim 1\otimes e_{kk}$ 
in
$K_0(\SOA, \SDA). $
We may find
a partial isometry
$w_k \in \SOA$
satisfying the desired conditions.
\end{proof}
\begin{lemma}\label{lem:4.2}
Let $\beta \in \Aut(\SOA)$ satisfy
$\beta(\SDA) = \SDA$ and
$\beta_* =\id$ on $K_0(\OA)$.
Then there exists a unitary
$w \in M(\SOA)$
such that
\begin{gather*}
(\Ad(w^*) \circ \beta)(1\otimes e_{kk}) =1\otimes e_{kk}, \\
(\Ad(w^*) \circ \beta)(\OA\otimes e_{kk}) =\OA\otimes e_{kk}, \\
\Ad(w^*) \circ \beta(\DA\otimes e_{kk}) =\DA\otimes e_{kk}, 
 \end{gather*}
for all $k \in \N$.
\end{lemma}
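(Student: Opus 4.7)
The plan is to assemble the family $\{w_k\}_{k\in\N}$ from Lemma \ref{lem:4.1} into a single unitary $w \in M(\SOA)$ by strict-topology summation, and then to verify that $\Ad(w^*)\circ\beta$ satisfies the three stated properties by reducing every check to the single-index case handled by $w_k$. Throughout, write $q_k = 1\otimes e_{kk}$ and $p_k = \beta(q_k) = w_k w_k^*$.

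First I would establish orthogonality. Because $\{q_k\}_k$ and (therefore, as $\beta$ is an automorphism) $\{p_k\}_k$ are mutually orthogonal families of projections in $\SOA$, the identities $w_k = p_k w_k = w_k q_k$ yield $w_k^* w_l = w_k^* p_k p_l w_l = 0$ and $w_k w_l^* = w_k q_k q_l w_l^* = 0$ whenever $k\neq l$. Consequently the partial sums $W_N = \sum_{k=1}^N w_k$ are partial isometries with $W_N^* W_N = \sum_{k=1}^N q_k$ and $W_N W_N^* = \sum_{k=1}^N p_k = \beta\bigl(\sum_{k=1}^N q_k\bigr)$; both series increase strictly to $1$ in $M(\SOA)$, so $W_N$ converges strictly to a unitary $w = \sum_{k=1}^\infty w_k$.

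The key computational simplification is that, by orthogonality of the $p_l$'s together with $w_k^* p_k = w_k^*$ and $p_k w_k = w_k$, one has $w^* p_k = w_k^*$ and $p_k w = w_k$. Consequently, for every $a\in\SOA$,
\[
w^*\,\beta(q_k\,a\,q_k)\,w \;=\; w^* p_k\,\beta(a)\,p_k w \;=\; w_k^*\,\beta(a)\,w_k.
\]
Taking $a = 1$ gives $(\Ad(w^*)\circ\beta)(q_k) = w_k^* w_k = q_k$. Letting $a$ range over $\SOA$ gives
$(\Ad(w^*)\circ\beta)(\OA\otimes e_{kk}) = w_k^*\,(p_k\SOA p_k)\,w_k$, and this equals $q_k\SOA q_k = \OA\otimes e_{kk}$ since conjugation by $w_k$ implements a $*$-isomorphism between the corners $p_k\SOA p_k$ and $q_k\SOA q_k$.

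For the diagonal statement, $\beta(\SDA)=\SDA$ together with $\beta(q_k)=p_k\in\SDA$ gives $\beta(\DA\otimes e_{kk}) = \beta(q_k\SDA q_k) = p_k\SDA p_k$. The two inclusions $w_k(\SDA)w_k^*\subset\SDA$ and $w_k^*(\SDA)w_k\subset\SDA$ from Lemma \ref{lem:4.1}, combined with the corner-isomorphism property above, show that conjugation by $w_k$ restricts to a $*$-isomorphism $p_k\SDA p_k \to q_k\SDA q_k = \DA\otimes e_{kk}$. Applying the displayed identity with $a\in\SDA$ then yields $(\Ad(w^*)\circ\beta)(\DA\otimes e_{kk}) = \DA\otimes e_{kk}$. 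The main delicate point I expect is justifying the strict-topology convergence of $w$ together with the collapse identities $w^* p_k = w_k^*$ and $p_k w = w_k$; once these are in place, each of the three properties is a single-index routine check, and no further use of the $K_0$-hypothesis on $\beta$ is needed here (it was already consumed in Lemma \ref{lem:4.1}).
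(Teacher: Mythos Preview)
Your proof is correct and follows the same strategy as the paper: form $w=\sum_k w_k$ in the strict topology using the orthogonality of the $w_k$, and then verify the three identities. The paper's write-up is terser---it records the global normalizer property $w(\SDA)w^*=\SDA$ (from \eqref{eq:4.1.2}) and deduces the diagonal claim from this together with $(\Ad(w^*)\circ\beta)(q_k)=q_k$, rather than arguing corner-by-corner as you do---but the underlying argument is the same.
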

\begin{proof}
Take $w_k \in  \SOA$ be a partial isometry for each $k \in \N$
satisfying the conditions of Lemma \ref{lem:4.1}.
It is easy to see that the summation 
$\sum_{k=1}^\infty w_k $ converges in the strict topology
of
$M(\SOA)$.
By \eqref{eq:4.1.1} and \eqref{eq:4.1.2}, 
we have
$w^*w= ww^*=1$
and
$w(\SDA)w^*
=w^*(\SDA)w= \SDA.
$
We then see that
\begin{equation*}
w(1\otimes e_{kk})w^* =ww_k^*w_kw^* = w_k w_k^* = \beta(1\otimes e_{kk})
\end{equation*}
so that 
$(\Ad(w^*)\circ\beta)(1\otimes e_{kk}) = 1\otimes e_{kk}.$
For $x \in \OA$, we have
\begin{align*}
(\Ad(w^*)\circ\beta)(x\otimes e_{kk})
=& (\Ad(w^*)\circ\beta)((1\otimes e_{kk})(x\otimes e_{kk})(1\otimes e_{kk})) \\
=&(1\otimes e_{kk})(\Ad(w^*)\circ\beta)((x\otimes e_{kk}))(1\otimes e_{kk})
\end{align*}
so that
$
(\Ad(w^*) \circ \beta)(\OA\otimes e_{kk}) =\OA\otimes e_{kk}.
$
As
$\beta(\SDA) =\SDA$
and
$w^*(\SDA)w =\SDA$, we have
$
\Ad(w^*) \circ \beta(\DA\otimes e_{kk}) =\DA\otimes e_{kk}. 
$
\end{proof} 

{\it Proof of Proposition \ref{prop:4.1}.}
Suppose that
 $\beta \in \Aut(\SOA)$ satisfies
$\beta(\SDA) = \SDA$ and
$\beta_* =\id$ on $K_0(\OA)$.
Take a unitary $w\in M(\SOA)$
satisfying the conditions of Lemma \ref{lem:4.2}.   
Put
$\beta_w =\Ad(w^*)\circ\beta \in \Aut(\SOA).$
Since
$
(\Ad(w^*) \circ \beta)(\OA\otimes e_{kk}) =\OA\otimes e_{kk},
$
we may find an automorphism
$\alpha_k \in \Aut(\OA)$  for $k \in \N$ such that 
\begin{equation*}
\alpha_k(x)\otimes e_{kk} =\beta_w(x \otimes e_{kk})
\quad
\text{ for } 
x \in \OA.
\end{equation*}
By replacing $\beta$ with $\beta_w$, we may assume that 
$\beta_w(x \otimes e_{kk}) =\alpha_k(x)\otimes e_{kk}. 
$
For $j,k \in \N$, we have 
\begin{equation*}
\beta(x \otimes e_{jk})
=\beta((1\otimes e_{jk})(x \otimes e_{jk})) 
=\beta(1\otimes e_{jk}) \cdot (\alpha_k(x) \otimes e_{jk}). 
\end{equation*}
By putting $x =1$,
we see that 
\begin{equation*}
\beta(1 \otimes e_{jk})
=(1\otimes e_{jj})\beta(1 \otimes e_{jk})(1 \otimes e_{kk})
\end{equation*}
so that there exists
$w_{jk} \in \OA$ such that 
$w_{jk}^* = w_{kj}$ and 
$\beta(1 \otimes e_{jk}) = w_{jk}\otimes e_{jk}$.
Since
\begin{equation*}
w_{jk}^*w_{jk} \otimes e_{kk}
={\beta(1\otimes e_{jk})}^* \beta(1 \otimes e_{jk})
=\beta(1\otimes e_{kk}) 
=1 \otimes e_{kk}
\end{equation*}
so that 
$w_{jk}^*w_{jk} = 1$ 
and similarly
$w_{jk}w_{jk}^* = 1$.
We also have for $a \in \DA$ 
\begin{equation*}
w_{jk}^* a w_{jk} \otimes e_{jj}
=\beta((1\otimes e_{jk})(a\otimes e_{kk})(1 \otimes e_{kj})) 
=\beta(a \otimes e_{jj}) =\alpha_j(a) \otimes e_{jj}
\end{equation*}
so that
$w_{jk}\DA w_{jk}^* = \DA$.
Since
\begin{equation*}
\beta(x \otimes e_{jk})
=\beta(1\otimes e_{jk}) \cdot(\alpha_k(x) \otimes e_{kk})
=w_{jk} \alpha_k(x) \otimes e_{jk}
\end{equation*}
and similarly
$\beta(x \otimes e_{jk}) = \alpha_j(x)w_{jk}  \otimes e_{jk},
$
we see
$w_{jk} \alpha_k(x) \otimes e_{jk} =\alpha_j(x)w_{jk}  \otimes e_{jk}$
and hence
$ \alpha_k(x) =w_{jk}^* \alpha_j(x)w_{jk}$ for $x \in \OA$.
Put
$u = \sum_{k=1}^\infty w_{1k}\otimes e_{kk}$
which is easily proved to be a unitary in
$M(\SOA)$.
It then follows that 
\begin{align*}
\beta(x \otimes e_{jk})
=& \beta((1\otimes e_{j1})(x \otimes e_{11})(1 \otimes e_{1k})) \\
=& (w_{j1}\otimes e_{j1})(\alpha_1(x) \otimes e_{11})(w_{1k} \otimes e_{1k}) \\
=& (w_{j1} \alpha_1(x)w_{1k}) \otimes e_{jk} \\
=& u^* (\alpha_1(x) \otimes e_{jk}) u 
\end{align*}
for $x \in \OA$
so that
$\beta = \Ad(u^*) \circ (\alpha_1\otimes \id)$.
Since $w_{1k}\DA w_{1k}^* =\DA$,
we have
$u(\SDA)u^* =\SDA$.
By \cite[Lemma 1.1]{KodakaJOT}, we know that
$\Ad(u)_* =\id$ on $K_0(\OA)$
so that 
$\alpha_{0*} = (\beta^{-1})_* = \id$ on $K_0(\OA)$.
\qed

 %%%%%%%%%
 We thus have the following theorem.
\begin{theorem}\label{thm:4.1}
Let $\beta \in \Aut(\SOA)$.
Then $\beta$ satsifies the following condition 
\begin{equation}
\beta(\SDA) = \SDA
\quad \text{ and }
\beta_* =\id \text{  on } K_0(\OA)
\end{equation}
if and only if 
 there exists an automorphism 
$\alpha \in \Aut(\OA)$ 
and a unitary
$u \in M(\SOA)$
such that
\begin{gather}
\beta = \Ad(u)\circ(\alpha\otimes\id)
\quad\text{ and }
\quad
\alpha_* = \id \text{ on }
K_0(\OA), \label{eq:thm4.1.1}\\
u(\SDA)u^* = \SDA, \qquad
\alpha(\DA) = \DA.\label{eq:thm4.1.2}
\end{gather}
\end{theorem}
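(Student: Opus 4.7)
The theorem is an ``if and only if'' statement, and one direction, namely the ``only if'' direction, is essentially Proposition \ref{prop:4.1} already established. So my plan is to reduce the forward implication to a direct citation of Proposition \ref{prop:4.1}, which produces the unitary $u \in M(\SOA)$ and the automorphism $\alpha \in \Aut(\OA)$ with the required properties (using the identification $\alpha = \alpha_1$ from the end of the proof of that proposition, where the $K_0$-triviality of $\alpha$ is deduced from the $K_0$-triviality of $\beta$ together with Kodaka's result that inner automorphisms induced by unitaries in $M(\SOA)$ act trivially on $K_0$).

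The remaining content is the converse, which is the routine verification part. First I would check that $\beta(\SDA) = \SDA$: since $\alpha(\DA) = \DA$ we have $(\alpha\otimes\id)(\DA\otimes\C) = \DA\otimes\C = \SDA$, and then applying $\Ad(u)$ together with the hypothesis $u(\SDA)u^* = \SDA$ gives $\beta(\SDA) = \SDA$.

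Second I would check $\beta_* = \id$ on $K_0(\OA)$. Under the canonical isomorphism $K_0(\SOA) \cong K_0(\OA)$ induced by stabilization, the automorphism $\alpha \otimes \id$ acts on $K_0(\SOA)$ as $\alpha_*$ on $K_0(\OA)$, which is the identity by hypothesis. The inner automorphism $\Ad(u)$ by a unitary $u \in M(\SOA)$ acts trivially on $K_0(\SOA)$ by \cite[Lemma 1.1]{KodakaJOT} (the same fact already invoked in the proof of Proposition \ref{prop:4.1}). Composing, $\beta_* = \Ad(u)_* \circ (\alpha\otimes\id)_* = \id$ on $K_0(\OA)$.

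I do not anticipate any serious obstacle here: the forward direction is already done in Proposition \ref{prop:4.1}, and the reverse direction is a two-line check using only the preservation of subalgebras under $\alpha$ and $\Ad(u)$ and the triviality of inner actions on $K_0$. The only point requiring a moment of care is making sure that the identifications $\DA \otimes \C = \SDA$ and $K_0(\OA) = K_0(\SOA)$ are used consistently; both are standard and available throughout the paper.
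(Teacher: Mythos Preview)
Your proposal is correct and matches the paper's approach exactly: the paper presents Theorem \ref{thm:4.1} as an immediate consequence of Proposition \ref{prop:4.1} (the ``only if'' direction), leaving the converse implicit as a routine verification, which you have spelled out correctly using $\alpha(\DA)=\DA$, $u(\SDA)u^*=\SDA$, and \cite[Lemma 1.1]{KodakaJOT}.
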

 The following proposition shows that 
 the expression $\beta$ in the form \eqref{eq:thm4.1.1} and \eqref{eq:thm4.1.2}  
 is unique up to inner automorphisms on $\OA$ invariant globally $\DA$.
\begin{proposition}\label{prop:uniquebeta}
Supose that $\beta \in \Aut(\SOA,\SDA)$ is of the form
$\beta = \Ad(u)\circ(\alpha\otimes\id)= \Ad(u')\circ(\alpha'\otimes\id)$
for some  automorphisms 
$\alpha, \alpha' \in \Aut(\OA,\DA)$ 
and unitaries
$u, u' \in M(\SOA)$
satisfying both the conditions
\eqref{eq:thm4.1.1} and \eqref{eq:thm4.1.2}.
Then there exists a unitary $V \in \OA$ 
such that 
\begin{equation}
 u = u'(V\otimes 1) ,\qquad
 \alpha = \Ad(V^*)\circ \alpha'
\quad
\text{ and }
 \quad
 V\DA V^* = \DA.
\end{equation}
\end{proposition}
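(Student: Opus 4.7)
The plan is to set $w = {u'}^{*} u$, which is a unitary in $M(\SOA)$, and show that it has the form $V \otimes 1$ for a suitable unitary $V \in \OA$. From the assumed equality $\Ad(u) \circ (\alpha\otimes\id) = \Ad(u') \circ (\alpha'\otimes\id)$, the first step is to rewrite this as
\begin{equation*}
\Ad(w) = (\alpha' \circ \alpha^{-1}) \otimes \id \quad \text{on } \SOA,
\end{equation*}
so that with $\gamma := \alpha' \circ \alpha^{-1} \in \Aut(\OA,\DA)$ we have $w (x \otimes k) w^{*} = \gamma(x) \otimes k$ for $x \in \OA$ and $k \in \K$. Specializing to $x = 1$ gives $w (1 \otimes k) w^{*} = 1 \otimes k$ for every $k \in \K$, so $w$ lies in the relative commutant of $1 \otimes \K$ inside $M(\SOA)$.

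Next I will invoke the standard identification of this commutant. Since $\OA$ is unital, $M(\OA) = \OA$, and the commutant of $1 \otimes \K$ in $M(\OA \otimes \K)$ is $\OA \otimes \bbC 1$. Hence there exists a unique $V \in \OA$ with $w = V \otimes 1$, and because $w$ is unitary, so is $V$. This immediately yields $u = u'(V \otimes 1)$, which is the first conclusion.

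Plugging $w = V \otimes 1$ into $\Ad(w) = \gamma \otimes \id$ and evaluating on $x \otimes 1$ gives $V x V^{*} \otimes 1 = \gamma(x) \otimes 1$ for all $x \in \OA$, so $\Ad(V) = \gamma = \alpha' \circ \alpha^{-1}$, equivalently $\alpha = \Ad(V^{*}) \circ \alpha'$. Since both $\alpha$ and $\alpha'$ globally preserve $\DA$, the automorphism $\gamma = \Ad(V)$ does as well, which is exactly the condition $V \DA V^{*} = \DA$.

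The only genuinely nontrivial point is the commutant identification $(1 \otimes \K)' \cap M(\OA \otimes \K) = \OA \otimes \bbC$; everything else is a direct algebraic manipulation. I would justify this either by quoting the standard fact for multipliers of tensor products with $\K$, or by a short direct argument using the rank-one projections $1 \otimes e_{kk}$ to decompose $w$ and the matrix units $1 \otimes e_{jk}$ to show that the resulting ``diagonal blocks'' coincide and are scalar on the $\K$-factor. This is the main (mild) obstacle; the remaining steps are bookkeeping.
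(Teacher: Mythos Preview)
Your proposal is correct and follows essentially the same line as the paper's proof: both set $w=u'^{*}u$, observe that it commutes with $1\otimes\K$, and extract a unitary $V\in\OA$ with $w=V\otimes 1$. The only difference is presentational---you invoke the relative commutant identification $(1\otimes\K)'\cap M(\OA\otimes\K)=\OA\otimes\bbC$ as a lemma, whereas the paper carries out the matrix-unit computation you sketch (cutting down by $1\otimes e_{11}$ to define $V$, then checking $w(1\otimes e_{kk})=(V\otimes 1)(1\otimes e_{kk})$ for all $k$) directly inline.
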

\begin{proof}
For $x \otimes K\in\SOA$,
we have
$u(\alpha(x)\otimes K)u^* =  u'(\alpha'(x)\otimes K)u'^*$.
Put 
$v =u'^* u \in M(\SOA)$ 
which is a unitary satisfying
$v(\alpha(x)\otimes K) =  (\alpha'(x)\otimes K)v.$
%for all $x \otimes K\in\SOA$.
We in particularly see that
$v(1\otimes e_{jk}) =  (1\otimes e_{jk})v$
for all $j, k \in \Zp$.
Define 
$V\in \OA$ by setting
 $V \otimes e_{11} = (1\otimes e_{11})v(1\otimes e_{11})$.
As $v$ commutes with $1\otimes e_{11}$,
we know that
$V$ is a unitary in $\OA$.
We then have
\begin{align*}
u'^* u(1\otimes e_{kk})
& = v(1\otimes e_{k1})(1\otimes e_{1k}) \\
& =(1\otimes e_{k1}) v(1\otimes e_{1k}) \\
& =(1\otimes e_{k1})(V \otimes e_{11})(1\otimes e_{1k}) \\
& =(V \otimes 1)(1\otimes e_{k1})(1\otimes e_{11})(1\otimes e_{1k}) \\
& =(V \otimes 1)(1\otimes e_{kk})
\end{align*}
for all $k \in \Zp$.
 Hence we have
$u'^* u = V \otimes 1$.
As we have for $x\in \OA$
\begin{align*}
\alpha(x) \otimes e_{11}
& =  v^*(\alpha'(x)\otimes e_{11})v\\
& =  v^* (1\otimes e_{11})(\alpha'(x)\otimes e_{11}) (1\otimes e_{11})v\\
& =  (V^* \otimes e_{11})(\alpha'(x)\otimes e_{11}) (V\otimes e_{11})\\
& =  V^* \alpha'(x) V\otimes e_{11}
\end{align*}
so that 
$\alpha(x) = V^* \alpha'(x) V$
for $x\in \OA$.
As $\alpha(\DA) =\alpha'(\DA) = \DA$,
we have
$V \DA V^* =\DA$.
\end{proof}
%We have a corollary of Theorem \ref{thm:4.1}. 
\begin{corollary}\label{cor:4.1}
Let $\beta \in \Aut(\SOA)$.
Let us denote by  $1_A$ the unit of the $C^*$-algebra $\OA$.
Then $\beta$ satsifies the following condition 
\begin{equation*}
\beta(\SDA) = \SDA
\quad \text{ and }
\beta_*([1_A\otimes e_{11}]) =[1_A\otimes e_{11}] \text{  on } K_0(\SOA)
\end{equation*}
if and only if 
 there exists an automorphism 
$\alpha \in \Aut(\OA)$ 
and a unitary
$u \in M(\SOA)$
such that
\begin{gather*}
\beta = \Ad(u)\circ(\alpha\otimes\id)
\quad\text{ and }
\quad
\alpha_* = \beta_* \text{ on }
K_0(\OA), %\label{eq:cor4.1.1}
\\
u(\SDA)u^* = \SDA, \qquad
\alpha(\DA) = \DA. %\label{eq:cor4.1.2}
\end{gather*}
\end{corollary}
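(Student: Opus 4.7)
\textbf{For the ``if'' direction,} the argument is direct. If $\beta = \Ad(u) \circ (\alpha \otimes \id)$ with the stated properties, then
$\beta(\SDA) = \Ad(u)((\alpha\otimes\id)(\SDA)) = \Ad(u)(\DA\otimes\C) = \SDA,$
and since $\alpha(1_A) = 1_A$ and $\Ad(u)$ acts trivially on $K_0(\SOA)$ by \cite[Lemma 1.1]{KodakaJOT}, the class $[1_A \otimes e_{11}]$ is fixed by $\beta_*$.

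\textbf{For the ``only if'' direction,} my plan is to adapt the proofs of Lemma \ref{lem:4.1}, Lemma \ref{lem:4.2}, and Proposition \ref{prop:4.1} essentially verbatim, observing that the hypothesis $\beta_* = \id$ on $K_0(\OA)$ used there is only needed to guarantee the equality $[\beta(1_A \otimes e_{kk})] = [1_A \otimes e_{kk}]$ in $K_0(\SOA, \SDA) \cong K_0(\OA)$ for every $k \in \N$. The key observation I will exploit is that the projections $1_A \otimes e_{kk}$ are Murray--von Neumann equivalent for all $k$ via the normalizer partial isometries $1_A \otimes e_{k1}$, which lie in $N_s(\SOA, \SDA)$ because conjugation by them preserves $\SDA$. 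Hence the single hypothesis $\beta_*([1_A \otimes e_{11}]) = [1_A \otimes e_{11}]$ propagates to $\beta_*([1_A \otimes e_{kk}]) = [1_A \otimes e_{kk}]$ for every $k$, which is precisely the input Lemma \ref{lem:4.1} requires.

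Granted this, Lemma \ref{lem:4.1} will yield normalizer partial isometries $w_k$ with $w_k^* w_k = 1_A \otimes e_{kk}$ and $w_k w_k^* = \beta(1_A \otimes e_{kk})$; Lemma \ref{lem:4.2} will assemble them in the strict topology into a unitary $w \in M(\SOA)$ with $w\SDA w^* = \SDA$; and the matrix-unit construction of Proposition \ref{prop:4.1} will then produce $\alpha \in \Aut(\OA, \DA)$ and a unitary $u \in M(\SOA)$ with $u\SDA u^* = \SDA$ such that $\beta = \Ad(u) \circ (\alpha \otimes \id)$. The only modification in the conclusion is the final $K_0$ identification: applying $K_*$ gives $\beta_* = \Ad(u)_* \circ (\alpha \otimes \id)_* = \alpha_*$ on $K_0(\OA)$, so one reads off $\alpha_* = \beta_*$ rather than $\alpha_* = \id$. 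I do not foresee any real obstacle; the only genuinely new ingredient is the propagation from the single class $[1_A \otimes e_{11}]$ to the full family $\{[1_A \otimes e_{kk}]\}_{k \in \N}$, and the remainder is a mechanical re-run of the arguments already carried out in Section~4.
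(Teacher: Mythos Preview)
Your proof is correct, but it takes a genuinely different route from the paper's. The paper does \emph{not} re-open the proofs of Lemma~\ref{lem:4.1}, Lemma~\ref{lem:4.2}, and Proposition~\ref{prop:4.1}; instead it reduces to Theorem~\ref{thm:4.1} as a black box. Concretely, the paper first invokes R{\o}rdam's classification \cite{Ro} to produce $\alpha_\circ\in\Aut(\OA)$ with $\alpha_{\circ*}=\beta_*$, then \cite[Proposition~5.1]{MaPAMS} to upgrade this to $\alpha_1\in\Aut(\OA,\DA)$ with $\alpha_{1*}=\beta_*$. Setting $\beta_1:=\beta\circ(\alpha_1^{-1}\otimes\id)$ gives $\beta_1\in\Aut(\SOA,\SDA)$ with $\beta_{1*}=\id$, so Theorem~\ref{thm:4.1} applies to $\beta_1$ and yields $\alpha_2$ and $u$; finally $\alpha:=\alpha_2\circ\alpha_1$.

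Your argument is more elementary and self-contained: you observe that the only place $\beta_*=\id$ is used in the chain Lemma~\ref{lem:4.1}$\to$Lemma~\ref{lem:4.2}$\to$Proposition~\ref{prop:4.1} is to get $[\beta(1_A\otimes e_{kk})]=[1_A\otimes e_{kk}]$ in $K_0(\SOA,\SDA)$, and that the normalizer partial isometries $1_A\otimes e_{k1}$ already force all these classes to coincide with $[1_A\otimes e_{11}]$. This avoids both external inputs (R{\o}rdam and \cite{MaPAMS}) entirely. The paper's approach, on the other hand, is more modular and leaves Theorem~\ref{thm:4.1} untouched, at the cost of importing nontrivial classification results.
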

\begin{proof}
The if part is clear.
It suffices to show the only if part.
Suppose that
$\beta \in \Aut(\SOA)$
 satsifies the following conditions 
\begin{equation*}
\beta(\SDA) = \SDA
\quad \text{ and }
\quad
\beta_*([1_A\otimes e_{11}]) =[1_A\otimes e_{11}] \text{  on } K_0(\SOA).
\end{equation*}
Since $\beta \in \Aut(\SOA)$ satisfies
$
 \beta_*([1_A\otimes e_{11}]) =[1_A\otimes e_{11}], 
$
By \cite{Ro},
there exists an automorphism
$\alpha_\circ$ of $\OA$ such that 
$\alpha_{\circ*} = \beta_*$ on $K_0(\OA)$.
Hence by using \cite[Proposition 5.1]{MaPAMS}, 
we may find an automorphism
$\alpha_1 $
of $\OA$ such that 
$\alpha_1(\DA) = \DA$
and
$\alpha_{1*} =\alpha_{\circ*}$ on $K_0(\OA)$.
Put
$\beta_1 := \beta\circ (\alpha_1^{-1}\otimes\id) \in \Aut(\SOA)$. 
We have
$\beta_1(\SDA) = \SDA$ and $\beta_{1*} = \beta_* \circ\alpha_{1*}^{-1} =\id$
on $K_0(\OA)$.
By Theorem \ref{thm:4.1},
one may take
an automorphism 
$\alpha_2 \in \Aut(\OA)$ 
and a unitary
$u \in M(\SOA)$
such that
\begin{gather*}
\beta_1 = \Ad(u)\circ(\alpha_2\otimes\id)
\quad\text{ and }
\quad
\alpha_{2*} = \id \text{ on }
K_0(\OA), %\label{eq:cor4.1.1p}
\\
u(\SDA)u^* = \SDA, \qquad
\alpha_2(\DA) = \DA. %\label{eq:cor4.1.2p}
\end{gather*}
Put $\alpha := \alpha_2\circ\alpha_1 \in \Aut(\OA)$.
We then have 
$$
\beta = \Ad(u)\circ(\alpha\otimes\id),
\qquad
\alpha_* = \beta_* \text{ on }
K_0(\OA), 
\qquad
\alpha(\DA) = \DA.
$$
\end{proof}
%%%%%%%%%%%%%%%%%%%%%%%%%%%%%%
 Let
\begin{equation*}
\Aut_\circ(\OA,\DA) 
= \{\alpha\in \Aut(\OA)\mid \alpha(\DA) = \DA, \alpha_* = \id \text{ on } K_0(\OA)\}.
\end{equation*}
Since
$\Int(\OA,\DA)$ is a subgroup
of $\Aut_\circ(\OA,\DA)$,
we may consider the quotient group
 $\Aut_\circ(\OA,\DA)/\Int(\OA,\DA)$
which we denote by
$\Out_\circ(\OA,\DA)$.
Thanks to Theorem \ref{thm:4.1}
and Corollary \ref{cor:4.1},
we know the following theorem on the relative Picard group
$\Pic(\OA,\DA)$,
which are relative versions of the results shown in 
Appendix after this section.
%%%%%%%%%%%%%%%%%%%%%%%%%%%%%%%%%%%%%%

Let
$\Psi:\Aut(\OA,\DA)\longrightarrow \Aut(\SOA,\SDA)
$
be the homomorphism
defined by 
$\Psi(\alpha) = \alpha\otimes \id.$
Since
$\Psi(\Int(\OA,\DA)) \subset \Int(\SOA,\SDA),
$
it induces a homomorphism from
$\Out(\OA,\DA)$
to
$\Out(\SOA,\SDA)
$
written 
$\bar{\Psi}$.
The following is a corollary of Proposition \ref{prop:uniquebeta}.
\begin{corollary}\label{cor:injPsi}
The homomorphism
$\bar{\Psi}:\Out(\OA,\DA)\longrightarrow \Out(\SOA,\SDA)
$
is injective.
\end{corollary}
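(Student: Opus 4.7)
The plan is to deduce injectivity of $\bar{\Psi}$ directly from the uniqueness statement in Proposition \ref{prop:uniquebeta}. Suppose $[\alpha]\in\Out(\OA,\DA)$ lies in the kernel of $\bar{\Psi}$, so that $\alpha\otimes\id \in \Int(\SOA,\SDA)$; this means there is a unitary $u\in M(\SOA)$ with $u(\SDA)u^* = \SDA$ and $\alpha\otimes\id = \Ad(u)$. The goal is then to produce a unitary $V\in\OA$ with $V\DA V^* = \DA$ and $\alpha = \Ad(V)$, which would exhibit $\alpha \in \Int(\OA,\DA)$ and hence force $[\alpha]$ to be trivial.

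The key observation is that the automorphism $\beta := \alpha\otimes\id \in \Aut(\SOA,\SDA)$ admits two distinct decompositions of the form appearing in Theorem \ref{thm:4.1}:
\begin{equation*}
\beta \;=\; \Ad(u)\circ(\id\otimes\id) \;=\; \Ad(1)\circ(\alpha\otimes\id).
\end{equation*}
Both satisfy the hypotheses \eqref{eq:thm4.1.1} and \eqref{eq:thm4.1.2} required by Proposition \ref{prop:uniquebeta}. For the first decomposition, $\id_* = \id$ on $K_0(\OA)$ and $\id(\DA)=\DA$ hold trivially, while $u(\SDA)u^*=\SDA$ is our standing hypothesis. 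For the second, $1\in M(\SOA)$ obviously stabilizes $\SDA$ under conjugation, $\alpha(\DA)=\DA$ holds because $\alpha\in\Aut(\OA,\DA)$, and since $\Ad(u)_* = \id$ on $K_0(\SOA)$ by \cite[Lemma 1.1]{KodakaJOT}, we obtain $(\alpha\otimes\id)_* = \id$ on $K_0(\SOA)$, whence $\alpha_* = \id$ on $K_0(\OA)$ under the canonical identification.

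Applying Proposition \ref{prop:uniquebeta} to these two decompositions then yields a unitary $V\in\OA$ satisfying $V\DA V^* = \DA$, $u = 1\cdot(V\otimes 1)= V\otimes 1$, and $\id = \Ad(V^*)\circ\alpha$, the last of which gives $\alpha = \Ad(V) \in \Int(\OA,\DA)$. Therefore $[\alpha]$ is the trivial class in $\Out(\OA,\DA)$, proving $\ker\bar{\Psi} = \{1\}$. The argument is entirely formal once Proposition \ref{prop:uniquebeta} is available; the only point requiring any care is the bookkeeping verification that both decompositions of $\beta$ meet the hypotheses of Theorem \ref{thm:4.1}, and no deeper ingredient is needed.
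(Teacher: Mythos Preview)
Your proof is correct and follows essentially the same approach as the paper: both arguments apply Proposition~\ref{prop:uniquebeta} to the two decompositions $\alpha\otimes\id = \Ad(u)\circ(\id\otimes\id) = \Ad(1)\circ(\alpha\otimes\id)$ to extract a unitary $V\in U(\OA,\DA)$ with $\alpha=\Ad(V)$. Your version is slightly more careful in explicitly verifying the $K_0$-hypothesis \eqref{eq:thm4.1.1} via \cite[Lemma~1.1]{KodakaJOT}, whereas the paper's proof leaves this implicit.
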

\begin{proof}
Suppose that 
$\alpha\in \Aut(\OA,\DA)$
satisfies
$\alpha\otimes\id = \Ad(u')$ 
for some
$u' \in U(\SOA,\SDA)$.
Put
$\alpha' = \id$ and $u = 1$ in the statement of
Proposition \ref{prop:uniquebeta}
to have a unitary
$V \in U(\OA,\DA)$ such that 
$u' = V\otimes1$ and $\alpha=\Ad(V)$.
\end{proof}
By \cite[Lemma 1.1]{KodakaJOT},
we may define a homomorphism
 $K_* :\Out(\SOA,\SDA) \longrightarrow \Aut(K_0(\SOA))$
by setting
$K_*([\alpha]) = \alpha_*$
for $[\alpha] \in \Out(\SOA,\SDA)$.

%%%%%%%%%%%%%%%%%%%%%%%%%%%%%%%%%%%%%%%%%%%%
\begin{theorem} \label{thm:PicOADA}
Let $A$ be an irreducible non-permutation matrix.
Then the following short exact sequence holds:
\begin{equation}
1\longrightarrow \Out_\circ(\OA,\DA) 
\overset{\bar{\Psi}}{\longrightarrow}\Out(\SOA,\SDA) 
\overset{K_*}{\longrightarrow}\Aut(K_0(\OA\otimes\K))
\longrightarrow 1.  \label{eq:exactOADA1}
\end{equation}
Hence there exists a short exact sequence:
\begin{equation}
1\longrightarrow \Out_\circ(\OA,\DA) 
\overset{\bar{\Psi}}{\longrightarrow}\Pic(\OA,\DA) 
\overset{K_*}{\longrightarrow}\Aut( \Z^N/{(\id - A^t)\Z^N}
\longrightarrow 1.  \label{eq:exactOADA2}
\end{equation}
\end{theorem}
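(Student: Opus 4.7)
The plan is to establish the first exact sequence \eqref{eq:exactOADA1} directly, and then read off the second sequence \eqref{eq:exactOADA2} from the identifications $\Pic(\OA,\DA) = \Out(\SOA,\SDA)$ proved in the corollary closing Section 8, together with $K_0(\OA\otimes\K) \cong K_0(\OA) \cong \Z^N/(\id-A^t)\Z^N$ from \cite{Cu3} and stability of $K$-theory. Under these identifications, the two sequences are literally the same after relabeling, so no independent work is needed for \eqref{eq:exactOADA2}.

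For the first sequence, injectivity of $\bar{\Psi}$ is immediate from Corollary \ref{cor:injPsi}. Well-definedness of $K_*$ on $\Out(\SOA,\SDA)$ follows from \cite[Lemma 1.1]{KodakaJOT}, which forces inner automorphisms to act trivially on $K_0$. The inclusion $\operatorname{im}\bar{\Psi} \subset \ker K_*$ is the observation $(\alpha\otimes\id)_* = \alpha_*\otimes\id = \id$ for any $\alpha \in \Aut_\circ(\OA,\DA)$. The reverse inclusion $\ker K_* \subset \operatorname{im}\bar{\Psi}$ is precisely the content of Theorem \ref{thm:4.1}: any $\beta \in \Aut(\SOA,\SDA)$ with $\beta_* = \id$ can be written as $\Ad(u)\circ(\alpha\otimes\id)$ with $\alpha \in \Aut_\circ(\OA,\DA)$ and $u \in U(\SOA,\SDA)$, whence $[\beta] = \bar{\Psi}([\alpha])$ in $\Out(\SOA,\SDA)$.

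The substantive step is surjectivity of $K_*$. Given $\xi \in \Aut(K_0(\OA))$, I must construct $\beta \in \Aut(\SOA,\SDA)$ with $\beta_* = \xi$. When $\xi$ fixes the class $[1_A]$, this is exactly the ingredient underlying Corollary \ref{cor:4.1}: Rørdam's classification \cite{Ro} supplies $\alpha_\circ \in \Aut(\OA)$ with $\alpha_{\circ*} = \xi$, and \cite[Proposition 5.1]{MaPAMS} upgrades it to some $\alpha_1 \in \Aut(\OA,\DA)$ with $\alpha_{1*} = \xi$; then $\beta = \alpha_1 \otimes \id$ works. For a general $\xi$, however, $\xi([1_A])$ may differ from $[1_A]$, so no $\alpha \in \Aut(\OA)$ realizes $\xi$ and one must work stably. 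The plan is to represent $\xi([1_A])$ by a projection $q \in \SDA$ and use the corner-isomorphism machinery of Section 7 (or a direct partial-isometry construction modeled on Lemma \ref{lem:3.3} and Lemma \ref{lem:3.4}) to produce a partial isometry $V \in M(\SOA)$ with $V\SDA V^* = \SDA$, $V^*V = 1_A\otimes e_{11}$, and $VV^* = q$; composing $\Ad(V)$ (extended to a unitary in $M(\SOA)$ by the standard doubling trick) with the unit-fixing case already handled then yields the required $\beta$. The principal obstacle is exactly this surjectivity step: moving the unit class within $\SOA$ while preserving the canonical masa $\SDA$ requires genuine use of the relative Morita/corner-isomorphism theory developed in Sections 3--7 rather than purely $K$-theoretic classification, and the unit-fixing and general cases must be combined carefully so that the resulting automorphism lies in $\Aut(\SOA,\SDA)$.
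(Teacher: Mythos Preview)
Your treatment of injectivity of $\bar\Psi$, well-definedness of $K_*$, and exactness at the middle term matches the paper's proof exactly. The divergence, and the gap, is in the surjectivity of $K_*$.

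Your proposed construction for general $\xi$ cannot work as written. You ask for a partial isometry $V \in M(\SOA)$ with $V^*V = 1_A\otimes e_{11}$ and $VV^* = q$, where $[q]=\xi([1_A])$. But since $1_A\otimes e_{11}\in \SOA$, any such $V$ satisfies $V = V(1_A\otimes e_{11}) \in \SOA$, and then $V$ is an honest Murray--von Neumann equivalence in $\SOA$ forcing $[q]=[1_A\otimes e_{11}]$ in $K_0(\SOA)$. This contradicts the very hypothesis $\xi([1_A])\ne [1_A]$ you are trying to handle. No ``doubling trick'' repairs this: in a purely infinite simple $C^*$-algebra two nonzero projections are equivalent if and only if their $K_0$-classes agree, so there is simply no partial isometry of the kind you describe. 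More fundamentally, producing \emph{some} $\gamma\in\Aut(\SOA,\SDA)$ with $\gamma_*([1_A])=\xi([1_A])$ is already as hard as the surjectivity statement itself, so your reduction is circular.

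The paper avoids this by a different route: it identifies $K_0(\OA)$ with the Bowen--Franks group $BF(A)$ and invokes Huang's theorem \cite[Theorem 2.15]{Hu}, which says that every automorphism of $BF(A)$ is induced by a flow equivalence of $(\bar X_A,\bar\sigma_A)$ with itself. By the Cuntz--Krieger construction (the implication $(5)\Rightarrow(3)$ in Theorem \ref{thm:sumCK}), such a flow self-equivalence yields $\psi\in\Aut(\SOA)$ with $\psi(\SDA)=\SDA$ and $\psi_*=\xi$. This is a genuine dynamical input that your $K$-theoretic/corner argument does not supply.
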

\begin{proof}
We will show the exactness of \eqref{eq:exactOADA1}.
The injectivity of the homomorphism 
$\bar{\Psi}:  \Out_\circ(\OA,\DA)  \longrightarrow \Out(\SOA,\SDA)$
follows from Corollary \ref{cor:injPsi}.
The inclusion relation
$\bar{\Psi}(\Out_\circ(\OA,\DA)) \subset  \Ker(K_*)$ 
is clear.  
 Conversely for any 
$[\beta] \in \Ker(K_*)$,
we know 
$\beta \in \Aut(\SOA)$ satisfy
$\beta_* =\id$ on $K_0(\OA)$.
By Theorem \ref{thm:4.1},
there exist a unitary
$u \in M(\SOA)$
and an automorphism 
$\alpha \in \Aut(\OA,\DA)$
such that
$
\beta = \Ad(u)\circ(\alpha\otimes\id)
$
and
$\alpha_* = \id \text{ on }
K_0(\OA).
$
Hence we have
$
[\beta] = [\alpha\otimes \id] = {\bar{\Psi}}([\alpha])$
and
$[\alpha] \in \Aut_\circ(\OA,\DA)/\Int(\OA,\DA)$,
so that 
we have
$\bar{\Psi}(\Out_\circ(\OA,\DA)) = \Ker(K_*)$

For any $\xi \in \Aut(K_0(\OA\otimes\K))$,
$\xi$ 
gives rise to an automorphism
of the abelian group
$\Z^N/(\id -A^t)\Z^N$.
The group
$\Z^N/(\id -A^t)\Z^N$
is isomorphic to the Bowen--Franks group
$BF(A) =\Z^N/(\id -A)\Z^N$ of the matrix $A$.
By Huang's theorem \cite[Theorem 2.15]{Hu}
and its proof,  any automorphism of 
the Bowen--Franks group
$BF(A)$ comes from an flow equivalence of the
underlying topological Markov shifts
$(\bar{X}_A,\bar{\sigma}_A)$.
It implies that
there exists an automorphism $\psi\in \Aut(\SOA)$
such that 
$\psi(\SDA) = \SDA$ and $\psi_* = \xi$ on $K_0(\OA)$.
Hence $\psi $ belongs to $\Aut(\SOA,\SDA)$ such that
$K_*(\psi) =\xi$.
Consequently 
the sequence 
\eqref{eq:exactOADA1}
is exact.
\end{proof}
Let 
$
\Aut_1( \Z^N/{(\id - A^t)\Z^N})
$ 
be a subgroup of 
$\Aut( \Z^N/{(\id - A^t)\Z^N}) $ defined by
$$
\Aut_1( \Z^N/{(\id - A^t)\Z^N})
= \{ \xi \in \Aut( \Z^N/{(\id - A^t)\Z^N}) \mid \xi([1]) = [1] \}
$$
where 
$[1] \in  \Z^N/{(\id - A^t)\Z^N}$
denotes the class of the vector 
$(1,\dots,1) $ in $\Z^N$. 
\begin{theorem} \label{thm:PicOADA2}
Let $A$ be an irreducible non-permutation matrix.
Then there exists a short exact sequence:
\begin{align*}
1& \longrightarrow \Out(\OA,\DA) 
\overset{\bar{\Psi}}{\longrightarrow}\Pic(\OA,\DA) \\
& \overset{K_*}{\longrightarrow}\Aut( \Z^N/{(\id - A^t)\Z^N}) 
/ \Aut_1( \Z^N/{(\id - A^t)\Z^N})
\longrightarrow 1. 
\end{align*}
\end{theorem}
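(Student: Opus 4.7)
The plan is to parallel the proof of Theorem \ref{thm:PicOADA}, upgrading from $\Out_\circ(\OA,\DA)$ to the full $\Out(\OA,\DA)$ on the left and replacing the target $\Aut(\Z^N/(\id-A^t)\Z^N)$ by its quotient by $\Aut_1(\Z^N/(\id-A^t)\Z^N)$ on the right. The key observation is that if $\alpha \in \Aut(\OA,\DA)$, then $\alpha$ fixes the unit, hence $\alpha_* \in \Aut_1(K_0(\OA))$ under the isomorphism $\epsilon_A \colon K_0(\OA) \to \Z^N/(\id-A^t)\Z^N$ which carries $[1_{\OA}]$ to $[1]$. Consequently the composition $K_* \circ \bar{\Psi}$ lands in $\Aut_1$ and becomes trivial after passing to the quotient by $\Aut_1$, so $\mathrm{Im}(\bar{\Psi}) \subseteq \ker(K_* \bmod \Aut_1)$.

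First I would record injectivity of $\bar{\Psi} \colon \Out(\OA,\DA) \to \Pic(\OA,\DA) = \Out(\SOA,\SDA)$, which was already established in Corollary \ref{cor:injPsi}. Next I would derive surjectivity of the induced map $\Pic(\OA,\DA) \to \Aut(\Z^N/(\id-A^t)\Z^N)/\Aut_1(\Z^N/(\id-A^t)\Z^N)$ directly from the surjectivity of $K_* \colon \Pic(\OA,\DA) \to \Aut(\Z^N/(\id-A^t)\Z^N)$ proved in Theorem \ref{thm:PicOADA}, since postcomposition with a quotient map preserves surjectivity. Both of these require no further input.

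The main content is exactness in the middle, namely the reverse inclusion $\ker(K_* \bmod \Aut_1) \subseteq \bar{\Psi}(\Out(\OA,\DA))$. For this, take any $[\beta] \in \Pic(\OA,\DA)$ with $\beta_* \in \Aut_1$. Unwinding the identification $\epsilon_A$ together with the canonical isomorphism $K_0(\OA) \cong K_0(\SOA)$, which sends $[1_{\OA}]$ to $[1_{\OA}\otimes e_{11}]$, this hypothesis is exactly the condition $\beta(\SDA) = \SDA$ and $\beta_*([1_{\OA}\otimes e_{11}]) = [1_{\OA}\otimes e_{11}]$ appearing in Corollary \ref{cor:4.1}. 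Applying that corollary produces $\alpha \in \Aut(\OA,\DA)$ and a unitary $u \in M(\SOA)$ with $u(\SDA)u^* = \SDA$ such that $\beta = \Ad(u)\circ(\alpha\otimes\id)$, whence $[\beta] = \bar{\Psi}([\alpha])$ in $\Out(\SOA,\SDA) = \Pic(\OA,\DA)$, as required.

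The hard part is really the correct bookkeeping of the isomorphisms: matching the condition "$\beta_* \in \Aut_1(\Z^N/(\id-A^t)\Z^N)$" on the Bowen–Franks side to the hypothesis "$\beta_*([1_{\OA}\otimes e_{11}]) = [1_{\OA}\otimes e_{11}]$" required by Corollary \ref{cor:4.1}. Once this translation is made explicit, all four pieces (injectivity, surjectivity, both inclusions) assemble immediately from the preceding results, and no new $C^*$-algebraic construction is needed.
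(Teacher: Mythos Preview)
Your proposal is correct and follows essentially the same route as the paper. The only difference is packaging: for the reverse inclusion at the middle, the paper reproves the content of Corollary~\ref{cor:4.1} inline (invoking R{\o}rdam's result and \cite[Proposition~5.1]{MaPAMS} to produce $\alpha_1\in\Aut(\OA,\DA)$ with $\alpha_{1*}=\beta_*$, then implicitly combines with Theorem~\ref{thm:PicOADA}), whereas you invoke Corollary~\ref{cor:4.1} directly to obtain the decomposition $\beta=\Ad(u)\circ(\alpha\otimes\id)$; these amount to the same argument.
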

\begin{proof}
It suffices to show the exactness at the middle.
The inclusion relation
$\bar{\Psi}(\Out(\OA)) \subset  \Ker(K_*)$ is clear.  
Conversely,
by R{\o}rdam's result
\cite{Ro} again,
for any $\xi \in \Aut(K_0(\OA\otimes\K))$
with $\xi([1]) = [1]$,
there exists an automorphism
$\alpha_\circ$ of $\OA$
such that 
$\alpha_{\circ*} = \xi$ on $K_0(\OA)$.
By  \cite[Proposition 5.1]{MaPAMS}, 
we may find an automorphism
$\alpha_1 $
of $\OA$ such that 
$\alpha_1(\DA) = \DA$
and
$\alpha_{1*} =\alpha_{\circ*}$ on $K_0(\OA)$.
Hence
$\alpha_1\in \Aut(\OA,\DA)$
such that
$\bar{\Psi}([\alpha_1]) = \xi$
so that  
$\bar{\Psi}(\Out(\OA)) = \Ker(K_*)$,
and the sequence is exact.
\end{proof}

%%%%%%%%%%%%%%%%%%%%%%%%%%%%%%%%%
%%%%%%%%%%%%%%%%%%%%%%%%%%%%%%%%%%%%%
\section{Appendix:  Picard groups of Cuntz--Krieger algebras}
%%%%%%%%%%%%%%%%%%%%%%%%%%%%%%%%%%
In this appendix, we will refer to the Picard groups of Cuntz--Krieger algebras
and especially Cuntz algebras.
As examples of the Picard groups for some interesting class of $C^*$-algebras,
  K. Kodaka has studied the Picard groups
for irrational rotation $C^*$-algebras 
$\Pic(A_\theta)$ to show that
$\Pic(A_\theta)$ is isomorphic to
$\Out(A_\theta)$ if $\theta$ is not quadratic,
and a semidirect product
$\Out(A_\theta)\rtimes\Z$
if $\theta$ is quadratic
(\cite{KodakaJLMS}, \cite{KodakaJOT}).
He also studied  the Picard group of certain Cuntz algebras in \cite{KodakaJOT}.
He proved that
$\Pic(\ON) = \Out(\ON)$
for $N=2,3$.
He also  showed  that
there exists a short exact sequence:
\begin{equation}
1\longrightarrow \Out(\ON) 
\overset{\bar{\Psi}}{\longrightarrow}\Pic(\ON) 
\overset{K_*}{\longrightarrow}\Aut( \Z/{(1 - N)\Z})
\longrightarrow 1  \label{eq:exactCuntz}
\end{equation}
for $N=4,6$.
Since $\Aut( \Z/{(1 - N)\Z}) $ is trivial for $N=2,3$,
the Kodaka's results say that the exact sequence \eqref{eq:exactCuntz} holds
for $N=2,3,4,6$. 

%In this appendix, we study the Picard groups of Cuntz--Krieger algebras.
We will show that
the above exact sequence holds for all $1<N\in \N$
(Theorem \ref{thm:PicON}).
As a corollary we know that  
the Picard group $\Pic(\ON)$ of the Cuntz algebra $\ON$ is a semidirect product
$\Out(\ON)\rtimes \Z/{(N-2)\Z}$
if $N-1$ is a prime number.

%%%%%%%%%%%%%%%%%%%%%%%%%%%%%%%%%%%%%%%%%
%\section{Picard groups of Cuntz--Krieger algebras}
%%%%%%%%%%%%%%%%%%%%%%%%%%%%%%%%%%%%%%%%%
%%%%%%%%%%%%%%%%%%%%%%%%%%%%%%%%%%%%%%%%%
We first refer to the Picard groups of Cuntz--Krieger algebras. 
Let $u \in M(\A)$ 
be a unitary in the multiplier $C^*$-algebra
$M(\A)$ of a $C^*$-algebra $\A$.
The automorphism
$\Ad(u)$ on $\A$
acts trivially on its K-group
$K_0(\A)$
by \cite[Lemma 1.1]{KodakaJOT}.
\begin{lemma}[{Kodaka \cite[Lemma 1.3]{KodakaJOT}}]\label{lem:K1.3}
Let $\beta \in \Aut(\SOA)$ satisfy
$\beta_* =\id$ on $K_0(\OA)$.
Then there exists a unitary
$u \in M(\SOA)$
and an automorphism 
$\alpha \in \Aut(\OA)$
such that
\begin{equation*}
\beta = \Ad(u)\circ(\alpha\otimes\id)
\quad\text{ and }
\quad
\alpha_* = \id \text{ on }
K_0(\OA).
\end{equation*}
\end{lemma}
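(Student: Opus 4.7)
The plan is to follow the skeleton of the proof of Proposition \ref{prop:4.1}, but dropped the conditions involving $\DA$ and $\SDA$. So the argument is actually simpler. The key input is the existence of Murray--von Neumann equivalences of projections in $\SOA$ witnessed by partial isometries in $\SOA$, together with the fact that $K_0(\SOA) \cong K_0(\OA)$ is detected by classes of projections $1_A \otimes e_{kk}$.

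First I would produce, for each $k \in \N$, a partial isometry $w_k \in \SOA$ with $w_k^* w_k = 1 \otimes e_{kk}$ and $w_k w_k^* = \beta(1 \otimes e_{kk})$. The hypothesis $\beta_* = \id$ on $K_0(\OA) = K_0(\SOA)$ forces $[\beta(1 \otimes e_{kk})] = [1 \otimes e_{kk}]$, and since $\OA$ is purely infinite and simple (so any two equivalent projections in $\SOA$ are Murray--von Neumann equivalent via a partial isometry in $\SOA$), such $w_k$ exists. Next, observing that $\{1\otimes e_{kk}\}_{k\in\N}$ and $\{\beta(1\otimes e_{kk})\}_{k \in \N}$ are mutually orthogonal families summing to $1$ in the strict topology of $M(\SOA)$, I would set $w = \sum_{k=1}^\infty w_k$ which converges strictly to a unitary in $M(\SOA)$.

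Then $\beta_w := \Ad(w^*) \circ \beta$ fixes each $1 \otimes e_{kk}$. Hence $\beta_w$ carries the corner $\OA \otimes e_{kk}$ onto itself, yielding for each $k$ an automorphism $\alpha_k \in \Aut(\OA)$ by $\alpha_k(x) \otimes e_{kk} = \beta_w(x \otimes e_{kk})$. Applying $\beta_w$ to $1 \otimes e_{jk} = (1 \otimes e_{jj})(1 \otimes e_{jk})(1 \otimes e_{kk})$, one obtains $\beta_w(1 \otimes e_{jk}) = w_{jk} \otimes e_{jk}$ for some $w_{jk} \in \OA$ with $w_{jk}^* = w_{kj}$, $w_{jk}^* w_{jk} = w_{jk} w_{jk}^* = 1$. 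The multiplicativity of $\beta_w$ applied to $(1\otimes e_{jk})(x \otimes e_{kk}) = (x \otimes e_{jj})(1 \otimes e_{jk})$ forces $w_{jk} \alpha_k(x) = \alpha_j(x) w_{jk}$, i.e.\ $\alpha_k = \Ad(w_{jk}^*) \circ \alpha_j$. Setting $\alpha := \alpha_1$ and $u' := \sum_{k=1}^\infty w_{1k} \otimes e_{k1}$, the sum converges strictly to a unitary in $M(\SOA)$, and one computes directly that $\beta_w = \Ad(u'^*) \circ (\alpha \otimes \id)$. Taking $u = w u'^*$ gives $\beta = \Ad(u) \circ (\alpha \otimes \id)$.

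Finally, since $\beta_* = \id$ and $(\Ad(u))_* = \id$ on $K_0(\SOA)$ by \cite[Lemma 1.1]{KodakaJOT}, we conclude $(\alpha \otimes \id)_* = \id$, hence $\alpha_* = \id$ on $K_0(\OA)$ under the isomorphism $K_0(\OA) \cong K_0(\SOA)$. The main technical obstacle is the strict convergence of $w = \sum_k w_k$ and of $u'$, and the bookkeeping showing that the family $\{w_{jk}\}_{j,k}$ assembles into a genuine automorphism implementer rather than merely intertwiners on each corner; this is essentially the matrix-unit reconstruction of the identification $\SOA \cong \OA \otimes \K$ used throughout the paper.
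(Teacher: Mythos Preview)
Your overall strategy is exactly the one the paper uses for the relative version (Proposition \ref{prop:4.1}); the paper itself does not reprove Lemma \ref{lem:K1.3} but simply cites Kodaka, so mirroring the argument of Proposition \ref{prop:4.1} with the diagonal conditions stripped is the natural route and is essentially correct.

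There is one concrete error: your definition $u' := \sum_{k=1}^\infty w_{1k} \otimes e_{k1}$ is wrong. Each summand has source projection $1\otimes e_{11}$, so the partial sums do not converge strictly (indeed $\bigl\|\sum_{k=M}^N w_{1k}\otimes e_{k1}\bigr\|^2 = N-M+1$), and the resulting operator could not be a unitary. The correct choice, as in the paper's proof of Proposition \ref{prop:4.1}, is the diagonal sum
\[
u' \;=\; \sum_{k=1}^\infty w_{1k}\otimes e_{kk},
\]
for which $u'^*(\alpha_1(x)\otimes e_{jk})u' = w_{1j}^*\,\alpha_1(x)\,w_{1k}\otimes e_{jk} = w_{j1}\,\alpha_1(x)\,w_{1k}\otimes e_{jk} = \beta_w(x\otimes e_{jk})$, giving $\beta_w = \Ad(u'^*)\circ(\alpha_1\otimes\id)$ and hence $\beta = \Ad(wu'^*)\circ(\alpha_1\otimes\id)$. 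With this correction the argument goes through.
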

For a  $C^*$-algebra $\A$,
we put
\begin{equation*}
\Aut_\circ(\A) =\{ \alpha \in \Aut(\A) \mid \alpha_* = \id \text{ on }K_0(\A)\}
\end{equation*}
which is a subgroup of  $\Aut(\A)$.
Since $\Ad(u)_* = \id$ on $K_0(\A)$ for a unitary $u \in M(\A)$,
we see that 
$\Int(\A)$ is a subgroup of 
$\Aut_\circ(\A)$.
The quotient group
$\Aut_\circ(\A) / \Int(\A)$
is denoted by
$\Out_\circ(\A)$.

Let $\A$ be a unital $C^*$-algebra.
Let $\Psi: \Aut(\A) \longrightarrow \Aut(\A\otimes\K)$
be the homomorphism defined by
$\Psi(\alpha) = \alpha\otimes \id$ for 
$\alpha \in \Aut(\A)$.
It induces a homomorphism
$
\bar{\Psi}: \Out(\A) \longrightarrow \Out(\A\otimes\K).
$
If $\bar{\Psi}([\alpha]) =\id$ for some $\alpha \in \Aut(\A)$,
we have 
$\Psi(\alpha) = \Ad(W)$ for some unitary $W \in M(\A\otimes\K)$.
Hence we see that
\begin{equation}
\alpha(x) \otimes K = W(x\otimes K)W^*
\quad \text{ for all } x \in \A, \, K \in \K. 
\end{equation}
Since 
$$
1\otimes e_{11} = \alpha(1)\otimes e_{11} = W(1\otimes e_{11})W^*.
$$
the unitary $W$ commutes $1\otimes e_{11}$
so that there exists a unitary $w \in \A$ such that 
$w \otimes e_{11}= (1\otimes e_{11}) W (1\otimes e_{11})$.
We then have
\begin{equation*}
\alpha(x) \otimes e_{11} = (1\otimes e_{11}) W (x\otimes e_{11})(1\otimes e_{11})
= w x w^* \otimes e_{11} \text{ for all } x \in \A.
\end{equation*}
Hence $\alpha = \Ad(w) \in \Int(\A)$.
This means that 
the map $\bar{\Psi} : \Out(\A) \longrightarrow \Out(\A\otimes\K)$
is injective.
Any automorphism
$\beta \in \Aut(\A\otimes\K)$ 
induces  
an automorphism $\beta_* $ of $K_0(\A \otimes\K)$,
which we denote by $K_*(\beta) \in \Aut(\A \otimes\K)$.
By \cite[Theorem 1.2]{BGR} with
\cite[Corollary 3.5]{BGR}, 
we know $\Pic(\A)=\Pic(\A\otimes\K) = \Out(\A\otimes\K)$. 
\begin{proposition} \label{prop:PicOA}
Let $A$ be an irreducible non-permutation matrix.
Then the following short exact sequence holds:
\begin{equation}
1\longrightarrow \Out_\circ(\A) 
\overset{\bar{\Psi}}{\longrightarrow}\Out(\SOA) 
\overset{K_*}{\longrightarrow}\Aut(K_0(\OA\otimes\K))
\longrightarrow 1. \label{eq:exactOA1}
\end{equation}
Hence there exists a short exact sequence:
\begin{equation}
1\longrightarrow \Out_\circ(\OA) 
\overset{\bar{\Psi}}{\longrightarrow}\Pic(\OA) 
\overset{K_*}{\longrightarrow}\Aut( \Z^N/{(\id - A^t)\Z^N})
\longrightarrow 1.  \label{eq:exactOA2}
\end{equation}
\end{proposition}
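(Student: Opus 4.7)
\textbf{Proof proposal for Proposition \ref{prop:PicOA}.}
The strategy is to mirror the structure of the proof of Theorem \ref{thm:PicOADA}, simply dropping every condition that refers to the diagonal $\DA$. Since Brown--Green--Rieffel's theorem together with \cite[Corollary 3.5]{BGR} yields the identification $\Pic(\OA) = \Out(\SOA)$, and Cuntz's theorem identifies $K_0(\OA\otimes\K)$ with $\Z^N/(\id - A^t)\Z^N$, it suffices to establish the first exact sequence \eqref{eq:exactOA1}; the second sequence \eqref{eq:exactOA2} is then obtained by transfer.

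First I would verify injectivity of $\bar{\Psi}: \Out_\circ(\OA) \longrightarrow \Out(\SOA)$. This is immediate from the injectivity of $\bar{\Psi}: \Out(\A) \longrightarrow \Out(\A\otimes\K)$ that has already been proved in the paragraph preceding the proposition: any unitary $W \in M(\A\otimes\K)$ implementing $\alpha\otimes\id$ satisfies $W(1\otimes e_{11})W^* = 1\otimes e_{11}$, hence commutes with $1\otimes e_{11}$ and cuts down to a unitary $w\in\A$ with $\alpha=\Ad(w)$. Restricting to automorphisms with trivial $K_0$-action then gives injectivity on $\Out_\circ$. Next, exactness at the middle term. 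The inclusion $\bar{\Psi}(\Out_\circ(\OA)) \subset \Ker(K_*)$ is immediate since $(\alpha\otimes\id)_* = \alpha_* = \id$ on $K_0(\OA)\cong K_0(\SOA)$ whenever $\alpha\in\Aut_\circ(\OA)$. For the reverse inclusion, given $[\beta]\in\Ker(K_*)$, Kodaka's Lemma \ref{lem:K1.3} provides a unitary $u\in M(\SOA)$ and an automorphism $\alpha\in\Aut(\OA)$ with $\alpha_*=\id$ on $K_0(\OA)$ such that $\beta = \Ad(u)\circ(\alpha\otimes\id)$; thus $[\beta] = \bar{\Psi}([\alpha]) \in \bar{\Psi}(\Out_\circ(\OA))$.

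The step I expect to be the main obstacle is surjectivity of $K_*: \Out(\SOA)\longrightarrow \Aut(K_0(\OA\otimes\K))$. Given $\xi\in\Aut(K_0(\OA))$, the goal is to produce some $\beta\in\Aut(\SOA)$ with $\beta_*=\xi$. My plan is to invoke Huang's theorem \cite[Theorem 2.15]{Hu} exactly as in the proof of Theorem \ref{thm:PicOADA}: viewing $\xi$ as an automorphism of the Bowen--Franks group $BF(A) = \Z^N/(\id - A)\Z^N$ via the isomorphism induced by transposition, Huang shows that $\xi$ arises from a flow equivalence of $(\bar{X}_A,\bar{\sigma}_A)$ with itself, and by Cuntz--Krieger \cite{CK} this flow equivalence lifts to a $*$-automorphism of $\SOA$ realising $\xi$ on $K_0$. (Alternatively, one may appeal directly to R{\o}rdam's classification \cite{Ro} applied to the Kirchberg algebra $\SOA$, which automatically lifts every $K_0$-automorphism to a $C^*$-automorphism since the unit condition is vacuous in the stable setting.) Either route establishes surjectivity and completes the proof of the first exact sequence, and hence of the proposition.
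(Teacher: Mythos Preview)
Your argument is correct and matches the paper's proof almost verbatim for injectivity and exactness at the middle term. The only divergence is in the surjectivity step: the paper invokes R{\o}rdam's classification \cite{Ro} directly (your parenthetical alternative), rather than passing through Huang's flow-equivalence theorem, since in the non-relative setting there is no diagonal subalgebra to preserve and R{\o}rdam's result already lifts any $K_0$-automorphism to an automorphism of $\SOA$.
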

\begin{proof}
We will show the exactness of \eqref{eq:exactOA1}.
We have already known that the injectivity of 
$\bar{\Psi}:  \Out_\circ(\OA)  \longrightarrow \Out(\SOA)$.
By definition of the group
$\Aut_\circ(\OA)$, the inclusion relation
$\bar{\Psi}(\Out_\circ(\OA)) \subset  \Ker(K_*)$ is clear.  
 Conversely for any 
$[\beta] \in \Ker(K_*)$,
we know that
$\beta \in \Aut(\SOA)$ satisfy
$\beta_* =\id$ on $K_0(\OA)$.
By Lemma \ref{lem:K1.3},
there exists a unitary
$u \in M(\SOA)$
and an automorphism 
$\alpha \in \Aut(\OA)$
such that
\begin{equation*}
\beta = \Ad(u)\circ(\alpha\otimes\id)
\quad\text{ and }
\quad
\alpha_* = \id \text{ on }
K_0(\OA).
\end{equation*}
Hence we have
$
[\beta] = [\alpha\otimes \id] = {\bar{\Psi}}([\alpha])$
and
$[\alpha] \in \Aut_\circ(\OA)/\Int(\OA)$.
Therefore 
we have
$\bar{\Psi}(\Out_\circ(\OA)) = \Ker(K_*)$

By R{\o}rdam's result
\cite{Ro},
for any $\xi \in \Aut(K_0(\OA\otimes\K))$,
there exists an automorphism
$\beta$ of $\OA\otimes\K$
such that 
$\beta_* = \xi$.
Therefore the map $K_*$ is surjective
to prove the exactness of the sequence
\eqref{eq:exactOA1}.
\end{proof}
Let 
$
\Aut_1( \Z^N/{(\id - A^t)\Z^N})
$ 
be a subgroup of 
$\Aut( \Z^N/{(\id - A^t)\Z^N}) $ defined by
$$
\Aut_1( \Z^N/{(\id - A^t)\Z^N})
= \{ \xi \in \Aut( \Z^N/{(\id - A^t)\Z^N}) \mid \xi([1]) = [1] \}
$$
where 
$[1] \in  \Z^N/{(\id - A^t)\Z^N}$
denotes the class of the vector 
$(1,\dots,1) $ in $\Z^N$. 
\begin{proposition} \label{prop:PicOA2}
Let $A$ be an irreducible non-permutation matrix.
Then there exists a short exact sequence:
\begin{align*}
1& \longrightarrow \Out(\OA) 
\overset{\bar{\Psi}}{\longrightarrow}\Pic(\OA) \\
& \overset{K_*}{\longrightarrow}\Aut( \Z^N/{(\id - A^t)\Z^N}) 
/ \Aut_1( \Z^N/{(\id - A^t)\Z^N})
\longrightarrow 1.
\end{align*}
\end{proposition}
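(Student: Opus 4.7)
The plan is to mirror the proof of Theorem \ref{thm:PicOADA2}, but with all diagonal-preservation conditions removed, and to cite Proposition \ref{prop:PicOA} wherever possible. Injectivity of $\bar{\Psi}: \Out(\OA) \to \Pic(\OA)$ has already been established in the paragraph preceding Proposition \ref{prop:PicOA}: if $\alpha \otimes \id = \Ad(W)$ for some unitary $W \in M(\OA\otimes\K)$, then $W$ commutes with $1\otimes e_{11}$ and one extracts a unitary $w \in \OA$ with $\alpha = \Ad(w)$. Surjectivity of the composed map $K_*: \Pic(\OA) \to \Aut(\Z^N/(\id-A^t)\Z^N)/\Aut_1(\Z^N/(\id-A^t)\Z^N)$ is immediate from the surjectivity of $K_*: \Pic(\OA) \to \Aut(\Z^N/(\id-A^t)\Z^N)$ proved in Proposition \ref{prop:PicOA}, followed by the quotient map.

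The substance of the proof lies in verifying exactness at the middle. One inclusion is formal: any $\alpha \in \Aut(\OA)$ preserves the class $[1_{\OA}]$ of the unit in $K_0(\OA)$, and under the isomorphism $\epsilon_A: K_0(\OA) \to \Z^N/(\id-A^t)\Z^N$ sending $[1_{\OA}]$ to $[1]$ the induced automorphism $\epsilon_A \circ (\alpha\otimes\id)_* \circ \epsilon_A^{-1}$ therefore belongs to $\Aut_1(\Z^N/(\id-A^t)\Z^N)$. Hence $K_* \circ \bar{\Psi}([\alpha]) = 1$ in the quotient, giving $\bar{\Psi}(\Out(\OA)) \subset \Ker(K_*)$.

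For the reverse inclusion, take $[\beta] \in \Pic(\OA) = \Out(\OA\otimes\K)$ with $K_*([\beta])$ in the image of $\Aut_1$, i.e.\ $\beta_*([1_A \otimes e_{11}]) = [1_A \otimes e_{11}]$. By R\o rdam's classification theorem \cite{Ro} for simple purely infinite Cuntz--Krieger algebras, any automorphism of $K_0(\OA)$ fixing the class of the unit is realized by an automorphism of $\OA$, so there exists $\alpha_\circ \in \Aut(\OA)$ with $\alpha_{\circ *} = \beta_*$ on $K_0(\OA)$. Then $\beta \circ (\alpha_\circ^{-1}\otimes\id)$ lies in $\Aut(\OA\otimes\K)$ and acts trivially on $K_0$, so by Lemma \ref{lem:K1.3} there are a unitary $u \in M(\OA\otimes\K)$ and $\alpha_1 \in \Aut_\circ(\OA)$ with
\begin{equation*}
\beta \circ (\alpha_\circ^{-1}\otimes\id) = \Ad(u) \circ (\alpha_1 \otimes \id).
\end{equation*}
Hence $\beta = \Ad(u) \circ ((\alpha_1 \circ \alpha_\circ) \otimes \id)$, so $[\beta] = \bar{\Psi}([\alpha_1 \circ \alpha_\circ])$ in $\Pic(\OA)$, as required.

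The only potentially delicate step is the appeal to R\o rdam's result in the precise form needed: a $K_0$-automorphism of $\OA$ that fixes $[1_{\OA}]$ lifts to an honest automorphism of $\OA$. Once that is invoked, the rest is a direct bookkeeping argument parallel to the diagonal-preserving case treated in Theorem \ref{thm:PicOADA2}, with the simplification that here no analogue of \cite[Proposition 5.1]{MaPAMS} is needed to force $\DA$-invariance.
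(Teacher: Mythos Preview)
Your proof is correct and follows essentially the same route as the paper's: both use R{\o}rdam's result to lift a $K_0$-automorphism fixing $[1]$ to an automorphism of $\OA$, and then correct by an inner automorphism of the stabilization. The paper's argument is terser---it stops after producing the lift and implicitly leans on the exactness already established in Proposition~\ref{prop:PicOA} to finish, whereas you spell out the correction step via Lemma~\ref{lem:K1.3} directly; these amount to the same thing.
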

\begin{proof}
It suffices to show the exactness at the middle.
The inclusion relation
$\bar{\Psi}(\Out_\circ(\OA)) \subset  \Ker(K_*)$ is clear.  
Conversely,
by R{\o}rdam's result
\cite{Ro} again,
for any $\xi \in \Aut(K_0(\OA\otimes\K))$
with $\xi([1]) = [1]$,
there exists an automorphism
$\beta$ of $\OA$
such that 
$\beta_* = \xi$.
The sequence is exact.
\end{proof}

%%%%%%%%%%%%%%%%%%%%%%%%%%%%%%%%%%%%%%%%%
%\section{Picard groups of Cuntz algebras}
%%%%%%%%%%%%%%%%%%%%%%%%%%%%%%%%%%%%%%%%%
%%%%%%%%%%%%%%%%%%%%%%%%%%%%%%%%%%%%%%%%%
We will finally mention about the Picard groups of Cuntz algebras.
By using Proposition \ref{prop:PicOA}, we  know the following theorem.
For $N =2,3,4,6$, Kodaka has already shown in \cite[Corollary 15, Remark 17]{KodakaJLMS}.
\begin{theorem} \label{thm:PicON}
For each $1<N\in \N$,
there exists a short exact sequence:
\begin{equation}
1\longrightarrow \Out(\ON) 
\overset{\bar{\Psi}}{\longrightarrow}\Pic(\ON) 
\overset{K_*}{\longrightarrow}\Aut( \Z/{(1 - N)\Z})
\longrightarrow 1 \label{eq:exactON1}
\end{equation}
\end{theorem}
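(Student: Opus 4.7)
The plan is to derive Theorem~\ref{thm:PicON} as a direct specialization of Proposition~\ref{prop:PicOA2}. The Cuntz algebra $\ON$ coincides with $\OA$ for the $N\times N$ matrix $A$ all of whose entries equal $1$; for $N>1$ this $A$ is plainly irreducible and is not a permutation matrix. I would therefore invoke Proposition~\ref{prop:PicOA2} for this $A$ to obtain the short exact sequence
\[
1\longrightarrow \Out(\ON)\overset{\bar\Psi}{\longrightarrow} \Pic(\ON)
\overset{K_*}{\longrightarrow} \Aut(\Z^N/(\id-A^t)\Z^N)/\Aut_1(\Z^N/(\id-A^t)\Z^N)\longrightarrow 1,
\]
and the remaining task is simply to identify this quotient with $\Aut(\Z/(1-N)\Z)$.

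To that end I would first compute the group $\Z^N/(\id-A^t)\Z^N$ together with the image of the distinguished class $[(1,\ldots,1)]$. Since $A^t=A$ is the all-ones matrix, the map $\bar\phi\colon\Z^N\to\Z/(N-1)\Z$, $(v_i)\mapsto\sum_i v_i\pmod{N-1}$, is surjective and vanishes on $(\id-A)\Z^N$, because $(\id-A)w$ has coordinate sum $(1-N)\sum_i w_i$; conversely, if $\sum_i v_i=(N-1)k$, then $w:=v-k\mathbf{1}$ satisfies $(\id-A)w=v$. Hence $\bar\phi$ descends to an isomorphism $\Z^N/(\id-A^t)\Z^N\cong\Z/(1-N)\Z$, under which $[(1,\ldots,1)]$ maps to $N\equiv 1\pmod{N-1}$, a generator.

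The final step is to observe that $\Aut_1$ collapses. Every automorphism of $\Z/(1-N)\Z$ is multiplication by a unit, and the requirement that it fix the generator $1$ forces it to be the identity; hence $\Aut_1(\Z^N/(\id-A^t)\Z^N)=\{\id\}$, so $\Aut/\Aut_1$ equals $\Aut(\Z/(1-N)\Z)$, and the sequence above becomes exactly the one in Theorem~\ref{thm:PicON}. There is no serious obstacle: the argument is elementary integer linear algebra built on top of Proposition~\ref{prop:PicOA2}, with the only substantive observation being that $\Aut_1$ is trivial precisely because $[1_{\ON}]$ generates $K_0(\ON)$, a feature special to the Cuntz case.
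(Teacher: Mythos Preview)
Your argument is correct. You specialize Proposition~\ref{prop:PicOA2} and then show that the quotient $\Aut/\Aut_1$ collapses to $\Aut(\Z/(1-N)\Z)$ because $[(1,\dots,1)]$ is a generator of the cyclic group, so the only automorphism fixing it is the identity. The linear-algebra computation identifying $\Z^N/(\id-A^t)\Z^N$ with $\Z/(1-N)\Z$ and tracking the class of $(1,\dots,1)$ is clean and correct.

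The paper, by contrast, specializes Proposition~\ref{prop:PicOA} rather than Proposition~\ref{prop:PicOA2}: it starts from the sequence with $\Out_\circ(\ON)$ on the left and $\Aut(K_0(\ON))$ on the right, and then argues that $\Out_\circ(\ON)=\Out(\ON)$ because any unital automorphism of $\ON$ fixes the generator $[1_{\ON}]$ of the cyclic group $K_0(\ON)$ and hence acts as the identity on $K_0$. The two routes are dual to each other and rest on the same observation, namely that $[1_{\ON}]$ generates $K_0(\ON)$: you use it to kill $\Aut_1$ on the right-hand side of \ref{prop:PicOA2}, while the paper uses it to enlarge $\Out_\circ$ to $\Out$ on the left-hand side of \ref{prop:PicOA}. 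Your approach has the minor virtue of making the isomorphism $K_0(\ON)\cong\Z/(1-N)\Z$ explicit rather than citing it; the paper's is a line shorter. Either is perfectly acceptable.
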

\begin{proof}
Since $K_0(\ON) =\Z/{(1 - N)\Z}$ by \cite{CuntzAnnMath}
and the unit $1$ of the $C^*$-algebra $\ON$ corresponds to the generator
$[1]$ of the cyclic group $\Z/{(1 - N)\Z}$,
the fact $\alpha(1) =1$ for any automorphism 
$\alpha \in \Aut(\ON)$ ensures us that 
$\alpha_* = \id $ on 
$K_0(\ON) $.
Hence we see that 
$\Aut_\circ(\ON) = \Aut(\ON)$
and hence
$\Out_\circ(\ON) = \Out(\ON)$.
Therefore the exact sequence 
\eqref{eq:exactOA1}
goes to \eqref{eq:exactON1}. 
 \end{proof}

As a corollary, we have
\begin{corollary}\label{cor:PicONprime}
Suppose that 
 $N-1$ is a prime number.
Then the Picard group $\Pic(\ON)$ of the Cuntz algebra $\ON$ is a semidirect product
$\Out(\ON)\rtimes \Z/{(N-2)\Z}$
of the outer automorphism group by the cyclic group
$\Z/{(N-2)\Z}$:
\begin{equation*}
\Pic(\ON)=
\Out(\ON)\rtimes \Z/{(N-2)\Z}.
\end{equation*}
\end{corollary}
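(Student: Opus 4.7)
The plan is to derive the corollary from the short exact sequence established in Theorem \ref{thm:PicON},
\begin{equation*}
1 \longrightarrow \Out(\ON) \overset{\bar{\Psi}}{\longrightarrow} \Pic(\ON) \overset{K_*}{\longrightarrow} \Aut(\Z/(1-N)\Z) \longrightarrow 1,
\end{equation*}
by first identifying the right-hand quotient and then exhibiting a splitting.

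First I would compute the right-hand term. Since $N-1$ is prime, $\Z/(1-N)\Z = \Z/(N-1)\Z$ is a finite field, so its automorphism group as an abelian group is the multiplicative group of units, which is cyclic of order $N-2$ by the standard fact that the multiplicative group of any finite field is cyclic. Thus $\Aut(\Z/(1-N)\Z) \cong \Z/(N-2)\Z$, and the sequence above takes the shape $1 \to \Out(\ON) \to \Pic(\ON) \to \Z/(N-2)\Z \to 1$. With a splitting $\tau: \Z/(N-2)\Z \to \Pic(\ON)$ of $K_*$ in hand, the semidirect-product description $\Pic(\ON) = \Out(\ON) \rtimes \Z/(N-2)\Z$ follows automatically, with the action of $\Z/(N-2)\Z$ on $\Out(\ON)$ inherited by conjugation inside $\Pic(\ON)$.

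To construct the splitting, fix a primitive root $g$ modulo $N-1$, so that multiplication by $g$ generates $(\Z/(N-1)\Z)^\times$. The surjectivity of $K_*$, coming via R\o rdam's realization of $K_0$-automorphisms as used in the proof of Theorem \ref{thm:PicON}, supplies an automorphism $\gamma \in \Aut(\ON\otimes\K)$ with $\gamma_*$ equal to multiplication by $g$ on $K_0(\ON) = \Z/(N-1)\Z$. I would then set $\tau(k) := [\gamma]^k$; this defines a homomorphism precisely when $[\gamma]^{N-2} = 1$ in $\Pic(\ON) = \Out(\ON\otimes\K)$.

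The main obstacle is verifying this last condition. A priori Lemma \ref{lem:K1.3} only places $[\gamma]^{N-2}$ inside $\bar{\Psi}(\Out(\ON)) = \Ker(K_*)$, so $[\gamma]^{N-2} = \bar{\Psi}([\alpha])$ for some $\alpha \in \Aut(\ON)$, and this class need not be trivial for an arbitrary lift. To remove the obstruction, the idea is to replace $\gamma$ by $\gamma \cdot (\beta \otimes \id)$ for a suitably chosen $\beta \in \Aut(\ON)$; since any two lifts of multiplication by $g$ differ exactly by an element of $\bar{\Psi}(\Out(\ON))$, this is the only freedom available. The choice of $\beta$ amounts to solving a $1$-cocycle equation for the $\Z/(N-2)\Z$-action on $\Out(\ON)$, and primality of $N-1$ enters here through the cyclic structure of the quotient, which permits such a trivialization. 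Once $\tau$ is a genuine section, one assembles it with $\bar{\Psi}$ to obtain the claimed isomorphism $\Pic(\ON) \cong \Out(\ON) \rtimes \Z/(N-2)\Z$.
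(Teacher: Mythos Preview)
Your identification of $\Aut(\Z/(N-1)\Z)\cong\Z/(N-2)\Z$ is fine, and your strategy of lifting a generator $g$ to some $[\gamma]\in\Pic(\ON)$ is natural. The gap is in the last paragraph: you correctly isolate the obstruction $[\gamma]^{N-2}=\bar\Psi([\alpha])$ and observe that the only freedom is to replace $\gamma$ by $\gamma\cdot(\beta\otimes\id)$, but you never actually solve the resulting equation. The sentence ``primality of $N-1$ enters here through the cyclic structure of the quotient, which permits such a trivialization'' is not an argument. For a short exact sequence $1\to K\to G\to Q\to 1$ with $Q$ cyclic of order $m$, the splitting obstruction is whether some lift of a generator has order exactly $m$ in $G$; cyclicity of $Q$ alone does not force this, and nothing you have said about $\Out(\ON)$ shows that the relevant element $[\alpha]$ can be written in the required ``norm'' form. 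As it stands, the argument is circular: you are assuming the extension splits in order to conclude it splits.

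The paper sidesteps this difficulty entirely. Rather than lifting a single generator and attempting a cocycle adjustment, it invokes Kodaka's \cite[Theorem~16]{KodakaJLMS}: when $N$ is not prime (which, for $N-1\ge 3$ prime, holds since $N$ is then even; the case $N=3$ has trivial quotient and needs no splitting), Kodaka explicitly constructs automorphisms $\beta_k\in\Aut(\ON\otimes\K)$ with $(\beta_k)_*=k\cdot\id$ on $K_0(\ON)$ for every $1\le k\le N-1$, and these assemble into a homomorphic section $k\mapsto[\beta_k]$. The splitting is thus obtained from an external, concrete construction rather than from an abstract cohomological vanishing, and that construction is precisely the missing ingredient in your proposal.
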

\begin{proof}
As $N-1$ is a prime number,
an automorphism $\eta $ of  
the cyclic group
$\Z/(1 - N)\Z$ is determined by
$\eta(1)$ which can take its value in 
$\{1,2,\dots, N-2\}$, so that we have 
$\Aut( \Z/{(1 - N)\Z})$ is isomorphic to
$\Z/{(N-2)\Z}$.
Since $N$ is not prime,
by \cite[Theorem 16]{KodakaJLMS},
for any $k \in \N$ with $1\le k \le N-1$,
there exists $\beta_k \in \Aut(\ON\otimes\K)$
such that $(\beta_k)* = k\cdot \id$ on $K_0(\ON)$.
Hence the correspondence
$k \in \{1,2,\dots, N-1\}\longrightarrow [\beta_k] \in \Pic(\ON)$
gives rise to a cross section for the exact sequence \eqref{eq:exactON1}.
Therefore the exact sequence \eqref{eq:exactON1} splits and yields
a decomposition of $\Pic(\ON)$ into 
a semidirect product
$\Out(\ON)\rtimes \Z/{(N-2)\Z}$.
\end{proof}

\medskip
\begin{remark}
After the first draft of the paper was completed, the following paper has appeared in arXiv.

Kazunori Kodaka, Tamotsu Teruya: The strong Morita equivalence for inclusions of 
$C^*$--algebras and conditional expectations for equivalence bimodules,
 arXiv:1609.08263.
 
 In the above paper, Morita equivalence for pairs of $C^*$-algebras is defined.
 However, their definition of Morita equivalence is different from ours.   
\end{remark}

%%%%%%%%%%%%%%%%%%%%%%%%%%%%%%%%%%%%
%%%%%%%%%%%%%%%%%%%%%%%%%%%%%%%%%%%%%%
{\it Acknowledgments:}
%The author would like to thank 
%Hiroki Matui for his discussions on this subject.
%A part of this work was done during staying at the Institut Mittag-Leffler, and 
%the author is grateful to the Institut Mittag-Leffler 
%for its wonderful research environment, 
%hospitality and support.
This work was  supported by JSPS KAKENHI Grant Number 15K04896.

%%%%%%%%%%%%%%%%%%%%%%%%%%%%%%


\begin{thebibliography}{99}


%\bibitem{Baladi1998}{\sc V. Baladi}, 
%{\it Periodic orbits and dynamical spectra},
% Ergodic Theory Dynam. Systems {\bf 18}(1998), pp. 255--292.



%\bibitem{Boy}{\sc M. Boyle},
%{\it Topological orbit equivalence and factor maps in symbolic dynamics},
%Ph. D. Thesis, University of Washington, 1983.


%\bibitem{BH}{\sc M. Boyle and D. Handelman},
%{\it Orbit equivalence, flow equivalence and ordered cohomology},
%Israel J.\  Math.\{\bf 95}(1996), pp.\ 169--210.

\bibitem{BF} {\sc R. Bowen and J. Franks},
{\it Homology for zero-dimensional nonwandering sets},
Ann.\  Math.\ {\bf 106}(1977), pp.\ 73--92.


%\bibitem{BT}{\sc M. Boyle and J. Tomiyama},
%{\it Bounded continuous orbit equivalence and $C^*$-algebras},
%J.\ Math.\ Soc.\ Japan
%{\bf 50}(1998), pp.\ 317--329.

%\bibitem{BraKishi}
%{\sc O. Bratteli and A. Kishimoto}, 
%{\it Trace scaling automorphisms of certain AF algebras II},
%Quart. J. Math.  {\bf 51}(2000), pp.\ 131--154.


\bibitem{Brown}
{\sc L. G. Brown},
{\it Stable isomorphism of hereditary subalgebras of $C^*$-algebras},
Pacific J.\ Math.\ {\bf 71}(1977), pp.\ 335--348.



\bibitem{BGR}
{\sc L. G. Brown, P. Green and M. A. Rieffel},
{\it Stable isomorphism and strong Morita equivalence of $C^*$-algebras},
 Pacific  J.\ Math.\ {\bf 71}(1977), pp.\ 349--363.



%\bibitem{Combes}{\sc F. Combes},
%{\it Crossed products and Morita equivalence},
%Proc.\ London Math.\ Soc.\{\bf 49}(1984), pp.\ 289--306.

\bibitem{CKRW}
{\sc  D. Crocker, A. Kumjian, I. Raeburn and D. P. Williams},
{\it An equivariant Brauer group and actions of groups of $C^*$-algebras},
J.\ Funct.\ Anal.\ {\bf 146}(1997), pp.\ 151--184.











%\bibitem{C}{\sc J. Cuntz},
%{\it Simple $C^*$-algebras generated by isometries},
% Comm.\ Math.\ Phys.\ {\bf 57}(1977), pp.\ 173--185.




\bibitem{CuntzAnnMath} {\sc J.~Cuntz},
{\it K-theory for certain $C^*$-algebras},
 Ann. Math.\ {\bf 117}(1981), pp.\ 181--197.

%\bibitem{C86} {\sc J. Cuntz}, 
%{\it The classification problem for the $C^*$-algebra ${\cal O}_A$}, 
%Geometric methods in operator algebras,
%Pitman Reserch Notes in Mathematics Series 
%{\bf 123}(1986), pp.\ 145--151.

%\bibitem{Cu2}{\sc J. Cuntz},
%{\it Automorphisms of certain simple $C^*$-algebras},
% quantum Fields-Algebras, Processes, Springer Verlag, Wien-New York, 1980,
%pp.\ 187--196. 



\bibitem{Cu3}
{\sc J. ~Cuntz}, 
{\it A class of $C^*$-algebras and topological Markov chains II: reducible chains and the Ext-functor for $C^*$-algebras},
Invent.\ Math.\ {\bf 63}(1980),
pp.\ 25--40.





\bibitem{CK}{\sc J. ~Cuntz and W. ~Krieger},
{\it A class of $C^*$-algebras and topological Markov chains},
 Invent.\ Math.\
 {\bf 56}(1980), pp.\ 251--268.


%
%\bibitem{EFW} {\sc M. Enomoto, M. Fujii and Y. Watatani},
%{\it $K_0$-groups and classifications of Cuntz--Krieger algebras}, 
%Math.\ Japon. {\bf 26}(1981), pp.\ 443--460.
%%%%

\bibitem{Franks}{\sc J. Franks},
{\it Flow equivalence of subshifts of finite type},
Ergodic Theory Dynam. Systems {\bf 4}(1984), pp.\ 53--66.


\bibitem{Hu}
{\sc D. Huang},
{\it Flow equivalence of reducible shifts of finite type},
Ergodic Theory Dynam. Systems
{\bf 14}(1994), 
pp. \ 695--720.



%\bibitem{Hu2}{\sc D. Huang},
%{\it Flow equivalence of reducible shifts of finite type and Cuntz--Krieger algebras},
%J. Reine Angew. Math. {\bf 462}(1995), pp. \ 185--217.

%\bibitem{KPW}{\sc T. Kajiwara, C. Pinzari  and Y. Watatani},
%{\it Ideal structure and simplicity of the  $C^*$-algebras generated by Hilbert bimodules},
 %J. Funct. Anal.  {\bf 195}(1998), pp. \ 295--322.

\bibitem{KW}
{\sc T. Kajiwara and Y. Watatani},
{\it Jones index theory by Hilbert $C^*$-modules and K-theory},
Trans. Amer. Math. Soc. 
{\bf 352}(2000), pp. \ 3429--3472.

%\bibitem{Kasparov} {\sc G. G. Kasparov},
%{\it Hilbert $C^*$-modules: Theorem of Steinspring and Voiculescu},
%J. Operator Theory{\bf 4}(1980), pp.\ 133--150.



\bibitem{Kitchens}{\sc B.~P. ~Kitchens},
{\it Symbolic dynamics}, 
Springer-Verlag, Berlin, Heidelberg and New York (1998).

\bibitem{KodakaJLMS}
{\sc K. Kodaka},
{\it Picard groups of irrational rotation $C^*$-algebras},
J. London Math. Soc.
{\bf 56}(1997), pp.\ 179--188.

\bibitem{KodakaJOT}
{\sc K. Kodaka},
{\it Full projections, equivalence bimodules and automorphisms of stable algebras of unital $C^*$-algebras},
J. Operator Theory
{\bf 37}(1997), pp.\ 357--369.


\bibitem{Kumjian}, 
{\sc A. Kumjian},  
{\it On $C^*$-diagonals},
 Canad. J. Math. {\bf 38}(1986), pp.\ 969--1008.


\bibitem{LM}{\sc D. ~Lind and B. ~Marcus},
{\it An introduction to symbolic dynamics and coding},
 Cambridge University Press, Cambridge
(1995).





%\bibitem{Ma2009} {\sc K. Matsumoto},
%{\it  Orbit equivalence in $C^*$-algebras defined by actions of  symbolic dynamical systems},
%Contemporary Math.  {\bf 503}(2009),  pp.\ 121--140.




%\bibitem{MaJOT2000} {\sc K. Matsumoto},
%{\it On automorphisms of $C^*$-algebras associated with subshifts},
%J. Operator Theory {\bf 44}(2000), pp.\  91--112.

\bibitem{MaETDS2004} {\sc K. Matsumoto},
{\it Strong shift equivalence of symbolic dynamical systems 
and Morita equivalence of $C^*$-algebras},
Ergodic Theory Dynam. Systems {\bf 24}(2004), pp.\ 199--215.


%\bibitem{MaYMJ2007}{\sc K. Matsumoto},
%{\it On strong shift equivalence of Hilbert $C^*$-bimodules}, 
%Yokohama Math.\ J.\  {\bf 53}(2007), pp.\ 161--175. 



\bibitem{MaPacific}{\sc K. Matsumoto},
{\it Orbit equivalence of topological Markov shifts and Cuntz--Krieger algebras},
Pacific J.\ Math.\  {\bf 246}(2010), 199--225.



%\bibitem{MaYMJ}
%{\sc K. Matsumoto},
%{\it Some remarks  on orbit equivalence of 
%topological Markov shifts and Cuntz--Krieger algebras},
%Yokohama Math. J.
%{\bf 58}(2012), pp.\ 41--52.






\bibitem{MaPAMS}{\sc K. Matsumoto},
{\it  Classification of Cuntz--Krieger algebras by orbit equivalence of topological Markov shifts}, 
Proc. Amer. Math. Soc. {\bf 141}(2013), pp.\ 2329--2342.



%\bibitem{MaPre2012}{\sc K. Matsumoto},
%{\it Full groups of one-sided topological Markov shifts},
%preprint, arXiv:1205.1320, to appear in Israel J. Math..

%\bibitem{MaJOT2015}{\sc K. Matsumoto},
%{\it Strongly continuous orbit equivalence of 
%one-sided topological Markov shifts},
%J. Operator Theory {\bf 74}(2015), pp. 101--127.



\bibitem{MaMZ2016}{\sc K. Matsumoto},
{\it Continuous orbit equivalence, flow equivalence of Markov shifts and circle actions on Cuntz--Krieger algebras},
preprint, arXiv:1501.06965v4, to appear in Math. Z.  


\bibitem{MaPre2016} {\sc K. Matsumoto},
{\it Topological conjugacy of topological Markov shifts
and Cuntz--Krieger algebras},
preprint, arXiv:1604.02763.

%\bibitem{MaPicard} {\sc K. Matsumoto},
%{\it On Picard groups of Cuntz--Krieger algebras
%(tentative title)}, in preparation.


\bibitem{MMKyoto}
{\sc K. Matsumoto and H. Matui},
{\it Continuous orbit equivalence of topological Markov shifts 
and Cuntz--Krieger algebras},
Kyoto J. Math.{\bf 54}(2014), pp.\ 863--878.


%\bibitem{MMPre2014} {\sc K. Matsumoto and H. Matui},
%{\it Continuous orbit equivalence of topological Markov shifts 
%and dynamical zeta functions}, preprint, arXiv:1403.0719,
%to appear in Ergodic Theory Dynam. Systems.


%\bibitem{Matui2006} {\sc H. Matui}, 
%{\it Some remarks on topological full groups of  Cantor minimal systems},
%Internat. J.  Math. {\bf 17}(2006),  pp.\ 231--251.

%\bibitem{MatuiPre2011}{\sc H. Matui}, 
%{\it Some remarks on topological full groups of  Cantor minimal systems II},
%to appear in Ergodic Theory Dynam. Systems.

%\bibitem{MatuiPLMS}{\sc H. Matui}, 
%{\it Homology and topological full groups of {\'e}tale groupoids on totally disconnected spaces},
 %Proc. London Math. Soc. {\bf 104}(2012),  pp.\ 27--56.

%\bibitem{MatuiPre2012}{\sc H. Matui}, 
%{\it Topological full groups of one-sided shifts of finite type},
%preprint, arXiv:1210.5800, to appear in J. Reine Angew. Math..



\bibitem{MPT}
{\sc P. S. Muhly, D.  Pask and M. Tomforde},
{\it Strong shift equivalence of $C^*$-correspondences},
Israel J. Math. {\bf 167} (2008), pp. \ 315--346. 


\bibitem{PS}{\sc W. Parry and D. Sullivan},
{\it A topological invariant for flows on one-dimensional spaces},
Topology {\bf 14}(1975), pp.\ 297--299.


\bibitem{Paschke} 
{\sc W. L. Paschke},
{\it Inner product modules over $B^*$-algebras},
Trans. Amer. Math. Soc.
{\bf 182}(1973), pp.\ 443--468.
%\bibitem{Pat}{\sc A. L. T. Paterson},
%{\it Groupoids, inverse semigroups, and their operator algebras},
%Progress in Mathematics 170, Birkh{\"a}user, Boston, Basel, Berlin (1998).


%\bibitem{PWY}
%{\sc C. Pinzari, Y. Watatani and K. Yoshida},
%{\it KMS-states, entropy and the variational principle in full $C^*$-dynamical% systems}, Comm.\  Math.\ Phys.\
%{\bf 213}(2000), pp.\ 231--379.





%\bibitem{Po}{\sc Y. T. Poon},
%{\it A K-theoretic invariant for dynamical systems}, 
%Trans.\  Amer.\ Math.\ Soc.\ 
%{\bf 311}(1989), pp.\ 513--533.





%\bibitem{Ph}
%{\sc N. C. Phillips},
%{\it A classification theorem for nuclear  purely infinite simple
% $C^*$-algebras}, Doc.\ Math.\ {\bf 5}(2000), pp.\ 49--114.



\bibitem{RW}  
{\sc I. Raeburn and D. P. Williams},
{\it Morita equivalence and continuous-trace $C^*$-algebras},
Mathematical Surveys and Monographs, vol(60)
Amer. Math. Soc. (1998).



%\bibitem{Renault}{\sc J. Renault},
%{\it A groupoid approach to $C^*$-algebras},
%Lecture Notes in Math.  793, 
%Springer-Verlag, Berlin, Heidelberg and New York (1980).


\bibitem{Renault}{\sc J. Renault},
{\it  Cartan subalgebras in $C^*$-algebras},
Irish Math. Soc. Bull. 
{\bf 61}(2008), pp.\ 29--63. 




\bibitem{Renault}{\sc J. Renault},
{\it Examples of masas in $C^*$-algebras},
 Operator structures and dynamical systems, pp.\ 259--265, 
 Contemp. Math., 503, Amer. Math. Soc., Providence, RI, 2009. 

\bibitem{Rieffel1}
{\sc M. A. Rieffel},
{\it Induced representations of $C^*$-algebras},
Adv.\ in Math.\
{\bf 13}(1974), pp. 176--257.


\bibitem{Rieffel2}
{\sc M. A. Rieffel},
{\it  Morita equivalence for  $C^*$-algebras and  $W^*$-algebras},
J.\ Pure Appl.\ Algebra {\bf 5}(1974), pp.\ 51--96.  





%\bibitem{Ruelle1978}{\sc D. Ruelle},
%{\it Thermodynamic formalism}, Addison-Wesley, Reading (Mass.) (1978).

%\bibitem{Ruelle2002}{\sc D. Ruelle},
%{\it Dynamical zeta functions and transfer operators},
%Notice  Amer.\ Math.\ Soc. {\bf 49}(2002), pp.\ 175--193.





\bibitem{Ro}
{\sc M. R{\o}rdam},
{\it Classification of Cuntz-Krieger algebras},
 K-theory {\bf 9}(1995), pp.\  31--58.

%\bibitem{Ro2}{\sc M. R{\o}dom},
%{\it Classification of purely infinite simple $C^*$-algebras I}, 
%J. Func. Anal.\ {\bf 131}(1995), pp.\ 415--458.




%\bibitem{RS}
%{\sc J. Rosenberg and C. Schochet},
%{\it The K{\"u}nneth theorem and the universal coefficient theorem for Kasparov's generalized K-functor
%}, Duke Math.\ J.\
%{\bf 55}(1987) pp.\ 431--474.

\bibitem{Tomforde}{\sc M. Tomforde},
{\it Strong shift equivalence in the $C^*$-algebraic setting:
 graphs and $C^*$-correspondences},  
Operator theory, Operator Algebras, and Applications,  221--230, Contemp. Math., {\bf 414}, Amer. Math. Soc., Providence, RI, 2006.


%\bibitem{TomfordProblem}{\sc M. Tomforde},
%{\it The Graph Algebra Problem Page: List of Open Problems},
%http://www.math.uh.edu/tomforde/GraphAlgebraProblems/ListOfProblems.html.


\bibitem{WO}{\sc N. E. Wegge-Olsen},
{\it K-Theory and $C^*$-algebras},
 Oxford University Press, Oxford
(1993).



\bibitem{Watatani}
{\sc Y. Watatani},
{\it Index for $C^*$-algebras},
{\bf 424}(1990), Memoirs of Amer. Math. Soc.



%%%%%%%%%%%%%%%%
%\bibitem{We} {\sc B. Weiss},
%{\it Subshifts of finite type and sofic systems}, 
%Monats.\ Math.\ {\bf 77},
%(1973), pp.\ 462--474.
%%%%%%%%%%%%%


%%%%%%%%%%%%%%%%%%%%%%%%%%%%
\bibitem{Williams} {\sc R. F. Williams},
{\it Classification of subshifts of finite type}, 
 Ann.\ Math.\  {\bf 98}(1973), pp.\ 120--153.
 erratum, Ann.\ Math.\
{\bf 99}(1974), pp.\ 380--381.
%%%%%%%%%%%%%%%%%%%%%%%%%%%%%%%%%%%%%%%%

\end{thebibliography}
\end{document}